\title[A Bernstein theorem for two-valued minimal graphs]%
{A Bernstein theorem for two-valued\\ minimal graphs in dimension four}
\author{Fritz Hiesmayr}
\address{University College London, 25 Gordon Street, London WC1H 0AY}
\email{f.hiesmayr@ucl.ac.uk}
\newcolumntype{Y}{>{\centering\arraybackslash}X}
\newcommand\restr[2]{{
	\left.\kern-\nulldelimiterspace 
	#1 
	\vphantom{\big|} 
	\right|_{#2} 
}}
\newcommand\srestr[2]{{
	\left.\kern-\nulldelimiterspace 
	#1 
	\right|_{#2} 
}}
\newcommand{\mres}{\mathbin{\vrule height 1.6ex depth 0pt width
0.13ex\vrule height 0.13ex depth 0pt width 1.3ex}}
\DeclareFontFamily{U}{MnSymbolC}{}
\DeclareSymbolFont{MnSyC}{U}{MnSymbolC}{m}{n}
\DeclareFontShape{U}{MnSymbolC}{m}{n}{
    <-6>  MnSymbolC5
   <6-7>  MnSymbolC6
   <7-8>  MnSymbolC7
   <8-9>  MnSymbolC8
   <9-10> MnSymbolC9
  <10-12> MnSymbolC10
  <12->   MnSymbolC12}{}
\DeclareMathSymbol{\intprod}{\mathbin}{MnSyC}{'270}
\newcommand{\indic}{\mathbf{1}}
\newcommand{\calA}{\mathcal{A}}
\newcommand{\calB}{\mathcal{B}}
\newcommand{\calH}{\mathcal{H}}
\newcommand{\calG}{\mathcal{G}}
\newcommand{\calK}{\mathcal{K}}
\newcommand{\calR}{\mathcal{R}}
\newcommand{\calS}{\mathcal{S}}
\newcommand{\calC}{\mathcal{C}}
\newcommand{\calV}{\mathcal{V}}
\newcommand{\calU}{\mathcal{U}}
\newcommand{\calZ}{\mathcal{Z}}
\newcommand{\bC}{\mathbf{C}}
\newcommand{\bP}{\mathbf{P}}
\newcommand{\bR}{\mathbf{R}}
\newcommand{\bL}{\mathbf{L}}
\newcommand{\bN}{\mathbf{N}}
\newcommand{\bE}{\mathbf{E}}
\newcommand{\bZ}{\mathbf{Z}}
\newcommand{\orig}{0}
\newcommand{\ct}[1]{\text{\rmfamily\upshape #1}}
\newcommand*\intdiff{\mathop{}\!\mathrm{d}}
\newcommand{\abs}[1]{\lvert#1\rvert}
\newcommand{\norm}[1]{\lVert#1\rVert}
\newcommand{\cur}[1]{\llbracket #1 \rrbracket}
\DeclareMathOperator{\IV}{\mathbf{IV}}
\DeclareMathOperator{\I}{\mathbf{I}}
\newcommand{\Gr}{Gr}
\newcommand{\vartan}{\mathrm{VarTan}}
\newcommand*\diff{\mathop{}\!\mathrm{d}}
\newcommand{\eps}{\epsilon}
\newcommand{\bdary}{\partial}
\newcommand{\clos}[1]{\overline{#1}}
\DeclareMathOperator\Lip{Lip}
\DeclareMathOperator{\linspan}{span}
\DeclareMathOperator\spt{spt}
\DeclareMathOperator\reg{reg}
\DeclareMathOperator\sing{sing}
\DeclareMathOperator{\dist}{dist}
\DeclareMathOperator{\inter}{int}
\DeclareMathOperator{\Per}{Per}
\let\div\relax 
\DeclareMathOperator{\div}{div}
\DeclareMathOperator{\cpcty}{cap}
\let\Re\relax
\DeclareMathOperator{\Re}{Re}
\DeclareMathOperator{\graph}{graph}
\theoremstyle{plain}
\newtheorem{thm}{Theorem}[section]
\newtheorem*{thm*}{Theorem}
\newtheorem{lem}[thm]{Lemma}
\newtheorem*{lem*}{Lemma}
\newtheorem{prop}[thm]{Proposition}
\newtheorem{cor}[thm]{Corollary}
\newtheorem*{cor*}{Corollary}
\theoremstyle{definition}
\theoremstyle{remark}
\newtheorem{rem}[thm]{Remark}
\newtheorem*{rem*}{Remark}
\newtheorem{claim}{Claim}
\newtheorem*{claim*}{Claim}
\newtheorem*{notation*}{Notation}
\newtheorem*{quest*}{Question}
\theoremstyle{plain}
\newenvironment{customthm}[1]
{\innercustomthm}
{\endinnercustomthm}
\numberwithin{equation}{section}
\begin{document}

\begin{abstract}
	
We prove a Bernstein-type theorem for two-valued minimal graphs in the
four-dimensional Euclidean space $\bR^4$.
This states that two-valued functions defined on the entire $\bR^3$,
and whose graph is a minimal surface, must necessarily be linear.
This is a two-valued analogue of the classical Bernstein theorem, which 
asserts that in dimensions up to $n + 1 \leq 8$, an entire single-valued
minimal graph is linear.
The main contrast with the single-valued theory is the presence of a large
set of singularities in the graphs of two-valued functions.
Indeed two-valued minimal graphs are neither area-minimising,
nor is the regularity theory of elliptic PDE directly available in this setting.
We obtain structure results for the blowdown cones of two-valued minimal graphs,
valid in dimension $n + 1 \leq 7$, proving in particular that they are 
smoothly immersed away from an $(n-2)$-rectifiable set that includes
its branch points.
In dimension four we go further, and completely classify the possible blowdown
cones using a combinatorial argument. We show that they must be a union of two
three-dimensional planes: this is the key to the proof of the Bernstein
theorem.

\end{abstract}

\maketitle

\section*{Introduction}

The Bernstein theorem is one of the most emblematic results in the theory
of minimal surfaces. It answers the following question. Let $u: \bR^n \to \bR$
be a globally defined function whose graph $G$ is minimal. Must $u$ be affine linear?
This question was first studied by Bernstein~\cite{Bernstein_1916}, who answered
the question affirmatively when $n = 2$, using methods from complex analysis.
However his argument proved hard to generalise, and Fleming~\cite{Fleming_Oriented_Plateau}
opened the way to higher-dimensional extensions by framing the problem in
measure-theoretic terms.
We summarise the novelty of Fleming's approach, which culminated in a proof of Bernstein's
theorem in all dimensions up to $n + 1 = 8$.
Fleming~\cite{Fleming_Oriented_Plateau} showed that $G$ is weakly asymptotic
at large scales to an $n$-dimensional area-minimising cone $\bC$. Moreover if $u$
is non-linear then $\bC$ must be singular.
%
De Giorgi~\cite{deGiorgi_Frontiere_orientate} next observed that additionally
the blowdown cone $\bC$ of a non-linear $G$ must be cylindrical: there is 
an $(n-1)$-dimensional area-minimising cone $\bC_0$ in $\bR^n$ so that
$\bC = \bC_0 \times \bR e_{n+1}$.
Consequently if none of the $(n-1)$-dimensional area-minimising cones in $\bR^n$ are
singular then there cannot be any non-linear minimal graphs in $\bR^{n+1}$ either.
Fleming~\cite{Fleming_Oriented_Plateau} showed that this is true when $n = 3$;
next this was extended to $n = 4$ by
Almgren~\cite{Almgren_Some_Interior_Regularity_and_Bernstein}, and finally by
Simons~\cite{Simons_Minimal_varieties} to $n \leq 7$.
This is sharp, as Bombieri--de Giorgi--Giusti~\cite{BombierideGiorgiGiusti69}
showed that the cones $\bC_{p} = \{ (X,Y) \in \bR^p \times \bR^p \mid
\abs{X} = \abs{Y} \}$ are area-minimising for all $p \geq 4$.
They also constructed, for every one of these cones, a  minimal graph
in $\bR^{2p + 1}$ which at infinity is asymptotic to $\bC_{p} \times
\bR e_{2p + 1}$, thus proving that the Bernstein theorem could not hold
in dimensions larger than $n + 1 \geq 9$.

Here we propose to study the analogous question for two-valued functions:
must entire two-valued minimal graphs be affine linear?
Historically multi-valued functions were introduced to geometric measure theory
by Almgren~\cite{Almgren1981} in his monumental regularity theory
for area-minimising currents, valid in any codimension $k$.
To this end Almgren used functions taking values in the set of unordered
$Q$-tuples $\calA_Q(\bR^k)$.
(Almgren's regularity theory has been streamlined
by De Lellis--Spadaro in a series of papers, including an initial paper
revisiting the theory of $Q$-valued functions~\cite{DLS_Q_Valued_Functions,
DLS_RegularityI,DLS_RegularityII,DLS_RegularityIII}.)

Here we work exclusively in codimension one, with two-valued functions
defined to be those taking values in $\calA_2(\bR)$, the set of
unordered pairs of real numbers $\{ a_1,a_2 \} = \{ a_2, a_1 \}$ with $a_1,a_2 \in \bR$.
The graph $G$ of a two-valued function is called minimal if it is a
critical point for the $n$-dimensional area functional. 
Such two-valued minimal graphs provide the local picture of stable minimal
hypersurfaces near multiplicity-two branch points.
Indeed, a result of Wickramasekera~\cite{Wic_MultTwoAllard} proves
the following: near a point where it has a multiplicity two tangent hyperplane,
a stationary codimension one varifold with stable regular part is equal to the graph of a
two-valued function.
Their inherent interest aside, this provides an additional motivation for the
study of entire two-valued minimal graphs $G$, defined in terms of some
$u: \bR^n \to \calA_2(\bR)$,
because they arise as blow-up models along degenerating sequences.

Unlike their single-valued counterparts, two-valued minimal graphs are not
area-minimising. Indeed, they contain branched and immersed singularities,
neither of which exist in area-minimising hypersurfaces.
Although a two-valued minimal graph $G$ is still asymptotic at infinity
to a stationary cone $\bC$, this is not area-minimising either.
In short, the regularity theory of area-minimising cones in $\bR^{n+1}$ is of no use here,
even when $n + 1 \leq 8$.
Nonetheless, two-valued minimal graphs can be shown to be stable (see 
Lemma~\ref{lem_graph_ambient_stability}): the second variation of the
area functional is non-negative.
The presence of a large singular set (that includes branch points)
may therefore come as a surprise at a first sight. Indeed, perhaps
the most celebrated regularity theorems for stable stationary hypersurfaces%
---respectively obtained by Schoen--Simon~\cite{SchoenSimon81}
and Wickramasekera~\cite{Wickramasekera14}---give that their singular
set is small, and has codimension at least seven.
However neither result applies here, because both impose \emph{a priori}
hypotheses on the singular set that two-valued minimal graphs fail to satisfy: 
Schoen--Simon ask that it be $\calH^{n-2}$-negligible, and
Wickramasekera imposes a (significantly weaker) structural hypothesis,
forbidding so-called classical singularities.
In particular, all the results we obtain are completely independent of the
regularity theory developed by Wickramasekera~\cite{Wickramasekera14}.

There are additional difficulties associated to working with two-valued
minimal graphs. A real-valued function $u$ whose graph $G$ is minimal 
satisfies an elliptic quasilinear PDE: the minimal surface equation
$(1 + \abs{Du}^2) \Delta u - D_i u D_j u D_{ij} u = 0$.  
This makes the elliptic regularity theory available. For example, assuming only
that $u$ is $C^{1,\alpha}$-regular for some $\alpha \in (0,1)$ one finds
that in fact $u$ must be smooth.
(In fact via de Giorgi--Nash--Moser theory it suffices to consider
Lipschitz-regular $u$ to obtain the same conclusion.)
This is not applicable in the two-valued setting.
Simon--Wickramasekera~\cite{SimonWickramasekera07} constructed two-valued
minimal graphs defined on bounded subsets of $\bR^n$ by solving an adapted
Dirichlet problem. Their examples have an $(n-2)$-dimensional branch set,
along which they look approximately like functions of the form
$(x_1,\dots,x_n) \mapsto \Re(x_1 + \ct{i} x_2)^{k/2}$, where $k \geq 3$ is an odd integer.
(See also the subsequent work of Krummel~\cite{Krummel_Multivalued_Dirichlet}
for an extension of these constructions to higher codimensions.)
Simon--Wickramasekera~\cite{SimonWickramasekera16}
later showed that a two-valued minimal graph of class $C^{1,\alpha}$
for some $\alpha \in (0,1)$ is necessarily $C^{1,1/2}$-regular.
The examples above with $k = 3$ show that this is optimal.

We overcome these difficulties and prove the following theorem,
which is our main result; see Theorem~\ref{thm_bernstein_dim_four_chapter}.

\begin{customthm}{1}
\label{thm_bernstein_dim_four_intro}
Let $\alpha \in (0,1)$ and $u \in C^{1,\alpha}(\bR^3;\calA_2(\bR))$ be a two-valued
function whose graph $G$ is minimal. Then $u$ is linear, and its graph is a
union of two three-dimensional planes.
\end{customthm}

Rosales~\cite{Rosales_Two_valued_minimal_graphs} has proved a similar result in dimension
$n + 1 = 3$. However the proofs bear few similarities, because Rosales obtains
curvature estimates via a logarithmic cut-off trick,
an argument unavailable in larger dimensions.

It is crucial here that no bound is assumed for the growth of $u$ or its derivatives.
Notice in particular the strong contrast with two-valued harmonic functions, of which
an abundance of non-linear examples exists without such a hypothesis:
take for example the functions
$(x_1,\dots,x_n) \mapsto \Re (x_1 + \ct{i} x_2)^{k/2}$ for odd $k \geq 3$ we
already mentioned above.
(This mirrors a phenomenon already present in the single-valued setting, where the
Liouville theorems for harmonic functions require a bound on $u$ or its derivatives.)

When one does in fact assume a bound for the growth of $u$, then the conclusion
of Theorem~\ref{thm_bernstein_dim_four_intro} extends to all dimensions;
see Theorem~\ref{thm_grad_bounds_vert_line}.

\begin{customthm}{2}
	\label{thm_bernstein_bounded_growth_intro}
Let $\alpha \in (0,1)$ and $u \in C^{1,\alpha}(\bR^n;\calA_2(\bR))$ be a
two-valued function whose graph $G$ is minimal, and for which there exists
a constant $C > 0$ so that $\sup_{D_R} \{ \abs{u_1} + \abs{u_2} \} \leq C R$
for all $R \geq 1$.
Then $u$ is linear, and its graph is a union of two $n$-dimensional planes.
\end{customthm}

The method of proof of Theorem~\ref{thm_bernstein_bounded_growth_intro}
diverges from the classical, single-valued theory.
There the corresponding result---that a single-valued minimal graph
$u: \bR^n \to \bR$ with bounded growth is necessarily linear---can essentially
be proved using standard Harnack inequalities. We cannot follow this line
of argument here, and instead develop an alternative proof.
The key here is a regularity lemma, where we prove that Lipschitz
two-valued minimal graphs must in fact be $C^{1,\alpha}$-regular for some $\alpha \in (0,1)$.
This result is not completely new, as Becker-Kahn and Wickramasekera
have obtained the same conclusion via a frequency function argument%
~\cite{BeckerKahn_Wic_Perso}.
Our approach here is different and original: it combines a
geometric, inductive argument with anterior results of
Becker-Kahn~\cite{Spencer_Two_valued_graphs_arbitrary_codimension};
see Section~\ref{sec_regularity_lemma_for_two_valued_minimal_graphs}.

Of the two results, Theorem~\ref{thm_bernstein_dim_four_intro} is by far the more
difficult to prove. However both results require \emph{a priori} estimates,
among which we cite the stability inequality (see Lemma~\ref{lem_graph_ambient_stability}),
an area estimate (see Proposition~\ref{prop_area_estimate})
and an interior gradient estimate (see Lemmas~\ref{lem_interior_gradient_bounds}
and~\ref{lem_int_grad_bounds_form_in_small_ball}). 
It is imperative that all three hold in the presence of branch point singularities.
This means that, although we can adapt the proofs used in the single-valued theory,
these modifications introduce significant technical intricacies.

Broadly speaking, the proof of Theorem~\ref{thm_bernstein_dim_four_intro} follows a
similar argument as the proof of the single-valued Bernstein theorem.
Let $G = \graph u$ be a minimal two-valued graph,
corresponding to some function $u \in C^{1,\alpha}(\bR^n;\calA_2(\bR))$.
Next let $G_j = \graph u_j$ be a sequence of two-valued minimal graphs obtained
by homothetically rescaling $G$ along some sequence of factors $\lambda_j \to + \infty$:
$u_j(x) = \lambda_j^{-1} u(\lambda_j x)$ for all $x \in \bR^n$.
The area bounds imply that a subsequence of these is convergent, with weak limit
a so-called blowdown cone $\bC$ of $G$. 
Using the well-known monotonicity formula for area,
it suffices to prove that $\norm{\bC}(B_1) = 2 \omega_n$---%
the same area as the sum of two $n$-dimensional planes---%
to deduce that $u$ is linear.
Because $\norm{\bC}(B_1) = \lim_{j \to \infty} \calH^n(G_j \cap B_1)$, 
one might harbour hope that the initial area estimates can be sharpened to
obtain this.
Although improvements on the initial bounds%
---see Section~\ref{subsec_improved_area_estimates}---%
indeed play an important role in our argument, these do not directly lead to a proof. 
Instead we classify the cones that can arise from such a blowdown construction,
and ultimately prove that in dimension $n + 1 = 4$ the cone $\bC$ must be a sum of two
three-dimensional planes $\Pi_1,\Pi_2 \in \Gr(3,4)$, with multiplicity one:
$\bC = \abs{\Pi_1} + \abs{\Pi_2}$.

In this sense our argument is similar to the single-valued theory, where 
one instead shows that, provided $n + 1 \leq 8$, the only possible blowdown
cones are $n$-dimensional planes. This is a direct consequence of the
regularity of area-minimising currents.
As this is unavailable here, even in low dimensions the blowdown cone $\bC$
could in principle contain a large set of singularities $\sing \bC$,
which can be immersed, branched or more complicated yet.
The singular set $\sing \bC$ can be divided into strata $\calS^0(\bC),
\dots,\calS^n(\bC)$, where $\calS^k(\bC)$ is the set of points
whose tangent cones have a spine of dimension at most $k$, along which it
is translationally invariant.
The Almgren--Federer stratification theorem~\cite{Almgren1981} gives that
$\dim_{\calH} \calS^k(\bC) \leq k$; this was later improved by
Naber--Valtorta~\cite{NaberValtorta_rectifiability_stationary_varifolds},
who proved that in fact $\calS^k(\bC)$ is countably $k$-rectifiable using
quantitative stratification methods.
The lower strata can thus be gathered into a set with codimension at least two,
which can be excised using capacity arguments.

The top stratum $(\calS^n \setminus \calS^{n-1})(\bC)$ is the set of branch points
$\calB(\bC)$, where at least one tangent cone is of the form $Q \abs{\Pi}$,
where $Q \geq 2$ and $\Pi \in \Gr(n,n+1)$. 
It turns out to be relatively easy to prove that the multiplicity at these points is
$Q = 2$. This allows the application of the results of Wickramasekera~\cite{Wic_MultTwoAllard}
and Krummel--Wickramasekera~\cite{KrumWic_FinePropsMinGraphs}, which together
imply that $\calB(\bC)$ 
too is $(n-2)$-rectifiable and can be excised.
A large portion of the text is therefore dedicated to
$(\calS^{n-1} \setminus \calS^{n-2})(\bC)$, the only stratum
for which these excision arguments are impossible on account of its size.
This is composed of those points $X \in \sing \bC$ that have at least one
so-called classical tangent cone: this is a cone $\bP \in \vartan(\bC,X)$ 
of the form $\bP = \sum_i Q_i \abs{\pi_i}$, where $Q_i \geq 1$ and
$\pi_i$ and $n$-dimensional half-planes which meet along an $(n-1)$-dimensional
axis $L$ say.
Via a diagonal extraction argument, one obtains a new sequence $(G_j \mid j \in \bN)$
of two-valued minimal graphs so that $\abs{G_j} \to \bP$ as $j \to \infty$.
(We use the same notation as for our original sequence for sake of simplicity.)
The aim is therefore to classify the classical cones that can arise as weak
limits of two-valued minimal graphs. We prove that, provided $n + 1 \leq 7$,
there exist two $n$-dimensional planes $\Pi_1,\Pi_2 \in \Gr(n,n+1)$ so that
$\bP = \abs{\Pi_1} + \abs{\Pi_2}$: Sections~\ref{sec_classical_limit_cones}
to~\ref{sec_classical_cones_vertical} are devoted to this classification.
(The dimension restriction is related to our improved area estimates, see
Corollary~\ref{cor_qualitative_estimates}. When $n + 1 \leq 7$, these
exclude \emph{a priori} the possibility that $\bP = 2( \abs{\Pi_1} + \abs{\Pi_2}
+ \abs{\Pi_3})$ for example. 
When $n + 1 \geq 8$, and this initial reduction is unavailable, our classification
arguments come up short.)
This being established, the results of Wickramasekera~\cite{Wic_MultTwoAllard}
then demonstrate that the support of $\bC$ is smoothly immersed away from
an $(n-2)$-dimensional rectifiable set that includes its branch points.
In fact this is true for any varifold that arises as a weak limit from a sequence
of two-valued minimal graphs. For blowdown cones---those that arise from a
sequence of homothetic rescalings of $G = \graph u$ for some
$u \in C^{1,\alpha}(\bR^n;\calA_2(\bR))$---we obtain the following stronger
conclusion; see Corollary~\ref{cor_sing_of_limit_cones} and
Theorem~\ref{thm_degiorgi_splitting_in_low_dimensions}.

\begin{customthm}{3} 
	\label{thm_structure_blowdown_cones_intro}
Let $\alpha \in (0,1)$ and $2 \leq n \leq 6$.
Let $u \in C^{1,\alpha}(\bR^n;\calA_2)$ be
an entire two-valued minimal graph and $\bC$ be a blowdown cone of
$\abs{G}$ at infinity.
Then $\bC$ is stationary, stable, smoothly immersed away from an $(n-2)$-rectifiable set,
and in fact
\begin{enumerate}[label = (\roman*), font = \upshape]
	\item \label{item_blowdown_cylindrical}
		either $\bC$ is cylindrical of the form
		$\bC = \bC_0 \times \bR e_{n+1}$,
	\item \label{item_blowdown_cylindrical_plus_plane}
		or $\bC = \abs{\Pi} + \bC_0 \times \bR e_{n+1}$
		where $\Pi \in \Gr(n,n+1)$,
	\item \label{item_blowdown_sum_of_planes}
		or $\bC$ is the sum of two planes $\Pi_1,\Pi_2 \in \Gr(n,n+1)$,
		$\bC = \abs{\Pi_1} + \abs{\Pi_2}$.
\end{enumerate}
\end{customthm}

Finally, in dimension $n + 1 = 4$ we complete the classification of blowdown
cones: we first show that actually the blowdown cone $\bC$ is equal to a sum
of at most three planes, each of which has multiplicity one, and then apply
a combinatorial argument to a sort of dual cellular decomposition to conclude that
there are two three-dimensional planes $\Pi_1,\Pi_2 \in \Gr(3,4)$ so that
$\bC = \abs{\Pi_1} + \abs{\Pi_2}$.
By the monotonicity formula for the area, this will conclude the proof of
Theorem~\ref{thm_bernstein_dim_four_intro}.

\subsection*{Overview}
In Section~\ref{sec_two_valued_functions} we set notation for two-valued functions.
In Section~\ref{sec_two_valued_minimal_graphs} we first define two-valued
minimal graphs, next quote results of Simon--Wickramasekera~\cite{SimonWickramasekera16}
and Krummel--Wickramasekera~\cite{KrumWic_FinePropsMinGraphs}, and finally establish
some of their basic properties, including stability.
In the next two sections we prove estimates for two-valued minimal graphs:
bounds for their area in Section~\ref{sec_area_estimates} and an interior
gradient estimate in Section~\ref{sec_grad_estimates}.
In Section~\ref{sec_regularity_lemma_for_two_valued_minimal_graphs}
we prove a regularity lemma, which establishes that Lipschitz two-valued minimal graphs
are in fact $C^{1,\alpha}$-regular.
In Section~\ref{sec_single_valued_minimal_graphs}
we prove a lemma for single-valued minimal graphs in the style of Jenkins--Serrin:
this is used in our classification of classical limit cones;
see Section~\ref{sec_classical_cones_vertical}.
In Section~\ref{sec_multiplicity_limit_cones} we show that the branch points of limits of
two-valued minimal graphs have multiplicity two, and establish some preliminary
lemmas in anticipation of Section~\ref{sec_classical_cones_vertical}.
The following three sections are devoted to the classification of classical limit cones.
In Section~\ref{sec_classical_limit_cones} we adapt an argument of Schoen--Simon
for a preliminary analysis.
Section~\ref{sec_non_vertical_cone} treats limit cones that
are not vertical, and Section~\ref{sec_classical_cones_vertical} concludes the
analysis by considering vertical cones. 
In Section~\ref{sec_blowdown_cones_and_asymptotic_analysis}
we prove Theorems~\ref{thm_bernstein_bounded_growth_intro}
and~\ref{thm_structure_blowdown_cones_intro}. Finally we give a proof of the main result,
Theorem~\ref{thm_bernstein_dim_four_intro}
in Section~\ref{sec_bernstein_theorem_in_four_dimensions}.
In Appendix~\ref{sec_appendix_regularity_and_singularities} we quote some results
from geometric measure theory, notably the statement of
the branched sheeting theorem of Wickramasekera~\cite{Wic_MultTwoAllard} 
in Section~\ref{subsec_wic_mult_two_allard}.

\subsection*{Acknowledgements}

We are grateful for comments from Neshan Wickramasekera, Costante Bellettini,
Spencer Becker--Kahn, Ben Sharp and Edriss Titi;
we would also expressly like to thank Neshan Wickramasekera
and Costante Bellettini for their support and encouragement.
This work was supported by the UK Engineering and Physical Sciences Research
Council (EPSRC) under grants EP/S005641/1 and EP/L016516/1.


\section{Two-valued functions}

\label{sec_two_valued_functions}

\subsection{Unordered pairs}

Let $\calA_2(\bR)$ be the set of unordered pairs of real numbers,
abbreviated $\calA_2$ when no confusion is possible. An element of $\calA_2(\bR)$
is written $\{ x , y \} = \{ y , x \}$.
One can define $\calA_2(\bR)$ as the set obtained
by taking the quotient of $\bR^2$ under the action by the group $\bZ_2$ which
transposes the two elements. The quotient map is then
$\bR^2 \to \calA_2(\bR): (x,y)  \mapsto \{ x, y \}$.
Alternatively one could define $\calA_2(\bR)$ as the non-empty subsets of $\bR$
counting at most two elements.
More generally we can also take unordered pairs of elements of any set $X$,
thus forming $\calA_2(X)$.
Here $X$ is usually either a finite-dimensional vector space $\bE$ say
or a subset thereof.
Apart from $\bE = \bR$ we also use $\bE = \bR^{n+1}$
and $\bE = \bL(\bR^n;\bR)$, the space of linear functions $\bR^n \to \bR$.
Consider the example where $X = \bE$ is a real vector space.
Scalar multiplication is unambiguously defined, but
$\calA_2(\bE)$ is not naturally a vector space, because the sum (or
difference) of two elements $u,v \in \calA_2(\bE)$ cannot be made sense
of as an element in $\calA_2(\bE)$.
Any map $\Phi: X \to Y$ between two sets can be extended to a two-valued
analogue by setting $\mathbf{\Phi}: \calA_2(X) \to \calA_2(Y): \{ x,y \} \mapsto
\{ \Phi(x),\Phi(y) \}$.
For example we define $\abs{ \{ u_1,u_2\} } = \{ \abs{u_1} , \abs{u_2} \}
\in \calA_2(\bR_{\geq 0})$.
Given a pair $\{ x_1, x_2 \} \in \calA_2(\bR)$ of real numbers, we define
its \emph{average} and \emph{symmetric difference} by
$x_a = \frac{1}{2}(x_1 + x_2)$ and $x_s = \{ \pm (x_1 - x_2) \}$
respectively. 
Moreover write $x_+ = \max \{ x_1 , x_2 \} = x_1 \vee x_2$ and
$x_- = \min \{ x_1 , x_2 \} = x_1 \wedge x_2$.

Let $\bE$ be a vector space with norm $\norm{\cdot}$. For a pair
$\{ x , y \} \in \calA_2(\bE)$ we write $\norm{ \{ x , y \}}
= \norm{x} + \norm{y}$.
We define a metric on $\calA_2(\bE)$ by
$\calG: \calA_2(\bE) \times \calA_2(\bE) \to \bR_{\geq 0}$ with
$\calG(u,v) = \min ( \norm{u_1 - v_1} + \norm{u_2 - v_2},
\norm{u_1 - v_2} + \norm{u_2 - v_1} )$.
The analogous construction works for arbitrary metric spaces.

\subsection{Definition and function spaces}

Let $A \subset \bR^n$ be an arbitrary set. A \emph{two-valued function} on $A$ is
a function $A \to \calA_2(\bR)$.
To $u$ we can associate its average
$u_a = \frac{1}{2}(u_1 + u_2)$ and symmetric difference
$u_s = \{ \pm \frac{1}{2} (u_1 - u_2) \}$.
Similarly we write $u_+ = \max \{u_1,u_2\} = u_1 \wedge u_2$ and
$u_- = \min \{ u_1 ,u_2 \} = u_1 \vee u_2$.

More generally we consider two-valued maps $A \to \calA_2(X)$, where $X$ is
an arbitrary set. For use most often $X$ will either be $\bR$ or a real vector space,
say $\bR^n$ or $\bL(\bR^n;\bR)$. Those taking values in $\calA_2(\bR^n)$ we shall call 
\emph{two-valued vector fields}.
Consider a map $\Phi : X \to \bR$. Given $u: A \to \calA_2(X)$ we can
compose to $\Phi \circ u: A \to \calA_2(\bR)$. If we are given
two different functions $\Phi,\Psi: X \to \bR$ then we may take the sum
$(\Phi \circ u) + (\Psi \circ u) := (\Phi + \Psi) \circ u$, although
one the sum of two-valued functions cannot be defined as a two-valued function.
The same of course holds for other binary operations.

As $\calA_2 = \calA_2(\bR)$ is a metric space, endowing it with the corresponding topology
and Borel $\sigma$-algebra allows us to define \emph{measurable} and \emph{continuous}
two-valued functions.
Given $\alpha \in (0,1)$, a two-valued function on $A$ is called
\emph{$\alpha$-H\"{o}lder continuous} if $\limsup_{x \neq y \in A}
\abs{x - y}^{-\alpha} \calG(u(x),u(y)) < \infty$, and that $u$ is
\emph{Lipschitz continuous} if this holds with $\alpha = 1$.
Let $\Omega \subset \bR^n$ be open. We write $C^{0,\alpha}(\Omega;\calA_2)$ and
$\Lip(\Omega;\calA_2)$ for the functions that are locally $\alpha$-H\"{o}lder continuous
or locally Lipschitz respectively.
Both notions are again defined for functions taking values in $\calA_2(X)$
for any metric space $(X,d)$, for example $X = \bL(\bR^n;\bR)$.

We say that a function $l: \bR^n \to \calA_2$ is \emph{linear} if there exist
two single-valued linear functions $l_i \in \bL(\bR^n;\bR)$ so that $l = \{l_1,l_2 \}$.
A function $u: \Omega \to \calA_2$ is called
\emph{differentiable at} $x$ if there is a two-valued linear function $l = \{l_1,l_2\}$
so that for all $v \in \bR^n$, $t^{-1} \calG(u(x+ tv),\{ u_i (x) + l_i(tv) \}) \to 0$
as $t \to 0$.
If this exists, we write $Du(x) = l$ and call this the \emph{derivative} of $u$
at $x$. This defines a two-valued function $Du : \Omega \to \calA_2(\bL(\bR^n;\bR))$.
Moreover we write $u \in C^1(\Omega;\calA_2)$ if it is differentiable at all points
$x \in \Omega$ and the function $Du$ is continuous, and for $\alpha \in (0,1)$
we write $u \in C^{1,\alpha}(\Omega;\calA_2)$ if
$Du \in C^{0,\alpha}(\Omega;\calA_2(\bL(\bR^n;\bR)))$.

The corresponding H\"{o}lder norms are defined in the usual way; given 
$\alpha \in (0,1)$ we write 
$C^0(\clos{\Omega};\calA_2), C^{0,\alpha}(\clos{\Omega};\calA_2),
C^1(\clos{\Omega};\calA_2),C^{1,\alpha}(\clos{\Omega};\calA_2)$ 
for the spaces formed by those functions on $\Omega$ for which the respective
norms are finite.

\subsection{Selections and singularities}

Consider a two-valued function $u: \Omega \to \calA_2$, and let $\Omega' \subset \Omega$
be an open subset. We say that two functions $u_1,u_2: \Omega \to \bR$
define a \emph{selection} for $u$ on $\Omega'$ if
$u = \{ u_1,u_2 \}$ on $\Omega'$.
Most often one seeks selections with favourable properties:
for example a selection is called continuous if $u_1,u_2 \in C^0(\Omega')$,
of class $C^1$ if $u_1,u_2 \in C^1(\Omega')$, and smooth if $u_1,u_2 \in C^\infty(\Omega')$.

The \emph{touching set} of a function $u \in C^1(\Omega;\calA_2)$
is $\calZ_u = \{ x \in \Omega \mid u_1(x) = u_2(x) \}$,
and the \emph{critical set} of $u$ is
$\calK_u = \{ x \in \Omega \mid u_1(x) = u_2(x), Du_1(x) = Du_2(x) \}$.
We also set $\calC_u = \calZ_u \setminus \calK_u =
\{ x \in \Omega \mid u_1(x)= u_2(x), Du_1(x) \neq Du_2(x) \}$.
This is a relatively open subset of $\calZ_u$, and
$\calB_u,\calK_u \subset \calZ_u$ are relatively closed.
For every point $y \in \Omega \setminus \calK_u$ there is $\sigma > 0$ so
that on $D_\sigma(y)$ there are two functions $u_{1,y},u_{2,y} \in C^1(D_\sigma(y))$
so that $u = \{ u_{1,y} , u_{2,y} \}$.
A point $x \in \Omega$ is called a \emph{branch point}
if there is no radius $\sigma > 0$ for which
$u$ admits a $C^1$ selection on $D_\sigma(x)$.
They form the set $\calB_u \subset \calK_u$.
Points in $\calK_u \setminus \calB_u$ are sometimes called \emph{false branch points}.

Let $u : \Omega \to \calA_2$ be a two-valued function. Its 
\emph{graph} $G \subset \Omega \times \bR$ is the set
\begin{equation}
	\graph u =
	G = \{ (x,X^{n+1}) \in \Omega \times \bR \mid 
	X^{n+1} = u_1(x) \text{ or } u_2(x) \}.
\end{equation}
This may be considered as a varifold inside $\Omega \times \bR$, in which case
we write $\abs{G} = \abs{\graph u} \in \IV_n(\Omega \times \bR)$ as is customary.
(We emphasise that throughout we do not consider the graph as a subset of $\Omega
\times \calA_2(\bR)$, as one might expect by interpreting the term more literally.)
In general of course $G = \graph u$ of $u \in C^1(\Omega;\calA_2)$
is not regular and instead has an immersed set of singularities $\calC(G)$,
where its tangent cone is a union of two distinct $n$-dimensional planes,
and the branch set $\calB(G)$, where its tangent cone is a hyperplane
with multiplicity two.
They are related to the singularities of $u$ via the orthogonal projection $P_0$
onto $\bR^n \times \{ 0 \}$,
$P_0(\sing G \cup \{ X \in \reg G \mid \Theta(\norm{G},X) = 2 \}) = \calZ_u$,
$P_0(\calB(G) \cup \{ X \in \reg G \mid \Theta(\norm{G},X) = 2 \}) = \calK_u$,
and $P_0(\calC(G)) = \calC_u$.

We briefly remark on the case where $n = 1$ and 
$\Omega = I \subset \bR$ is an open interval in the real line.
Let $u \in C^1(I;\calA_2)$. This automatically has
$\calB_u = \emptyset$, even while $\calK_u$ may be non-empty.
In other words, we can always find $u_1,u_2 \in C^1(I)$ so that
$u = \{ u_1,u_2 \}$ on $I$, although some arbitrary choices have
to be made if $\calK_u \neq \emptyset$.
We will later use this elementary observation in the following context.
Let $\Omega \subset \bR^n$ be open, and $u \in C^{1}(\Omega;\calA_2)$.
Let $y \in \Omega$ and $v \in \bR^n$ be arbitrary. Write $l_y
\subset \{ y + tv \mid t \in \bR \} \cap \Omega$ for the connected
component containing $y$. This corresponds to an interval $I \subset \bR$.
Via this identification, the restriction of $u$ to $l_y$ defines
a two-valued function in $C^{1}(I;\calA_2)$. Hence we can find two
functions $u_{1,y},u_{2,y} \in C^1(I)$ so that
$u(y + tv) = \{ u_{1,y}(t),u_{2,y}(t) \}$ even though possibly
$l_y \cap \calB_u \neq \emptyset$.
Moreover if $u \in C^{1,\alpha}(\Omega;\calA_2)$ for some $\alpha \in (0,1)$
then we can impose $u_{1,y},u_{2,y} \in C^{1,\alpha}(I)$ as well.

\subsection{Integrals of two-valued functions}
\label{subsec_two_valued_integrals}

Let $p \in [1,+\infty)$. We write $L^p(\Omega;\calA_2)$ for the space of
two-valued functions $u : \Omega \to \calA_2$ with
$\int_\Omega \norm{u}^p < \infty$, and $L^\infty(\Omega;\calA_2)$
for those functions for which $\norm{u}$ is essentially bounded.
We define the \emph{integral} of a two-valued function $u \in L^1(\Omega;\calA_2)$
by $\int_\Omega u = 2 \int_\Omega u_a = \int_\Omega u_1 + u_2.$
Sometimes we also write $\int_\Omega u(x) \intdiff x = \int_\Omega u$.

Let $u \in C^1(\clos{\Omega};\calA_2)$ and $\Phi: \bR \to \bR$ be so that
$\Phi \circ u \in L^1(\Omega;\calA_2)$. Then we have that 
$\int_\Omega (\Phi \circ u) (1 + \abs{Du}^2) = \int_G \Phi \intdiff \calH^n$,
where on the right-hand side the integral is over $G = \graph u \subset \Omega \times \bR$.

Consider $u \in L^1(\Omega;\calA_2)$,
$A \subset \bR$ be a Borel subset, and $\indic_A$ be its indicator function.
As $u$ takes values in $\calA_2$, the pre-image of $A$ under $u$ is not
defined. However, we can define the integral
$\int_{u \in A} u := \int u (\indic_A \circ u)
= \int u_1 ( \indic_A \circ u_1) + u_2( \indic_A \circ u_2)$.
This can be generalised further if we consider $\Phi: \bR \to \bR$ so that
$\Phi \circ u \in L^1(\Omega;\calA_2)$, by setting
$\int_{u \in A} \Phi \circ u = \int (\Phi \circ u) (\indic_A \circ u)$,
where recall the product of the two functions is a well-defined two-valued
function in this specific context.
We will often use a variant of this, where in fact $\Phi: \Omega \times \bR^n \times \bR
\to \bR$ is so that the two-valued function $x \in \Omega \mapsto \Phi(x,Du(x),u(x))$ is
integrable, and we consider integrals of the form
$\int_{u \in A} \Phi(x,Du(x),u(x)) \intdiff x$, or say
$\int_{u \in A, Du \in B} \Phi(x,Du(x),u(x)) \intdiff x$
where  $B \subset \bR^n$ is another Borel subset.

\subsection{Some results for two-valued functions}

From the usual Rademacher theorem, see for example~\cite[Thm.\ 5.2]{Simon84},
one easily derives a two-valued analogue.

\begin{lem}
\label{lem_rademacher_two_valued}
Let $u \in \Lip(D_1;\calA_2)$ be a two-valued Lipschitz function. Then $u$
is differentiable $\calH^n$-a.e.\ in $D_1$.
\end{lem}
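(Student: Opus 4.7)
The plan is to reduce the two-valued Rademacher theorem to the classical single-valued one by extracting two symmetric real-valued functions from $u$, and then checking differentiability case by case.

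First I would set $f(x) = u_1(x) + u_2(x) = 2 u_a(x)$ and $h(x) = \abs{u_1(x) - u_2(x)}$. Both are symmetric in the pair and hence unambiguously defined single-valued real functions on $D_1$. A short computation, using the minimum over labellings in the definition of $\calG$, shows that $\abs{f(x) - f(y)} \leq \calG(u(x),u(y))$ and $\abs{h(x) - h(y)} \leq \calG(u(x),u(y))$, so both $f$ and $h$ are Lipschitz with constant bounded by $\Lip(u)$. The classical Rademacher theorem then yields a set $E \subset D_1$ with $\calH^n(D_1 \setminus E) = 0$ at every point of which both $f$ and $h$ are (classically) differentiable.

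Next I would show that $u$ is two-valued differentiable at every $x_0 \in E$, distinguishing two cases. If $h(x_0) > 0$, then $h > 0$ on some disc $D_\sigma(x_0)$ by continuity, so the formulas $u_1 = (f+h)/2$ and $u_2 = (f-h)/2$ (for an appropriate labelling) provide a single-valued continuous selection with $u_1, u_2$ Lipschitz and differentiable at $x_0$, whence $u$ itself is differentiable at $x_0$ with $Du(x_0) = \{D u_1(x_0), D u_2(x_0)\}$. If instead $h(x_0) = 0$, then $h \geq 0$ attains its minimum at $x_0$, which together with its differentiability forces $Dh(x_0) = 0$; consequently $h(x_0 + v) = o(\abs{v})$ as $\abs{v} \to 0$. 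Writing $l = \tfrac{1}{2} D f(x_0)$ and $b(v) = u_a(x_0) + l(v)$, the estimate $\abs{a_1 - b} + \abs{a_2 - b} \leq \abs{a_1 + a_2 - 2b} + \abs{a_1 - a_2}$ combined with $f(x_0 + v) - 2 u_a(x_0) - 2 l(v) = o(\abs{v})$ and $\abs{u_1(x_0 + v) - u_2(x_0 + v)} = h(x_0 + v) = o(\abs{v})$ gives
\begin{equation}
    \calG\bigl(u(x_0 + v),\{ b(v), b(v) \}\bigr) = o(\abs{v}),
\end{equation}
so $u$ is differentiable at $x_0$ with derivative $\{l, l\}$.

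The only step which is not entirely routine is the second case, where the observation that a Lipschitz non-negative function differentiable at a zero must have vanishing gradient there is what allows us to upgrade the $\tfrac{1}{2}$-order information carried by $h$ (which is only Lipschitz, not smoother) to the first-order statement needed for two-valued differentiability. Everything else is bookkeeping: the set of non-differentiability points is contained in $(D_1 \setminus E)$, which has $\calH^n$-measure zero, proving the lemma.
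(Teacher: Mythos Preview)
Your proof is correct. The paper does not actually give a proof of this lemma: it simply states that ``from the usual Rademacher theorem \ldots one easily derives a two-valued analogue'' and leaves it at that. Your argument supplies precisely those details, reducing to the classical result via the symmetric functions $f = u_1 + u_2$ and $h = \lvert u_1 - u_2 \rvert$, which is a standard and clean way to carry this out.
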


Using a general form of the Arzel\`{a}--Ascoli theorem, for example~%
\cite[Thm.~47.1]{Munkres_Topology}, we obtain the following lemma specialised
to sequences of two-valued Lipschitz functions.

\begin{lem}
\label{lem_arzela_ascoli_two_valued}
Let $(u_j \mid j \in \bN)$ be a sequence of two-valued Lipschitz functions
on $D_1$. If there is $C > 0$ so that $\sup_{D_1} \norm{u_j} + \norm{Du_j} \leq C$
for all $j \in \bN$ then there is a subsequence which converges locally uniformly
to a Lipschitz function $u \in \Lip(D_1;\calA_2)$.
\end{lem}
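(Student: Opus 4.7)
The plan is to apply the general Arzel\`{a}--Ascoli theorem, as cited in the statement, to the family $\{u_j\}$ viewed as continuous maps $D_1 \to (\calA_2, \calG)$. The target $(\calA_2, \calG)$ is a complete metric space in which bounded subsets are relatively compact, both properties inherited from $\bR^2$ via the quotient map $\bR^2 \to \calA_2$. With this observation in hand, only two conditions remain to be verified: uniform equicontinuity of $\{u_j\}$, and pointwise precompactness of $\{u_j(x)\}_{j \in \bN}$ for each $x \in D_1$.

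Pointwise precompactness is immediate from the assumption $\norm{u_j(x)} \leq C$: this places $u_j(x)$ in the image under the quotient map of the compact set $[-C,C]^2 \subset \bR^2$, itself compact in $\calA_2$. For equicontinuity I would reduce to the one-dimensional case via the observation made at the end of the previous subsection, which says that a $C^1$ (or Lipschitz) two-valued function of a single variable always admits a globally-defined single-valued selection of the same regularity, even across false branch points. Given $x, y \in D_1$, restricting $u_j$ to the segment $[x,y]$ yields a one-variable Lipschitz two-valued function; the selection principle gives Lipschitz single-valued branches $u_{j,1}, u_{j,2}$ whose classical derivatives are bounded a.e.\ by $\sup_{D_1} \norm{Du_j} \leq C$. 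The single-valued fundamental theorem of calculus then produces
\begin{equation*}
    \calG(u_j(x), u_j(y)) \leq \abs{u_{j,1}(\abs{x-y}) - u_{j,1}(0)} + \abs{u_{j,2}(\abs{x-y}) - u_{j,2}(0)} \leq C \abs{x-y},
\end{equation*}
establishing uniform Lipschitz continuity, hence equicontinuity.

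Arzel\`{a}--Ascoli then delivers a subsequence converging locally uniformly on $D_1$ to some continuous map $u : D_1 \to \calA_2$. Passing to the limit in the uniform Lipschitz inequality shows that $u$ is itself $C$-Lipschitz, so $u \in \Lip(D_1;\calA_2)$ as required.

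The argument is essentially routine once one accepts that $(\calA_2, \calG)$ behaves like a quotient of a Euclidean space by a finite group action. The only place where care is needed is the extraction of a uniform Lipschitz constant from the a.e.\ bound $\norm{Du_j} \leq C$; the one-variable selection shortcut circumvents any subtlety arising from branch points or from the absence of a linear structure on $\calA_2$.
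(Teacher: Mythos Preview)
Your proposal is correct and matches the paper's approach: the paper does not give a detailed proof but simply cites the general metric-space Arzel\`{a}--Ascoli theorem, and you have supplied the verification of its hypotheses. The pointwise precompactness from the uniform bound on $\norm{u_j}$ and the equicontinuity from the uniform bound on $\norm{Du_j}$ via one-dimensional selection are exactly the ingredients needed.
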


\section{Two-valued minimal graphs}

\label{sec_two_valued_minimal_graphs}

\subsection{Definition and basic properties}

Let $\Omega \subset \bR^n$ be an open set, and let $\alpha \in (0,1)$.
We say that $u \in C^{1,\alpha}(\Omega;\calA_2(\bR))$ defines a \emph{two-valued
minimal graph} if its graph
\begin{equation}
\abs{G} = \abs{\graph u } \in \IV_n(\Omega \times \bR)
\end{equation}
is stationary as a varifold in the open cylinder $\Omega \times \bR$.
We will often abbreviate this by saying that $u \in C^{1,\alpha}(\Omega;\calA_2)$
is a two-valued minimal graph.

When an open subdomain $\Omega' \subset \Omega \setminus \calB_u$ is simply
connected then there is a selection $u_1,u_2 \in C^2(\Omega')$ so that
$u = \{ u_1, u_2 \}$ in $\Omega'$.
Inside $\Omega' \times \bR$ the graph can be decomposed like
$\abs{G} = \abs{ \graph u_1 } + \abs{ \graph u_2 }$,
which we frequently abbreviate by writing $G_i = \graph u_i$.

The stationarity of $\abs{G}$ is inherited by $\abs{G_1}$ and $\abs{G_2}$.
This in turn means that $u_1,u_2$ are both smooth, that is $u_1,u_2
\in C^\infty(\Omega')$ and they separately solve the minimal surface equation
\begin{equation}
	\label{eq_class_MSE}
	\div T(Du_i) = 0 \text{ in $\Omega$ for $i = 1,2$,}
\end{equation}
where here and throughout we write, for all $p \in \bR^n$
\begin{equation}
	T_k(p) = \frac{p_k}{\sqrt{1 + \abs{p}^2}}
	\text{ for $k = 1,\dots,n$.}
\end{equation}
The vector $T(Du) \in \bR^n$ is the horizontal part of $- \nu$, the downward-pointing
unit normal to the graph $G$. We habitually write $v = \sqrt{1 + \abs{Du}^2}$, so
that also $T(Du) = Du / v$.

If we interpret the equation \eqref{eq_class_MSE} in a weak sense we can show that
for all test functions $\phi \in C_c^1(\Omega \setminus \calB_u)$
\begin{equation}
	\label{eq_integral_MSE}
	\int_{\Omega} \langle T(Du) , D \phi \rangle = 0, 
\end{equation}
using a partition of unity argument to reduce to the case where $\phi$ is
supported in a simply connected domain $\Omega' \subset \Omega \setminus \calB_u$.
Note that as $u$ is a two-valued function, the integral above is understood to be
$\int_{\Omega} \langle T(Du) , D \phi \rangle =
\int_{\Omega} \langle T(Du_1) , D \phi \rangle + \langle T(Du_2) , D \phi \rangle$,
as explained in Section~\ref{subsec_two_valued_integrals}.

This can be generalised to arbitrary test functions $\phi \in C_c^1(\Omega)$
by reasoning as follows. By assumption, there is a compact subset $K \subset \Omega$
so that $\phi$ vanishes identically outside $K$.
Take a sequence $(\eta_j \mid j \in \bN)$ of functions in
$C_c^1(\Omega)$ with 
\begin{enumerate}[label = (\arabic*), font = \upshape]
	\item $0 \leq \eta_j \leq 1$ in $\Omega$ for all $j$,
	\item $\eta_j \equiv 0$ on $\calB_u \cap K$ for all $j$,
	\item $\eta_j \to 1$ $\calH^n$-a.e. in $\Omega$,
	\item $\int_{\Omega} \abs{D \eta_j} \to 0$ as $j \to \infty$.
\end{enumerate}
Such a sequence exists because $\calH^{n-1}(\calB_u) = 0$,
see Section~\ref{sec_properties_of_branch_set} and~\cite[Ch.~5.6]{Evans15}
for example.
Actually by~\cite{KrumWic_FinePropsMinGraphs} the branch set is countably
$(n-2)$-rectifiable, see again Section~\ref{sec_properties_of_branch_set}.
(There we also explain that analogous sequences can be constructed to cut off
the critical set $\calK_u$, provided only that the set $G$ is not in fact
equal to a single-valued minimal graph, with multiplicity two.)

Following the steps in the proof of the following proposition 
one gets 
$\int_\Omega \langle T(Du), D \phi \rangle = 0$ for all $\phi \in C_c^1(\Omega)$.
We skip over the details of this, and move on to the proof of the more general identity
expressed in the proposition, which allows the test function to depend on $u$.

\begin{prop}
\label{prop_test_function}
Let $u \in C^{1,\alpha}(\Omega;\calA_2) \cap C^0(\clos{\Omega};\calA_2)$ define
a two-valued minimal graph and let 
$\Phi \in \Lip(\Omega \times \bR \times \bR^n)$
have $\spt \Phi \subset \Omega' \times \bR \times \bR^n$ for
some $\Omega' \subset \subset \Omega$.
If 
\begin{equation}
\label{eq_condition_weak_twovalued_MSE}
\int_{\Omega \setminus \calB_u} \abs{D(\Phi(x,u,Du))} < +\infty
\end{equation}
then
\begin{equation}
\label{eq_weak_MSE}
\int_\Omega \langle T(Du),  D(\Phi(x,u,Du)) \rangle = 0.
\end{equation}
\end{prop}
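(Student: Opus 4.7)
The plan is to extend the already-established weak identity \eqref{eq_integral_MSE}, valid for $C_c^1$ test functions, to the test function $x \mapsto \Phi(x, u(x), Du(x))$, which is merely Lipschitz and two-valued. Two obstacles must be handled simultaneously: the branch set $\calB_u$, where $u$ admits no local $C^1$ selection, and the two-valued nature of the composition itself. My approach is to truncate near $\calB_u$ using the cutoff sequence $(\eta_j)$ described just before the statement, and to mollify on the complement $\Omega \setminus \calB_u$, where local $C^\infty$ selections of $u$ exist.

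Concretely, I would introduce $\Psi_j(x) := \eta_j(x)\, \Phi(x, u(x), Du(x))$, a two-valued Lipschitz function compactly supported in $\Omega' \setminus \calB_u$. Cover $\Omega \setminus \calB_u$ by a locally finite family of simply connected open sets $\Omega_\beta$ on each of which there is a $C^\infty$ selection $u = \{u_{1,\beta}, u_{2,\beta}\}$, and let $\{\chi_\beta\}$ be a subordinate partition of unity. On each $\Omega_\beta$ the functions $\chi_\beta \eta_j\, \Phi(\cdot, u_{i,\beta}, Du_{i,\beta})$ for $i = 1, 2$ are compactly supported and Lipschitz, and can be mollified to $C_c^1(\Omega_\beta)$ sequences with gradients converging in $L^1$. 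Applying \eqref{eq_integral_MSE} to each single-valued graph $u_{i,\beta}$ with these mollifications, passing to the limit, and then summing over $i$ and $\beta$ (the sum is locally finite, and $\sum_\beta \chi_\beta \equiv 1$ on the support of $\Psi_j$) yields the intermediate identity
\[
\int_\Omega \langle T(Du), D\Psi_j\rangle = 0.
\]

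I would then expand $D\Psi_j = \eta_j\, D(\Phi(\cdot, u, Du)) + \Phi(\cdot, u, Du)\, D\eta_j$ and pass $j \to \infty$. The integrability hypothesis \eqref{eq_condition_weak_twovalued_MSE} together with $\abs{T(Du)} \leq 1$ permits dominated convergence on the first term, whose limit is the desired integral $\int_\Omega \langle T(Du), D(\Phi(\cdot, u, Du))\rangle$. The second term is bounded by $\norm{\Phi(\cdot, u, Du)}_{L^\infty(\Omega')} \int_\Omega \abs{D\eta_j}$, which tends to zero; here $\Phi(\cdot, u, Du)$ is bounded on $\Omega'$ because both $u$ and $Du$ are bounded on $\clos{\Omega'}$ (since $u \in C^{1,\alpha}(\Omega;\calA_2)$ and $\Omega' \subset\subset \Omega$) and $\Phi$ is Lipschitz on the resulting bounded subset of $\Omega \times \bR \times \bR^n$.

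The principal obstacle is the bookkeeping in the first step: one must verify that the local single-valued identities assemble into a globally well-defined statement, independent of the labelling ambiguity in each $\Omega_\beta$. This reduces to checking that the local sum $\sum_{i = 1,2} \langle T(Du_{i,\beta}), D(\chi_\beta \eta_j\, \Phi(\cdot, u_{i,\beta}, Du_{i,\beta}))\rangle$ is invariant under the swap $u_{1,\beta} \leftrightarrow u_{2,\beta}$, and so matches the two-valued integration convention of Section~\ref{subsec_two_valued_integrals}. Once this is settled, the limit $j \to \infty$ is routine given the integrability hypothesis and the key capacity property $\int_\Omega \abs{D\eta_j} \to 0$; the latter itself rests on $\calH^{n-1}(\calB_u) = 0$, indeed on the countable $(n-2)$-rectifiability of $\calB_u$ given by Krummel--Wickramasekera.
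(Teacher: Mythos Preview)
Your approach is correct and is essentially the same as the paper's: multiply by the cutoff $\eta_j$ (the paper writes $(1-\eta_j)$, an evident sign slip given the listed properties), apply the identity \eqref{eq_integral_MSE} away from $\calB_u$, and pass to the limit using dominated convergence for the $\eta_j D\phi$ term and $\int_\Omega \abs{D\eta_j} \to 0$ for the other. Your mollification and partition-of-unity steps make explicit the Lipschitz-to-$C^1$ passage and the two-valued bookkeeping that the paper treats as routine; also note that for this proposition only $\calH^{n-1}(\calB_u)=0$ (hence $1$-capacity zero) is needed, not the full $(n-2)$-rectifiability.
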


\begin{proof}
Define a function $\phi \in C^1_c(\Omega)$ by setting
$\phi(x) = \Phi(x,u,Du)$ for all $x \in \Omega$.
This is smooth away from $\calB_u$ and has support contained inside $\Omega'$.
Let $(\eta_j \mid j \in \bN)$ be a sequence with the same properties
as above.
As $(1 - \eta_j) \phi$ vanishes near $\calB_u$ it is a valid test function
in the integral identity \eqref{eq_integral_MSE}, yielding 
$\int_{\Omega} \langle T(Du), D \phi \rangle (1 - \eta_j)
- \int_\Omega \langle T(Du), D \eta_j \rangle \phi = 0$.

That the second integral goes to zero is a direct application of
H\"{o}lder's inequality as above.
For the first integral, we can bound the integrand like
$\abs{\langle T(Du) , D \phi \rangle (1 - \eta_j)}
\leq \abs{ D \phi}$ almost everywhere in $\Omega$---on $\Omega
\setminus \calB_u$ to be precise.
The bounding function is integrable by assumption,
so that again we can use dominated convergence to let $j \to \infty$
and deduce that $\int_\Omega \langle T(Du) , D( \Phi(x,u,Du)) \rangle = 0$.
\end{proof}

\subsection{Orientation and the current structure}

Let $\Omega \subset \bR^n$ be an open set, $\alpha \in (0,1)$ and 
let $u \in C^{1,\alpha}(\Omega;\calA_2)$ define a two-valued minimal
graph in $\Omega \times \bR$.
At all regular points $X = (x,X^{n+1}) \in \reg G \cap \Omega \times \bR$ we
write $\nu(X)$ for the upward-pointing unit normal. 
In terms of a smooth selection $u = \{ u_{1},u_2 \}$ for $u$ on a small disc
$D_\rho(x)$ we have $\nu(x,u_i(x)) = (1 + \abs{Du_i(x)}^2)^{-1/2}  (-Du_i(x),1)$.
This is also defined at the branch points, and hence $\nu$ defines a continuous
vector field on $(\reg G \cup \calB(G)) \cap \Omega \times \bR$, which is
moreover is smooth on $\reg G$.
However $\nu$ cannot be continuously extended to the set of classical singularities
$\calC(G) \cap \Omega \times \bR$.
As this set has $\calH^n(\calC(G) \cap \Omega \times \bR) = 0$ we can however
still define an integer multiplicity rectifiable current
$\cur{G}$ by integrating over $\reg G \cap \Omega \times \bR$.
Moreover $\bdary \cur{G} = 0$ in $\Omega \times \bR$ and
$\cur{G} \in \I_n(\Omega \times \bR)$.

\subsection{Properties of the branch set}
\label{sec_properties_of_branch_set}

Let $\alpha \in (0,1)$, and let $u \in C^{1,\alpha}(\Omega;\calA_2)$ be an arbitrary
two-valued minimal graph.
Using an approach based on a so-called frequency function,
Simon--Wickramasekera~\cite{SimonWickramasekera16} proved the following.

\begin{thm}[\cite{SimonWickramasekera16}]
Let $\alpha \in (0,1)$ and $u \in C^{1,\alpha}(\Omega;\calA_2)$ be
a two-valued minimal graph.
Then the branch set of $u$ is either empty or $\dim_{\calH} \calB_u = n-2$ and
$\calH^{n-2}(\calB_u) \neq 0$.
\end{thm}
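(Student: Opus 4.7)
The plan is to follow Almgren's frequency function approach, as adapted to the two-valued minimal graph setting by Simon--Wickramasekera. Fix $x_0 \in \calB_u$; after a rigid motion we may assume $u(x_0) = \{0\}$ and $Du(x_0) = \{0\}$. The symmetric part $u_s = \{\pm\tfrac{1}{2}(u_1 - u_2)\}$ then vanishes at $x_0$, and the rate at which it does so is measured by the frequency $N_{u_s, x_0}(r) = r D(r)/H(r)$, where $D(r) = \int_{B_r(x_0)} |Du_s|^2$ and $H(r) = \int_{\partial B_r(x_0)} |u_s|^2$. The whole argument hinges on showing that $r \mapsto N_{u_s, x_0}(r)$ is monotone non-decreasing.

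To establish this monotonicity, I would exploit that away from the critical set $\calK_u$ the two sheets $u_1, u_2$ are smooth solutions of the minimal surface equation, so $u_s$ satisfies a linear elliptic equation modulo error terms quadratic in $Du$. These errors are controllable under the Simon--Wickramasekera $C^{1,1/2}$ regularity. A standard first-variation computation then gives the monotone identity, provided the integrations by parts can be justified near the branch set. For this I would use the cut-off sequence of Proposition~\ref{prop_test_function}, together with a preliminary Federer dimension reduction yielding $\calH^{n-1}(\calB_u) = 0$. This step---absorbing the nonlinear error terms and discarding the boundary contributions arising from the branch locus---is the main technical obstacle.

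Granted monotonicity, the limit $N(0^+) := \lim_{r\downarrow 0} N_{u_s,x_0}(r)$ exists, is upper semi-continuous in $x_0$, and characterises branch points via $N(0^+) \geq 3/2$, so $\calB_u$ is relatively closed. Blowing up $u_s$ about $x_0$ by its $L^2$-norm on $\partial B_r(x_0)$ produces a non-trivial homogeneous two-valued harmonic function $\psi$ on $\bR^n$ of degree $\geq 3/2$, branched at the origin. For the upper bound $\dim_\calH \calB_u \leq n-2$, I would apply Federer dimension reduction: if $\dim_\calH \calB_u > n-2$, a standard density argument produces a tangent $\psi$ whose spine (the maximal subspace of translation invariance) has dimension $\geq n-1$. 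Such $\psi$ would reduce to a branched two-valued harmonic function of one real variable, but these are necessarily linear, contradicting that the origin is a branch point.

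For the lower bound, assume $\calB_u \neq \emptyset$ and pick $x_0 \in \calB_u$. The spine $V$ of any tangent function $\psi$ then has dimension exactly $n-2$: the same one-dimensional obstruction rules out $\dim V \geq n-1$, while $\dim V \leq n-3$ is incompatible with $\psi \not\equiv 0$ being a homogeneous branched harmonic function (indeed two-valued homogeneous harmonic functions branched only at the origin exist on $\bR^2$ but not below). Since $\psi$ is translation-invariant along $V$, upper semi-continuity of the frequency forces every point of $(x_0 + V)$ near $x_0$ to satisfy $N(0^+) \geq 3/2$ and hence to lie in $\calB_u$. Quantifying this via comparison of $u_s$ with $\psi$ at small scales yields $\calH^{n-2}(\calB_u \cap B_\rho(x_0)) > 0$ for $\rho$ small enough, completing the proof.
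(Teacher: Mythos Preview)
The paper does not prove this theorem; it is quoted from \cite{SimonWickramasekera16} with only the remark that the argument uses a frequency function. Your sketch correctly follows that strategy for the upper bound (monotonicity of $N_{u_s,x_0}$, blow-up to a nonzero homogeneous two-valued harmonic $\psi$, Federer reduction). The lower-bound argument, however, has a genuine gap. You assert that the spine $V$ of any tangent function $\psi$ has dimension exactly $n-2$, dismissing $\dim V \leq n-3$ on the grounds that ``two-valued homogeneous harmonic functions branched only at the origin exist on $\bR^2$ but not below''. This is false: on $\bR^3$ the function $\psi(x) = \big\{\pm \Re\big((x_1+\ct{i} x_2)^{3/2}\big)\, x_3\big\}$ is homogeneous of degree $5/2$, two-valued harmonic, branched at the origin, and has spine $\{0\}$ since it depends nontrivially on all three variables. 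What \emph{is} $(n-2)$-dimensional here is the branch set $\calB_\psi$ (the $x_3$-axis in the example), not the spine, and the route to $\calH^{n-2}(\calB_u)>0$ must go through $\calB_\psi$ together with a persistence-of-branch-points argument; that persistence is an $\eps$-regularity type statement and does not follow from upper semicontinuity of the frequency along the affine subspace $x_0+V$ as you write.

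A smaller point: you invoke Proposition~\ref{prop_test_function} to handle the integration by parts across $\calB_u$, but in the paper's logic that proposition already relies on $\calH^{n-1}(\calB_u)=0$, which is derived from the very theorem you are proving; the monotonicity has to be obtained without presupposing this bound.
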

In particular $\calH^s(\calB_u) = 0$ for all $s > n-2$. We use the fact
that $\calH^{n-1}(\calB_u) = 0$ to derive our area estimates.

\begin{cor}[\cite{SimonWickramasekera16}]
Let $\alpha \in (0,1)$ and $u \in C^{1,\alpha}(\Omega;\calA_2)$ be 
a two-valued minimal graph. If $G$ is not equal to a single-valued
minimal graph with multiplicity two
then $\dim_{\calH} \calK_u = n-2$ and $\calH^{n-2}(\calK_u) \neq 0$.
\end{cor}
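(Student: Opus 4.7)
The plan is to combine the preceding theorem on the branch set with classical nodal set theory for second-order linear elliptic equations applied to local smooth selections of $u$. Since $\calB_u \subset \calK_u$ by definition, the preceding theorem immediately yields $\dim_\calH \calK_u \geq n-2$ and $\calH^{n-2}(\calK_u) \geq \calH^{n-2}(\calB_u) > 0$ once one verifies $\calB_u \neq \emptyset$ under the hypothesis.

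For the matching upper bound $\dim_\calH \calK_u \leq n-2$, I would split $\calK_u = \calB_u \cup (\calK_u \setminus \calB_u)$. The first set has Hausdorff dimension exactly $n-2$ by the preceding theorem. For the second, around any $x_0 \in \calK_u \setminus \calB_u$ the function $u$ admits, by definition of $\calB_u$, a $C^1$ selection $\{u_1,u_2\}$ on a small disc $D_\rho(x_0)$, and elliptic regularity applied to~\eqref{eq_class_MSE} renders the branches smooth. The difference $w = u_1 - u_2$ then solves a homogeneous second-order linear elliptic equation with smooth coefficients, obtained by subtracting the two copies of~\eqref{eq_class_MSE} and writing
\begin{equation*}
T(Du_1) - T(Du_2) = \Bigl(\int_0^1 DT\bigl(sDu_1 + (1-s)Du_2\bigr)\,ds\Bigr) Dw.
\end{equation*}
The points of $\calK_u$ in $D_\rho(x_0)$ are exactly the points of the critical zero set $\{w = 0,\ Dw = 0\}$. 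Provided $w \not\equiv 0$, classical nodal set estimates (Hardt--Simon, Han--Lin) give $\dim_\calH \{w = 0,\ Dw = 0\} \leq n-2$. Covering $\calK_u \setminus \calB_u$ by countably many such discs then yields $\dim_\calH(\calK_u \setminus \calB_u) \leq n-2$, and combined with the bound on $\calB_u$ this gives the upper bound.

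The hypothesis that $G$ is not equal to a single-valued minimal graph with multiplicity two enters at two places, and this is where I expect the main care to be needed. First, it is used to rule out $w \equiv 0$ in the argument above: if this failed on some disc, then by unique continuation for the linear elliptic equation satisfied by $w$ and standard analytic continuation across $\calB_u$ (which has $\calH^{n-1}$-measure zero by the preceding theorem), one would have $u_1 \equiv u_2$ on the whole connected component, forcing $G$ to be a single-valued minimal graph with multiplicity two. Second, the same hypothesis must be used to conclude $\calB_u \neq \emptyset$ for the lower bound; the obstacle here is the degenerate situation in which the two sheets never touch at all, so that $G$ splits as a disjoint sum of two distinct single-valued minimal graphs. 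Excluding this case and propagating a single tangential touching point into a full $(n-2)$-rectifiable branch stratum is the technical heart of the argument and is handled in~\cite{SimonWickramasekera16} via the frequency function machinery alluded to in the preceding theorem.
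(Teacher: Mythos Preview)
The paper does not prove this corollary; it is quoted from \cite{SimonWickramasekera16} without argument, so there is no proof to compare against. Your route---splitting $\calK_u = \calB_u \cup (\calK_u \setminus \calB_u)$, invoking the preceding theorem for $\calB_u$, and handling false branch points via Hardt--Simon nodal-set estimates for $w=u_1-u_2$ in a local smooth selection---is the natural one and delivers the upper bound $\dim_\calH \calK_u \leq n-2$ correctly; the unique-continuation step ruling out $w\equiv 0$ on a subdomain is also fine.

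The lower bound, however, has a gap that cannot be closed under the stated hypothesis. The assumption that $G$ is not a multiplicity-two single-valued graph does \emph{not} force $\calB_u\neq\emptyset$: on a small ball in $\bR^3$ take $u_2$ constant and $u_1$ a single-valued minimal graph with a non-degenerate critical point $x_0$ at that level, with $D^2 u_1(x_0)$ having eigenvalues proportional to $1,1,-2$ (trace-free, as the equation forces at a critical point). Then a global smooth selection exists, so $\calB_u=\emptyset$, while $\calK_u=\{x_0\}$ and hence $\calH^{n-2}(\calK_u)=\calH^1(\{x_0\})=0$. Your appeal to the frequency machinery of \cite{SimonWickramasekera16} cannot rescue this, since that machinery manufactures $\calB_u$, which is empty here. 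The corollary really needs the stronger hypothesis $\calB_u\neq\emptyset$ (equivalently, that $u$ admits no global $C^1$ selection on any component of $\Omega$); under that, your argument is complete. This imprecision is harmless for the paper, which only ever uses the upper bound on $\dim_\calH\calK_u$ to build capacity cut-offs.
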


Starting also from a frequency function, this was taken further
by Krummel--Wickramasekera~\cite{KrumWic_FinePropsMinGraphs},
who proved the following.

\begin{thm}[\cite{KrumWic_FinePropsMinGraphs}]
	\label{thm_krumwic_finepropsmingraphs}
Let $\alpha \in (0,1)$ and $u \in C^{1,\alpha}(\Omega;\calA_2)$ be
a two-valued minimal graph defined on $\Omega \subset \bR^n$.
Then $\calB_u$ is either empty or is countably $n-2$-rectifiable.
Moreover if $G$ is not equal to a single-valued minimal graph with multiplicity two
then $\calK_u$ is countably $n-2$-rectifiable. 
\end{thm}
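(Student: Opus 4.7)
The plan is to reduce the rectifiability statement to a quantitative uniqueness result for tangent maps, then apply a Naber--Valtorta style rectifiable Reifenberg criterion. Throughout I would work with the symmetric difference $u_s$ (or, more precisely, with an adapted two-valued harmonic approximation of it), since near every branch point $x_0 \in \calB_u$ one has $u(x_0) = \{a,a\}$ and $Du(x_0) = \{L,L\}$, so after subtracting the average and the linear part one obtains a two-valued function that vanishes to positive order at $x_0$.

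First I would import the frequency function machinery of Simon--Wickramasekera: at every $x_0 \in \calB_u$, define
\begin{equation}
N_{x_0}(r) = \frac{r \int_{D_r(x_0)} \abs{D u_s}^2}{\int_{\partial D_r(x_0)} \abs{u_s}^2},
\end{equation}
which is monotone non-decreasing in $r$ and bounded below by $3/2$. Monotonicity plus the standard compactness gives that rescaled symmetric differences $\tilde u_s^{(r)}(y) = u_s(x_0 + r y) / \bigl(r^{-n}\int_{\partial D_r} u_s^2\bigr)^{1/2}$ subconverge as $r \downarrow 0$ to a \emph{tangent map} $\phi$ which is homogeneous of degree $N_{x_0}(0+)$ and two-valued harmonic. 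I would then stratify $\calB_u$ by the spine dimension of $\phi$: the key point is that $\phi$ is translation-invariant along a linear subspace of dimension at most $n-2$, so the Almgren--Federer stratum of dimension $n-2$ captures all of $\calB_u$.

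The hard part---and this is the heart of the Krummel--Wickramasekera argument---is to upgrade the qualitative existence of tangent maps to \emph{uniqueness with a polynomial decay rate}. Concretely, one wants to show there is $\beta > 0$ such that
\begin{equation}
\calG\bigl( \tilde u_s^{(r)}, \phi_{x_0} \bigr)_{L^2(D_1)} \leq C r^\beta,
\end{equation}
where $\phi_{x_0}$ is the tangent map and the constants are uniform on compact subsets of $\calB_u$. This is typically obtained via a delicate Łojasiewicz-type inequality, projecting the error onto the eigenmodes of the spherical Jacobi operator around $\phi_{x_0}$ and exploiting the fact that the critical eigenvalue (corresponding to infinitesimal rotations of the branching axis) is the \emph{only} non-integrable direction; all other modes decay at a definite rate dictated by a spectral gap. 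I anticipate this eigenmode-gap analysis, combined with the dimension-reduction forbidding singular tangent maps to concentrate in codimension less than two, to be the principal technical obstacle.

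With quantitative uniqueness in hand, one gets for every $\eta > 0$ a scale $r_0$ such that on $B_{r_0}(x_0) \cap \calB_u$ the tangent maps vary by at most $\eta$. This is exactly the hypothesis of the Naber--Valtorta rectifiable Reifenberg theorem for the $(n-2)$-stratum, yielding that $\calB_u$ is covered by countably many Lipschitz images of subsets of $\bR^{n-2}$ and has locally finite $(n-2)$-Hausdorff measure, hence is countably $(n-2)$-rectifiable. For the second statement on $\calK_u$, I would rule out that the frequency of $u_s$ collapses to $1$ on a large set (which would correspond to $u_s$ having a nontrivial linear leading part, forcing $\calC_u$ to locally dominate and $G$ to avoid being a multiplicity-two single-valued graph only trivially). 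Under the hypothesis that $G$ is not of this form, one shows $\calK_u \setminus \calB_u$ is contained in an analogous frequency-stratified set, and the same Naber--Valtorta scheme applies.
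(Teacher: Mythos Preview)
The paper does not give a proof of this statement: Theorem~\ref{thm_krumwic_finepropsmingraphs} is quoted as a result from Krummel--Wickramasekera~\cite{KrumWic_FinePropsMinGraphs} and is used as a black box (chiefly to justify that $\calB_u$ and $\calK_u$ have zero $2$-capacity, feeding into the cutoff constructions of Lemma~\ref{lem_EG_cutoff_sequence_two} and Corollary~\ref{cor_cutoff_sequence_cpcty_2_zero}). There is therefore nothing in the paper to compare your argument against.

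As to the plausibility of your sketch on its own terms: the overall architecture --- frequency monotonicity for the symmetric part, homogeneous two-valued harmonic blowups with spine of dimension at most $n-2$, and then a quantitative uniqueness step --- is broadly in line with how \cite{KrumWic_FinePropsMinGraphs} proceeds. Two comments. First, the actual Krummel--Wickramasekera argument does not invoke the Naber--Valtorta rectifiable Reifenberg theorem; they obtain rectifiability directly from their own decay/uniqueness estimates for the blowups, together with a covering argument adapted to the frequency stratification. Your route via Naber--Valtorta is a reasonable alternative packaging, but you would still need exactly the same hard input (uniform quantitative control on the approach to the tangent map), so it does not shortcut anything. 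Second, your last paragraph on $\calK_u \setminus \calB_u$ is vague: at false branch points the symmetric part $u_s$ vanishes identically on a neighbourhood (there is a smooth selection), so the frequency is not even defined there in the usual sense, and one has to argue differently --- essentially by showing that under the non-degeneracy hypothesis the false-branch locus is itself contained in the zero set of a nontrivial smooth function and hence has the required structure. As written, that step is a genuine gap.
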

This represents a significant improvement over~\cite{SimonWickramasekera16}
because it allows the excision of the branch set via capacity arguments
which we now detail.

Following the presentation given by Evans--Gariepy in~\cite[Ch.\ 4.7]{Evans15} we 
let $K^p$ be the space of functions $f: \bR^n \to \bR_{\geq 0}$ with
$f \in L^{p^*}$ and $\abs{Df} \in L^p$, where $p^* = np / (n-p)$ is the
Sobolev conjugate of $p$.
The \emph{$p$-capacity} of a set $A \subset \bR^n$ is then defined to be
$\cpcty_p A = \inf \{ \int \abs{Df}^p \mid f \in K^p, A \subset \inter \{ f \geq 1 \} \}$.
When $A$ is compact then equivalently
$\cpcty_p A = \inf \{ \int \abs{Df}^p \mid f \in C_c^\infty(\bR^n), f \geq \indic_A \}$.
We do not list the basic properties of capacity listed in~\cite{Evans15},
and only record here the following two results. For $p \in (1,n)$, if a set $A \subset \bR^n$
has $\calH^{n-p}(A) < \infty$ then $\cpcty_p A  = 0$, and if $\cpcty_p A = 0$ then
$\dim_{\calH} A \leq n-p$.
From this it follows the countably $(n-p)$-rectifiable sets have $p$-capacity zero.
(When $p = 1$ then more is true, as a subset $A \subset \bR^n$ has 
$\cpcty_1 A = 0$ if and only if $\calH^{n-1}(A) = 0$; see~\cite[Ch.~5.6]{Evans15}.)

Now let $p \in [1,n)$ and $A \subset \bR^n$ be a compact set with $\cpcty_p A = 0$.
With only little effort one establishes the existence of a sequence of cutoff
function $(\eta_j \mid j \in \bN)$ with the following properties for all $j$:
\begin{enumerate}[label = (\roman*)]
	\item \label{item_first_prop_compact_capacity}
		$\eta_j \in C_c^1(\bR^n)$,
	\item $0 \leq \eta_j \leq 1$,
	\item \label{item_third_prop_compact_capacity} 
		$\eta_j \equiv 1$ on $(A)_{r_j}$ for some $r_j \to 0$,
	\item \label{item_fourth_prop_compact_capacity}
		$\eta_j \to 0$ $\calH^n$-a.e.,
	\item $\int_{\bR^n} \abs{D \eta_j}^p \to 0$.
\end{enumerate}
Moreover if $U$ is an open set containing $A$ then we can additionally
impose that $\spt \eta_j \subset U$. For the branch set of two-valued
minimal graphs this yields the following.

\begin{lem}
	\label{lem_EG_cutoff_sequence_two}
Let $\alpha \in (0,1)$, and $u \in C^{1,\alpha}(D_2;\calA_2)$ be a
two-valued minimal graph. Then there is a sequence of functions
$(\eta_j \mid j \in \bN)$ with for all $j$,
\begin{enumerate}[label = (\roman*), font = \upshape]
	\item $\eta_j \in C_c^1(D_2)$,
	\item $0 \leq \eta_j \leq 1$ on $D_2$,
	\item $\eta_j \equiv 1$ on $(\calB_u)_{r_j} \cap D_1$ for some $r_j \to 0$,
	\item $\eta_j \to 0$ $\calH^n$-a.e.\ as $j \to \infty$,
	\item $ \int_{D_2} \abs{D \eta_j}^2 \to 0$ as $j \to \infty$.
\end{enumerate}
\end{lem}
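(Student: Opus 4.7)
The plan is to apply the capacity-based cutoff construction recalled in the paragraph immediately preceding the lemma to an appropriately chosen compact subset of $D_2$ that contains all branch points near $\clos{D_1}$. The essential ingredient is the countable $(n-2)$-rectifiability of $\calB_u$ provided by Theorem~\ref{thm_krumwic_finepropsmingraphs}, which via the capacity estimates recalled above forces vanishing of the $2$-capacity of the relevant piece of the branch set.

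First I would record that $\calB_u$ is relatively closed in $D_2$: any $y \in D_2 \setminus \calB_u$ admits a $C^1$ selection on some disc $D_\sigma(y) \subset D_2$, and then every point of $D_{\sigma/2}(y)$ also fails to be a branch point. Hence the set $A := \calB_u \cap \clos{D_{3/2}}$ is a compact subset of the open set $D_2$. By Theorem~\ref{thm_krumwic_finepropsmingraphs}, $A$ is countably $(n-2)$-rectifiable, and so by the capacity fact recalled before the lemma we obtain $\cpcty_2(A) = 0$. In the borderline case $n = 2$ that fact is stated only for $p < n$ and so does not apply directly with $p = 2$; however, $A$ is then merely countable, and the vanishing of the $2$-capacity of a single point in $\bR^2$ (via the classical logarithmic cutoff) together with countable subadditivity of $\cpcty_2$ still yields $\cpcty_2(A) = 0$.

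Second, I would invoke the general cutoff construction described in the paragraph before the lemma with $p = 2$, the set $A$, and ambient open set $U = D_2$. This immediately produces a sequence $(\eta_j) \subset C_c^1(D_2)$ with $0 \leq \eta_j \leq 1$, $\eta_j \equiv 1$ on $(A)_{r_j}$ for some $r_j \to 0$, $\eta_j \to 0$ $\calH^n$-a.e., and $\int_{D_2} \abs{D \eta_j}^2 \to 0$. Properties (i), (ii), (iv), (v) of the lemma are thus met verbatim. For property (iii), I would verify the inclusion $(\calB_u)_{r_j} \cap D_1 \subset (A)_{r_j}$ for $r_j \leq 1/2$: any $x \in D_1$ within distance $r_j$ of some $y \in \calB_u$ satisfies $\abs{y} < 1 + r_j \leq 3/2$, forcing $y \in A$ and hence $x \in (A)_{r_j}$, so $\eta_j(x) = 1$.

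There is no serious obstacle here, since the heavy lifting has already been done by Theorem~\ref{thm_krumwic_finepropsmingraphs} and by the capacity formalism recalled in the preceding paragraphs. The only points requiring minor care are the borderline case $n = 2$ (handled by the logarithmic cutoff) and the elementary inclusion of neighborhoods used to pass from $(A)_{r_j}$ to $(\calB_u)_{r_j} \cap D_1$.
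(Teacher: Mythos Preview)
Your proposal is correct and follows essentially the same approach as the paper, which does not give an explicit proof but presents the lemma as an immediate consequence of the capacity framework and Theorem~\ref{thm_krumwic_finepropsmingraphs} developed in the preceding paragraphs. You have simply filled in the details the paper leaves implicit: the compactness of $A = \calB_u \cap \clos{D_{3/2}}$, the neighborhood inclusion needed for property~(iii), and the borderline case $n = 2$ (where the general capacity estimate as stated requires $p < n$ and so does not directly apply, but the branch set is countable and the logarithmic cutoff handles it).
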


In the same way as for the results cited above, one obtains a version of this
valid for the set $\calK_u$ provided the set $G$ is not equal to a single-valued
minimal graph, with multiplicity two.
In various places it is useful to modify the sequence from 
Lemma~\ref{lem_EG_cutoff_sequence_two} and construct it on the graph itself.

\begin{cor}
	\label{cor_cutoff_sequence_cpcty_2_zero}
Let $\alpha \in (0,1)$, and $u \in C^{1,\alpha}(D_2;\calA_2)$ be a
two-valued minimal graph. If $\calB(G) \neq \emptyset$
then there is a sequence of functions
$(\eta_j \mid j \in \bN)$ with for all~$j$,
\begin{enumerate}[label = (\roman*), font = \upshape]
	\item $\eta_j \in C_c^1(D_2 \times \bR)$,
	\item $0 \leq \eta_j \leq 1$ on $D_2 \times \bR$,
	\item $\eta_j \equiv 1$ on $(\calB(G))_{r_j} \cap D_1 \times \bR$ for some $r_j \to 0$,
	\item $\eta_j \to 0$ $\calH^n$-a.e. on $\reg G \cap D_2 \times \bR$ as $j \to \infty$,
	\item $\int_{\reg G \cap D_2 \times \bR} \abs{\nabla_G \eta_j}^2 \intdiff \calH^n
		\to 0$ as $j \to \infty$.
\end{enumerate}
\end{cor}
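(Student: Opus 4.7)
The plan is to lift the horizontal cutoff sequence from Lemma~\ref{lem_EG_cutoff_sequence_two} into the ambient space $D_2 \times \bR$ by multiplying it with fixed spatial and vertical bumps. Since $u \in C^{1,\alpha}(D_2;\calA_2)$ is continuous, pick $M > 0$ with $\norm{u} \leq M$ on $\clos{D_{3/2}}$, a horizontal cutoff $\phi \in C_c^1(D_{3/2})$ with $\phi \equiv 1$ on $\clos{D_{11/10}}$, and a vertical bump $\psi \in C_c^1(\bR)$ with $0 \leq \psi \leq 1$, $\psi \equiv 1$ on $[-M-1, M+1]$ and $\spt \psi \subset [-M-2, M+2]$. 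Denoting by $(\eta_j)$ the sequence supplied by Lemma~\ref{lem_EG_cutoff_sequence_two}, set
\begin{equation*}
\tilde{\eta}_j(x, X^{n+1}) := \phi(x) \, \eta_j(x) \, \psi(X^{n+1}).
\end{equation*}

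Properties (i) and (ii) are then immediate. For (iv), the area formula applied to local sheets of $\reg G$ shows that any Lebesgue-null subset of $D_2$ pulls back to an $\calH^n$-null subset of $\reg G \cap D_2 \times \bR$, so $\eta_j \to 0$ Lebesgue a.e.\ forces $\tilde{\eta}_j \to 0$ $\calH^n$-a.e.\ on $\reg G$. For (iii), let $r_j \to 0$ be as in Lemma~\ref{lem_EG_cutoff_sequence_two} and restrict to indices with $r_j \leq 1/10$. Since branch points of the graph project onto $\calB_u$ and $P_0$ is $1$-Lipschitz, any $(x, X^{n+1}) \in (\calB(G))_{r_j} \cap D_1 \times \bR$ has $x \in (\calB_u)_{r_j} \cap D_1$, so $\eta_j(x) = \phi(x) = 1$. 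Moreover a nearest branch point $(y, Y^{n+1}) \in \calB(G)$ satisfies $y \in D_{3/2}$ and $Y^{n+1} = u_1(y) = u_2(y)$, giving $\abs{X^{n+1}} \leq M + 1$ and hence $\psi(X^{n+1}) = 1$. Consequently $\tilde{\eta}_j \equiv 1$ on $(\calB(G))_{r_j} \cap D_1 \times \bR$.

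The substantive step is the energy bound (v). Expanding by the product rule and dominating the tangential gradient by the Euclidean one,
\begin{equation*}
\abs{\nabla_G \tilde{\eta}_j}^2 \leq 2 \psi(X^{n+1})^2 \abs{\nabla(\phi \eta_j)(x)}^2 + 2 \phi(x)^2 \eta_j(x)^2 \abs{\psi'(X^{n+1})}^2.
\end{equation*}
Both terms vanish outside $\clos{D_{3/2}} \times \spt \psi$; on this compact set $\sqrt{1 + \abs{Du_i}^2} \leq C = C(u)$ by $C^{1,\alpha}$ regularity, so the area formula applied sheetwise on simply connected subsets of $D_{3/2} \setminus \calB_u$ (a co-null set, since $\calB_u$ is $(n-2)$-rectifiable by Theorem~\ref{thm_krumwic_finepropsmingraphs}) yields
\begin{equation*}
\int_{\reg G \cap D_2 \times \bR} \abs{\nabla_G \tilde{\eta}_j}^2 \intdiff \calH^n \leq C' \int_{D_{3/2}} \bigl( \abs{\nabla(\phi \eta_j)}^2 + \eta_j^2 \bigr) \intdiff x.
\end{equation*}
The first summand tends to zero by Lemma~\ref{lem_EG_cutoff_sequence_two}(v) combined with the product rule, and the second by dominated convergence, using that $\eta_j \to 0$ a.e.\ with $0 \leq \eta_j \leq 1$ on the finite-measure set $D_{3/2}$.

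The only genuine technical subtlety---and the reason for introducing the fixed horizontal cutoff $\phi$ beyond the $\eta_j$ already supplied by Lemma~\ref{lem_EG_cutoff_sequence_two}---is that $\abs{Du}$ is only locally bounded on $D_2$, so the graph area element $\sqrt{1 + \abs{Du_i}^2}$ cannot be controlled uniformly on the supports of the $\eta_j$ alone, which could in principle approach $\bdary D_2$. Confining the construction to the compact subdomain $\clos{D_{3/2}}$ bypasses this at no cost to property (iii), which is required only on $D_1 \times \bR$.
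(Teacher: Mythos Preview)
Your proof is correct and follows essentially the same approach as the paper: lift the horizontal cutoff from Lemma~\ref{lem_EG_cutoff_sequence_two} to $D_2 \times \bR$ by multiplying with a vertical bump, using the local boundedness of $\abs{Du}$ on $\clos{D_{3/2}}$ to control the area element in the verification of (v). The only differences are cosmetic: the paper imposes $\spt \eta_j^0 \subset D_{3/2}$ directly (as permitted by the remark preceding Lemma~\ref{lem_EG_cutoff_sequence_two}) rather than introducing your auxiliary $\phi$, and it appends a further cutoff in the distance to $G$---a modification not needed for properties (i)--(v) as stated, and which your argument correctly omits.
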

\begin{proof}
Let $(\eta^{0}_j \mid j \in \bN)$ be a sequence with the properties
listed in Lemma~\ref{lem_EG_cutoff_sequence_two}, but adapted to the set $\calK_u$
instead, and we additionally impose
that $\spt \eta^0_j \subset  D_{3/2}$ for all $j$.
Inside this disc there is $A > 0$ so that
$-A < \min_{D_{3/2}} u_- \leq \max_{D_{3/2}} u_+ < A$,
where $u_- = \min \{ u_1,u_2 \}$ and $u_+ = \{ u_1,u_2\}$.
Let $\tau \in C_c^1(\bR)$
be a classical cutoff function with $\tau \equiv 1$ on $[-1,1]$,
$\spt \tau \subset [-2,2]$ and $\abs{\tau'} \leq 2$.
For all $j \in \bN$, extend $\eta_j^0$ to $D_2 \times \bR$ by setting
$\eta_j(x,X^{n+1}) = \eta_j^0(x) \tau(X^{n+1}/A)$ at all $X = (x,X^{n+1})
\in D_2 \times \bR$.

To obtain the last two properties, let $\delta > 0$ be given, and $d_G$
be the unsigned distance function to $G \cap D_2 \times \bR$. For any $j$
we may replace $\eta_j$ with $\bar{\eta}_j$, defined by
$\bar{\eta}_{j,\delta}(X) =  \eta_j(X)\tau(d_G(X)/\delta)$ at all $X \in D_2 \times \bR$.
This additionally has $\spt \bar{\eta}_{j,\delta}
\subset (G)_{2 \delta} \cap D_2 \times \bR$.
This function inherits most properties from $\eta_j$, but it is only Lipschitz
regular because of $d_G$. This however can be easily remedied by a standard mollification
argument, taking care to choose the mollification parameter small enough in terms
of $r_j,\delta > 0$.
Finally, we may pick any sequence $s_j \to 0$ and apply the construction
above with $\delta = s_j / 2$, letting $\bar{\eta_j} = \bar{\eta}_{j,s_j/2}$
to conclude.
\end{proof}

Let us conclude by pointing out the following consequence
of~\cite{KrumWic_FinePropsMinGraphs}, obtained by combining it with
the results of~\cite{Wic_MultTwoAllard}, see Theorem~\ref{thm_wic_mult_two_allard}.
\begin{cor}
Let $V \in \IV_n(B_1)$ be a stationary varifold with stable regular part.
Then $\calB(V) \cap \{ \Theta(\norm{V},\cdot) \leq 2 \}$ is countably
$n-2$--rectifiable.
\end{cor}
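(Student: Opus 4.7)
The plan is to reduce the statement locally to the case of a two-valued minimal graph, where the conclusion is already furnished by Theorem~\ref{thm_krumwic_finepropsmingraphs}, and then patch the local conclusions together.

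First I would show that every $X \in \calB(V) \cap \{ \Theta(\norm{V},\cdot) \leq 2 \}$ actually has $\Theta(\norm{V},X) = 2$ and admits a tangent cone of the form $2 \abs{\Pi}$ for some $\Pi \in \Gr(n,n+1)$. Indeed, by the definition of branch points recalled in the introduction, at least one tangent cone to $V$ at $X$ is of the form $Q \abs{\Pi}$ with integer $Q \geq 2$; the density identity $\Theta(\norm{V},X) = Q$ together with the hypothesis $Q \leq 2$ forces $Q = 2$. (A density equal to $1$ would place $X$ in $\reg V$ by Allard's regularity theorem and so contradict $X \in \calB(V)$.)

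Second, I would apply the branched sheeting theorem of Wickramasekera (Theorem~\ref{thm_wic_mult_two_allard}) at each such $X$. Since $V$ is a stationary integral varifold with stable regular part and possesses $2 \abs{\Pi}$ as a tangent cone at $X$, the theorem produces a radius $\rho_X > 0$, a choice of orthogonal frame in which $\Pi = \bR^n \times \{0\}$, and a two-valued function $u_X \in C^{1,\alpha}(D_{\rho_X};\calA_2)$ with $\alpha \in (0,1)$, such that $V$ coincides with $\abs{\graph u_X}$ on the cylinder $D_{\rho_X} \times \bR$ around $X$. Stationarity of $V$ is inherited by $\abs{\graph u_X}$, so $u_X$ defines a two-valued minimal graph in the sense of Section~\ref{sec_two_valued_minimal_graphs}.

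Third, I would invoke Theorem~\ref{thm_krumwic_finepropsmingraphs} for $u_X$ to obtain that $\calB_{u_X}$ is countably $(n-2)$-rectifiable. Under the graph map, which is a bi-Lipschitz homeomorphism from $D_{\rho_X}$ onto $\spt V \cap (D_{\rho_X} \times \bR)$, this property transfers to the branch set of $V$ inside the neighbourhood; thus $\calB(V) \cap (D_{\rho_X} \times \bR)$ is countably $(n-2)$-rectifiable. Finally, by separability of $B_1$ one can extract a countable subcover of $\calB(V) \cap \{ \Theta(\norm{V},\cdot) \leq 2 \}$ by such neighbourhoods, and a countable union of countably $(n-2)$-rectifiable sets is again countably $(n-2)$-rectifiable.

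The only delicate step is the verification of the hypotheses of Theorem~\ref{thm_wic_mult_two_allard} in the first reduction: one must know that stability of the regular part of $V$ passes to the local graph, and that the density bound propagates to a neighbourhood so that the tangent plane lies at density exactly two. Everything else is a standard covering argument.
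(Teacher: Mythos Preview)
Your proposal is correct and is precisely the argument the paper has in mind: the corollary is stated without proof, the paper simply saying it is ``obtained by combining [Krummel--Wickramasekera] with the results of [Wic\_MultTwoAllard]'', which is exactly your reduction to a local two-valued $C^{1,\alpha}$ graph via Theorem~\ref{thm_wic_mult_two_allard} followed by Theorem~\ref{thm_krumwic_finepropsmingraphs} and a countable cover.

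Two small remarks. First, among the hypotheses of Theorem~\ref{thm_wic_mult_two_allard} you should also check that $\Theta(\norm{V},\cdot) \neq 3/2$ in a small ball about the branch point; this follows because a density-$3/2$ point would carry a tangent cone that is a sum of at least three half-planes not contained in a single hyperplane, which is impossible once $V$ is sufficiently $L^2$-close to $2\abs{\Pi}$. Second, the map carrying $\calB_{u_X}$ to $\calB(V)$ is not the graph map from the disc (that is two-to-one), but rather $x \mapsto (x,u_{X,1}(x))$ restricted to $\calB_{u_X}$, which is Lipschitz because $u_X \in C^{1,\alpha}$; this still preserves countable $(n-2)$-rectifiability.
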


\subsection{Immersion away from the branch set}
\label{subsec_global_immersion}

Let $\Omega \subset \bR^n$ be a connected open set, and let 
$u \in C^{1,\alpha}(\Omega;\calA_2)$
be so that $\abs{G} = \abs{\graph u} \in \IV_n(\Omega \times \bR)$
is a stationary varifold.
In this section we construct a smooth $n$-dimensional manifold $\Gamma$ and a minimal
immersion $\iota: \Gamma \to \bR^{n+1}$ with image
$\iota(\Gamma) = \reg G \cup \calC(G) = G \setminus \calB(G)$
using a standard gluing construction.

Let $ (U_\alpha \mid \alpha \in A)$ be an open cover of $\Omega \setminus \calB_u$,
chosen so that every $U_\alpha$ is simply connected.
For every open set $U_\alpha$ in this collection we can make a
smooth selection $u_{\alpha,1}, u_{\alpha,2} \in C^1(U_\alpha)$ so that
$u = \{ u_{\alpha,1},u_{\alpha,2} \}$ in $U_\alpha$.
Write $G_{\alpha,1} = \graph u_{\alpha,1}$ and $G_{\alpha,2} = \graph u_{\alpha,2}$,
so that accordingly $G \cap U_\alpha \times \bR	= G_{\alpha,1} \cup G_{\alpha,2}.$
We then consider the disjoint union of these sets
$(G_{\alpha,i} \mid \alpha \in A, i  = 1,2)$, each of which is endowed with
an obvious map $\iota_{\alpha,i}: G_{\alpha,i} \to \bR^{n+1}$, which
may also be composed with the projection $P_0: \bR^{n+1} \to \bR^n \times \{ 0 \}$
to obtain bijections $P_0 \circ \iota_{\alpha,i}: G_{\alpha,i} \to U_{\alpha,i}$.
We glue these together using the equivalence relation $\sim$ defined as follows.
\begin{quotation}
	\emph{Two points $X \in G_{\alpha,i}$ and $Y \in G_{\beta,j}$ are equivalent
	if $P_0 \circ \iota_{\alpha,i}(X) = P_0 \circ \iota_{\beta,j}(Y)$ and there is
	a neighbourhood of this point where $u_{\alpha,i}$ and
	$u_{\beta,j}$ coincide.}
\end{quotation}

Given this we simply set
\begin{equation}
	\Gamma = \bigsqcup_{\substack{\alpha \in A \\ i = 1,2}} G_{\alpha,i} / \sim.
\end{equation}
Write $p$ for the projection $\sqcup G_{\alpha,i} \to \Gamma$.
By construction the map $\sqcup \iota_{\alpha,i}: \sqcup U_{\alpha,i} \to \bR^{n+1}$
passes to the quotient by $\sim$,  thus defining a map
$\iota: \Gamma \to \bR^{n+1}.$
This map has the property that for every set $U_{\alpha,i}$
and all $x \in U_{\alpha,i}$, $\iota \circ p(x) = \iota_{\alpha,i}(x).$
Then it is not hard to see that $\Gamma$ is a smooth $n$-dimensional manifold,
with charts $\{  (p(U_{\alpha,i}),P_0 \circ \iota) \mid \alpha \in A , i = 1,2 \}$.

\begin{lem} \leavevmode
\label{lem_properties_global_immersion}
\begin{enumerate}[label = (\roman*),font=\upshape]
\item The map $\iota$ is a smooth immersion, injective
	away from $\iota^{-1}(\calC(G))$.
\item The immersion can be oriented by the upward unit normal $\nu$.
\item \label{item_proper_immersion}
	The map $\iota$ is proper into $\bR^{n+1} \setminus \calB(G)$,
	but not into $\bR^{n+1}$ unless $\calB(G) = \emptyset$.
\item \label{item_glued_manifold_connected}
	The manifold $\Gamma$ is connected unless $G$ is the union of two
	single-valued graphs.
\end{enumerate}
\end{lem}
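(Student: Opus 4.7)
The plan is to verify the four statements in sequence, leveraging the explicit chart structure on $\Gamma$ and the local structure of $G$ off its branch set. For~(i), in each chart $(p(U_{\alpha,i}), P_0 \circ \iota)$ the immersion $\iota$ is written $x \mapsto (x, u_{\alpha,i}(x))$; since $u_{\alpha,i}$ solves the minimal surface equation on $U_\alpha$ it is smooth, its chart expression has differential of rank $n$, and so $\iota$ is a smooth immersion. For injectivity off $\iota^{-1}(\calC(G))$ I would argue by cases on the image point $Y \in \iota(\Gamma) \subset G \setminus \calB(G)$: if $Y \in \reg G$ has multiplicity one, only one local sheet passes through and the preimage is a singleton; if $Y \in \reg G$ has multiplicity two, the two local selections coincide on a whole neighbourhood and are identified by $\sim$; and $\calB(G)$ is excluded since $\Gamma$ lies above $\Omega \setminus \calB_u$. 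The remaining case $Y \in \calC(G)$ is precisely where two distinct local selections pass through the same point and injectivity fails. For~(ii), setting $\nu = (1 + \abs{Du_{\alpha,i}}^2)^{-1/2}(-Du_{\alpha,i}, 1)$ in each chart defines a smooth upward unit normal, consistently defined on overlaps because the same local selection produces the same normal.

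For~(iii), let $K \subset \bR^{n+1} \setminus \calB(G)$ be compact, so that $\dist(K, \calB(G)) > 0$. Given a sequence $y_j \in \iota^{-1}(K)$ I would extract a subsequence with $\iota(y_j) \to Y \in K \subset \reg G \cup \calC(G)$. A sufficiently small neighbourhood $V$ of $Y$ in $\bR^{n+1}$ has $G \cap V$ equal to either a single smooth graph (if $Y \in \reg G$) or the transverse union of two such graphs (if $Y \in \calC(G)$), so $\iota^{-1}(V)$ splits as a finite disjoint union of chart pieces; restricting to a single piece along a further subsequence yields convergence in $\Gamma$. To exhibit non-properness into $\bR^{n+1}$ when $\calB(G) \neq \emptyset$, I would choose $Y \in \calB(G)$ and a sequence $Y_j \to Y$ with $Y_j \in \reg G \cup \calC(G)$; then $\{Y\} \cup \{Y_j \mid j \in \bN\}$ is compact, yet any convergent subsequence of lifts $y_j \in \iota^{-1}(\{Y_j\})$ would have its limit in $\Gamma$ mapping to $Y \notin \iota(\Gamma)$, a contradiction.

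For~(iv) I argue the contrapositive. Assume $\Gamma$ has at least two connected components $\Gamma_1, \ldots, \Gamma_k$, and set $V_i = (P_0 \circ \iota)(\Gamma_i) \subset \Omega \setminus \calB_u$. Each $V_i$ is open, and together they cover $\Omega \setminus \calB_u$. The plan is to show each $V_i$ is also relatively closed; since $\Omega \setminus \calB_u$ is connected (using $\dim_{\calH} \calB_u \leq n-2$) this forces $V_i = \Omega \setminus \calB_u$ for every $i$. Combined with the fact that each $x$ has at most two preimages in $\Gamma$, this implies $k = 2$ and each $\Gamma_i$ projects injectively onto $\Omega \setminus \calB_u$, yielding a smooth single-valued selection $u^{(i)}: \Omega \setminus \calB_u \to \bR$ with $\{u^{(1)}, u^{(2)}\} = u$ there. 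These extend continuously across $\calB_u$ by continuity of $u$ and, by a removable singularity argument for the minimal surface equation (valid since $\calB_u$ has codimension at least two), to $C^{1,\alpha}(\Omega)$ solutions whose graphs together equal $G$.

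The main obstacle I expect lies in the closedness step of~(iv): ruling out a scenario in which some component $\Gamma_i$ projects onto a proper open subset $V_i \subsetneq \Omega \setminus \calB_u$, leaving points above which all sheets lie in other components. The closedness argument requires combining properness from part~(iii) with a careful local analysis of the behaviour of $\iota$ near classical singularities; once it is in place, the dimensional bound on $\calB_u$ together with the removable singularity extension cleanly closes the argument.
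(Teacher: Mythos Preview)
Your arguments for (i)--(iii) are correct and essentially identical to the paper's, only more detailed (the paper dismisses (i) and (ii) with ``by construction'' and treats (iii) exactly as you do).

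For (iv) you take a somewhat different route. The paper simply observes that $P_0 \circ \iota : \Gamma \to \Omega \setminus \calB_u$ is a two-sheeted covering map over a connected base, so by elementary covering-space theory $\Gamma$ has at most two components; if there are two, each projects homeomorphically onto $\Omega \setminus \calB_u$, yielding the two single-valued graphs. Your open-and-closed argument for the images $V_i$ arrives at the same conclusion but is more laborious, and the ``main obstacle'' you flag---relative closedness of $V_i$---is not genuinely an obstacle: it follows in two lines from the properness established in (iii), exactly as you suspect (take $y_j \in \Gamma_i$ over $x_j \to x \in \Omega \setminus \calB_u$; continuity of $u$ traps $\iota(y_j)$ in a compact set away from $\calB(G)$; properness gives a subsequential limit $y \in \Gamma_i$ with $(P_0 \circ \iota)(y)=x$). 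Your approach does have one advantage over the paper's terse proof: you explicitly address the extension of the two selections across $\calB_u$, which the paper leaves implicit. Note that this extension step can be shortened: once you have $u^{(1)},u^{(2)} \in C^\infty(\Omega \setminus \calB_u)$, both they and their gradients extend continuously across $\calB_u$ (because $u \in C^{1,\alpha}(\Omega;\calA_2)$ and the two values of $u$ and $Du$ coincide on $\calB_u \subset \calK_u$), giving a $C^1$ selection on all of $\Omega$---which by definition forces $\calB_u = \emptyset$. The removable-singularity theorem for the MSE is not really needed.
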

\begin{proof}
The first two properties follow by construction.

\ref{item_proper_immersion} Let $K \subset \bR^{n+1} \setminus \calB(G)$ be a compact
set, and consider a sequence of points $(X_j \mid j \in \bN)$ in $\iota^{-1}(K \cap G)$.
Write $Y_j = \iota(X_j)$ for all $j$, and extract a convergent subsequence from this,
with limit say $Y_{j'} \to Y \in \reg G \cup \calC(G)$.
If $Y \in \reg G$ then there is $\rho > 0$ so that the restriction of $\iota$
to $\iota^{-1}(B_\rho(Y) \cap G)$ is a homeomorphism onto $B_\rho(Y) \cap G$,
from whence the property follows.
If instead $Y \in \calC(G)$ then we can decompose $\iota^{-1}(B_\rho(Y) \cap G)
= W_1 \cup W_2$ into two disjoint open sets, so that the restriction of
$\iota$ to either of them is again a homeomorphism. As one of $W_1,W_2$ contains
infinitely many terms in the sequence, the conclusion follows.

\ref{item_glued_manifold_connected}
The map $P_0 \circ \iota:\Gamma \to G$ is a double cover, so 
$\Gamma$ has at most two connected components. When $\Gamma$ is disconnected,
then $\Gamma = \Gamma_1 \cup \Gamma_2$ and the restriction of $P_0 \circ \iota$ to either
of them is a homeomorphism, and their images are two single-valued graphs.
\end{proof}

\subsection{Stability of two-valued minimal graphs}
\label{subsec_stability_of_two_valued_graphs}

Let $i: \Gamma \to \bR^{n+1}$ be the immersion constructed in the previous section,
which maps onto $\reg G \cup \calC(G)$. This admits the positive Jacobi field 
$\langle \nu , e_{n+1} \rangle$, which implies the stability of the immersion
via the following standard argument.
Pick any non-negative test function $\varphi \neq 0 \in C_c^2(\Gamma)$.
On its support $\langle \nu , e_{n+1} \rangle$ is bounded below away from zero,
and thus $T = \max \{ t \in \bR \mid t \varphi \leq \langle \nu , e_{n+1} \rangle \}$
is positive.
By construction $\langle \nu , e_{n+1} \rangle - T \varphi  \geq 0$,
and it has a zero but does not vanish identically. If there were a less
regular test function $\varphi \in C_c^1(\Gamma)$ with
$\int_\Gamma \abs{A_\Gamma}^2 \varphi^2 > \int_\Gamma \abs{\nabla_\Gamma \varphi}^2$
then we could mollify this and follow the same argument as above;
hence the stability of the regular part of $G$ is established.

If $\varphi$ is any function in $C_c^1(\bR^{n+1} \setminus \calB(G))$, then
its pullback $\phi = \varphi \circ \iota$ by the immersion is compactly supported
on $\Gamma$ (because $\iota$ is proper away from the branch set).
If we equate the integrals on $\Gamma$ with ones on $G$ we obtain that
$\int_{\reg G} \abs{A_G}^2 \varphi^2 \leq \int_{\reg G} \abs{\nabla_G \varphi}^2$
for all $\varphi \in C_c^1(\bR^{n+1} \setminus \calB(G))$.
Extending this to arbitrary test functions requires a capacity argument.

\begin{lem}
	\label{lem_graph_ambient_stability}
Let $U \subset \bR^{n+1}$ be an open set, and $G \subset U$
be a two-valued minimal graph.
Then $G$ is ambient stable: for all $\varphi \in C_c^1(U)$, 
\begin{equation}
	\tag{$S_G$}
	\label{eq_stab_ineq}
	\int_{\reg G  \cap U } \abs{A_G}^2 \varphi^2
	\leq \int_{\reg G \cap U} \abs{\nabla_G \varphi}^2.
\end{equation}
\end{lem}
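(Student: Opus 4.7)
The work has essentially already been done: the positive Jacobi field argument on the immersion $\iota:\Gamma\to\bR^{n+1}$ gives the stability inequality $(S_G)$ for all test functions $\varphi\in C_c^1(U\setminus\calB(G))$. The task remaining is the extension to arbitrary $\varphi\in C_c^1(U)$, for which the natural tool is a capacity-based cutoff of the branch set. If $\calB(G)=\emptyset$ there is nothing to do, so I will suppose $\calB(G)\neq\emptyset$.

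The plan is to fix $\varphi\in C_c^1(U)$ and, after choosing some ball $B\subset\bR^{n+1}$ containing $\spt\varphi$, apply (a version of) Corollary~\ref{cor_cutoff_sequence_cpcty_2_zero} in $B$ to produce a sequence $(\eta_j\mid j\in\bN)$ of cutoffs in $C_c^1(\bR^{n+1})$ with $0\le\eta_j\le 1$, $\eta_j\equiv 1$ on a neighbourhood $(\calB(G))_{r_j}$ of the branch set, $\eta_j\to 0$ $\calH^n$-a.e.\ on $\reg G$, and $\int_{\reg G}\abs{\nabla_G\eta_j}^2\,\intdiff\calH^n\to 0$. Setting $\psi_j=\varphi(1-\eta_j)$, each $\psi_j$ belongs to $C_c^1(U)$ and vanishes in a neighbourhood of $\calB(G)\cap\spt\varphi$, so it qualifies as a test function in the stability inequality already established on $U\setminus\calB(G)$, giving
\begin{equation*}
	\int_{\reg G\cap U}\abs{A_G}^2\psi_j^2
	\leq\int_{\reg G\cap U}\abs{\nabla_G\psi_j}^2.
\end{equation*}

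Next I would expand the right-hand side as
\begin{equation*}
	\abs{\nabla_G\psi_j}^2
	=(1-\eta_j)^2\abs{\nabla_G\varphi}^2
	-2(1-\eta_j)\varphi\,\langle\nabla_G\varphi,\nabla_G\eta_j\rangle
	+\varphi^2\abs{\nabla_G\eta_j}^2
\end{equation*}
and pass to the limit $j\to\infty$. The first term tends to $\int_{\reg G\cap U}\abs{\nabla_G\varphi}^2$ by dominated convergence; the third tends to $0$ by the final property of $(\eta_j)$ combined with the bound $\abs{\varphi}\le\norm{\varphi}_\infty$; and the cross term is controlled by Cauchy--Schwarz via
\begin{equation*}
	\Bigl\lvert\int 2(1-\eta_j)\varphi\langle\nabla_G\varphi,\nabla_G\eta_j\rangle\Bigr\rvert
	\leq 2\norm{\varphi}_\infty\Bigl(\int_{\spt\varphi\cap\reg G}\abs{\nabla_G\varphi}^2\Bigr)^{1/2}
	\Bigl(\int\abs{\nabla_G\eta_j}^2\Bigr)^{1/2},
\end{equation*}
which likewise tends to zero. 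For the left-hand side I would apply Fatou's lemma using $(1-\eta_j)^2\varphi^2\to\varphi^2$ $\calH^n$-a.e.\ on $\reg G$ to conclude
\begin{equation*}
	\int_{\reg G\cap U}\abs{A_G}^2\varphi^2
	\leq\liminf_{j\to\infty}\int_{\reg G\cap U}\abs{A_G}^2\psi_j^2
	\leq\int_{\reg G\cap U}\abs{\nabla_G\varphi}^2,
\end{equation*}
which is exactly $(S_G)$. The validity of the Fatou step, in particular the \emph{a priori} finiteness of the right-hand side and hence of $\int\abs{A_G}^2\varphi^2$, is an automatic consequence of the bound once established.

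The only real obstacle here is the construction of the cutoff sequence $(\eta_j)$ in the open set $U$ rather than in the specific cylinder $D_2\times\bR$ of Corollary~\ref{cor_cutoff_sequence_cpcty_2_zero}. This is, however, routine: by Theorem~\ref{thm_krumwic_finepropsmingraphs} the set $\calB(G)\cap\spt\varphi$ is countably $(n-2)$-rectifiable hence of vanishing $2$-capacity in $\bR^{n+1}$, and the construction in the proof of the corollary goes through verbatim after replacing $D_2\times\bR$ by any open neighbourhood of $\spt\varphi$ relatively compact in $U$. Everything else in the argument is standard.
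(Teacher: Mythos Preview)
Your proposal is correct and follows essentially the same approach as the paper: apply the already-established stability inequality to $(1-\eta_j)\varphi$ using the capacity cutoff sequence from Corollary~\ref{cor_cutoff_sequence_cpcty_2_zero}, expand the right-hand side and pass to the limit term by term (dominated convergence on $(1-\eta_j)^2\abs{\nabla_G\varphi}^2$, the $L^2$-smallness of $\nabla_G\eta_j$ on the remaining two terms), and use Fatou on the left. Your remark that the cutoff construction transfers from the cylinder to a general $U$ is a point the paper leaves implicit.
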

\begin{proof}
Let $\varphi \in C_c^1(U)$ be a test function with $\spt \varphi \cap \calB(G) \neq
\emptyset$, and $(\eta_j \mid j \in \bN)$ be a sequence of functions in $C_c^1(U)$
with properties analogous to those listed in Corollary~\ref{cor_cutoff_sequence_cpcty_2_zero}.
Then $(1 - \eta_j) \varphi \in C_c^1(U \setminus \calB(G))$ and
thus~\eqref{eq_stab_ineq} holds with this test function,
\begin{equation}
	\label{eq_stab_ineq_with_cutoff_branch_set}
	\int_{\reg G \cap U} \abs{A_G}^2 (1 - \eta_j)^2 \varphi^2
	\leq \int_{\reg G \cap U} \abs{\nabla_G  \{(1-\eta_j) \varphi \}}^2.
\end{equation}

The right-hand side can be bounded uniformly in $j$ because
\begin{equation}
	\int_{\reg G \cap U} \abs{ \nabla_G \{ (1 - \eta_j) \varphi \} }^2
	\leq 2 \int_{\reg G \cap U} \abs{\nabla_G \eta_j}^2 \varphi^2
	+ (1-\eta_j)^2 \abs{\nabla_G \varphi}^2
\end{equation}
and as $j \to \infty$ we can separately estimate
\begin{equation}
	\label{eq_grad_to_zero_capa_proof_stability}
	\int_{\reg G \cap U} \abs{\nabla_G \eta_j}^2 \varphi^2
	\to 0
\end{equation}
and by dominated convergence
\begin{equation}
	\label{eq_dom_convergence_proof_stability}
	\int_{\reg G \cap U} (1-\eta_j)^2 \abs{\nabla_G \varphi}^2
	\to \int_{\reg G \cap U} \abs{\nabla_G \varphi}^2.
\end{equation}

In fact we can compute the bounding integral in~\eqref{eq_stab_ineq_with_cutoff_branch_set}
more precisely and show that the cross-term also has
\begin{equation}
	\label{eq_convergence_crossterm_proof_stability}
	2 \int_{\reg G \cap U} (1 -\eta_j) \varphi
	\langle \nabla_G (1-\eta_j) , \nabla_G \varphi \rangle 
	\to 0
	\text{ as $j \to \infty$.}
\end{equation}

On the left-hand side of~\eqref{eq_stab_ineq_with_cutoff_branch_set} we may
pass to the limit by Fatou's lemma, so that letting $j \to \infty$ we obtain
the desired inequality
\begin{equation}
	\int_{\reg G \cap U} \abs{A_G}^2 \varphi^2
	\leq \int_{\reg G \cap U} \abs{\nabla_G \varphi}^2.
	\qedhere
\end{equation}
\end{proof}

Using the results of~\cite{Hutchinson86}---see Proposition~\ref{prop_hutchinson_stability}---%
we have the following corollary for sequences of two-valued minimal graphs.

\begin{cor}
\label{cor_stability_preserved_two_valued_graphs}
Let $(u_j \mid j \in \bN)$ be a sequence of two-valued minimal graphs
$u_j \in C^{1,\alpha}(D_2;\calA_2)$,
and suppose that their graphs converge weakly in the varifold topology
to a limit varifold $V \in \IV_n(D_2 \times \bR)$,
\begin{equation}
	\abs{G_j} = \abs{\graph u_j} \to V
	\text{ as $j \to \infty$}.
\end{equation}
Then $V$ is stationary and ambient stable.
\end{cor}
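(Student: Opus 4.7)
The plan is to pass the two properties---stationarity and ambient stability---across the weak varifold limit separately. Both are preserved under varifold convergence, the first by a completely standard argument and the second via the already cited result of Hutchinson~\cite{Hutchinson86} (Proposition~\ref{prop_hutchinson_stability}).

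\textbf{Stationarity of $V$.} Each $\abs{G_j}$ is stationary in $D_2 \times \bR$ by assumption. Given any vector field $X \in C_c^1(D_2 \times \bR;\bR^{n+1})$, the first variation $W \mapsto \delta W(X)$ is continuous with respect to weak varifold convergence, because $\delta W(X) = \int \div_S X \intdiff W(y,S)$ is the integral of a continuous compactly supported function on the Grassmann bundle $G_n(D_2 \times \bR)$. Hence $\delta V(X) = \lim_{j \to \infty} \delta \abs{G_j}(X) = 0$, so $V$ is stationary.

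\textbf{Ambient stability of $V$.} By Lemma~\ref{lem_graph_ambient_stability} each graph $\abs{G_j}$ satisfies the ambient stability inequality~\eqref{eq_stab_ineq} on $D_2 \times \bR$, so in particular each $\abs{G_j}$ has stable regular part in the sense required by Hutchinson's framework~\cite{Hutchinson86}. Since $\abs{G_j} \to V$ weakly in the varifold topology and the mass is locally uniformly bounded (an immediate consequence of Proposition~\ref{prop_area_estimate}), Proposition~\ref{prop_hutchinson_stability} applies directly and delivers the stability inequality for $V$: for every $\varphi \in C_c^1(D_2 \times \bR)$,
\begin{equation*}
\int_{\reg V} \abs{A_V}^2 \varphi^2 \leq \int_{\reg V} \abs{\nabla_V \varphi}^2.
\end{equation*}

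\textbf{Main obstacle.} The only nontrivial point is the passage of the stability inequality to the limit, because the second fundamental form is a second-order object defined only on the regular part of the varifold, and the regular set is not lower-semicontinuous under weak convergence (branch points and immersed singularities can appear in the limit). The argument in Lemma~\ref{lem_graph_ambient_stability} was precisely designed to produce the inequality~\eqref{eq_stab_ineq} on every ambient test function via a capacity cut-off of the branch set; consequently each $\abs{G_j}$ enters the hypotheses of Hutchinson's theorem without needing to single out its branch locus, and we can outsource the limiting step entirely to~\cite{Hutchinson86}. No further work is needed.
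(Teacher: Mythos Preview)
Your proof is correct and follows exactly the approach the paper intends: the corollary is stated immediately after the sentence ``Using the results of~\cite{Hutchinson86}---see Proposition~\ref{prop_hutchinson_stability}---we have the following corollary,'' and no further proof is given. You have simply spelled out the two ingredients (Lemma~\ref{lem_graph_ambient_stability} for ambient stability of each $G_j$, then Proposition~\ref{prop_hutchinson_stability} to pass to the limit), which is precisely what the paper leaves implicit.
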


\section{Area estimates for two-valued minimal graphs}
\label{sec_area_estimates}

\subsection{Area bounds for two-valued minimal graphs}
Here we extend the classical area estimates, which are
well-known for single-valued minimal graphs, to two-valued minimal graphs
by adapting the arguments presented in~\cite[Ch.~16]{GilbargTrudinger98}.

\begin{prop}
\label{prop_area_estimate}
Let $\alpha \in (0,1/2)$, and let 
$u \in C^{1,\alpha}(D_{2r};\calA_2)$ be a two-valued minimal graph.
Then 
\begin{equation}
	\calH^n(G \cap B_r) 
	\leq 2 \omega_n ( 1 + n ) r^n.
\end{equation}
\end{prop}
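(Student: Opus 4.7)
The plan is to adapt the classical area estimate for single-valued minimal graphs (à la Gilbarg--Trudinger, Chapter~16) to the two-valued setting, exploiting the weak minimal surface equation of Proposition~\ref{prop_test_function}.

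First, using that $\calH^{n-1}(\calB_u)=0$, the area formula gives
\[
\calH^n(G \cap B_r) = \int_{U_r} v\,dx,
\]
where $v = \sqrt{1+|Du|^2}$ and $U_r = \{x \in D_r : |x|^2 + u(x)^2 < r^2\}$ is understood in the two-valued sense; each sheet of $U_r$ lies in $D_r$, so $|U_r| \leq 2\omega_n r^n$ counted with multiplicity, and on $U_r$ one has $|x|, |u| \leq r$. The pointwise identity $v = v^{-1} + |Du|^2/v$ combined with $v^{-1} \leq 1$ then splits the area as
\[
\calH^n(G \cap B_r) \leq 2\omega_n r^n + \int_{U_r} \tfrac{|Du|^2}{v}.
\]

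The heart of the proof is to bound $\int_{U_r} |Du|^2/v$ by $2n\omega_n r^n$. To this end I would apply Proposition~\ref{prop_test_function} with Lipschitz test functions localised to $U_r$ --- a natural choice being $\Phi(x, z, p) = \tfrac{z}{2}(r^2 - |x|^2 - z^2)_+$ and, as a companion, $\Phi(x, z, p) = \tfrac12(r^2 - |x|^2 - z^2)_+$. Computing the composition-derivatives yields the identities
\[
\int_{U_r} (r^2 - |x|^2 - 3u^2)\tfrac{|Du|^2}{v} = 2\int_{U_r} u \langle T(Du), x\rangle, \qquad \int_{U_r} u \tfrac{|Du|^2}{v} = -\int_{U_r} \langle T(Du), x\rangle.
\]
Combining these with the pointwise bounds $|u|, |x| \leq r$, $|T(Du)| \leq 1$ on $U_r$, and the volume bound $|U_r| \leq 2\omega_n r^n$, one extracts the desired bound on $\int_{U_r}|Du|^2/v$, and adding the two contributions produces the claimed $2(1+n)\omega_n r^n$.

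The main obstacle lies in the last step. The weight $(r^2 - |x|^2 - 3u^2)$ in the first identity is not sign-definite on $U_r$, so a naive manipulation does not isolate $\int_{U_r}|Du|^2/v$ with the sharp constant $2n$; a careful linear combination of the two identities above (possibly supplemented by a third, obtained from a quadratic-in-$z$ test function) is required to close the estimate. A secondary technical point is verifying the integrability hypothesis $\int_{D_{2r}\setminus\calB_u}|D(\Phi(x,u,Du))|<\infty$ of the Proposition, which reduces to $|Du| \in L^1_{\loc}(D_{2r})$ and follows from $u \in C^{1,\alpha}$ together with the cutoff sequences of Corollary~\ref{cor_cutoff_sequence_cpcty_2_zero} used to excise the branch set $\calB_u$.
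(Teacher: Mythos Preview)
Your overall strategy---split $v = v^{-1} + |Du|^2/v$, bound the first term by volume, and control the second via Proposition~\ref{prop_test_function}---is exactly the paper's. The gap is your choice of test function, and it is the gap you yourself flag: the ball-adapted functions built from $(r^2 - |x|^2 - z^2)_+$ produce \emph{weighted} integrals such as $\int_{U_r}(r^2-|x|^2-3u^2)\,|Du|^2/v$, and no linear combination of your two identities (nor any further polynomial-in-$z$ variant of the same kind) strips that weight to isolate the unweighted $\int_{U_r}|Du|^2/v$ with the constant $2n$. Each new identity trades one weight for another rather than eliminating it.

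The paper sidesteps this by decoupling the cutoffs: it takes the product test function $\Phi(x,z) = \eta(x)\,z_r$, where $\eta \in C_c^1(D_{2r})$ has $\eta \equiv 1$ on $D_r$ and $z_r = \max(-r,\min(r,z))$. Then $\partial_z(z_r) = \indic_{|z|<r}$ carries no weight and $D\eta$ vanishes on $D_r$, so the identity $\int \langle T(Du), D(\Phi(x,u))\rangle = 0$ gives immediately
\[
\int_{D_r}\indic_{|u|<r}\,\frac{|Du|^2}{v}
\;\leq\; 2r\int_{D_{2r}}|D\eta|,
\]
using $|u_r|\leq r$ and $|T(Du)|\leq 1$ on each of the two sheets. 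Letting $\eta$ approximate $\indic_{D_r}$ drives the right side to $2r\,\calH^{n-1}(\partial D_r) = 2n\omega_n r^n$; since $U_r \subset D_r \cap\{|u|<r\}$, this already controls your $\int_{U_r}|Du|^2/v$. The moral: separate the $x$-cutoff from the $z$-truncation rather than binding them together through the ball constraint.
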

\begin{proof}
Let $\eta \in C_c^1(D_{2r})$ be a test function with $\eta \equiv 1$
on $D_r$ and $\abs{D \eta} \leq 2/r$.
Next define $\Phi(x,z) = \eta(x) z_r$, where 
\begin{equation}
	\label{eq_def_zr}
z_r =
\begin{cases}
	r &\text{if } z > r \\
	z &\text{if } -r \leq z \leq r \\
	-r &\text{if } z < -r.
\end{cases}
\end{equation}

Then by the (two-valued) chain rule we get 
\begin{equation}
D(\Phi(x,u))
= D \eta(x) u_r + \{ D_z \Phi(x,u_i(x)) Du_i(x) \}.
\end{equation}
This is clearly well-defined at any $x \in D_{2r} \setminus \calB_u$;
in fact the same is true at branch points $x \in \calB_u$ because the
two components of $Du(x) \in \calA_2(\bL(\bR^n;\bR))$ agree there.
We evaluate this expression to be
\begin{equation}
D(\Phi(x,u)) =  D \eta(x) u_r(x) + \eta(x)  \indic_{\abs{u} < r } Du(x).
\end{equation}
As $\eta$ is compactly supported inside $D_{2r}$ we get
$\int_{D_{2r} \setminus \calB_u} \abs{D ( \Phi(x,u))} < +\infty$.
Then Proposition~\ref{prop_test_function} justifies
\begin{equation}
	\label{eq_int_identity_for_area_estimate}
	\int_{D_{2r}} u_r \langle T(Du), D \eta \rangle
	+ \eta \langle T(Du), Du \rangle \indic_{\abs{u} < r} = 0,
\end{equation}
so that
\begin{equation}
	\label{eq_area_estimate_inequality_perimeter_type}
	\int_{D_r} \indic_{\abs{u} < r} \frac{\abs{Du}^2}{v}
	\leq 2r \int_{D_{2r}} \abs{D \eta}.
\end{equation}

The area of the graph is bounded by the integral $\calH^n(G \cap B_r) \leq \int
v \indic_{\abs{u} < r}$, which we split as
\begin{equation}
	\label{eq_split_area_two_integrals}
	\int_{D_r} \frac{1}{v} \indic_{\abs{u} < r}
	+ \int_{D_r} \frac{\abs{Du}^2}{v} \indic_{\abs{u} < r}
	\leq 2 (\calH^n(D_r) + r \calH^{n-1}(\bdary D_r)),
\end{equation}
whence we conclude by noting $\calH^{n-1}(\bdary D_r) = n \omega_n r^{n-1}$.
\end{proof}
A similar argument yields area bounds in the cylinder above the disc $D_r$. 
A detailed proof in the single-valued case is given in~\cite[Ch.16]{GilbargTrudinger98},
to adapt it to two-valued graphs one makes the same modifications as
above.
These will be used in the proof of the gradient
estimates (see the proof of Lemma~\ref{lem_int_grad_bounds_form_in_small_ball}).
\begin{lem}\label{lem_cylindrical_area_estimates}
Let $\alpha \in (0,1)$ and $u \in C^{1,\alpha}(D_{2r};\calA_2)$ be a two-valued minimal
graph. Then
\begin{equation}
\calH^n(G \cap D_r \times \bR) 
\leq 2 \omega_n  r^n(1 + n r^{-1} \sup_{D_{2r}} \norm{u}) .
\end{equation}
\end{lem}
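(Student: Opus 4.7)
The plan is to mimic the proof of Proposition~\ref{prop_area_estimate}, replacing the test function $\Phi(x,z) = \eta(x) z_r$ (which truncates $u$ both horizontally and vertically) with one that truncates only in the horizontal variable. Since we now wish to bound the area in the entire vertical cylinder $D_r \times \bR$, the vertical truncation must be dropped; in its place, the hypothesis that $\sup_{D_{2r}} \norm{u}$ is finite ensures that the test function remains bounded on the support of $u$. Concretely, I would take $\eta \in C_c^1(D_{2r})$ with $\eta \equiv 1$ on $D_r$, $0 \leq \eta \leq 1$, and $\abs{D \eta} \leq 2/r$, and apply Proposition~\ref{prop_test_function} with $\Phi(x,z) = \eta(x) z_M$ for some fixed $M > \sup_{D_{2r}} \norm{u}$, so that $\Phi(x, u(x)) = \eta(x) u(x)$ on $\spt \eta$. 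The two-valued chain rule then yields $D(\Phi(x,u)) = u \, D\eta + \eta \, Du$, and the integrability hypothesis \eqref{eq_condition_weak_twovalued_MSE} follows from the continuity of $u$ together with the $C^{1,\alpha}$ regularity of $Du$ away from $\calB_u$.

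The integral identity supplied by Proposition~\ref{prop_test_function} then rearranges, using $\langle T(Du), Du \rangle = \abs{Du}^2/v$, to
\[
\int_{D_{2r}} \eta \frac{\abs{Du}^2}{v} = -\int_{D_{2r}} u \langle T(Du), D \eta \rangle,
\]
whose right-hand side is bounded via $\abs{T(Du)} \leq 1$ by $\sup_{D_{2r}} \norm{u} \cdot \int_{D_{2r}} \abs{D\eta}$. Next, decomposing $v = 1/v + \abs{Du}^2/v$ and invoking the two-valued area formula, I would write
\[
\calH^n(G \cap D_r \times \bR) = \int_{D_r} v \leq \int_{D_r} \frac{1}{v} + \int_{D_{2r}} \eta \frac{\abs{Du}^2}{v} \leq 2 \omega_n r^n + \sup_{D_{2r}} \norm{u} \cdot \int_{D_{2r}} \abs{D \eta},
\]
where the factor $2$ in $2 \omega_n r^n$ reflects the two-valued convention $\int_{D_r} 1/v = \int_{D_r} (1/v_1 + 1/v_2) \leq 2 \omega_n r^n$. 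Choosing $\eta$ so that $\int \abs{D \eta}$ approaches $\calH^{n-1}(\partial D_r) = n \omega_n r^{n-1}$, for instance by approximating $\indic_{D_r}$ in $BV$, then produces the stated constant up to a harmless factor.

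The only obstacle beyond the classical single-valued argument of~\cite[Ch.~16]{GilbargTrudinger98} is the handling of the branch set $\calB_u$, which would \emph{a priori} prevent the integration by parts underlying Proposition~\ref{prop_test_function}. This is absorbed into the proof of that proposition, which rests on the capacity-based cutoff construction from Corollary~\ref{cor_cutoff_sequence_cpcty_2_zero}: since $\calH^{n-1}(\calB_u) = 0$ and the two components of $Du$ agree on $\calB_u$, the integrand $D(\Phi(x,u))$ is continuous across the branch set. With this integration-by-parts tool in hand, the remainder of the estimate is a direct two-valued transcription of the argument in~\cite{GilbargTrudinger98}.
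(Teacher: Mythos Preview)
Your proposal is correct and follows essentially the same approach the paper indicates: adapt the single-valued argument from \cite[Ch.~16]{GilbargTrudinger98} by replacing the vertically truncated test function $\eta(x)z_r$ with the untruncated $\eta(x)z$, and handle the branch set exactly as in Proposition~\ref{prop_area_estimate} via Proposition~\ref{prop_test_function}. One minor remark: the cutoff sequence underlying Proposition~\ref{prop_test_function} uses only that $\calH^{n-1}(\calB_u)=0$ (i.e.\ $1$-capacity zero), not the stronger $2$-capacity statement of Corollary~\ref{cor_cutoff_sequence_cpcty_2_zero} you cite, but this does not affect the argument.
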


\subsection{Improved estimates for convergent sequences}

\label{subsec_improved_area_estimates}

These estimates can be significantly improved when one considers a sequence
of two-valued minimal graphs that converge weakly to a vertical varifold,
that is of the form $V = V_0 \times \bR e_{n+1} \in \IV_n(D_2 \times \bR)$.
Here we concentrate on the case where additionally $V$ is known to be supported
in a union of planes, that is $V = \bP = \sum_j m_j \abs{\Pi_j^0} \times \bR e_{n+1}$,
where $\Pi_j = \Pi_j^0 \times \bR e_{n+1} \in \Gr(n,n+1)$.
(However we point out that virtually identical bounds are possible without this
hypothesis, at the price of slightly more involved computations.)

\begin{prop}
\label{prop_improved_estimates_sum_of_planes}
Let $u_j \in C^{1,\alpha}(D_2;\calA_2)$ be a sequence
of two-valued minimal graphs with $\abs{G_j} \to
\bP = \sum_j m_j \abs{\Pi_j^0} \times \bR e_{n+1}$.
Then $\sum_j m_j \leq \lfloor n \omega_n / \omega_{n-1} \rfloor$.
\end{prop}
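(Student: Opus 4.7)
The plan is to bound the area of each $G_j$ inside a finite vertical slab $D_r \times (-h,h)$ by adapting the test function argument of Proposition~\ref{prop_test_function}, then to pass to the varifold limit and exploit the vertical structure of $\bP$. The sharp constant $n\omega_n / \omega_{n-1}$ should emerge after dividing the resulting slab area bound by $2h \omega_{n-1} r^{n-1}$ (the $\norm{\bP}$-measure of such a slab) and sending $h \to \infty$.

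Fix $r \in (0,2)$ and $h > 0$, and choose a family $\psi_\eps \in C^1_c(D_{3/2})$ of radial cutoffs increasing to $\indic_{D_r}$ as $\eps \downarrow 0$, chosen by a standard radial construction so that $\int \abs{D \psi_\eps} \to \calH^{n-1}(\partial D_r) = n \omega_n r^{n-1}$. Apply Proposition~\ref{prop_test_function} to $\Phi_\eps(x,z,p) = \psi_\eps(x)\, z_h$, with $z_h$ the truncation at $\pm h$ of~\eqref{eq_def_zr}; expanding the chain rule, this gives
\begin{equation}
	\int_{D_{3/2}} \psi_\eps\, \frac{\abs{Du_j}^2}{v_j}\, \indic_{\abs{u_j} < h}
	= - \int_{D_{3/2}} (u_j)_h\, \langle T(Du_j), D \psi_\eps \rangle.
\end{equation}
The right-hand side is bounded in modulus by $2h \int \abs{D \psi_\eps}$; letting $\eps \downarrow 0$, using monotone convergence on the left and the uniform bounds $\abs{z_h} \leq h$ and $\abs{T(Du_j)} \leq 1$ on the right, yields
\begin{equation}
	\int_{D_r} \frac{\abs{Du_j}^2}{v_j}\, \indic_{\abs{u_j} < h}
	\leq 2 h n \omega_n r^{n-1}.
\end{equation}
Combining with the trivial bound $\int_{D_r}(1/v_j)\, \indic_{\abs{u_j} < h} \leq 2 \omega_n r^n$ and the branchwise identity $v_j = 1/v_j + \abs{Du_j}^2 / v_j$, this produces
\begin{equation}
	\calH^n(G_j \cap D_r \times (-h,h))
	\leq 2 h n \omega_n r^{n-1} + 2 \omega_n r^n.
\end{equation}

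Because $\bP$ is supported on vertical planes $\Pi_k = \Pi_k^0 \times \bR e_{n+1}$, the topological boundary of the slab $D_r \times (-h,h)$ is $\norm{\bP}$-null for a.e.~$r \in (0,2)$, so varifold convergence yields
$\calH^n(G_j \cap D_r \times (-h,h)) \to \norm{\bP}(D_r \times (-h,h)) = 2 h \omega_{n-1} r^{n-1} \sum_k m_k$. Sending $j \to \infty$ in the displayed area bound, dividing through by $2 h \omega_{n-1} r^{n-1}$, and then letting $h \to \infty$ gives $\sum_k m_k \leq n \omega_n / \omega_{n-1}$; integrality of $\sum_k m_k$ then concludes the proof.

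The main obstacle is securing the sharp boundary constant $n \omega_n r^{n-1}$ in the slab bound: any crude choice of cutoff (e.g.\ the dilate supported in $D_{2r}$ used for Proposition~\ref{prop_area_estimate}) produces an extra dimensional factor like $2^n$ and fails to yield the integer bound after dividing by $2 h \omega_{n-1} r^{n-1}$. The required radial construction of $\psi_\eps$ with $\int \abs{D \psi_\eps} \to \calH^{n-1}(\partial D_r)$ is routine, and the singular behaviour at the branch sets $\calB_{u_j}$ is absorbed by the capacity argument already built into Proposition~\ref{prop_test_function}.
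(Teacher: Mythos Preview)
Your proof is correct and follows essentially the same approach as the paper: both use the test function $\Phi = \psi(x)\, z_h$ in Proposition~\ref{prop_test_function} with a sharp radial cutoff to obtain the optimal perimeter constant $n\omega_n r^{n-1}$, then pass to the varifold limit and compare with $\norm{\bP}(D_r \times (-h,h)) = 2h\,\omega_{n-1} r^{n-1}\sum_k m_k$. The one minor difference is in how the lower-order term $\int_{D_r} (1/v_j)\,\indic_{\abs{u_j}<h}$ is disposed of: you bound it trivially by $2\omega_n r^n$ and kill it by sending $h \to \infty$, whereas the paper fixes $h = r = 1$ and instead argues directly that $\int_{D_1} (1/v_j)\,\indic_{\abs{u_j}<1} \to 0$ using the verticality of the limit (since $1/v_j = \langle \nu_j, e_{n+1}\rangle \to 0$ in measure). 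Your version is marginally more self-contained in that it avoids this extra limiting argument, at the cost of carrying a second parameter; the paper's version is slightly shorter.
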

\begin{proof}
For all $j$, $\calH^n(G_j \cap B_1) \leq
\int_{D_1} \frac{1}{v_j} \indic_{\abs{u_j} < 1} + \int_{D_1} \frac{\abs{Du_j}^2}{v_j}
\indic_{\abs{u_j} < 1}$. (In fact this bounds the area of the graph in
$D_1 \times (-1,1)$.)
A quick computation reveals that the varifold convergence $\abs{G_j} \to \bP$
is strong enough to ensure that $\int_{D_1} \frac{1}{v_j} \indic_{\abs{u_j} < 1} \to 0$
as $j \to \infty$, so that we may focus on the second term.
Referring back to~\eqref{eq_area_estimate_inequality_perimeter_type} and~%
\eqref{eq_split_area_two_integrals} we find that this is bounded like
$\int_{D_1} \frac{\abs{Du_j}^2}{v_j} \indic_{\abs{u_j} < 1} 
\leq 2 \calH^{n-1}(\bdary D_1) = 2 n \omega_n$.
Hence given any $\eps > 0$ there is $J(\eps) \in \bN$ so that
$\calH^n(G_j \cap B_1) \leq 2 n \omega_n + \eps$ for all $j \geq J(\eps)$,
and in fact by the remark above
$\calH^n(G_j \cap D_1 \times (-1,1)) \leq 2n  \omega_n + \eps$.
Letting $j \to \infty$ we find that
$\norm{\bP}(D_1 \times (-1,1)) \leq 2n \omega_n$,
whence after replacing the right-hand side by $\sum_j 2 m_j \omega_{n-1}$
we find $\sum_j m_j \leq n \omega_n / \omega_{n-1}$.
The conclusion follows after taking integer values. 
\end{proof}

\begin{table}
\caption{Improved area estimates obtained in
	Proposition~\ref{prop_improved_estimates_sum_of_planes},
for dimensions up to seven.}
\begin{tabularx}{0.8\textwidth}{ c  *{6}{Y} } 
\toprule
$n$ &  2 & 3 & 4 & 5 & 6 & 7 \\ \midrule
$n \omega_n / \omega_{n-1}$  & $\pi$ & $4$ &  $3\pi/2$ & $16/3$ & $15 \pi / 8$ & $32 / 5$ \\
$\lfloor n \omega_n / \omega_{n-1} \rfloor$  & 3 & 4 & 4 & 5 & 5 & 6 \\
\bottomrule
\end{tabularx}
\label{table_values_for_the_improved_estimates}
\end{table}

We obtain a small additional improvement of these estimates, which will eventually
turn out to be essential. Assume for now the validity of its conclusion. In the
special case $n = 3$, using the values from
Table~\ref{table_values_for_the_improved_estimates}
 one finds that $\sum_j m_j \leq (1 - \delta) 3 \omega_3 / \omega_2
= 4 ( 1 - \delta)$, whence after taking integer values $\sum_j m_j \leq 3$.

\begin{cor}
\label{cor_qualitative_estimates}
Let $\alpha \in (0,1)$, and $u_j \in C^{1,\alpha}(D_2;\calA_2)$ be a
sequence of two-valued minimal graphs with
$\abs{G_j} \to \bP = \sum_j m_j \abs{\Pi_j^0} \times \bR e_{n+1}$ as $j \to \infty$.
Then there is $\delta = \delta(\bP) \in (0,1)$ so that
$\sum_j m_j \leq \lfloor (1 - \delta) n \omega_n / \omega_{n-1} \rfloor$.
\end{cor}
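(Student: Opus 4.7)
My plan is to combine Proposition~\ref{prop_improved_estimates_sum_of_planes} with the integrality of $\sum_j m_j$. Set $r_n := n\omega_n/\omega_{n-1}$ and write $k = \sum_j m_j \in \bN_{>0}$. The Proposition gives $k \leq r_n$, and the desired bound $k \leq \lfloor (1-\delta) r_n \rfloor$ for some $\delta \in (0,1)$ is equivalent to the strict real inequality $k < r_n$; given such strictness, any $\delta \in (0,\, 1 - k/r_n)$ suffices, because then $\lfloor (1-\delta) r_n\rfloor \geq k$.

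I would first handle the easy case $r_n \notin \bN$ (which by Table~\ref{table_values_for_the_improved_estimates} covers $n = 2, 4, 5, 6, 7$): integrality of $k$ automatically gives $k \leq \lfloor r_n\rfloor < r_n$, so any $\delta \in (0,\, 1 - \lfloor r_n\rfloor / r_n)$ works.

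The essential case is $r_n \in \bN$, most notably $n = 3$ with $r_3 = 4$, where one must rule out $k = r_n$. I would revisit the proof of Proposition~\ref{prop_improved_estimates_sum_of_planes}: by varifold convergence and the verticality of $\bP$,
\[
\int_{D_1} \indic_{\abs{u_j} < 1}\, \abs{Du_j}^2 / v_j \, dx \longrightarrow 2\omega_{n-1} k,
\]
while the test function identity with a cutoff $\eta_\eps$ approximating $\indic_{D_1}$ bounds this integral above by $2 \int \abs{D\eta_\eps} \to 2 n \omega_n$, using $\abs{u_{j,1}} \leq 1$ and $\abs{T(Du_j)} \leq 1$. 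The equality $k = r_n$ would force saturation of these pointwise bounds asymptotically on $\partial D_1$: in an $L^1$ sense, $\abs{u_{j,1}(x)} \to 1$ and $T(Du_j)(x) \to \pm \hat{x}$ for $\calH^{n-1}$-a.e.\ $x \in \partial D_1$. However, by the Hutchinson-type stability (Proposition~\ref{prop_hutchinson_stability}), the varifold convergence $\abs{G_j} \to \bP$ forces the tangent directions of $G_j$ to align asymptotically with those of $\bP$, so that $T(Du_j) \to \pm \nu_i^0$ near $\Pi_i = \Pi_i^0 \times \bR e_{n+1}$, where $\nu_i^0 \perp \Pi_i^0$. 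At any $x \in \Pi_i^0 \cap \partial D_1$, the radial direction $\hat{x} = x$ lies in $\Pi_i^0$, hence $\hat{x} \perp \nu_i^0$, contradicting $T(Du_j)(x) \to \pm \hat{x}$. This contradiction yields $k < r_n$ strictly, and $\delta \in (0, 1/r_n)$ suffices.

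The hardest part will be making this saturation/contradiction argument quantitative, since varifold convergence controls $T(Du_j)$ in an integrated sense on the graph rather than pointwise on $\partial D_1$. In particular, on the set $\partial D_1 \setminus \bigcup_i \Pi_i^0$ (of full $(n-1)$-measure in $\partial D_1$), the graph heights $\abs{u_j}$ diverge and no direct tangent-plane convergence is available from $\abs{G_j} \to \bP$; transferring the boundary saturation requirement into a rigorous conflict with the limit geometry will require a boundary slicing or compactness argument, extracting the quantitative gap $\delta = \delta(\bP) > 0$ from the mismatch between the radial alignment forced by saturation and the horizontal normals $\{\nu_i^0\}$ determined by $\bP$.
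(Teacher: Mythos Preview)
Your proposal splits into an easy case ($r_n \notin \bN$) and a hard case ($r_n \in \bN$, notably $n=3$), and attempts the latter by a saturation--contradiction argument. You correctly identify that this argument is incomplete: on $\partial D_1 \setminus \bigcup_i \Pi_i^0$ the graphs escape to infinity, so varifold convergence to $\bP$ gives no information about $T(Du_j)$ there, and your proposed contradiction between radial alignment and the normals $\nu_i^0$ cannot be localised to where you have control. This is a genuine gap, not a routine technicality.

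The paper's proof sidesteps the saturation analysis entirely with a much simpler, constructive idea: rather than sharpening the inequality from Proposition~\ref{prop_improved_estimates_sum_of_planes}, it sharpens the \emph{cutoff}. Since $\spt \norm{\bP^0} \cap D_1$ lies in a finite union of $(n-1)$-planes, one can enclose it in an open set $U \subset D_1$ with $\Per(U) < \Per(D_1)$ (a thin tubular neighbourhood of the planes, using strict convexity of $\partial D_1$). One then chooses $\eta \in C_c^1(D_2)$ with $\eta \equiv 1$ on $U$ and $\int \abs{D\eta} \leq (1+\delta)\Per(U)$, and for large $j$ the graph $G_j \cap D_1 \times (-1,1)$ sits inside $U \times (-1,1)$, so the area estimate runs exactly as before but with $\Per(U)$ replacing $\Per(D_1)$. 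Choosing $\delta$ so that $(1+\delta)\Per(U) \leq (1-\delta)\Per(D_1)$ yields the conclusion directly, with $\delta = \delta(\bP)$ depending only on the gap $\Per(D_1) - \Per(U)$. No case distinction on whether $r_n$ is an integer is needed, and no saturation or boundary-trace analysis enters.
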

\begin{proof}
As the boundary of the disc is mean-convex, one may use an elementary construction
to find an open set $U \subset D_1$ with $\spt \norm{V_0} \cap D_1 \subset U$ and
$\Per(U) < \Per (D_1)$.
Given arbitrarily small $\delta > 0$, there is $\eta \in C_c^1(D_2)$ with
$\eta \equiv 1$ on $U$ and $\int_{D_2} \abs{D \eta} \leq (1 + \delta) \Per(U)$. 
Now first take $\tau > 0$ small enough that $(U)_\tau \subset \{ \eta = 1 \}$,
and next take the index $j \geq J(\tau)$ large enough that 
$G_j \cap D_1 \times (-1,1) \subset (U)_\tau  \times (-1,1)$.
This being fulfilled one proceeds as in the proof of the area bounds,
namely by estimating 
$\int_{D_1} \indic_{\abs{u_j} < 1} \frac{\abs{Du_j}^2}{v_j}
\leq 2 \int_{D_2} \abs{D \eta} \leq 2(1 + \delta) \Per(U)$.
Now choose $\delta = \delta(\bP) > 0$ small enough that $(1 + \delta)\Per(U) 
\leq (1 - \delta) \Per (D_1)$, and use the same value for $\delta$ in  the above.
Letting $j \to \infty$ we find $\norm{\bP}(D_1 \times (-1,1))
\leq  2 ( 1 - \delta) n \omega_n$, whence
$\sum_j m_j \leq \lfloor (1 - \delta) n \omega_n / \omega_{n-1} \rfloor$.
\end{proof}

\section{Gradient estimates for two-valued minimal graphs}
\label{sec_grad_estimates}

Let $\alpha \in (0,1)$ and the dimension $n \geq 1$ be arbitrary.
Let $u \in C^{1,\alpha}(D_2;\calA_2)$ be a two-valued minimal graph. In this
section we derive an interior gradient estimate analogous to the classical
estimates for smooth, single-valued graphs. These can be found for example
in Section~16.2 of~\cite{GilbargTrudinger98}, which we also follow for the
structure of the argument.
These gradient bounds stem from integral estimates for the function $w$,
defined on $\reg G$ by the expression~\eqref{eq_def_w}. To ensure the validity
of these in the presence of branch points (which are absent in the single-valued
case), we rely on the fine properties of the branch set proved by
\cite{KrumWic_FinePropsMinGraphs}, specifically that it has zero $2$-capacity.
The main result in this section is the following.

\begin{lem}
\label{lem_interior_gradient_bounds}
Let $\alpha \in (0,1)$ and let
$u \in C^{1,\alpha}(D_2;\calA_2)$ be a two-valued minimal graph.
There is a constant $C = C(n) > 0$ so that
\begin{equation}
	\max_{D_1} \norm{Du} \leq C \exp(C \sup_{D_2} \norm{u}).
\end{equation}
\end{lem}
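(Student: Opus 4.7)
The plan is to adapt the classical Bombieri--De Giorgi--Miranda interior gradient estimate for single-valued minimal graphs (see~\cite[Ch.~16.2]{GilbargTrudinger98}) to the two-valued setting. The backbone of the argument is that $v = \sqrt{1 + |Du|^2}$ is subharmonic on the graph, combined with Moser iteration via the Michael--Simon Sobolev inequality and a test function that incorporates the height variable $X^{n+1}$ to produce the exponential factor. The new ingredient relative to the classical setting is the presence of branch points, which will be handled via a capacity argument.

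First, I would work on $\reg G$. Locally on any simply connected subdomain $\Omega' \subset D_2 \setminus \calB_u$ one has a smooth selection $u = \{u_1, u_2\}$ with each $u_i$ solving the classical minimal surface equation. On each sheet the standard Simons-type computation gives
\begin{equation}
	\Delta_G v = |A_G|^2 v + 2 v^{-1} |\nabla_G v|^2
	\quad \text{and} \quad
	\Delta_G \log v = |A_G|^2 + |\nabla_G \log v|^2,
\end{equation}
so that $v$ and $\log v$ are both subsolutions on $\reg G$. At the same time, the coordinate function $X^{n+1}$ (which restricts to $u_i$ on $G_i$) is $G$-harmonic. These local identities patch together on all of $\reg G$ after the smooth gluing construction of Section~\ref{subsec_global_immersion}.

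Next I would insert the test function $\zeta = \eta\, v^{q/2} \exp(\gamma X^{n+1})$ into the stability inequality~\eqref{eq_stab_ineq}, where $\eta \in C_c^1(D_2)$ is a spatial cutoff with $\eta \equiv 1$ on $D_1$, $q \geq 1$ is an iteration parameter, and $\gamma > 0$ is a constant to be chosen. Expanding $|\nabla_G \zeta|^2$, using harmonicity of $X^{n+1}$ to convert cross terms, and invoking the subsolution identity for $\log v$ to absorb the $|A_G|^2$ term, one obtains a reverse-H\"{o}lder-type inequality for $v^q e^{2\gamma u}$ on $G$. Combined with the Michael--Simon Sobolev inequality on the stationary varifold $|G|$ and the cylindrical area bound of Lemma~\ref{lem_cylindrical_area_estimates}, this inequality is iterable: Moser iteration on $G$ gives
\begin{equation}
	\sup_{G \cap D_1 \times \bR} v
	\leq C \exp\bigl(C \sup_{D_2} \|u\|\bigr) \bigl(\calH^n(G \cap D_2 \times \bR)\bigr)^{1/2},
\end{equation}
which together with Lemma~\ref{lem_cylindrical_area_estimates} yields the claimed bound after choosing $\gamma$ proportional to $(\sup_{D_2} \|u\|)^{-1}$ (or a universal constant, depending on the scaling).

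The main obstacle is the branch set $\calB(G)$, on which the test function $\zeta$ does not vanish and across which the computations in the previous paragraphs are a priori invalid. The resolution is exactly the mechanism already used in the proof of Lemma~\ref{lem_graph_ambient_stability}: I would replace $\zeta$ by $(1 - \eta_j) \zeta$, where $(\eta_j)$ is the cutoff sequence supplied by Corollary~\ref{cor_cutoff_sequence_cpcty_2_zero}, carry out every integral manipulation on $\reg G \setminus (\calB(G))_{r_j}$ (where the smooth selections and the identities above are legitimate), and then send $j \to \infty$. The properties $\int_{\reg G} |\nabla_G \eta_j|^2 \to 0$ and $\eta_j \to 0$ $\calH^n$-a.e.\ together with Cauchy--Schwarz and dominated convergence ensure that every branch-set error term vanishes in the limit, leaving the desired inequality with the original $\zeta$. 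The only non-trivial check is that the pre-limit integrands are dominated by integrable majorants uniform in $j$, which follows from the area bound and the fact that $v$, while possibly large, is bounded on any fixed annular region once one argues by contradiction or a standard continuity-in-scale procedure to reduce to the case of a finite a priori bound.
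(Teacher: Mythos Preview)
Your approach is plausible but takes a genuinely different route from the paper. The paper does \emph{not} use Moser iteration or the Michael--Simon Sobolev inequality; it argues via a mean-value inequality for $w = \log v$. After establishing (as you do) that $w$ satisfies $\Delta_G w = \abs{\nabla_G w}^2 + \abs{A_G}^2$ weakly through the branch set, the paper derives a monotonicity-type mean-value inequality from scratch (Corollary~\ref{cor_mean_value_ineq}), explicitly because the standard Michael--Simon derivation requires $\int (\Delta_G \varphi)\, w$, which does not pass through $\calB(G)$ via the $2$-capacity cutoff. This yields $w(0) \leq C r^{-n} \int_{\reg G \cap B_r} w$. A separate integral estimate (Claim~\ref{claim_int_bound_for_gradient_estimate}), obtained by testing the weak MSE of Proposition~\ref{prop_test_function} with $\Phi(x,z,p) = \tfrac12 \log(1+\abs{p}^2)(z_r+r)\eta(x)$ --- linear, not exponential, in the height --- bounds this integral by $C(1 + M/r)$, and the exponential appears only upon exponentiating $w$ at the very end.

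Your iteration route sidesteps the delicate mean-value derivation, since Caccioppoli plus Sobolev only use the first-order weak identity, which \emph{does} extend through $\calB(G)$. That is a real advantage. Two points in your sketch need tightening, however. First, $\zeta = \eta\, v^{q/2} e^{\gamma X^{n+1}}$ is intrinsic (it depends on $\nu$ through $v$) and is therefore not an admissible test function in the ambient inequality~\eqref{eq_stab_ineq}; you must work on the immersion $\Gamma$ with the intrinsic stability of Section~\ref{subsec_stability_of_two_valued_graphs}, and likewise check that the Michael--Simon inequality on $\Gamma$ survives the capacity limit. Second, the Caccioppoli step is actually driven by the subsolution identity for $\log v$ tested against $\eta^2 v^q e^{2\gamma u}$ rather than by stability: plugging $\zeta$ into stability alone produces a term $q^2 v^{q-2}\abs{\nabla_G v}^2 \leq q^2 \abs{A_G}^2 v^{q}(v^2-1)$ on the right, which carries an uncontrolled extra factor of $v^2$ and does not close. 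Once you rewrite the argument so that the $\log v$ identity does the work, the iteration should go through.
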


We prove the equivalent version below, for discs of arbitrary radius $r > 0$.

\begin{lem}
\label{lem_int_grad_bounds_form_in_small_ball}

Let $u \in C^{1,\alpha}(D_{3r};\calA_2)$ be a two-valued minimal graph.
Then there is a constant $C = C(n) > 0$ so that
\begin{equation}
	\max \{\abs{Du_1(0)},\abs{Du_2(0)} \}
	\leq  C \exp(C \max_{D_{2r}}\norm{u} / r).
\end{equation}
\end{lem}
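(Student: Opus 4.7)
The plan is to adapt the classical interior gradient estimate for smooth single-valued minimal graphs, as presented in Ch.~16 of~\cite{GilbargTrudinger98}, to the two-valued setting. The key new ingredient is the excision of the branch set, made possible by its countable $(n-2)$-rectifiability proved in~\cite{KrumWic_FinePropsMinGraphs} (hence $\cpcty_2 \calB_u = 0$) together with the cutoff sequence of Corollary~\ref{cor_cutoff_sequence_cpcty_2_zero}. The first step is homothetic rescaling: setting $\tilde u(x) = r^{-1}u(rx)$ produces a two-valued minimal graph $\tilde u \in C^{1,\alpha}(D_3;\calA_2)$ with $\abs{D \tilde u(0)} = \abs{Du(0)}$ and $\sup_{D_2}\norm{\tilde u} = r^{-1}\sup_{D_{2r}}\norm{u}$, which reduces the statement to Lemma~\ref{lem_interior_gradient_bounds}.

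For the unit-scale case I would work on the graph $G$ itself. On $\reg G$ each smooth local selection $u_i$ satisfies the minimal surface equation, so a direct computation yields the classical pointwise identity $\Delta_G v = \abs{A_G}^2 v + 2 \abs{\nabla_G v}^2/v$ for $v = \sqrt{1 + \abs{Du}^2}$, and in particular both $v$ and $\log v$ are subharmonic on $\reg G$. Testing these against cutoffs of the form $\phi(X) = \chi(x) \exp(\gamma X^{n+1})$ and integrating by parts on $G$ gives a Caccioppoli-type inequality. Combined with the Michael--Simon Sobolev inequality on the stationary varifold $\abs{G}$ (equivalently, on the immersion of Section~\ref{subsec_global_immersion}), Moser iteration upgrades this to an $L^\infty$ bound for $v$ at the origin. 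The exponential weight $\exp(\gamma X^{n+1})$ is what produces the exponential dependence on $\sup_{D_2} \norm{u}$ in the final estimate, and the initial $L^1$ quantity to iterate is controlled by the cylindrical area estimate of Lemma~\ref{lem_cylindrical_area_estimates}.

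The main obstacle is to justify these integration-by-parts manipulations across the branch set, since $\reg G \cup \calC(G)$ is only open in $G$ and a~priori one cannot test against functions that do not vanish near $\calB(G)$. I would address this exactly as in the proof of Lemma~\ref{lem_graph_ambient_stability}: let $(\eta_j \mid j \in \bN)$ be the cutoff sequence on $G$ provided by Corollary~\ref{cor_cutoff_sequence_cpcty_2_zero}, replace every test function $\phi$ by $(1 - \eta_j)\phi$ so that the identities hold classically on $\spt\{(1-\eta_j)\phi\} \subset \reg G \cup \calC(G)$, and pass to the limit $j \to \infty$. Cross-terms of the form $\int_{\reg G} \abs{\nabla_G \eta_j} \cdot \abs{\nabla_G \phi} \, v^p \dH^n$ are handled by Cauchy--Schwarz together with $\int \abs{\nabla_G \eta_j}^2 \to 0$ and the area estimate, while all remaining terms converge by dominated convergence. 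Once the weak form of each smooth identity has been established in this way, the Moser iteration proceeds verbatim from the single-valued argument in~\cite[Ch.~16]{GilbargTrudinger98}, and the resulting exponential $L^\infty$ bound on $v$ is precisely the desired pointwise estimate on $\max\{\abs{Du_1(0)}, \abs{Du_2(0)}\}$.
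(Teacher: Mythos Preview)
Your high-level strategy matches the paper's: both adapt Ch.~16 of \cite{GilbargTrudinger98}, work with $w = \log v$ on $\reg G$, and excise $\calB(G)$ via the capacity cutoffs of Corollary~\ref{cor_cutoff_sequence_cpcty_2_zero}. The paper also begins by establishing the weak identity $\Delta_G w = \abs{\nabla_G w}^2 + \abs{A_G}^2$ through the branch set (Lemma~\ref{lem_weak_PDE_for_w}), then bounds $w(0)$ by an integral average which it estimates using Proposition~\ref{prop_test_function} with the test function $\Phi(x,z,p) = \tfrac{1}{2}\log(1+\abs{p}^2)(z_r + r)\eta(x)$ together with the cylindrical area bound of Lemma~\ref{lem_cylindrical_area_estimates}.

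The technical route differs at the sup-estimate step. You propose Michael--Simon Sobolev plus Moser iteration; the paper instead derives a mean-value inequality for $w$ from scratch (Corollary~\ref{cor_mean_value_ineq}), reworking the monotonicity computation with weight $w$ and justifying each first-variation identity by capacity. The paper is explicit about why it does \emph{not} quote the mean-value inequalities of Michael--Simon or Simon~\cite[Ch.~18]{Simon84}: the identity $\int_{\reg G}(\Delta_G\varphi)w = -\int_{\reg G}\langle\nabla_G\varphi,\nabla_G w\rangle$ does not extend across $\calB(G)$ by the $\cpcty_2$ argument, and the literature derivations rely on that form. Your Moser route sidesteps that particular identity (it needs only the weak inequality $-\int\langle\nabla_G w,\nabla_G\psi\rangle \geq 0$, which the paper does justify), but you still owe a verification that the Sobolev step applies to functions like $\phi\, w^{p/2}$, which are only $C^{0,\alpha}$ near $\calB(G)$ with $\nabla_G w \in L^2_{\loc}$. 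This is plausible, but it is precisely the technical point you have left implicit and which the paper chose to circumvent by a longer self-contained derivation.

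One small correction: the exponential weight $\exp(\gamma X^{n+1})$ is not how the exponential dependence arises here. It comes simply from $v = e^w$ after showing $w(0) \leq C r^{-n}\int_{\reg G \cap B_r} w \leq C(1 + r^{-1}\sup_{D_{2r}}\norm{u})$; no such weighted test function is used.
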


\subsection{Integral estimates and a mean-value inequality for $w$}

Define a function $w$ at all points  $X \in \reg G \cap D_2 \times \bR$ by
\begin{equation}
	\label{eq_def_w}
	w(X) = \log v(X) = - \log \langle \nu(X) , e_{n+1} \rangle,
\end{equation}
where $\nu(X)$ is the upward-pointing unit normal to $\reg G$ at $X$
and $v(X) = \langle \nu(X), e_{n+1} \rangle^{-1}$.

\begin{lem}
	\label{lem_weak_PDE_for_w}
For all compact $K \subset D_2 \times \bR$,
\begin{equation}
	\label{eq_local_bounds_for_w_and_nabla_w}
	\sup_{K \cap \reg G} w
	+ \int_{K \cap \reg G} \abs{\nabla_G w}^2 < +\infty
\end{equation}
and $w$ satisfies $\Delta_G w = \abs{\nabla_G w}^2 + \abs{A_G}^2$
weakly in the sense that for all $\varphi \in C_c^1(D_2 \times \bR)$,
\begin{equation}
	\label{eq_integral_identity_PDE_for_w}
	- \int_{\reg G \cap D_2 \times \bR} \langle \nabla_G w , \nabla_G \varphi \rangle
	= \int_{\reg G \cap D_2 \times \bR} (\abs{\nabla_G w}^2 + \abs{A_G}^2) \varphi.
\end{equation}
\end{lem}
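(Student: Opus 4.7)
The plan is to combine the classical pointwise identity $\Delta_G w = |A_G|^2 + |\nabla_G w|^2$ (valid on any smooth minimal hypersurface) with a capacity argument that excises the branch set, using the stability inequality to control $|A_G|$ in $L^2$ on compact subsets.

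First I would establish the sup bound on $w$. Since $u \in C^{1,\alpha}(D_2;\calA_2)$, the two-valued derivative $Du$ is continuous on the compact projection $K_0 = P_0(K) \subset D_2$ and hence bounded, so $v = \sqrt{1 + |Du|^2}$ and $w = \log v$ are bounded on $\reg G \cap K$. Second, a direct computation on the smoothly immersed part (i.e.\ on the manifold $\Gamma$ of Section~\ref{subsec_global_immersion}) yields the pointwise identity $\Delta_G w = |\nabla_G w|^2 + |A_G|^2$ on $\reg G$, together with the pointwise bound $|\nabla_G w|^2 \leq v^2 |A_G|^2$, which follows from $\nabla_G w = -v \nabla_G \langle \nu, e_{n+1}\rangle$ and $|\nabla_G \nu| = |A_G|$. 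Combined with the sup bound on $v$, this gives $|\nabla_G w|^2 \leq C(K) |A_G|^2$ on $\reg G \cap K$. Finally, the $L^2$-bound on $|A_G|$ on compact subsets of $D_2 \times \bR$ follows from the stability inequality \eqref{eq_stab_ineq} applied to any cutoff $\psi \in C_c^1(D_2 \times \bR)$ that equals $1$ on $K$, which establishes \eqref{eq_local_bounds_for_w_and_nabla_w}.

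For the weak identity \eqref{eq_integral_identity_PDE_for_w}, fix $\varphi \in C_c^1(D_2 \times \bR)$ and let $(\eta_j \mid j \in \bN)$ be the cutoff sequence from Corollary~\ref{cor_cutoff_sequence_cpcty_2_zero} adapted to $\calB(G)$. Then $(1 - \eta_j)\varphi$ is a $C^1$ test function compactly supported away from $\calB(G)$, so on a neighbourhood of its support the graph is smoothly immersed via the map $\iota: \Gamma \to \bR^{n+1}$ of Section~\ref{subsec_global_immersion} (which is proper there). Pulling back to $\Gamma$ and invoking the classical divergence theorem applied to the smooth vector field $(1-\eta_j)\varphi \nabla_G w$ together with the pointwise PDE yields
\begin{equation}
-\int_{\reg G} \langle \nabla_G w, \nabla_G\{(1-\eta_j)\varphi\}\rangle = \int_{\reg G}(|\nabla_G w|^2 + |A_G|^2)(1-\eta_j)\varphi.
\end{equation}

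It remains to pass to the limit $j \to \infty$. Expanding $\nabla_G\{(1-\eta_j)\varphi\} = (1-\eta_j)\nabla_G \varphi - \varphi \nabla_G \eta_j$, Cauchy--Schwarz yields
\begin{equation}
\Bigl|\int_{\reg G} \varphi \langle \nabla_G w, \nabla_G \eta_j\rangle\Bigr| \leq \sup|\varphi| \Bigl(\int_{\reg G \cap \spt \varphi} |\nabla_G w|^2\Bigr)^{1/2} \Bigl(\int_{\reg G \cap \spt \varphi} |\nabla_G \eta_j|^2\Bigr)^{1/2},
\end{equation}
which tends to zero by the defining property of $(\eta_j)$ and the already-established finiteness of the first factor. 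The remaining terms converge by dominated convergence, using $\eta_j \to 0$ $\calH^n$-almost everywhere on $\reg G$, the $L^2$-integrability of $|\nabla_G w|$ and $\nabla_G \varphi$, and the integrability of $(|A_G|^2 + |\nabla_G w|^2)\varphi$ on compact sets. The main technical point will be to verify the integrability hypotheses required for dominated convergence on the right-hand side, which reduces to the $L^2$-bound on $|A_G|$ already obtained from the stability inequality. The essential ingredient throughout is the $2$-capacity zero property of $\calB(G)$ furnished by the countable $(n-2)$-rectifiability of the branch set from~\cite{KrumWic_FinePropsMinGraphs}.
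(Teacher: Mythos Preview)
Your proof is correct and, for the weak identity, follows the same capacity argument as the paper. The one genuine difference is in how you obtain the local $L^2$-bound for $\nabla_G w$: you use the pointwise inequality $\abs{\nabla_G w}^2 \leq v^2 \abs{A_G}^2$ together with the sup bound on $v$ and the stability inequality, whereas the paper instead tests the PDE on the immersed surface $\Gamma$ with $\phi^2$ and uses a Cauchy--Schwarz absorption trick to obtain the sharper bound
\[
\int_{\reg G} \abs{\nabla_G w}^2 \varphi^2 \leq 4 \int_{\reg G} \abs{\nabla_G \varphi}^2,
\]
which does not depend on any lower bound for $\langle \nu, e_{n+1}\rangle$ on $\spt \varphi$. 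The paper itself points out (Remark~\ref{rem_alternative_derivation_bounds_for_w}) that your route is perfectly adequate for the present lemma; the reason it opts for the longer derivation is that the constant-$4$ inequality is needed later in the proof of the interior gradient estimate (Lemma~\ref{lem_int_grad_bounds_form_in_small_ball}), where the bound must be uniform as the graphs steepen.
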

\begin{proof}
Let $K \subset D_2 \times \bR$ be an arbitrary compact subset.
As the gradient of $u$ is locally bounded, we also find that
$\sup_{\reg G \cap K} w <  +\infty$.
To prove $\int_{\reg G \cap K} \abs{\nabla_G w}^2 < +\infty$
we use the minimal immersion $\iota: \Gamma \to \bR^{n+1}$
with image $\iota(\Gamma) = G \setminus \calB(G) = \reg G \cup \calC(G)$
that we constructed in Section~\ref{subsec_global_immersion}.

Pull back $w$ to $w \circ \iota \in C^\infty(\iota^{-1}(\reg G))$,
which we subsequently extend across $\iota^{-1}(\calC(G))$ to yield a function
in $C^\infty(\Gamma)$, still denoted by $w \circ \iota$.
This function satisfies the PDE
\begin{equation}
	\label{eq_PDE_on_globally_immersed_manifold}
	\Delta_\Gamma(w \circ \iota)
	- \abs{\nabla_\Gamma (w \circ \iota)}^2 - \abs{A_\Gamma}^2 = 0
\end{equation}
pointwise (and thus also weakly) on $\Gamma$.
From this we may deduce the bound 
\begin{equation}
	\label{eq_local_grad_bound}
	\int_\Gamma \abs{\nabla_\Gamma (w \circ \iota)}^2 \phi^2
	\leq 4 \int_\Gamma \abs{\nabla_\Gamma \phi}^2,
\end{equation}
valid for all $\phi \in C_c^1(\Gamma)$.

Indeed if we ignore the curvature term in
\eqref{eq_PDE_on_globally_immersed_manifold}---as we may because it
has a favourable sign---and integrate against an arbitrary $\phi \in C_c^1(\Gamma)$
we see that
\begin{equation}
	\int_\Gamma \abs{\nabla_\Gamma (w \circ \iota)}^2 \phi
	\leq - \int_\Gamma \langle \nabla_\Gamma (w \circ \iota), \nabla_\Gamma \phi \rangle.
\end{equation}
If instead we use $\phi^2$ as a test function, then we find
\begin{multline}
	\int_\Gamma \abs{\nabla_\Gamma (w \circ \iota)}^2 \phi^2  \\
	\leq -2 \int_\Gamma \phi 
	\langle \nabla_\Gamma (w \circ \iota), \nabla_\Gamma \phi \rangle
	\leq 2 \big( \int_\Gamma \phi^2 \abs{\nabla_\Gamma (w \circ \iota)}^2 \big)^{1/2}
	\big( \int_\Gamma \abs{\nabla_\Gamma \phi}^2 \big)^{1/2}.
\end{multline}
Unless $\int_\Gamma \abs{\nabla_\Gamma ( w \circ \iota)}^2 \phi^2 = 0$ we may divide
both sides by its square root, yielding~\eqref{eq_local_grad_bound}.
(If the integral vanishes then the inequality is trivially satisfied.)

In particular, if we take $\varphi \in C_c^1(D_2 \times \bR \setminus \calB(G))$ and
let $\phi = \varphi \circ \iota$ and translate \eqref{eq_local_grad_bound} 
to the graph, we obtain
\begin{equation}
	\label{eq_local_grad_bound_on_G}
	\int_{\reg G \cap D_2 \times \bR} \abs{\nabla_G w}^2 \varphi^2
	\leq 4 \int_{\reg G \cap D_2 \times \bR} \abs{\nabla_G \varphi}^2.
\end{equation}
To extend this through the branch point singularities of $G$, we once
again employ the sequence $(\eta_j \mid j \in \bN)$ with properties as
described in Lemma~\ref{lem_EG_cutoff_sequence_two}.

Then proceed as in the proof of Lemma~\ref{lem_graph_ambient_stability},
namely take $\varphi \in C_c^1(D_2 \times \bR)$ and substitute the test function
$\varphi (1 - \eta_j)$ into \eqref{eq_local_grad_bound_on_G}, yielding
\begin{multline}
	\int_{\reg G \cap D_2 \times \bR} \abs{\nabla_G w}^2 \varphi^2 (1 - \eta_j)^2 \\
	\leq 4 \int_{\reg G \cap D_2 \times \bR} \varphi^2 \abs{\nabla_G \eta_j}^2
	+ 2 \varphi (1-\eta_j) \langle \nabla_G \varphi , \nabla_G (1 -  \eta_j) \rangle
	+ \abs{\nabla_G \varphi}^2 (1 - \eta_j)^2.
\end{multline} 
The terms on the right-hand side are identical to those on the right-hand
side of~\eqref{eq_stab_ineq_with_cutoff_branch_set}, so that we may justify
passing to the limit $j \to \infty$ as in
\eqref{eq_grad_to_zero_capa_proof_stability},\eqref{eq_dom_convergence_proof_stability}
and \eqref{eq_convergence_crossterm_proof_stability}.
We thus obtain
\begin{equation}
	\label{eq_gradient_estimate_for_nabla_w}
	\int_{\reg G \cap D_2 \times \bR} \abs{\nabla_G w}^2 \varphi^2 
	\leq 4\int_{\reg G \cap D_2 \times \bR} \abs{\nabla_G \varphi}^2,
\end{equation}
justifying passing to the limit on the left-hand side 
by an application Fatou's lemma, again as in the proof of
Lemma~\ref{lem_graph_ambient_stability}.
This justifies \eqref{eq_local_bounds_for_w_and_nabla_w}.

To show that $w$ is a weak solution of the PDE $\Delta_G w 
= \abs{\nabla_G w}^2 + \abs{A_G}^2$ we proceed in much the same
way. The integral identity \eqref{eq_integral_identity_PDE_for_w}
is obtained for test functions $\varphi \in C_c^1(D_2 \times \bR \setminus \calB(G))$
by working with the immersed $\Gamma$ instead, as above.
Using the same sequence $(\eta_j \mid j \in \bN)$ of functions, we then
obtain 
\begin{equation}
	- \int_{\reg G \cap D_2 \times \bR}
	\langle \nabla_G w, \nabla \{ (1 - \eta_j) \varphi \} \rangle
	= \int_{\reg G \cap D_2 \times \bR}
	(\abs{\nabla_G w}^2 + \abs{A_G}^2) \varphi (1 - \eta_j).
\end{equation}
For the right-hand side of the identity, we may let $j \to \infty$
by dominated convergence, which we can justify using our previously established
local $L^2$-bounds for $A_G$ and $\abs{\nabla_G w}$ from \eqref{eq_stab_ineq} and 
\eqref{eq_local_bounds_for_w_and_nabla_w} respectively.

For the left-hand side of the identity, we expand
\begin{multline}
	- \int_{\reg G \cap D_2 \times \bR}
	\langle \nabla_G w, \nabla \{ (1 - \eta_j) \varphi \} \\
	= \int_{\reg G \cap D_2 \times \bR}
	\langle \nabla_G w , \nabla_G \eta_j \rangle \varphi
	- \langle \nabla_G w, \nabla_G \varphi \rangle (1 - \eta_j),
\end{multline}
and repeat calculations akin to those in the first part of the proof, noting that
the bounds of \eqref{eq_local_bounds_for_w_and_nabla_w} justify both the limit
\begin{multline}
	\int_{\reg G \cap D_2 \times \bR}
	\langle \nabla_G w , \nabla_G \eta_j \rangle \varphi \\
	\leq \sup_{D_2 \times \bR}
	\abs{\varphi} \big(\int_{\reg G \cap D_2 \times \bR} \abs{\nabla_G w}^2\big)^{1/2}
	\big(\int_{\reg G \cap D_2 \times \bR} \abs{\nabla_G \eta_j}^2\big)^{1/2} 
	\to 0 
	\text{ as $j \to \infty$}
\end{multline}
and the application of dominated convergence to deduce that
\begin{equation}
	\int_{\reg G \cap D_2 \times \bR} \langle \nabla_G w, \nabla_G \varphi \rangle
	(1 - \eta_j) \to \int_{\reg G \cap D_2 \times \bR}
	\langle \nabla_G w, \nabla_G \varphi \rangle
	\text{ as $j \to \infty$}.
\end{equation}
Thus we have derived the identity~\eqref{eq_integral_identity_PDE_for_w},
which concludes the proof of the lemma.
\end{proof}

\begin{rem}
\label{rem_alternative_derivation_bounds_for_w}
We could have derived the integral estimate for $\abs{\nabla_G w}$ 
in~\eqref{eq_local_bounds_for_w_and_nabla_w} differently,
using the local curvature bounds that follow from
the stability inequality~\eqref{eq_stab_ineq}.
Indeed at all regular points $\langle \nu , e_{n+1} \rangle > 0$, hence
\begin{equation}
	\label{eq_ineq_alternative_sobolev_bounds}
	\abs{\nabla_G w}^2 \leq \abs{A_G}^2 (\langle \nu , e_{n+1} \rangle^{-2} - 1)
	\text{ on $\reg G$},
\end{equation}
where we used the fact that $\abs{\nabla_G \langle \nu , e_{n+1} \rangle}^2
\leq \abs{A_G}^2 (1 - \langle \nu , e_{n+1} \rangle^2)$.
Given any compact subset $K \subset D_2 \times \bR$,
the term $\langle \nu,e_{n+1} \rangle$ is bounded below, say
$\langle \nu,e_{n+1} \rangle \geq \delta_K$ on $\reg G \cap K$.
Then integrating~\eqref{eq_ineq_alternative_sobolev_bounds}
we obtain
\begin{equation}
	\label{eq_succinct_estimate_nabla_w}
	\int_{\reg G \cap K} \abs{\nabla_G w}^2
	\leq (\delta_K^{-2} - 1) \int_{\reg G \cap K} \abs{A_G}^2,
\end{equation}
whence we get $\int_{\reg G \cap K} \abs{\nabla_G w}^2 \leq C_K$ for
some $C_K > 0$ using~\eqref{eq_stab_ineq}.

We chose to include the longer derivation in our proof, as it yields the
more precise $\int_{\reg G \cap D_2 \times \bR} \abs{\nabla_G w}^2 \varphi
\leq 4 \int_{\reg G \cap D_2 \times \bR} \abs{\nabla_G \varphi}^2$,
valid for all $\varphi \in C_c^1(D_2 \times \bR)$.
We will also use this in the derivation of the interior gradient estimates
(see the proof of Lemma~\ref{lem_int_grad_bounds_form_in_small_ball} below).
Compare this with the less useful inequality derived by arguing as above,
essentially combining \eqref{eq_succinct_estimate_nabla_w} with the stability
inequality~\eqref{eq_stab_ineq} to yield
$\int_{\reg G \cap D_2 \times \bR} \abs{\nabla_G w}^2 \varphi^2
\leq (\delta_K^{-2} - 1) \int_{\reg G \cap D_2 \times \bR} \abs{\nabla_G \varphi}^2$,
where $\varphi \in C_c^1(D_2 \times \bR)$ and $\spt \varphi \subset K$.
\end{rem}

Let us quickly comment on a slightly subtle point.
The function $w = - \log \langle \nu , e_{n+1} \rangle$ is only defined on the
regular part $\reg G \cap D_2 \times \bR$, and cannot be extended continuously
across $\calC(G) \cap D_2 \times \bR$.
However after pulling $w$ back via the immersion $\iota: \Gamma \to G \setminus
\calB(G)$ we obtain a function $w \circ \iota$ which we can extend smoothly
through $\iota^{-1}(\calC(G))$. This in turn allowed us to integrate by parts,
yielding formulas which translate to $G$.

This way one obtains the following identity,
valid for all $\varphi \in C_c^2(D_2 \times \bR \setminus \calB(G))$:
\begin{equation}
	\label{eq_partial_integration}
	\int_{\reg G \cap D_2 \times \bR} (\Delta_G \varphi) w
	= - \int_{\reg G \cap D_2 \times \bR} \langle \nabla_G \varphi , \nabla_G w \rangle
	= \int_{\reg G \cap D_2 \times \bR} \varphi \Delta_G w.
\end{equation}
From this, we may verify using a capacity argument
that for $\varphi \in C_c^2(D_2 \times \bR)$,
\begin{equation}
	\label{eq_partial_int_through_branch_set}
	- \int_{\reg G \cap D_2 \times \bR} \langle \nabla_G \varphi, \nabla_G w \rangle
	= \int_{\reg G \cap D_2 \times \bR} \varphi \Delta_G w.
\end{equation}
Let again $(\eta_j \mid j \in \bN)$ be a sequence of functions with properties as described
in Lemma~\ref{lem_EG_cutoff_sequence_two}. By a mollification argument for
example, we may additionally impose that $\eta_j \in C_c^\infty(D_2 \times \bR)$ for all $j$.
If $\varphi \in C_c^2(D_2 \times \bR)$ then
$(1 - \eta_j) \varphi \in C_c^2(D_2 \times \bR \setminus \calB(G))$
is a valid test function in~\eqref{eq_partial_integration}.

Focus on the two integrals in~\eqref{eq_partial_int_through_branch_set}.
For the first, we may justify taking the limit 
\begin{equation}
	\int_{\reg G \cap U} \langle \nabla_G \{ (1 - \eta_j)  \varphi \} , \nabla_G w \rangle
	\to \int_{\reg G \cap U} \langle \nabla_G \varphi , \nabla_G w \rangle
	\text{ as $j \to \infty$}
\end{equation}
as usual, whereas for the second note that on $\reg G$,
\begin{equation}
\abs{\varphi (1 - \eta_j) \Delta_G w}
\leq \abs{\varphi} (\abs{\nabla_G w}^2 + \abs{A_G}^2)
\text{ for all $j$}.
\end{equation}
As $K = \spt \varphi$ is compact, by stability
and~\eqref{eq_local_bounds_for_w_and_nabla_w} there is a $C_K > 0$ so that
\begin{equation}
	\int_{\reg G \cap K} \abs{\nabla_G w}^2 + \abs{A_G}^2
	\leq C_K.
\end{equation}
Finally, dominated convergence allows taking the limit
\begin{equation}
	\int_{\reg G \cap U} \varphi ( 1 - \eta_j) \Delta_G w
	\to \int_{\reg G \cap U} \varphi \Delta_G w
	\text{ as $j \to \infty$},
\end{equation}
which confirms the identity~\eqref{eq_partial_int_through_branch_set} claimed above.

By dropping the curvature term from the (weakly satisfied) PDE $\Delta_G w = 
\abs{\nabla_G w}^2 + \abs{A_G}^2$ one sees that $w$ is weakly subharmonic on
$G$ through the branch set. This implies that it satisfies a mean-value
inequality, see Corollary~\ref{cor_mean_value_ineq} below.
Here the difference between the identities expressed in~\eqref{eq_partial_integration}
and~\eqref{eq_partial_int_through_branch_set} becomes significant.
The identity involving the left-most term of~\eqref{eq_partial_integration}
does not naturally extend across the branch set of $G$ via the capacity argument
we just invoked.
This is unfortunate, because it \emph{prima facie} prevents an appeal to the
mean-value inequalities that appear in the literature: see for example those
derived by Michael--Simon~\cite{Michael_Simon_Sobolev} or Simon~\cite[Ch.\ 18]{Simon84}.
For this reason, we give a detailed derivation of the mean-value inequality%
~\eqref{eq_developed_monotonicity_formula} expressed in Corollary~\ref{cor_mean_value_ineq};
for this we essentially use a modification of arguments of Simon~\cite[Ch.\ 17]{Simon84}.

\begin{cor}
	\label{cor_mean_value_ineq}
Let $X = (x,X^{n+1}) \in D_2 \times \bR$.
Then for all $0 < \sigma < \rho < 2 - \abs{x}$
\begin{multline}
	\label{eq_mean_value_ineq}
	\rho^{-n} \int_{\reg G \cap B_\rho(X)} w
	- \sigma^{-n} \int_{\reg G \cap B_\sigma(X)} w \\
	\geq \int_{\reg G \cap B_\rho(X) \setminus \clos{B}_\sigma(X)}
	w \abs{D^\perp r}^2 r^{-n} \geq 0.
\end{multline}
\end{cor}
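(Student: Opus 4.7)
The plan is to establish a standard Simon-type monotonicity identity for the non-negative weakly subharmonic function $w$ on the stationary varifold $\abs{G}$, with the adaptation to pass through the branch set $\calB(G)$ via the capacity cutoff sequence of Corollary~\ref{cor_cutoff_sequence_cpcty_2_zero}. Throughout, write $r(Y)=\abs{Y-X}$; note that $w=\log v\geq 0$ on $\reg G$ since $v\geq 1$, and that $w$ is locally bounded on $\reg G$ because $u\in C^{1,\alpha}$ gives $\abs{Du}$ locally bounded.

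First I would apply the first variation of $\abs{G}$ to the weighted vector field $\xi(Y)=(Y-X)\gamma(r(Y))w(Y)$, where $\gamma\in C^1([0,\infty))$ is non-increasing with compact support. Computing the tangential divergence using $\abs{\nabla_G r}^2+\abs{D^\perp r}^2=1$ together with $\langle Y-X,\nabla_G w\rangle=r\langle\nabla_G r,\nabla_G w\rangle$ (which follows from tangentiality of $\nabla_G w$) yields
\begin{equation}
\label{eq_plan_identity_one}
\int_{\reg G}\bigl\{nw\gamma+rw\gamma'(1-\abs{D^\perp r}^2)+r\gamma\langle\nabla_G r,\nabla_G w\rangle\bigr\}\,\diff\calH^n=0.
\end{equation}
To justify this when $\spt\gamma$ meets $\calB(G)$, I would multiply $\xi$ by $1-\eta_j$ with $\eta_j$ as in Corollary~\ref{cor_cutoff_sequence_cpcty_2_zero} and pass to the limit using the $L^2$-bounds on $\nabla_G w$ from~\eqref{eq_local_bounds_for_w_and_nabla_w} and the vanishing of $\nabla_G\eta_j$ in $L^2$, exactly as in the proofs of Lemma~\ref{lem_graph_ambient_stability} and Lemma~\ref{lem_weak_PDE_for_w}.

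Specialising $\gamma=\gamma_\rho$ to a smooth non-increasing approximation of $\chi_{[0,\rho]}$ (so that $\gamma_\rho'\to-\delta_\rho$), the identity \eqref{eq_plan_identity_one} together with the coarea formula yield, upon differentiating $I(\rho):=\rho^{-n}\int_{\reg G\cap B_\rho(X)}w\,\diff\calH^n$,
\begin{equation*}
\frac{d}{d\rho}I(\rho)=\rho^{-n-1}\int_{\reg G\cap B_\rho(X)}r\langle\nabla_G r,\nabla_G w\rangle\,\diff\calH^n+\rho^{-n}\int_{\reg G\cap\partial B_\rho(X)}\frac{w\abs{D^\perp r}^2}{\abs{\nabla_G r}}\,\diff\calH^{n-1}.
\end{equation*}
To control the interior term, I would test the weak PDE $\Delta_G w\geq 0$ from Lemma~\ref{lem_weak_PDE_for_w} against the Lipschitz non-negative function $\phi(Y)=(\rho^2-r(Y)^2)_+/2$: since $\nabla_G\phi=-r\nabla_G r$ on $\{r<\rho\}$, weak subharmonicity (after the analogous branch-set cutoff) gives $\int_{B_\rho(X)}r\langle\nabla_G r,\nabla_G w\rangle\,\diff\calH^n\geq 0$. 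Hence $\frac{d}{d\rho}I(\rho)\geq\rho^{-n}\int_{\partial B_\rho(X)\cap\reg G}w\abs{D^\perp r}^2/\abs{\nabla_G r}\,\diff\calH^{n-1}$; integrating from $\sigma$ to $\rho$ and applying the coarea identity
\begin{equation*}
\int_{B_\rho(X)\setminus\clos{B}_\sigma(X)}\frac{w\abs{D^\perp r}^2}{r^n}\,\diff\calH^n=\int_\sigma^\rho s^{-n}\int_{\partial B_s(X)\cap\reg G}\frac{w\abs{D^\perp r}^2}{\abs{\nabla_G r}}\,\diff\calH^{n-1}\,ds
\end{equation*}
yields the stated inequality; the final non-negativity is immediate from $w\geq 0$.

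The main technical difficulty is the passage through the branch set in both \eqref{eq_plan_identity_one} and the subharmonic test against $\phi$; this rests squarely on the capacity cutoff sequence of Corollary~\ref{cor_cutoff_sequence_cpcty_2_zero} and the local $L^2$-bounds \eqref{eq_local_bounds_for_w_and_nabla_w}. The classical-singular set $\calC(G)$ requires no separate treatment since $\calH^n(\calC(G))=0$ and the first variation is already valid for $\abs{G}$ as a stationary varifold in $D_2\times\bR$.
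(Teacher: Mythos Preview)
Your proposal is correct and follows essentially the same strategy as the paper: apply the first variation of $\abs{G}$ to the weighted position vector field $(Y-X)\gamma(r)w$, handle the interior gradient term $\int r\langle\nabla_G r,\nabla_G w\rangle$ via weak subharmonicity of $w$ tested against $(\rho^2-r^2)_+$, and pass through the branch set using the capacity cutoff of Corollary~\ref{cor_cutoff_sequence_cpcty_2_zero} together with the $L^2$-bounds of~\eqref{eq_local_bounds_for_w_and_nabla_w}. The only cosmetic difference is that the paper keeps an explicit smoothing parameter $\delta$ in the cutoff $\gamma_{\delta,s}$ throughout and lets $\delta\to 0$ only at the very end, thereby avoiding any direct appeal to the a.e.\ differentiability of $I(\rho)$ or to slice integrals over $\partial B_\rho$; your sketch passes to these limits earlier, which is fine once one notes that the relevant quantities are absolutely continuous in $\rho$.
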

\begin{proof}
To simplify notation, we may assume without loss of generality that the point
$X$ lies at the origin and $\rho < 2$.
We use a two-parameter family of Lipschitz cutoff functions
$(\gamma_{\delta,s} \mid \delta \in (0,1), s \in (0,\rho))$
constructed by first setting
\begin{equation}
\gamma_\delta(t) = 
\begin{cases}
	1 &\text{ if $t \leq 1 - \delta$}, \\
	(1-t)/\delta &\text{ if $1 - \delta < t < 1$}, \\
	0 & \text{ if $t \geq 1$}
\end{cases}
\end{equation}
and then rescaling $\gamma_{\delta,s}(t) = \gamma_\delta(t/s)$ for all $t \in \bR$.
Moreover we write $r = \abs{X}$ and define the radial functions
$\gamma_{\delta,s}(X) = \gamma_{\delta,s}(r)$ for all $X \in D_2 \times \bR$,
which all have $\spt \gamma_{\delta,s} \subset \subset B_\rho \subset D_2 \times \bR$.
Fix $\delta \in (0,1)$ and $s \in (0,\rho)$, with the eventual aim of
letting $\delta$ tend to zero.

Although the vector field $\gamma_{\delta,s}(r) X w$ is not Lipschitz
we can justify its use in the first variation formula using a quick
capacity argument.
Let $(\eta_j \mid j \in \bN)$ be a sequence of cutoff sequences with
properties essentially as described in Corollary~\ref{cor_cutoff_sequence_cpcty_2_zero},
namely $\eta_j \in C_c^1(D_2 \times \bR \cap \reg G)$ with $0 \leq \eta_j \leq 1$
and $\eta_j \equiv 1$ on $(\calB(G))_{r_j} \cap B_\rho$ for some $r_j \to 0$.
Moreover as $j \to \infty$, $\eta_j \to 0$ $\calH^n$-a.e.\ on $\reg G \cap B_\rho$
and $\int_{\reg G \cap B_\rho} \abs{\nabla_G \eta_j}^2 \to 0$.
As these cut out the branch set, the vector field $(1-\eta_j) \gamma_{s,\delta} X w$
is a valid choice in the first variation formula, and yields
$\int_{\reg G \cap B_\rho} \div_G \big( (1-\eta_j) \gamma_{s,\delta} X w \big) = 0$.
We can expand this expression to get
$\abs{ \int (1-\eta_j) \div_G(\gamma_{s,\delta} X w) }
\leq \int \abs{\gamma_{s,\delta} X w} \abs{\nabla_G \eta_j}$.
The right-hand side tends to zero by the Cauchy--Schwarz inequality.
On the left-hand side we justify the convergence
$\int (1-\eta_j) \div_G (\gamma_{s,\delta} X w)
\to \int \div_G (\gamma_{s,\delta} X w)$ by dominated convergence, after noticing
that $\int \abs{ \div_G (\gamma_{s,\delta} X w)} < \infty$.
Hence we have 
\begin{equation}
	\label{eq_first_variation_with_w}
	\int_{\reg G \cap D_2 \times \bR} \div_G (\gamma_{\delta,s} X w) = 0.
\end{equation}
Following the computations in~\cite[p.~83]{Simon84} we find that
$\div_G(\gamma_{\delta,s} X) = n \gamma_{\delta,s}
+ r \gamma'_{\delta,s} (1 - \abs{D^\perp r}^2)$,
and~\eqref{eq_first_variation_with_w} leads to 
\begin{multline}
	\label{eq_developed_monotonicity_formula}
	n \int_{\reg G \cap D_2 \times \bR } \gamma_{\delta,s} w
	+ \int_{\reg G \cap D_2 \times \bR} r \gamma_{\delta,s}' w \\
	= \int_{\reg G \cap D_2 \times \bR} r \gamma_{\delta,s}' w \abs{D^\perp r}^2
	- \int_{\reg G \cap D_2 \times \bR} \gamma_{\delta,s} \langle  \nabla_G w, X \rangle.
\end{multline}

To somewhat abbreviate this integral identity we define the two functions
$I_\delta,J_\delta: (0,\rho) \to \bR$ by setting, for all $s \in (0,\rho)$,
\begin{align}
	I_\delta(s) & = \int_{\reg G \cap D_2 \times \bR} \gamma_{\delta,s} w, \\
	J_\delta(s) & = \int_{\reg G \cap D_2 \times \bR} \gamma_{\delta,s} w \abs{D^\perp r}^2.
\end{align}
Notice that these are both differentiable in $s$ with respective derivatives
$I_\delta'(s) = \int \frac{\partial \gamma_{\delta,s}}{\partial s}  w$ and $J_\delta'(s)
= \int \frac{\partial \gamma_{\delta,s}}{\partial s}   w  \abs{D^\perp r}^2$.
Note $\gamma_{\delta,s}'(r) = 1/s \gamma_\delta'(r/s) = 1/(rs) \gamma_\delta'(r/s)$,
so that $r \gamma_{\delta,s}'(r) = -s \frac{\partial}{\partial s} \gamma_{\delta,s}(r)$.
Thus we can rewrite~\eqref{eq_developed_monotonicity_formula} as
\begin{equation}
	\label{eq_changed_monotonicity_formula}
	n I_\delta(s) 
	-s I_\delta'(s)
	= -s J_\delta'(s)
	- \int_{\reg G \cap D_2 \times \bR} \gamma_{\delta,s} \langle \nabla_G w , X \rangle
	\text{ for all $s \in (0,\rho)$}.
\end{equation}
Multiply this equation by $s^{-n-1}$ and notice that this is
\begin{equation}
	\frac{\diff}{ \diff s}( s^{-n} I_\delta(s))
	= s^{-n} J_\delta'(s) + s^{-n-1} \int_{\reg G \cap D_2 \times \bR} \gamma_{\delta,s} \langle
	\nabla_G w , X \rangle.
\end{equation}
Integrate this identity for $s \in (\sigma,\rho)$ to get
\begin{equation}
	\label{eq_mean_value_first_identity}
	\rho^{-n} I_\delta(\rho) - \sigma^{-n} I_\delta(\sigma)
	= \int_\sigma^\rho s^{-n} J_\delta'(s)
	+ \int_\sigma^\rho s^{-n-1}
	\int_{\reg G \cap D_2 \times \bR} \gamma_{\delta,s} \langle \nabla_G w , X \rangle.
\end{equation}
The eventual aim is to let $\delta \to 0$ in this identity; this will yield%
~\eqref{eq_mean_value_ineq} and conclude the proof.
Before we do this, we separately integrate both integrals on the right-hand side by
parts. For the first, we obtain
\begin{align}
	\label{eq_mean_value_ineq_first_integral}
	\hspace{3em}	\int_\sigma^\rho
	&s^{-n} J_\delta'(s)  = \rho^{-n} J_\delta(\rho) - \sigma^{-n} J_\delta(\sigma)
	+ n \int_\sigma^\rho s^{-n-1} J_\delta(s) \\
	& = \int_{\reg G \cap D_2 \times \bR} w \abs{D^\perp r}^2
	\Big \{
		\rho^{-n} \gamma_{\delta,\rho} - \sigma^{-n} \gamma_{\delta,\sigma}
		+ n \int_{\sigma}^\rho s^{-n-1} \gamma_{\delta,s}
	\Big \}
	\intdiff \calH^n.
\end{align}

For the second integral, first fix $s \in (0,\rho)$ and notice
that $\nabla_G (r^2 - s^2) = 2 X^T$.
The identity $\int_{D_2 \times \bR \cap \reg G}
\div_G (\gamma_{\delta,s} (r^2 - s^2) \nabla_G w) = 0$,
is equivalent to 
\begin{multline}
	2\int_{ \reg G \cap D_2 \times \bR}
	\gamma_{\delta,s} \langle \nabla_G w , X \rangle \\
	= - \int_{ \reg G \cap D_2 \times \bR } (\Delta_G w) \gamma_{\delta,s} (r^2 - s^2)
	- \int_{ \reg G \cap D_2 \times \bR} (r^2 - s^2) \langle \nabla_G \gamma_{\delta,s} ,
	\nabla_G w \rangle.
\end{multline}
The first integral on the right-hand side is equal to
$- \int_{\reg G \cap D_2 \times \bR}( \abs{A_G}^2 + \abs{\nabla_G w}^2)
\gamma_{\delta,s} (r^2-s^2)$, and is non-negative.
From the identity we only retain the inequality
\begin{equation}
	2\int_{ \reg G \cap D_2 \times \bR} \gamma_{\delta,s} \langle \nabla_G w , X \rangle 
	\geq - \int_{\reg G \cap D_2 \times \bR}(r^2 - s^2)
	\langle \nabla_G \gamma_{\delta,s} , \nabla_G w \rangle.
\end{equation}
Notice that 
\begin{equation}
	\nabla_G \gamma_{\delta,s} = -(s\delta)^{-1} \frac{X^T}{r}
	\indic_{B_s \setminus \clos{B}_{(1 - \delta)s}},
\end{equation}
so using the co-area formula we can estimate the integral on the right-hand side
like
\begin{multline}
	\int_{\reg G \cap D_2 \times \bR} (r^2 - s^2 )
	\langle \nabla_G \gamma_{\delta,s}, \nabla_G w \rangle \\
	= \int_{(1-\delta)s}^s  (s\delta)^{-1} (s^2 - \theta^2)
	\Big \{	\int_{\bdary B_\theta \cap \reg G} 
	\big \langle \nabla_G w , \frac{X}{\theta} \big \rangle
	\Big \}  \intdiff \theta.
\end{multline}
As $\theta \in (s(1 - \delta),s)$ we get
$s^2 - \theta^2 \leq s^2 \delta (2 - \delta)$ and
\begin{multline}
	\Big \lvert \int_{\reg G \cap D_2 \times \bR} \gamma_{\delta,s}
	\langle \nabla_G w, X \rangle  \Big \rvert 
	\leq \Big \lvert \int_{\reg G \cap D_2 \times \bR}
	(s^2 - r^2) \langle \nabla_G \gamma_{\delta,s},\nabla_G w \rangle \Big \rvert  \\
	\leq \int_{(1-\delta)s}^s \int_{\bdary B_\theta \cap \reg G}
	(2 - \delta)s \abs{\nabla_G w}
	= (2-\delta)s \int_{\reg G \cap B_s \setminus \clos{B}_{(1 - \delta)s}}
	\abs{\nabla_G w}.
\end{multline}
Then integrating this over $s \in (\sigma,\rho)$ we obtain a
bound for the second integral in~\eqref{eq_mean_value_first_identity},
\begin{equation}
	\label{eq_second_int_last_before_delta_to_zero}
	\Big \lvert \int_\sigma^\rho
	s^{-n-1} \int_{\reg G \cap D_2 \times \bR} \gamma_{\delta,s}
	\langle \nabla_G w, X \rangle \Big \rvert
	\leq 2\int_\sigma^\rho s^{-n}
	\int_{\reg G \cap B_s \setminus \clos{B}_{(1-\delta)s}} \abs{\nabla_G w}.
\end{equation}

As we announced above, we may now let $\delta \to 0$ at the same time on
the left-hand side of~\eqref{eq_mean_value_first_identity}, in%
~\eqref{eq_mean_value_ineq_first_integral}
and \eqref{eq_second_int_last_before_delta_to_zero},
justifying the convergence each time by dominated convergence.
Thus
\begin{equation}
	\rho^{-n} I(\rho) - \sigma^{-n} I(\sigma)
	\geq \lim_{\delta \to 0} \Big \{ \int_{\sigma}^\rho s^{-n} J_{\delta}'(s) \Big \},
\end{equation}
where we write $I(s) = \int_{\reg G \cap B_s} w$ for all $s \in (0,\rho)$.
We conclude by evaluating the limit on the left-hand side, which is
\begin{multline}
	\int_{\reg G \cap B_\rho} w \abs{D^\perp r}^2
	\Big \{
		\rho^{-n} - \sigma^{-n} \indic_{r < \sigma}
		+ (r \vee \sigma)^{-n} - \rho^{-n}
	\Big \} \\
	= \int_{\reg G \cap B_\rho \setminus \clos{B}_\sigma} w \abs{D^\perp r}^2
	r^{-n}.
\end{multline}
Therefore we obtain
\begin{equation}
	\rho^{-n} I(\rho) - \sigma^{-n} I(\sigma)
	\geq \int_{\reg G \cap B_\rho \setminus \clos{B}_\sigma} w \abs{D^\perp r}^2
	r^{-n},
\end{equation}
which establishes~\eqref{eq_mean_value_ineq} and concludes the proof.
\end{proof}

\subsection{Proof of the gradient bounds}

We place ourselves in the situation described in
Lemma~\ref{lem_int_grad_bounds_form_in_small_ball}.
Let $u \in C^{1,\alpha}(D_{3r};\calA_2)$ be a two-valued minimal graph.
Write $u_1(0),u_2(0)$ for the two values of $u$ at $0 \in \bR^n$, with
corresponding $Du_1(0),Du_2(0) \in \bR^n$. 
Note that we may consider $w$ as a two-valued function defined on $D_{3r}$,
including at the singular points of $u$.

Applying the mean-value inequality at either point $X_i = (0,u_i(0))$
we obtain
$\rho^{-n} \int_{G \cap B_\rho(X_i)} w  - \sigma^{-n} \int_{G \cap B_\sigma(X_i)} w \geq 0$
for all $0< \sigma < \rho < 3r$.
Fixing $\rho = r$ and letting $\sigma \to 0$ we find
\begin{equation}
	\lim_{\sigma \to 0} \Big\{  (\omega_n\sigma^n)^{-1}
	\int_{\reg G \cap B_\sigma(X_i)} w \Big \}
	\leq (\omega_n r^n)^{-1} \int_{\reg G \cap B_{r}(X_i)} w.
\end{equation}
If $X_i$ is regular, then the limit on the right-hand side is $w_i(0)$
and if $X_i$ is a singularity then it is equal $w_1(0) + w_2(0)$.
Translate the graph so that $0 \in G$ and $w$ the larger of its two values
there. In both cases
\begin{equation}
	\label{eq_max_w_bounded_by_integral}
	2 \max \{ w_1(0),w_2(0) \} \leq( \omega_n r^n)^{-1}  \int_{\reg G \cap B_{r}} w.
\end{equation}
This allows us to reduce the proof of Lemma~\ref{lem_int_grad_bounds_form_in_small_ball}
to an estimation of the integral on the right-hand side of%
~\eqref{eq_max_w_bounded_by_integral}.

\begin{claim}\label{claim_int_bound_for_gradient_estimate}
	There is a constant $C = C(n)$ so that if $u \in C^{1,\alpha}(D_{3r};\calA_2)$
	is a two-valued minimal graph with $\orig \in G$ then
	\begin{equation}
		r^{-n} \int_{\reg G \cap B_r} w \leq C ( 1 + r^{-1} \sup_{D_{2r}} \norm{u}).
	\end{equation}
\end{claim}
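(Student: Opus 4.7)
The plan is to reduce bounding $\int_{\reg G \cap B_r} w$ to bounding a gradient integral on $D_r$, and then control the latter via the first-variation identity of Proposition~\ref{prop_test_function}.

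I first apply Proposition~\ref{prop_test_function} with $\Phi(x,z,p) = \eta^2(x)\, z$, where $\eta \in C_c^1(D_{2r})$ satisfies $\eta \equiv 1$ on $D_r$ and $\abs{D\eta} \leq 2/r$. Following exactly the computation that produces \eqref{eq_area_estimate_inequality_perimeter_type} in the proof of Proposition~\ref{prop_area_estimate}, one obtains the Caccioppoli-type bound
\begin{equation*}
\int_{D_r} \frac{\abs{Du}^2}{v}\, dx \leq 2 \int \eta\, \abs{u}\, \abs{T(Du)}\, \abs{D\eta}\, dx \leq C M r^{n-1}, \qquad M := \sup_{D_{2r}} \norm{u}.
\end{equation*}

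Next I exploit the elementary inequality $\log v \leq \tfrac12 (v - 1/v) = \abs{Du}^2/(2v)$ for $v \geq 1$, which follows by integrating $1/t \leq (1+1/t^2)/2$ from $1$ to $v$. Translated to the graph via $\intdiff \calH^n = v\, dx$ on the regular part, this gives $w\, v \leq \abs{Du}^2/2$ pointwise, whence
\begin{equation*}
\int_{\reg G \cap B_r} w\, \intdiff \calH^n = \int_E \bigl(w_1 v_1 + w_2 v_2\bigr)\, dx \leq \tfrac12 \int_E \bigl(\abs{Du_1}^2 + \abs{Du_2}^2\bigr)\, dx,
\end{equation*}
where $E \subset D_r$ is the $x$-projection of $G \cap B_r$.

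The main obstacle is the last step: bounding $\int_E \abs{Du}^2$ by $C(r^n + Mr^{n-1})$, which upon division by $r^n$ yields the claim. The Caccioppoli estimate of the first step only controls $\int \abs{Du}^2/v$, and the identity $\abs{Du}^2 = v\cdot (\abs{Du}^2/v)$ would require an \emph{a priori} pointwise bound on $v$, which is precisely the quantity we wish to estimate. To break this circularity I would split the domain at $v = 2$: on $\{v \leq 2\}$ one has $\abs{Du}^2 \leq 2\,\abs{Du}^2/v$, contributing at most $CMr^{n-1}$ by the Caccioppoli bound; on $\{v > 2\}$ the pointwise relation $\abs{Du}^2/v \geq v/2$ together with the absolute area bound $\int_E v = \calH^n(G \cap B_r) \leq 2\omega_n(1+n)r^n$ from Proposition~\ref{prop_area_estimate} controls the remaining contribution via a suitable H\"older-type interpolation. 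The net bound $\int_E \abs{Du}^2 \leq C(r^n + Mr^{n-1})$ then closes the estimate.
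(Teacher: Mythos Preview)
Your argument has a genuine gap at the final step. After the pointwise inequality $wv \leq \tfrac12\abs{Du}^2$ you need $\int_{D_r}\abs{Du}^2 \leq C(r^n + Mr^{n-1})$, but this bound is \emph{false} in general. Knowing only $\int \abs{Du}^2/v \leq CMr^{n-1}$ and $\int v \leq Cr^n$ does not control $\int \abs{Du}^2$: if $v \equiv K$ on a set of measure $\epsilon$ with $K\epsilon$ fixed, then $\int \abs{Du}^2 \sim K^2\epsilon = K\cdot(K\epsilon)$ can be arbitrarily large. No H\"older interpolation between two $L^1$ quantities can close this, and indeed the interior gradient estimate you are ultimately proving allows $\abs{Du}$ to grow like $\exp(CM/r)$, so $\int_{D_r}\abs{Du}^2$ simply need not be linear in $M/r$. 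The inequality $wv \leq \abs{Du}^2/2$ throws away precisely the logarithmic gain that makes the claim true.

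The paper avoids this by never passing through $\int\abs{Du}^2$. It splits $\int_{\reg G\cap B_r} w = \int w/v + \int w\,\abs{Du}^2/v$; the first piece is $\leq \int v = \calH^n(G\cap B_r)\leq Cr^n$ via $w\leq v$. For the second piece it applies Proposition~\ref{prop_test_function} with the test function $\Phi(x,z,p) = \tfrac12\log(1+\abs{p}^2)\,(z_r+r)\,\eta(x)$, i.e.\ a test function that already contains the factor $w$. This produces $\int_{\abs{x}<r,\,\abs{u}<r}\tfrac{\abs{Du}^2}{v}\,w$ directly on the left, balanced on the right by $2r\int(\abs{D\eta}\,w + \abs{Dw}\,\eta)$. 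The term $\int\abs{D\eta}\,w$ is handled by $w\leq v$ and the cylindrical area bound; the new term $\int\abs{Dw}\,\eta$ is passed to the graph via $\abs{Dw}/v \leq \abs{\nabla_G w}$ and then controlled by the $L^2$ estimate \eqref{eq_gradient_estimate_for_nabla_w} for $\nabla_G w$ together with Cauchy--Schwarz and the area bound. The essential idea you are missing is to put $w$ inside the test function so that the weak minimal-surface equation yields $\int \tfrac{\abs{Du}^2}{v}w$ rather than just $\int \tfrac{\abs{Du}^2}{v}$.
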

\begin{proof}
We split the integral $\int_{\reg G \cap B_r} w
= \int_{\abs{x}^2 + u^2 < r^2} wv$ into the sum of
$\int_{\abs{x}^2 + u^2 < r^2} w$ and
$\int_{\abs{x}^2 + u^2 < r^2} \frac{\abs{Du}^2}{v} w$.
The former is easier to estimate, as $w \leq v$. Therefore
$\int_{\abs{x}^2 + u^2 < r^2} w \leq \calH^n(G \cap B_r) \leq C(n) r^n$,
using the area bounds of Proposition~\ref{prop_area_estimate}.

We estimate the latter under the weaker restriction that $\abs{x} < r, \abs{u} < r$,
for notational convenience. Let $\eta \in C_c^1(D_{2r})$ be a standard
cutoff function with $\eta \equiv 1$ on $D_r$ and $\abs{D \eta} \leq 2 r^{-1}$.
We write 
\begin{equation}
\Phi(x,z,p) = \frac{1}{2} \log(1 + \abs{p}^2) (z_{r} + r) \eta(x),
\end{equation}
where $z_r$ is defined as in~\eqref{eq_def_zr}, meaning that here
\begin{equation}
z_r + r =
\begin{cases}
	2r &\text{ if $z > r$}, \\
	z + r &\text{ if $-r \leq z \leq r$}, \\
	0 & \text{ if $z < -r$}.
\end{cases}
\end{equation}
One may then check that $\Phi$ satisfies the required hypotheses
laid out in Proposition~\ref{prop_test_function}
to justify its use as a test function in the
equation~\eqref{eq_weak_MSE}.
The only hypothesis we check here is~\eqref{eq_condition_weak_twovalued_MSE},
\begin{equation}
	\label{eq_boundedness_condition_for_weak_MSE}
	\int_{D_{3r} \setminus \calB_u} \abs{D (\Phi(x,u,Du))} < +\infty.
\end{equation}
As $\Phi(x,u,Du) = (u_r + r) \eta w$ we calculate its derivative as
\begin{equation}
	\label{eq_expression_derivative_Phi}
D \Phi(x,u,Du) = Du \indic_{\abs{x} < r, \abs{u} < r} \eta w + D \eta (u_r + r) w
+ Dw (u_r + r) \eta.
\end{equation}
Only the last term is not locally bounded. Instead we find the following
integral bound. (The main observation underlying 
this bound, as proved in Claim~\ref{claim_bound_relating_Dw_and_nabla_w},
is the pointwise inequality~\eqref{eq_identity_grad_derivative}.
The unwieldiness of the proof is caused by the possibility of branch points in $D_{3r}$.)

\begin{claim}
	\label{claim_bound_relating_Dw_and_nabla_w}
For all compact $K \subset D_{3r}$ there is a constant $C_K > 0$
so that \begin{equation}
	\label{eq_bound_for_Dw}
	\int_{K \setminus \calB_u} \abs{Dw}
	\leq \int_{\reg G \cap K \times \bR} \abs{\nabla_G w}	
	\leq C_K.
\end{equation}

\end{claim}
\begin{proof}
This essentially follows from the integral estimates for $\abs{\nabla_G w}$
in \eqref{eq_local_bounds_for_w_and_nabla_w}. 
To justify this rigorously we proceed as follows.
First we extend the two-valued function
\begin{equation}
	w = \{ w_1,w_2 \}
	= \frac{1}{2} \{ \log (1 + \abs{Du_1}^2), \log(1 + \abs{Du_2}^2) \}
\end{equation}
to a two-valued function defined on the cylinder $D_{3r} \times \bR$ by setting
it constant in the vertical variable,
\begin{equation}
	\label{eq_vertical_extension}
	w(x,X^{n+1}) = w(x) = \{ w_1(x) , w_2(x) \}
\end{equation}
for all $(x,X^{n+1}) \in D_{3r} \times \bR$.

Near points $x \in D_{3r} \setminus \calB_u$ we can make a local selection for
$u$ by say $u_1^{x},u_{2}^x \in C^\infty(D_\sigma(x))$
so that $u = \{ u_1^{x}, u_2^{x} \}$ in $D_\sigma(x)$.
This selection is also valid for $w$, meaning that also $w = \{ w_1^{x},w_2^{x} \}$.
Of course this relation also holds for the extension of $w$ to $D_\sigma(x) \times \bR$.

In the cylinder $D_\sigma(x) \times \bR$ we split the
graph into the transverse union
\begin{equation}
G \cap D_\sigma(x) \times \bR = \graph u_1^x \cup \graph u_2^x.
\end{equation}
Abbreviating $G_i^x = \graph u_i^x$ for $i = 1,2$ we can separately
calculate
\begin{equation}
	\abs{\nabla_{G_i^x} w_i^x(y,Y^{n+1})}^2
	= \abs{D w_i^x(y,Y^{n+1})}^2 - \abs{\langle \nu_{G_i^x} , Dw_i^x(y,Y^{n+1}) \rangle}^2
\end{equation}
at all $Y = (y,Y^{n+1}) \in D_\sigma(x) \times \bR$.
As the $w_i^x$ are both independent of the vertical variable we obtain
that for $i = 1,2$,
\begin{equation}
	\label{eq_identity_grad_derivative}
	\abs{\langle \nu_{G_i^x} , e_{n+1} \rangle } \abs{D w_i^x}
	\leq \abs{\nabla_{G_i^x} w_i^x} 
	\text{ on $D_\sigma(x) \times \bR$}.
\end{equation}
Next we split the integral $\int_{D_\sigma(x)} \abs{Dw}
= \int_{D_\sigma(x)} \abs{D w_1^x} + \int_{D_\sigma(x)} \abs{D w_2^x}$,
and separately bound the two terms using \eqref{eq_identity_grad_derivative},
that is for $i = 1,2$,
\begin{equation}
	\int_{D_\sigma(x)} \abs{Dw_i^x}
	=\int_{G_i^x \cap D_\sigma(x) \times \bR} \frac{ \abs{Dw_i^x}}{v_i^x}
	\leq \int_{G_i^x \cap D_\sigma(x) \times \bR} \abs{\nabla_{G_i^x} w_i^x}.
\end{equation}
where recall $v_i^x = (1 + \abs{Du_i^x}^2)^{1/2}
=  \langle \nu_{G_i^x} , e_{n+1} \rangle^{-1}$.
Thus we obtain
\begin{equation}
	\int_{D_\sigma(x)} \abs{Dw}
	\leq \int_{\reg G \cap D_\sigma(x)} \abs{\nabla_G w},
\end{equation}
which is finite by~\eqref{eq_local_bounds_for_w_and_nabla_w} for example.

We may now return to the original problem of estimating the integral
$\int_{K  \setminus \calB_u} \abs{Dw}$. We may take a countable
cover of the set $K  \setminus \calB_u$ by some collection of
discs $(D_{\sigma_j}(x_j) \mid j \in \bN)$
centered at points $x_j \in K  \setminus \calB_u$
with
\begin{equation}
	D_{\sigma_j}(x_j) \subset D_{3r} \setminus \calB_u,
\end{equation}
and let $(\rho_j \mid j \in \bN)$ be a partition of unity subordinate to the cover.
Justifying the permutation of the sum and the integral by monotone
convergence for example, we can decompose $\int_{K \cap D_{3r} \setminus \calB_u} \abs{Dw}$
like
\begin{equation}
	\label{eq_integral_gradient_Dw_comparison}
	\int_{K \cap D_{3r} \setminus \calB_u} \abs{Dw}
	= \sum_{j \in \bN} \int_{K \cap D_{3r} \setminus \calB_u}
	\abs{Dw} \rho_j.
\end{equation}
Arguing as in our derivation of the inequality~\eqref{eq_integral_gradient_Dw_comparison}
we can bound each of the integrals on the right-hand side by
\begin{equation}
\int_{K \setminus \calB_u} 	\abs{Dw} \rho_j
\leq \int_{\reg G \cap D_{\sigma_j(x_j)  \times \bR}}
\abs{\nabla_G w} \rho_j,
\end{equation}
where we extend $\rho_j$ to a function defined on $D_{\sigma_j(x_j)} \times \bR$
by setting it constant in the vertical variable, as we did for $w$
in~\eqref{eq_vertical_extension}.
Finally we may use monotone convergence once again to permute sums with
integrals, yielding the desired
\begin{equation}
	\int_{K  \setminus \calB_u} \abs{Dw}
	\leq \int_{\reg G \cap K \times \bR} \abs{\nabla_G w}
	\leq C_K,
\end{equation}
where the last inequality comes from~\eqref{eq_local_bounds_for_w_and_nabla_w}.
\end{proof}

As we imposed that $\spt \eta \subset D_{2r}$, the
inequality~\eqref{eq_bound_for_Dw} from the claim applied with $K = \clos{D}_{2r}$ 
confirms the boundedness required by~\eqref{eq_boundedness_condition_for_weak_MSE}.
We may thus substitute the expression~\eqref{eq_expression_derivative_Phi}
we calculated for $D \Phi(x,u,Du)$ into~\eqref{eq_weak_MSE} to get
\begin{equation}
	\int_{D_{3r}} \big \langle \frac{Du}{v} ,
	Du \indic_{\abs{x} < r, \abs{u} < r} \eta w 
	+ D \eta (u_r + r) w 
	+ Dw (u_r + r) \eta \big \rangle = 0,
\end{equation}
so that
\begin{equation}
	\int_{\abs{x} < r,  \abs{u} < r  } \frac{\abs{Du}^2}{v} w 
	\leq 2r \int_{\abs{x} < 2r , u > -r } \abs{D \eta} w + \abs{Dw} \eta.
\end{equation}
To justify the expression on the left-hand side, simply note that for $i = 1,2$,
wherever $u_i > - r$ we have $(u_i)_r + r \leq 2r$, and  if
$u_i \leq -r$ then $(u_i)_r + r = 0$.
The proof then boils down to separately estimating the two integrals
\begin{equation}
	\int_{\abs{x} < 2r, u > -r } \abs{D \eta} w
\quad \text{and} \quad
\int_{\abs{x} < 2r,  u > -r } \abs{Dw} \eta.
\end{equation}
The first integral is easier. Indeed using the fact that $w \leq v$ and
the area bounds of Lemma~\ref{lem_cylindrical_area_estimates} we get
\begin{equation}
	\int_{\abs{x} < 2r, u > -r}
	\abs{D \eta} w \leq 2 r^{-1} \calH^n(G \cap D_{2r} \times \bR)
\leq C(n) r^{n-1}(1 + M r^{-1}),
\end{equation}
where we set $M = \sup_{D_{2r}} \norm{u}$.

For the second integral $\int_{u > -r} \abs{Dw} \eta$ we may start by arguing
as in the proof of Claim~\ref{claim_bound_relating_Dw_and_nabla_w} 
to justify that 
\begin{equation}
	\label{eq_ineq_with_Dw_eta_and_nabla_w}
\int_{u > -r} \abs{Dw} \eta \leq
\int_{\reg G \cap D_{2r}  \times (-r,\infty)} \abs{\nabla_G w} \eta.
\end{equation}
To estimate this, recall the helpful
integral inequality~\eqref{eq_gradient_estimate_for_nabla_w} we used
in the proof of Lemma~\ref{lem_weak_PDE_for_w}.
To use it in the present context, we extend $\eta$ to a test function compactly
supported in the cylinder
$D_{2r} \times \bR$ by multiplying by a cutoff function $\tau \in C_c^1(\bR)$
in the vertical direction. We further impose that $\tau \equiv 1$ on
$(-r,M)$ and $\spt \tau \subset (-2r,M + r)$ with $\abs{\tau'} \leq 2 r^{-1}$.
Then from \eqref{eq_gradient_estimate_for_nabla_w} we obtain
\begin{equation}
	\label{eq_conseq_of_stability_type_bound}
	\int_{\reg G} \abs{\nabla_G w}^2 \tau^2 \eta^2
	\leq 8 \int_{\reg G} \abs{\nabla_G \tau}^2 \eta^2 + \tau^2 \abs{\nabla_G \eta}^2
	\leq 64 r^{-2} \calH^n(G \cap \spt \phi).
\end{equation}
Estimate the right-hand side
of~\eqref{eq_ineq_with_Dw_eta_and_nabla_w} with H\"{o}lder's inequality,
\begin{multline}
	\int_{\reg G \cap D_{2r} \times (-r,\infty)} \abs{\nabla_G w} \eta \\
\leq (\calH^n(G \cap D_{2r} 
\times (-2r,M + r))^{1/2}
\Big(\int_{G \cap D_{2r} \times (-r,M)} \abs{\nabla_G w}^2 \eta^2\Big)^{1/2},
\end{multline}
and combining this with~\eqref{eq_conseq_of_stability_type_bound},
\begin{equation}
	\int_{\reg G \cap D_{2r} \times (-r,\infty)} \abs{\nabla_G w} \eta
\leq  8r^{-1} \calH^n(G \cap D_{2r} \times \bR).
\end{equation}
This in turn can be bounded using the area estimates from
Lemma~\ref{lem_cylindrical_area_estimates}:
$\calH^n(G \cap D_{2r} \times \bR)
\leq C(n)r^n (1 + Mr^{-1})$.
This yields the desired bound for $\int_{u > -r} \abs{Dw} \eta$
via~\eqref{eq_ineq_with_Dw_eta_and_nabla_w}, and concludes the proof of the claim.
\end{proof}

\section{A regularity lemma for two-valued minimal graphs}

\label{sec_regularity_lemma_for_two_valued_minimal_graphs}

\subsection{A maximum principle near branch point singularities}
\label{subsec_max_princ_near_branch_point}

\begin{lem}
\label{lem_max_princ_for_nu_dot_e}
Let $\alpha \in (0,1)$ and $u \in C^{1,\alpha}(D_2;\calA_2)$
be a two-valued minimal graph. Suppose that at the origin
\begin{equation}
	u(0) = \{0,0\} \text{ and } Du(0) = \{ 0 , 0 \}.
\end{equation}
Let $e \in \bR^n \times \{ 0 \}$ be a fixed unit vector.
If $\langle Du(X) , e \rangle \leq 0$ for all $X \in \reg G \cap D_2 \times \bR$
then it vanishes identically.
\end{lem}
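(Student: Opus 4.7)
The plan is to recognise $\langle \nu, e \rangle$ as the Jacobi field associated to the ambient translation in direction $e$, and to deduce the result from a strong minimum principle applied on the immersed manifold $\Gamma$ of Section~\ref{subsec_global_immersion}.

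At any regular point $X \in \reg G$ a local smooth selection gives $\nu_i(X) = (1 + \abs{Du_i(X)}^2)^{-1/2}(-Du_i(X), 1)$, so $\langle \nu(X), e \rangle = -\langle Du(X), e \rangle / \sqrt{1 + \abs{Du(X)}^2}$ and the hypothesis $\langle Du, e \rangle \leq 0$ reads $F := \langle \nu, e \rangle \geq 0$ on $\reg G$. Pulling back by the minimal immersion $\iota : \Gamma \to G \setminus \calB(G)$, the function $\tilde F := F \circ \iota$ is smooth, non-negative, and satisfies pointwise on $\Gamma$ the Jacobi equation $\Delta_\Gamma \tilde F + \abs{A_\Gamma}^2 \tilde F = 0$; in particular $\Delta_\Gamma \tilde F = -\abs{A_\Gamma}^2 \tilde F \leq 0$, so $\tilde F$ is superharmonic on $\Gamma$.

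If $(0, 0) \in \bR^{n+1}$ is \emph{not} a branch point of $G$, then $\iota^{-1}((0,0)) \neq \emptyset$, and at any preimage $Y \in \Gamma$ the assumption $Du(0) = \{0,0\}$ forces $\nu = e_{n+1} \perp e$, hence $\tilde F(Y) = 0$. The classical strong minimum principle for non-negative superharmonic functions then gives $\tilde F \equiv 0$ on the connected component of $Y$; by Lemma~\ref{lem_properties_global_immersion}, $\Gamma$ is connected unless $G$ splits as two single-valued graphs, and in that case each component contains a preimage of $(0,0)$. Thus $\tilde F \equiv 0$ on $\Gamma$, equivalently $\langle Du, e \rangle \equiv 0$ on $\reg G$.

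When $(0,0) \in \calB(G)$ the preimage $\iota^{-1}((0,0))$ is empty, and this direct argument breaks down. I would handle this case by extending the Jacobi identity across the branch set. First, the $C^{1,\alpha}$-regularity of $u$ together with $Du(0) = \{0,0\}$ ensures that $F$ extends continuously to $G$ with $F(0) = 0$. Next, because $\calB_u$ is $(n-2)$-rectifiable by Theorem~\ref{thm_krumwic_finepropsmingraphs} and thus of vanishing $2$-capacity, the cutoff sequence of Corollary~\ref{cor_cutoff_sequence_cpcty_2_zero}, the ambient stability inequality of Lemma~\ref{lem_graph_ambient_stability}, and the pointwise bound $\abs{\nabla_G F} \leq \abs{A_G}$ combine (exactly as in the proofs of Lemmas~\ref{lem_graph_ambient_stability} and~\ref{lem_weak_PDE_for_w}) to show that $F$ is a continuous non-negative weak supersolution of the linear elliptic Jacobi equation on all of $G \cap D_2 \times \bR$, including through the branch set. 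A De~Giorgi--Nash--Moser strong minimum principle for weak supersolutions then forces $F \equiv 0$ in a neighbourhood of $(0,0)$ in $G$, and propagation via the classical strong minimum principle on $\Gamma$ yields $\tilde F \equiv 0$ globally.

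The main obstacle is the branch-point case: because the origin is not part of the smooth structure of $\Gamma$, the classical strong minimum principle does not apply at an interior point where $\tilde F$ vanishes, and one must reformulate the argument for weak solutions after extending the Jacobi equation across the codimension-two singular set $\calB(G)$. Everything else reduces to a routine application of standard elliptic/minimal-surface machinery.
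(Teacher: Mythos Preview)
Your reduction to the Jacobi field $F = \langle \nu, e \rangle \geq 0$ is correct, and the case $(0,0)\notin\calB(G)$ is fine. You also correctly isolate the branch-point case as the real obstacle. The gap is in your proposed resolution of that case.

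The problematic sentence is ``A De~Giorgi--Nash--Moser strong minimum principle for weak supersolutions then forces $F \equiv 0$ in a neighbourhood of $(0,0)$.'' You are invoking this on $G$, which at $(0,0)$ is \emph{not} a smooth manifold: the density is $2$ and the local model is a two-sheeted $C^{1,\alpha}$ graph. Standard De~Giorgi--Nash--Moser theory is formulated on smooth domains or manifolds; to run Moser iteration on the singular varifold $G$ one would have to establish the requisite Sobolev/Poincar\'e inequalities intrinsically, and to give meaning to the pointwise condition $F(0)=0$ for a weak supersolution at a singular point. None of this is a black box, and filling it in is at least as much work as the lemma itself. Extending the weak Jacobi identity across $\calB(G)$ via capacity (which you do correctly) is not the same as having a Harnack-type principle \emph{at} the branch point.

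The paper avoids this entirely by a different mechanism. It argues by contradiction (assuming $F\geq 0$ but $F\not\equiv 0$, so $F>0$ on $\reg G$ by the classical strong maximum principle) and then \emph{perturbs the direction}: set $e_\theta = (\cos\theta)e + (\sin\theta)e_{n+1}$, so that $\langle\nu,e_\theta\rangle \geq \alpha\sin\theta > 0$ is bounded away from zero on $\reg G\cap B_1$. Then $w_\theta = -\log\langle\nu,e_\theta\rangle$ is bounded and weakly subharmonic on $G$, and the mean-value inequality of Corollary~\ref{cor_mean_value_ineq}---which the paper has already carefully established through branch points---gives $2w_\theta(0)\leq (\omega_n)^{-1}\int_{\reg G\cap B_1} w_\theta$. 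Since $\langle\nu(0),e_\theta\rangle=\sin\theta$ while $\inf_{B_1}\langle\nu,e_\theta\rangle\geq\alpha\sin\theta$, one gets $w_\theta(0)\geq(1-\delta)M_\theta$ with $M_\theta=\sup_{B_1}w_\theta\to\infty$ as $\theta\to 0$. On the other hand $w_\theta$ is uniformly bounded on compact subsets of $\reg G\cap\clos B_1$, and $\sing G$ has $\calH^n$-measure zero; splitting the integral on a sublevel set then bounds $(\omega_n)^{-1}\int_{B_1}w_\theta\leq(1+\eps)M_\theta$, contradicting the factor $2$ on the left.

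The structural point: the paper never needs a minimum principle \emph{at} the singular point. It uses only the monotonicity/mean-value inequality for a bounded subharmonic function (already available from Corollary~\ref{cor_mean_value_ineq}) and a quantitative comparison between $w_\theta(0)$ and the bulk average. Your approach could plausibly be rescued by proving a reversed mean-value inequality for the superharmonic $F$ directly (adapting the proof of Corollary~\ref{cor_mean_value_ineq}, using $\abs{\nabla_G F}\leq\abs{A_G}\in L^2_{\loc}$), from which $F(0)=0$, $F\geq 0$ would force the ball averages to vanish---but that is a genuine computation through the branch set, not a citation of De~Giorgi--Nash--Moser.
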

\begin{proof}
Let $X \in \reg G \cap D_2 \times \bR$, 
and note that $\langle Du(X),e \rangle \leq 0$ if and only the upward-pointing unit
normal to $\graph u$ has $\langle \nu(X) , e \rangle \geq 0$.
We argue by contradiction, assuming that $\langle \nu , e \rangle$ is non-negative
but does not vanish identically.
It is well-known that the function $\langle \nu , e \rangle$ is a Jacobi field
for $G$, that is it satisfies the equation $\Delta \langle \nu , e \rangle 
+ \abs{A_G}^2 \langle \nu , e \rangle = 0$ both pointwise on $\reg G$ and weakly
through singularities of $G$. (The justification for this can be made in essentially
the same way as when working with $\langle \nu,e_{n+1} \rangle$ in the above.)
Moreover, by the standard strong maximum principle, we know that
$\langle \nu , e \rangle > 0$ on $\reg G$, meaning that we can
define a smooth function $w_e \in C^2(\reg G)$ by
\begin{equation}
w_e(X) = - \log \langle \nu(X) , e \rangle
\text{ at all $X \in \reg G$}.
\end{equation}

The hypotheses of the claim ensure that $\langle \nu(0), e \rangle = 0$ at
the origin, which also means that $w$ diverges there.
To avoid technical difficulties related to this, we perturb the vector
$e$ slightly.
Let $\theta > 0$ be a small angle, through which we rotate $e$ in the
two-dimensional plane $\linspan \{ e,e_{n+1} \}$, yielding the vector
\begin{equation}
	e_\theta = (\cos \theta) e + (\sin \theta) e_{n+1}.
\end{equation}

Unless we are in the pathological case where $u$ diverges near the boundary
of $D_2$, the function $\langle \nu , e_{n+1} \rangle = \frac{1}{\sqrt{1 + \abs{Du}^2}}$
is positive and bounded below, say 
\begin{equation}
	\langle \nu , e_{n+1} \rangle \geq \alpha > 0
	\text{ on $\reg G \cap B_1$}.
\end{equation}
Should $u$ in fact diverge near the boundary, we can rescale it around the
origin by a factor $\lambda > 1$ close to one, and restrict the resulting
function to $D_2$ to reduce to the case where $u$ is bounded near the origin.

As a consequence the rotated vector $e_\theta$ also has
\begin{equation}
	\langle \nu , e_\theta \rangle \geq \alpha \sin \theta  > 0
	\text{ on $\reg G \cap B_1$}.
\end{equation}
We can then define the function
\begin{equation}
	w_\theta(X) = -\log \langle \nu(X) , e_\theta \rangle
	\text{ at all $X \in \reg G$}
\end{equation}
without running the risk of it diverging anywhere.
This function has, locally for all compact $K \subset D_2 \times \bR$ that
\begin{equation}
	\int_{K \cap \reg G} w_\theta^2 + \abs{\nabla w_\theta}^2 < +\infty,
\end{equation}
and it satisfies the equation
\begin{equation}
	\Delta w_{\theta} - \abs{\nabla w_\theta}^2 - \abs{A_G}^2 = 0
\end{equation}
weakly on $G \cap D_2 \times \bR$.
(These facts can be checked in much the same way as we did for the function
$- \log \langle \nu , e_{n+1} \rangle$ in Section~\ref{sec_grad_estimates},
see Lemma~\ref{lem_weak_PDE_for_w}.)

In particular the function $w_\theta$ is weakly subharmonic on $G$,
and thus by the mean-value inequality, we get that for all
$0 < r < s < 1$ and all points $X \in B_1$,
\begin{equation}
\frac{1}{\omega_n r^n} \int_{\reg G \cap B_r(X)} w_\theta
\leq \frac{1}{\omega_n s^n} \int_{\reg G \cap B_s(X)} w_\theta.
\end{equation}
Applying this at the origin and letting $r \to 0$
(as is justified by Fatou's lemma), we obtain that for all $0 < s < 1$,
\begin{equation}
	\label{eq_mean_value_ineq_for_wtheta}
	2 w_\theta(0) \leq \frac{1}{\omega_n s^n} \int_{\reg G \cap B_s} w_\theta.
\end{equation}

For all $\theta > 0$ we write
\begin{equation}
	M_\theta = \sup_{\reg G \cap B_1} w_\theta 
	\text{ and }
	m_\theta = \inf_{\reg G \cap B_1} \langle \nu , e_\theta \rangle,
\end{equation}
which are related by $M_\theta = - \log m_\theta$.
Then 
\begin{equation}
w_\theta(0) \geq M_\theta + \log \alpha,
\end{equation}
where recall $\alpha = \inf \langle \nu , e_{n+1} \rangle.$
Indeed at the origin $\langle \nu(0),e_\theta \rangle = \sin \theta$,
whereas $m_\theta \geq \alpha \sin \theta$. Therefore
$\langle \nu(0) , e_\theta \rangle \leq  m_\theta / \alpha$.
Translating this to $w_\theta$ we obtain
$w_\theta(0) \geq - \log (m_\theta / \alpha) = - \log m_\theta + \log \alpha$.
Let $\delta > 0$ be given.
As $M_\theta \to \infty$ as $\theta \to 0$, we may choose a small value
$\theta_0 > 0$ in terms of $\alpha$ so that for all $\theta \in (0,\theta_0)$ we also have
\begin{equation}
w_\theta(0) \geq (1-\delta/2) M_\theta.
\end{equation}
Substitute this into the mean-value inequality~\eqref{eq_mean_value_ineq_for_wtheta}
with radius $s = 1$, obtaining that
\begin{equation}
	(2-\delta) M_\theta \leq \frac{1}{\omega_n} \int_{\reg G \cap B_1} w_\theta.
\end{equation}

Let $\lambda > 0$ be a parameter whose value we will fix later. Then we may
split the integral on the right-hand side by conditioning on the event that
$\{ w_\theta \geq \lambda \}$, obtaining two integrals which we can separately
bound by
\begin{equation}
	 \int_{\reg G \cap B_1  \cap \{ w_\theta > \lambda \}}
	w_\theta
	\leq M_\theta \calH^n(\reg G \cap B_1 \cap
	\{ w_\theta > \lambda \})
\end{equation}
and
\begin{equation}
	 \int_{\reg G \cap B_1 \cap \{ w_\theta \leq \lambda \} }
	w_\theta
	\leq \lambda  \calH^n(\reg G \cap B_1 \cap \{ w_\theta \leq \lambda \}).
\end{equation}

Recall that $w$ is large only near $\{ \langle \nu , e \rangle = 0 \}
\subset \sing G \cap B_1$, and hence is bounded away from the singular set.
Since the functions $w_\theta$ converge to $w$ pointwise on $\reg G \cap \clos{B}_1$,
and uniformly in compact subsets $K \subset \reg G \cap \clos{B}_1$, we
obtain uniform bounds for the sequence too: 
for all compact $K \subset \reg G \cap \clos{B}_1$
there exist $\theta_K > 0$ and $D_K > 0$ so that for all $\theta \in (0,\theta_K)$,
\begin{equation}
	\label{eq_unif_bound_w_theta_away_from_sing}
	w_\theta \leq D_K \text{ on $K$}.
\end{equation}
Working in the larger ball $B_{3/2}$, we see that 
$\calH^n(\sing G \cap \clos{B}_{3/2}) = 0$, which also means that
small neighbourhoods of this set have arbitrarily small Hausdorff measure.
That is, given any $\eps > 0$ we may find a finite open cover of
$\sing G \cap \clos{B}_{3/2}$ by balls $B_{r_1}(X_1),\dots,B_{r_N}(X_N)$
with $\sum_{k=1}^N r_k^n \leq \eps$.
Perhaps after slightly increasing the radii of the balls 
in the cover, we may arrange for $\reg G \cap B_1 \setminus \cup_{k=1}^N B_{r_k}(X_k)$
to lie a positive distance away from the singular set.
Using the bound above in~\eqref{eq_unif_bound_w_theta_away_from_sing}, 
we see that there must be a constant
$D > 0$ so that for all $\theta \in (0,\theta_0)$
\begin{equation}
w_\theta(X) \leq D
\text{ at all $X \in \reg G \cap B_1 \setminus \cup_{k=1}^N B_{r_k}(X_k)$},
\end{equation}
after adjusting $\theta_0$ to a smaller value if necessary.
We may then set $\lambda = D$, and see that for all $\theta \in (0,\theta_0)$,
\begin{equation}
	\label{eq_area_bound_splitting_lemma}
	\calH^n(\reg G \cap B_1 \cap \{ w_\theta > D \})
	\leq \sum_{k=1}^N \omega_n r_k^n \leq \omega_n \eps.
\end{equation}

The divergence of $M_\theta$ as $\theta \to 0$ additionally lets us
impose that $\theta$ be small enough that $M_\theta \geq C(n) D$,
where $C(n)$ is a constant so that 
$\calH^n(\reg G \cap B_1) \leq  \omega_n C(n)$,
available via the area bounds of Proposition~\ref{prop_area_estimate}.
With $\theta$ as small as this, we get
\begin{equation}
	\label{eq_small_values_bound_splitting_lemma}
	D/\omega_n \calH^n(\reg G \cap B_1 \cap \{ w_\theta \leq D \}  )
	\leq M_\theta.
\end{equation}

Substituting  \eqref{eq_area_bound_splitting_lemma} and
\eqref{eq_small_values_bound_splitting_lemma} into our decomposition
for $1 / \omega_n \int_{\reg G \cap B_1} w_\theta$ we obtain the inequality
\begin{equation}
	(2-\delta) M_\theta
	\leq \frac{1}{\omega_n} \int_{\reg G \cap B_1} w_\theta
	\leq \big(\eps  + 1 \big) M_\theta,
\end{equation}
which is absurd provided $\delta,\eps$ are small enough.
\end{proof}

The analogous statement is a lot easier to prove for single-valued,
smooth minimal graphs. In fact, this is an immediate application of
the classical, strong maximum principle, and indeed we used this in the proof
above to deduce that $\langle \nu(X) , e \rangle > 0$ for all regular
points $X \in \reg G \cap D_2 \times \bR$.

\subsection{Regularity by a geometric argument}

Here we show that Lipschitz two-valued minimal graphs are 
automatically regular This is somewhat well-known among experts
in the field, although an explicit proof is absent from the literature.
Here we follow a strategy suggested to us by S.\ Becker-Kahn, using the
results developed in his thesis~\cite{Spencer_Two_valued_graphs_arbitrary_codimension}.
Our aim is to prove the following two results simultaneously, using an
inductive argument on the dimension $n$.

\begin{thm}
\label{thm_regularity_lipschitz_two_valued_graphs}
Let $u \in \mathrm{\Lip}(D_2;\calA_2)$ be a two-valued
minimal graph with Lipschitz constant $L$.
Then there is $\alpha = \alpha(L,n) \in (0,1)$ so that
\begin{equation}
	u \in C^{1,\alpha}(D_2;\calA_2).
\end{equation}
\end{thm}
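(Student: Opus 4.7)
The proof proceeds by induction on the dimension $n$. In dimension $n = 1$, the branch set $\calB_u$ is empty, so $u$ admits a global Lipschitz selection $u = \{u_1, u_2\}$; each component solves the single-valued minimal surface ODE and is therefore smooth by elementary elliptic regularity.

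For the inductive step $n \geq 2$, assume the conclusion in all dimensions strictly less than $n$. Following the strategy suggested by Becker-Kahn, the key step is to verify the hypotheses of the excess decay theorem in \cite{Spencer_Two_valued_graphs_arbitrary_codimension}: namely, that at every branch point $x_0 \in \calB_u$, every tangent map of $u$ is linear, i.e., a pair $\{l_1, l_2\}$ of single-valued linear functions on $\bR^n$. Once this is achieved, the excess decay machinery produces a uniform decay $r^{-n-2} \int_{B_r(X_0)} \calG(Du, l_r)^2 \lesssim r^{2\alpha}$, and a standard Campanato-Morrey argument yields $Du \in C^{0,\alpha}$.

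To produce tangent maps, fix $x_0 \in \calB_u$ and let $X_0 = (x_0, u_a(x_0))$. The rescalings $u_\rho(x) = \rho^{-1}(u(x_0 + \rho x) - u_a(x_0))$ are uniformly Lipschitz, so by Lemma \ref{lem_arzela_ascoli_two_valued} a subsequence converges locally uniformly to a Lipschitz two-valued minimal graph $\phi$ on $\bR^n$ which is homogeneous of degree one with $\phi(0) = \{0,0\}$ and trivial one-jet at the origin. Let $S(\phi)$ denote the spine of $\phi$, i.e., the maximal linear subspace of translation-invariance. If $\dim S(\phi) = k \geq 1$, then $\phi$ descends to a Lipschitz two-valued minimal graph $\psi$ on $S(\phi)^\perp \cong \bR^{n-k}$, which by the inductive hypothesis is $C^{1,\alpha}$; homogeneity then forces $\psi$, and hence $\phi$, to be linear.

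It therefore suffices to exclude the case $\dim S(\phi) = 0$. This is where the maximum principle of Lemma \ref{lem_max_princ_for_nu_dot_e} intervenes: using the Jacobi field $\langle \nu, e \rangle$ on the immersion $\iota : \Gamma \to \bR^{n+1}$ of Section \ref{subsec_global_immersion} together with the strong maximum principle on $\reg G$, one selects a horizontal unit vector $e$ for which $\langle Du, e \rangle$ is one-signed throughout $\reg G$; the lemma then upgrades this to $\langle Du, e \rangle \equiv 0$, giving translation-invariance of $\phi$ in the $e$-direction and contradicting $\dim S(\phi) = 0$. The main obstacle is precisely this last step: unlike in the single-valued theory, where one-homogeneous minimal cones of codimension one with trivial spine are forbidden by area-minimisation, two-valued minimal graphs are only stable, and the full strength of the capacity-based maximum principle of Section \ref{subsec_max_princ_near_branch_point} is required to rule out the branched one-homogeneous model $\Re(x_1 + \mathrm{i} x_2)^{3/2}$ and its higher-dimensional analogues.
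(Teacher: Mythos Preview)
Your inductive framework matches the paper's: both reduce Theorem~\ref{thm_regularity_lipschitz_two_valued_graphs} to showing that every homogeneous Lipschitz two-valued minimal graph $\phi$ is linear (the paper's Lemma~\ref{lem_reg_lipschitz_graph_cone}), and both invoke \cite{Spencer_Two_valued_graphs_arbitrary_codimension} once linearity of tangent maps is established. Your treatment of the case $\dim S(\phi) \geq 1$ via descent and induction is fine after the routine observation that, up to subtracting a single-valued linear function, the spine can be taken horizontal.

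The genuine gap is in your treatment of $\dim S(\phi) = 0$. You assert that one can \emph{select a horizontal unit vector $e$ for which $\langle Du, e\rangle$ is one-signed throughout $\reg G$}, but you give no mechanism to produce such an $e$, and none is available in general: $\langle \nu, e\rangle$ is a Jacobi field, and Jacobi fields can and do change sign. The strong maximum principle on $\reg G$ tells you nothing unless you already know the sign. Lemma~\ref{lem_max_princ_for_nu_dot_e} is an \emph{upgrading} lemma (non-negative implies zero, near a branch point with trivial one-jet); it does not manufacture the one-sidedness hypothesis. A second, related issue is that Lemma~\ref{lem_max_princ_for_nu_dot_e} requires the function to be $C^{1,\alpha}$ on the disc where it is applied, whereas your $\phi$ is \emph{a priori} only Lipschitz; you have not yet established $C^{1,\gamma}$-regularity of $\phi$ away from the origin, so you cannot invoke the lemma even locally.

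The paper's route to linearity is different and more delicate. First it proves $\phi \in C^{1,\gamma}(\bR^n \setminus \{0\})$ by dimension reduction at each nonzero point (using the induction hypothesis and \cite{Spencer_Two_valued_graphs_arbitrary_codimension}). Then it locates a point $Z \in \partial B_1$ where $w = -\log\langle \nu, e_{n+1}\rangle$ is maximal. If $Z \notin \calB(G)$ a Bochner argument gives local flatness. If $Z \in \calB(G)$, the paper constructs a specific $e \in \Pi_Z$ (in the tangent plane at $Z$, in the span of $e_{n+1}$ and $\nu(Z)$) for which the \emph{maximality of $w$ at $Z$} forces $\langle \nu, e\rangle \geq 0$ \emph{locally near $Z$}; only then does Lemma~\ref{lem_max_princ_for_nu_dot_e} apply (to the local $C^{1,\gamma}$ representation from Theorem~\ref{thm_wic_mult_two_allard}), yielding local flatness near $Z$. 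Finally, a flat/curved decomposition of $\reg G$ together with Lemma~\ref{lem_GMT_immersed_flat_union_of_planes} globalises this to conclude that $\graph \phi$ is a union of planes. The one-sidedness is thus a \emph{local} consequence of maximality, not a global fact about some direction $e$, and the passage from local flatness to global linearity requires a separate argument that your proposal omits.
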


This turns out to be equivalent to the following, seemingly weaker lemma.

\begin{lem}
\label{lem_reg_lipschitz_graph_cone}
Let $u \in \mathrm{\Lip}(\bR^n;\calA_2)$ be a two-valued
minimal graph with Lipschitz constant $L$.
If additionally $u$ is homogeneous,
\begin{equation}
	\label{eq_u_homogeneous}
u(\lambda x) = \lambda u(x)
\text{ for all $\lambda > 0, x \in \bR^n$},
\end{equation}
then $u$ is linear.
\end{lem}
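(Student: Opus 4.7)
The plan is to prove both Theorem~\ref{thm_regularity_lipschitz_two_valued_graphs} and Lemma~\ref{lem_reg_lipschitz_graph_cone} simultaneously by induction on the dimension $n$. The base case $n = 1$ is immediate: the minimal graph of a two-valued function on $\bR$ is a union of at most two straight line segments, and the homogeneity condition~\eqref{eq_u_homogeneous} forces these to be rays through the origin, so $u$ is linear. For the inductive step, I would assume both statements in dimensions strictly less than $n$ and derive the Lemma in dimension $n$ in two main stages: first establishing $C^{1,\alpha}$ regularity of $u$ on $\bR^n \setminus \{\orig\}$, and then using homogeneity together with a maximum principle to force linearity.

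For the regularity stage, the graph $G = \graph u$ is an $n$-dimensional minimal cone through the origin, so for any $X_0 = (x_0, u_i(x_0)) \in G$ with $x_0 \neq 0$ the entire line $\bR X_0$ lies in $G$. Consequently every tangent cone $T_{X_0} G$ contains this line and splits as $\bR X_0 \times \Sigma$, where $\Sigma$ is an $(n-1)$-dimensional stationary stable cone in the orthogonal complement of $X_0$. After a suitable rotation and projection, $\Sigma$ is realized as a Lipschitz two-valued minimal graph in dimension $n-1$; the inductive Lemma then forces $\Sigma$ to be linear, so $T_{X_0} G$ is a union of at most two $n$-planes containing $\bR X_0$. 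Applying this at every nonzero point of $G$ and invoking the branched sheeting theorem of Wickramasekera (Theorem~\ref{thm_wic_mult_two_allard}), one concludes that $G$ is smoothly $C^{1,\alpha}$-immersed away from the origin, i.e.\ $u \in C^{1,\alpha}_{\loc}(\bR^n \setminus \{\orig\};\calA_2)$.

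For the linearity stage, I would combine the regularity with homogeneity: $Du$ is defined on $\bR^n \setminus \{\orig\}$ as a two-valued $C^{0,\alpha}$ map which, being the derivative of a degree-one homogeneous function, is itself homogeneous of degree zero. Euler's identity applied to each local selection gives $u(x) = \langle Du(x), x \rangle$, so $u$ is linear precisely when $Du$ is constant on $S^{n-1}$. To prove this, fix a unit vector $e$ and consider $f_e = \langle Du, e \rangle$, a two-valued function which is bounded by the Lipschitz constant, homogeneous of degree zero, and satisfies a linearised minimal-surface equation away from the singular set $\calB_u \cup \calK_u$. The supremum of $f_e$ is attained at some $\omega_0 \in S^{n-1}$; a strong maximum principle argument propagating $f_e \equiv \sup f_e$ from $\omega_0$ to a neighbourhood, and then by homogeneity along the whole ray, forces $f_e$ to be constant. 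Taking $e$ arbitrary gives $Du$ constant, hence $u$ linear. The main obstacle is executing this max-principle step rigorously in the two-valued setting: Lemma~\ref{lem_max_princ_for_nu_dot_e} is anchored at the origin under the hypothesis $Du(\orig) = \{0,0\}$, which is essentially the conclusion we want, so I would instead apply the strong maximum principle at $\omega_0 \in S^{n-1}$ using the regularity from the previous stage, combined with a capacity cut-off across the trace $\calB_u \cap S^{n-1}$ (which is $(n-3)$-rectifiable by Theorem~\ref{thm_krumwic_finepropsmingraphs}) and the two-valued variant of Lemma~\ref{lem_max_princ_for_nu_dot_e} to deal with any classical singularities or branch points on the sphere.
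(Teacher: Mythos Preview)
Your inductive framework and the regularity stage are essentially the same as the paper's: dimension reduction at $X_0 \neq 0$ gives a tangent cone that splits off a line, and the inductive hypothesis forces it to be planar, so Becker-Kahn (or Wickramasekera) yields $u \in C^{1,\gamma}(\bR^n \setminus \{0\};\calA_2)$.

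The gap is in the linearity stage. You correctly identify the obstacle yourself: Lemma~\ref{lem_max_princ_for_nu_dot_e} requires $Du(0) = \{0,0\}$ and a \emph{sign} condition $\langle Du, e \rangle \leq 0$, neither of which you have for an arbitrary direction $e$ at an arbitrary maximising point $\omega_0$. Your proposed fix---``apply the strong maximum principle at $\omega_0$ combined with a capacity cut-off and the two-valued variant of Lemma~\ref{lem_max_princ_for_nu_dot_e}''---does not explain how to manufacture that sign condition, and without it there is no input to feed into the lemma. For a two-valued function at a genuine branch point one sheet's value of $\langle Du, e\rangle$ can attain the supremum while the other does not, and the standard Hopf-type argument breaks.

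The paper's trick is to work not with $\langle Du, e\rangle$ for arbitrary $e$ but with the single geometric quantity $w = -\log\langle \nu, e_{n+1}\rangle$, which is automatically bounded by the Lipschitz constant. If the supremum $M$ of $w$ on $\partial B_1$ is attained at a branch point $Z$, write $\Pi_Z$ for the tangent plane there and choose the specific vector $e \in \Pi_Z \cap \linspan\{e_{n+1}, \nu(Z)\}$. A short computation using only the maximality of $w$ at $Z$ then gives $\langle \nu(X), e\rangle \geq 0$ for all regular $X$ near $Z$. Now reparametrise $G$ as a two-valued graph over $\Pi_Z$: in these coordinates the new function has vanishing gradient at the origin and $e$ is horizontal, so Lemma~\ref{lem_max_princ_for_nu_dot_e} applies and forces $\langle \nu, e\rangle \equiv 0$ near $Z$. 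Unwinding, this gives $\nu \equiv \nu(Z)$ near $Z$, so $\reg G$ is locally flat there. The paper then finishes by decomposing $|G| = \bC_f + \bC_c$ into flat and curved stationary pieces, showing $\bC_f \neq 0$ is a union of at most two planes (Lemma~\ref{lem_GMT_immersed_flat_union_of_planes}), and eliminating $\bC_c$ via a removable-singularity argument.
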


\begin{proof}
Before moving on to the inductive argument, let us explain how
Lemma~\ref{lem_reg_lipschitz_graph_cone} implies
Theorem~\ref{thm_regularity_lipschitz_two_valued_graphs}.
Let $u \in \Lip(D_2;\calA_2)$ be a two-valued minimal graph with 
Lipschitz constant $L$. This is smooth away from away from $\calB_u$,
so consider an arbitrary $x \in \calB_u  \cap D_2$. 
Write $X = (x,X^{n+1}) \in \calB(G)$ be the corresponding point in the
graph, at height $X^{n+1} = u_1(x) = u_2(x)$. For any tangent cone
$\bC_X \in \vartan(\abs{G},X)$---\emph{a priori} these are not unique---,
there is a two-valued Lipschitz function $U_X \in \Lip(\bR^n;\calA_2)$
with the same Lipschitz constant, so that $\bC_X = \abs{\graph U_X}$.
This function $U_X$ is homogeneous as in~\eqref{eq_u_homogeneous},
and thus by Lemma~\ref{lem_reg_lipschitz_graph_cone} this must be linear.
In other words there is $\Pi_X \in \Gr(n,n+1)$ so that
$\bC_X = 2 \abs{\Pi_X}$. 
Then by~\cite{Spencer_Two_valued_graphs_arbitrary_codimension} there
is $0 < \gamma = \gamma(n,L) < 1$ so that for some $\rho > 0$, 
$u \in C^{1,\gamma}(D_\rho(x);\calA_2)$. 
As the branch point $x$ was chosen arbitrarily and $\alpha(n,L) := \gamma$ does not
depend on it, we get $u \in C^{1,\gamma}(D_2;\calA_2)$.

The base of the inductive argument is simple, as when $n = 1$ then two-valued minimal graphs
are automatically linear.
For the induction step, assume that Theorem~\ref{thm_regularity_lipschitz_two_valued_graphs}
holds in dimension $n-1 \geq 1$.
We prove Lemma~\ref{lem_reg_lipschitz_graph_cone} in dimension $n$,
and for that purpose consider an arbitrary minimal graph
$u \in \Lip(\bR^n;\calA_2)$, homogeneous as in~\eqref{eq_u_homogeneous}.
We claim that there is $\gamma = \gamma(n,L) > 0$ so that
$u \in C^{1,\gamma}(\bR^n \setminus \{ 0 \};\calA_2)$. To see this,
let $X = (x,X^{n+1}) \neq 0 \in \sing G \cap D_2 \times \bR$.
Every tangent cone $\bC_X \in \vartan(G,X)$ is the graph
of a two-valued function $U_X \in \Lip(\bR^n;\calA_2)$ with the same Lipschitz constant.
By a standard dimension reduction argument, $\bC_X$ is invariant under
translation by $tX$ for all $t \in \bR$.
By the induction hypothesis $U_X$ is linear, and $\bC_X$ is
\begin{enumerate}
\item either a sum of two multiplicity one planes, $\bC_X
	= \abs{\Pi_1^X} + \abs{\Pi_2^X}$,
\item or a single multiplicity two plane, $\bC_X = 2 \abs{\Pi_1^X}$.
\end{enumerate}
By~\cite{Spencer_Two_valued_graphs_arbitrary_codimension} there exists
$0 < \gamma = \gamma(n,L) < 1$ so that for some $0 < \rho < \abs{x}$,
$u \in C^{1,\gamma}(D_\rho(x);\calA_2)$ regardless of whether $X = (x,X^{n+1})$
is a classical singularity or a branch point.
As $x$ is arbitrary and $\gamma$ can be chosen independently of it,
we get $u \in C^{1,\gamma}(\bR^n \setminus \{ 0 \};\calA_2)$.

To extend this across the origin, define a function $w$ on the regular set,
\begin{equation}
	\label{eq_defn_function_w}
	w(X) = -\log \langle \nu(X) , e_{n+1} \rangle
	\text{ for all $X \in \reg G$.}
\end{equation}
This is non-negative and bounded, and we may let
\begin{equation}
	\label{eq_defn_M}
	M = \sup_{ \reg G \cap \bdary B_1} w > 0,
\end{equation}
and consider a sequence of points $X_i \in \reg G \cap \bdary B_1$
with $X_i \to Z \in G \cap \bdary B_1$ and $w(X_i) \to M$ as $i \to \infty$.
\begin{claim}
	\label{claim_limit_point_unbranched}
If $Z = (z,Z^{n+1}) \notin \calB(G) \cap \bdary B_1$
then $u$ is locally linear near $Z$ in the sense that there is $\rho > 0$
and a smooth selection $u_1,u_2 \in C^\infty(D_\rho(z))$ with
$u_1$ linear and $Z \in \graph u_1$.
\end{claim}
\begin{proof}
As the graph $\abs{G}$ is invariant under homotheties, so is $w$,
whence if $X \in \reg G$ then $\lambda X \in \reg G$ 
and $w(\lambda X) = w(X)$ for all $\lambda > 0$.
Therefore~\eqref{eq_defn_M} also means $M = \sup_{\reg G \cap B_1} w.$

When $Z \in \reg G \cap \bdary B_1$ then by the classical strong maximum
principle $w$ is locally constant near $Z$, and thus so is so $\abs{Du}$.
Pick a small radius $\rho > 0$ so that a
smooth selection $\{ u_1, u_2 \}$ can be made for $u$ on $D_\rho(z)$.
We arrange for  $Z \in \graph u_1$. As $\abs{Du_1}$ is
constant in $D_\rho(z)$, it is harmonic by inspection.
By the Bochner formula, $\abs{D^2 u_1}^2 = \Delta \abs{Du_1}^2 \equiv 0$
on $D_\rho(z)$.
Thus $u_1$ is affine linear, and the homothety-invariance of $G$ means that
it must in fact be linear.

The argument is similar when $Z \in \calC(G) \cap \bdary B_1$.
Make a smooth selection $\{ u_1,u_2 \}$ for $u$ on $D_\rho(z)$ and
write $G_i = \graph u_i$.
Define the two functions $w_1,w_2$ on $G_1,G_2$ respectively, using the
analogue of~\eqref{eq_defn_function_w}. 
Without loss of generality assume that $G_1$ contains infinitely
many points of $\{ X_i \mid i \in \bN \}$.
As $w_1$ is continuous at the point $Z$, we get $w_1(Z) = M$.
From then on, one can argue in the same way as when $Z$ is regular.
\end{proof}

Now suppose $Z \in \calB(G) \cap \bdary B_1$.
This argument is a bit more involved, but revolves around the same idea.
Because $Du_1(z) = Du_2(z)$, the function $w$ can be continuously extended to $Z$.
(The same is true for all branch points.)
Write $\nu(Z) \in \bR^{n+1}$ for the unit normal, and $2 \abs{\Pi_Z}
\in \vartan(\abs{G},Z)$ for the tangent plane to $G$ at $Z$.
As $w(Z) = M > 0$, this plane is not horizontal, and $\nu(Z) \neq e_{n+1}$.
Let $P = \linspan \{ e_{n+1},\nu(Z) \} \subset \bR^{n+1}$. This intersects
$\Pi_Z$ in a one-dimensional line, from which we pick a vector $e \in \Pi_Z \cap P$
with $\langle e , e_{n+1} \rangle > 0$.
Write $e = a e_{n+1} + b \nu(Z)$ for $a,b \in \bR$, which are constrained by
$0 = \langle e , \nu(Z) \rangle = a \langle e_{n+1} , \nu(Z) \rangle  + b$,
or equivalently $b = - a \langle e_{n+1} ,\nu(Z) \rangle$.
Let $X \in \reg G$ be an arbitrary regular point near $Z$. Then
\begin{align}
	\label{eq_expression_nu_dot_e}
\langle \nu(X) , e \rangle &= \langle \nu(X) , a e_{n+1} - a \langle e_{n+1},\nu(Z) \rangle 
\nu(Z) \rangle \\
&= a  \langle \nu(X),e_{n+1} \rangle
- a \langle e_{n+1},\nu(Z) \rangle  \langle \nu(X),\nu(Z) \rangle.
\end{align}
Take $X$ close enough to $Z$ that $0 < \langle \nu(Z),\nu(X) \rangle \leq 1$,
say this holds for $X \in \reg G \cap B_\rho(Z)$ for example.
Moreover by construction $\langle e_{n+1} , \nu(Z) \rangle \leq \langle e_{n+1},
\nu(X) \rangle$, and hence
\begin{equation}
	\label{eq_nu_dot_e_non_negative}
\langle \nu(X) , e \rangle \geq 0
\text{ for all $X \in \reg G \cap B_\rho(Z)$}.
\end{equation}
Upon decreasing $\rho > 0$ there is
$U_Z \in C^{1,\gamma}( B_\rho(Z) \cap (Z + \Pi_Z);\Pi_Z^\perp)$ so that
$G \cap  B_\rho(Z) \subset  \graph U_Z$, by Wickramasekera's
Theorem~\ref{thm_wic_mult_two_allard}.
By construction $e \in \Pi_Z$, and we can apply Lemma~\ref{lem_max_princ_for_nu_dot_e}
to $U_Z$ to deduce from~\eqref{eq_nu_dot_e_non_negative}
that $\langle \nu(X) , e \rangle = 0$ for all $X \in \reg G \cap B_\rho(Z)$.
Returning to~\eqref{eq_expression_nu_dot_e} we see that this is
only possible if for these points we have both $\langle \nu(X) , e_{n+1} \rangle
= \langle \nu(Z),e_{n+1} \rangle$ and $\langle \nu(Z) , \nu(X) \rangle = 1$.
Either would suffice to conclude that
$w(X) = w(Z)$ for all $X \in \reg G \cap B_\rho(Z)$.
Once we have derived this, we may reason as in the proof of
Claim~\ref{claim_limit_point_unbranched} to draw the analogous conclusion.

We use this to show that $G$ must be a union of planes, 
using an argument similar to that used to prove Lemma~\ref{lem_decomposition_cone}.
Let $\calR$ be the set of connected components of $\reg G$, of which there
are at most countably many. Among them we write $\calR_f \subset \calR$ for
those $\Sigma \in \calR$ that are flat in the sense that
$\abs{A_\Sigma} \equiv 0$.
The rest is denoted $\calR_c = \calR \setminus \calR_f$.
We decompose $\abs{G} = \bC_f + \bC_c \in \IV_n(\bR^{n+1})$, respectively
defined by $\bC_f = \sum_{\Sigma \in \calR_f} \Theta_\Sigma \abs{\Sigma}$
and $\bC_c = \sum_{\Gamma \in \calR_c} \Theta_\Gamma \abs{\Gamma}$.
Here given $\Sigma \in \calR$ we write $\Theta_\Sigma \in \bZ_{> 0}$
for its multiplicity, which is constant by~\cite[Thm.~41.1]{Simon84}.
Both $\bC_f,\bC_c$ are invariant under homotheties, and stationary.
To justify the latter, it suffices to prove that $\bC_f,\bC_c$ are stationary
near points in $\calC(G)$, as the other singularities do not contribute to the first variation.
Pick some point $X \in \spt \norm{\bC_f} \cap \norm{\bC_c} \cap \calC(G)$,
and let $\rho > 0$ be so that we can decompose
$G \cap B_\rho(X) = \Sigma_1 \cup \Sigma_2$ into a union of two surfaces embedded
in $B_\rho(X)$, which meet transversely along $\sing G \cap B_\rho(X)$.
By a unique continuation argument we may arrange for
$\Sigma_1 \subset \spt  \norm{\bC_f}$ and $\Sigma_2 \subset \spt \norm{\bC_c}$.
Both $\Sigma_i$ are stationary in $B_\rho(X)$, whence $\bC_f,\bC_c$ are stationary
inside $B_\rho(X)$ too. As $X$ was arbitrary, they are stationary in $\bR^{n+1}$.
The argument above shows that $\calR_f \neq \emptyset$ and $\bC_f \neq 0$,
and by Lemma~\ref{lem_GMT_immersed_flat_union_of_planes} it is
supported in a union of planes, say $\spt \norm{\bC_f} = \Pi_1 \cup \cdots \cup \Pi_D$
with $D \leq 2$.
If $\bC_c = 0$ then we are done, otherwise $D = 1$ and $\bC_f = \abs{\Pi_1}$.
In this case too one ultimately finds that $\abs{G} = \abs{\Pi_1} + \abs{\Pi_2}$,
for instance using~\cite{Simon_Erasable_Singularity_Result}.
\end{proof}

\section{A Jenkins--Serrin type lemma for single-valued minimal graphs}
\label{sec_single_valued_minimal_graphs}

We make a brief excursion to single-valued minimal graphs, with the aim of
proving two basic technical results that will turn out essential in the classification
of vertical limit cones; see Section~\ref{sec_classical_cones_vertical} and 
Lemma~\ref{lem_single_sheet_lemma} in particular.

Throughout this section, $\Omega \subset \bR^n$ be a bounded, convex domain
with Lipschitz-regular boundary. Let $u \in C^2(\Omega) \cap C^1(\clos{\Omega})$
be a single-valued function, whose graph $G \in \I_n(\Omega \times \bR)$ is
minimal.
Using the convexity of $\Omega$, it is well-known that the current $\cur{G}$
is area-minimising: any current $T \in \I_n(\bR^{n+1})$ with $\bdary T = \bdary \cur{G}$
has larger area: $\calH^n(G) \leq \norm{T}(\bR^{n+1})$.
Next, given $A < B \in \bR$ let $G_{A,B} = G \cap \Omega \times (A,B)$.
As the set $\Omega \times (A,B) \subset \bR^{n+1}$ is convex, this retains
the area-minimising property: any current $T \in \I_n(\bR^{n+1})$ with $\bdary T = 
\bdary \cur{G_{A,B}}$ has $\calH^n(G \cap \Omega \times (A,B)) \leq \norm{T}(\bR^{n+1})$.

For the remainder we work in a more specialised, polyhedral setting.
Let $\Pi_1,\dots,\Pi_N \in \Gr(n,n+1)$ be vertical planes, that is
$\Pi_i = \Pi_i^0 \times \bR e_{n+1}$, consider points $p_1,\dots,p_N \in \bR^n$
and the affine planes $p_1 + \Pi_1,\dots,p_N + \Pi_N$. 
In terms of some choice of unit normals $n_1,\dots,n_N \in \bR^n$ these can
be written $p_i + \Pi_i^0 = \{ x \in \bR^n \mid \langle x, n_i \rangle = a_i \}$,
where we set $a_i = \langle p_i , n_i \rangle$.
Let $\Omega \subset \bR^n$ be a convex polyhedral domain bounded by these planes,
and assume that the normals point into $\Omega$:
$\Omega = \cap_{i=1}^N \{ x \in \bR^n \mid \langle x , n_i \rangle > a_i \}$.
The boundary of $\Omega$ contains what we call the \emph{faces}
$F_j = \{ x \in \bdary \Omega \mid \langle x , n_i \rangle = a_i \}$
and the \emph{edges} $E_{ij} = F_j \cap F_j$, which have
$\calH^{n-1}(E_{ij}) = 0$.

Let $T = T_0 \times \bR e_{n+1} \in \I_n(\bR^{n+1})$ be a vertical current
so that the support of $T_0$ is equal to a union of some subcollection of
the faces in $\bdary \Omega$. 
Write $F_T$ for the faces and, likewise $E_T$ for the edges contained in $\spt \norm{T_0}$.
Let $(\Omega_j \mid j \in \bN)$ be a sequence of bounded domains with
$\dist_{\calH}(\Omega_j,\Omega) \to 0$ as $j \to \infty$; these
need neither be convex, nor polyhedral or have piecewise smooth boundary.
For each $j \in \bN$ there is a regular, single-valued $u_j \in C^2(\Omega_j)$
whose graph $G_j$ is minimal.
We prove the following lemma for their limit, inspired by the work
of Jenkins--Serrin in dimension three~\cite{JenkinsSerrin_Dirichlet_problem_infinite_data,
JenkinsSerrin_Variational_Problems_II}.

\begin{thm}
\label{thm_no_folding}
Let $\Omega, \Omega_j \subset \bR^n$ and $u_j \in C^2(\Omega_j)$ be as above.
Suppose that $\dist_{\calH}(\Omega_j,\Omega) \to 0$, $\cur{G_j} \to T$
and that for all $0 < \sigma < 1 < A$ there is $J = J(\sigma,A) \in \bN$ 
so that $\spt \bdary \cur{G_j \cap \Omega_j \times (-A,A)} \subset (E_T)_\sigma$
for all $j \geq J$.
Then $\norm{T_0} \leq \calH^{n-1}(\bdary \Omega) / 2.$
\end{thm}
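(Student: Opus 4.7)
My approach follows the classical Jenkins--Serrin strategy for the non-existence direction of the Dirichlet problem with infinite boundary data, lifted to arbitrary dimension. The main actor is the divergence-free vector field $V_j := T(Du_j) = Du_j / v_j \in C^\infty(\Omega_j; \bR^n)$, which satisfies $\div V_j = 0$ (since $u_j$ solves the minimal surface equation) and $|V_j| \leq 1$ pointwise. Passing to a subsequence, $V_j \to V$ weak-$*$ in $L^\infty(\Omega; \bR^n)$ with $|V| \leq 1$ almost everywhere and $\div V = 0$ in the sense of distributions; in particular, the divergence-measure field $V$ has a well-defined normal trace on $\bdary \Omega$ and satisfies $\int_{\bdary \Omega} V \cdot \nu_{\mathrm{out}} = 0$.

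The second step is a rigidity statement coming from the hypothesis $\cur{G_j} \to T = T_0 \times \bR e_{n+1}$. Pairing the current convergence with horizontal $n$-forms $\phi \diff x_1 \wedge \cdots \wedge \diff x_n$ (which are annihilated by any vertical current) yields $\int_{\Omega_j} \phi(x, u_j(x)) \diff x \to 0$ for all $\phi \in C_c(\bR^{n+1})$. Specializing to $\phi(x,z) = \psi(x) \indic_{(-R,R)}(z)$ forces $|\{ x \in K : |u_j(x)| \leq R \}| \to 0$ for every compact $K \subset \Omega$ and every $R > 0$, so $|u_j| \to \infty$ almost everywhere in $\Omega$ with a definite sign (an interior interface would produce a wall in $T$ away from $\bdary \Omega$, contradicting $\spt T_0 \subset \bdary \Omega$). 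Assume without loss of generality that $u_j \to -\infty$ a.e.\ in $\Omega$. For a face $F \in F_T$, the hypothesis $\spt \bdary \cur{G_j \cap \Omega_j \times (-A,A)} \subset (E_T)_\sigma$ combined with the bulk behaviour forces $u_j \to +\infty$ on $F$: this is the only way the graph-wall over $F$ can span the entire height-slab $(-A,A)$ in the limit, rather than being pushed off to $z = -\infty$ or producing a "cap"-type boundary on $F$ at finite height, which would violate the support condition on $\bdary R_j$. Consequently $Du_j \to -\infty \cdot n_F$ on $F$, and $V_j \to -n_F$ uniformly on compact subsets of $F$ away from edges, so the normal trace of $V$ on $F$ equals $V \cdot \nu_{\mathrm{out}}|_F = (-n_F) \cdot (-n_F) = 1$ almost everywhere on each $F \in F_T$.

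With these ingredients in place the divergence identity splits as
\begin{equation*}
0 = \int_{\bdary \Omega} V \cdot \nu_{\mathrm{out}}
= \sum_{F \in F_T} \calH^{n-1}(F) + \sum_{F \notin F_T} \int_F V \cdot \nu_{\mathrm{out}},
\end{equation*}
where the first sum equals $\norm{T_0}$ (single-valued graphs cannot fold, so the limit walls have multiplicity one), and the second sum has absolute value at most $\calH^{n-1}(\bdary \Omega) - \norm{T_0}$ since $|V| \leq 1$. Rearranging gives $\norm{T_0} \leq \calH^{n-1}(\bdary \Omega) - \norm{T_0}$, i.e.\ the desired bound $\norm{T_0} \leq \calH^{n-1}(\bdary \Omega) / 2$.

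The main obstacle is the second step, specifically making rigorous the normal-trace identification $V \cdot \nu_{\mathrm{out}} = 1$ on each face $F \in F_T$. One must unpack the hypothesis on $\spt \bdary \cur{G_j \cap \Omega_j \times (-A,A)}$ to exclude pathological boundary behaviours of $u_j$ on $F$ (e.g.\ $u_j$ bounded, or oscillating between $\pm \infty$ on patches), and then pass from the pointwise convergence $V_j \to -n_F$ in the interior approach to $F$ to a genuine trace-sense identity on $F$. The fact that $\Omega_j$ is only Hausdorff-close to $\Omega$ (not a diffeomorphic perturbation), together with the need to control $V_j$ uniformly in neighborhoods of the $(n-2)$-dimensional edges where it can oscillate, constitute the main technical layer.
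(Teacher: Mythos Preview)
Your approach is genuinely different from the paper's and is the classical Jenkins--Serrin route via the divergence-free field $T(Du_j)$ and the divergence theorem. The paper argues instead by area comparison: it uses that a single-valued minimal graph over a convex domain is area-minimising (this is where convexity of $\Omega$ enters), forms the Caccioppoli subgraph $W = \{(x,z) \in \Omega_{j,\tau,\sigma} \times (-A,A) : z > u_j(x)\}$, and compares $\calH^n(G_j \cap \text{slab})$ to the mass of the competitor $-\partial\cur{W} - \cur{G_j}$, which consists of top/bottom caps of total area $\leq 2\calH^n(\Omega)$, lateral walls over $\partial\Omega \setminus F_T$ of area $\approx 2A(\calH^{n-1}(\partial\Omega) - \calH^{n-1}(F_T))$, plus small error pieces near $(E_T)_\sigma$. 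Dividing by $2A$ and sending $A \to \infty$, $\sigma,\tau \to 0$ yields $2\calH^{n-1}(F_T) \leq \calH^{n-1}(\partial\Omega)$ directly, with no weak-$*$ limits or trace theory. The paper also handles $\Omega_j \neq \Omega$ by a homothety trick forcing $\Omega \subset \Omega_j$, which your argument would need too.

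Your route can be completed, but the justification you give for the trace step is not right as stated. The assertion ``$u_j \to +\infty$ on $F$'' presupposes $F \subset \Omega_j$ and pointwise control of $u_j$ there; neither is immediate, and in any case the normal trace of the weak-$*$ limit $V$ on $F$ is determined by the behaviour of $V_j$ \emph{inside} $\Omega$ near $F$, not by its values on $F$. What actually works: at an interior point $p \in F$ away from edges, multiplicity-one current convergence $\cur{G_j} \to \cur{F \times \bR}$ lets you invoke Allard regularity in a ball around $(p,0)$ to write $G_j$ locally as a $C^1$ graph over the vertical plane $\Pi_F$, converging in $C^1$ to $\Pi_F$; this forces $|Du_j| \to \infty$ and $V_j \to \pm n_F$ on the shrinking slab where $G_j$ sits near $p$, with the sign fixed by your bulk hypothesis $u_j \to -\infty$ in $\Omega$. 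One then feeds this into $\int_{\partial\Omega}(V\cdot\nu_{\mathrm{out}})\psi = \lim_j \int_\Omega V_j\cdot D\psi$ to identify the trace. The paper's competitor method sidesteps this entire layer; the compensating advantage of yours is that it uses only $\div T(Du_j)=0$ and never the area-minimising property.
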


\begin{proof}
We obtain the conclusion by constructing a comparison surface. We begin
the construction by making the following two assumptions:
\begin{enumerate}[label = (\arabic*)]
	\item \label{item_domain_contained_in_sequence}
		$\Omega \subset \Omega_j$ for all $j$,
\item \label{item_divergence_away_from_spt_T_0}
	and for all $0 < \tau < 1 < A$ there is $J = J(\tau,A) \in \bN$ so that
	$\abs{u_j} >2 A$ on $\Omega \setminus (F_T)_\tau$ for all
	$j \geq J$.
\end{enumerate}
We explain at the end of the proof why this can be done without restricting
the generality of the argument.

Let $0 < \tau < \sigma < 1 <  A $ be given, with the eventual aim of letting
$\sigma,\tau \to 0$ and $A \to \infty$.
We may perturb these by a small amount to guarantee that the level
sets $\{ u_{j} = \pm A \}$ are regular inside $\Omega_j$, justifying this by Sard's lemma;
whenever we adjust the values of $\tau,\sigma$ or $A$ we do so in a way which preserves
this property.

First we adjust $\tau$ in terms of $\sigma$ so that
$(F_T)_\tau \cap \bdary \Omega \subset F_T \cup (E_T)_\sigma$ and 
the $\tau$-tubular neighbourhoods of any two distinct faces are either
disjoint or meet inside $(E_T)_\sigma$.
Hence we can decompose $(F_T)_\tau \setminus (E_T)_\sigma$ into a disjoint
union, with one connected component for each face.
We define the open subset $\Omega_{j,\tau,\sigma} \subset \Omega_j$
by $\Omega_{j,\tau,\sigma} =  \Omega_j \cap \{\Omega \cup (F_T)_\tau \}
\setminus (E_T)_\sigma$ and take $j \geq J(\tau,\sigma,A)$ large enough
that $\abs{u_j} > 2A$ on $\Omega_{j,\tau,\sigma}$.

Consider the open set $W_{j,\tau,\sigma,A} \subset \Omega_{j,\tau,\sigma}
\times (-A,A)$ defined by
\begin{equation}
	W_{j,\tau,\sigma,A}
	= \{ (x,X^{n+1}) \in \Omega_{j,\tau,\sigma} \times (-A,A)
	\mid u_j(x) < X^{n+1} \}.
\end{equation}
This is a Caccioppoli set, whose current boundary we decompose into
\begin{align}
	-\bdary \cur{W_{j,\tau,\sigma,A}}
	& = \cur{(\{ u_j < A \} \cap \Omega_{j,\tau,\sigma} ) \times \{A \}} \\
	& + \cur{(\{ u_j < -A \}) \cap \Omega_{j,\tau,\sigma}) \times \{ -A \}} \\
	& + \cur{\bdary \Omega \setminus ([E_T]_\sigma \cup F_T) \times (-A,A)}\\
	& + \cur{\bdary W_{j,\tau,\sigma,A} \cap \bdary (E_T)_\sigma \times (-A,A) } \\
	&+ \cur{G_j \cap \Omega_{j,\tau,\sigma} \times (-A,A)}.
\end{align}
We estimate the areas of all the summands separately. Let a small $\delta > 0$ be given.
For the first two we respectively find
\begin{equation}
	\calH^n( (\{ u_j < \pm A \} \cap \Omega_{j,\tau,\sigma}) \times \{ \pm A \})
	\leq \calH^n(\Omega_{j,\tau,\sigma}) \leq \calH^n(\Omega_j).
\end{equation}
Using the convergence $\Omega_j \to \Omega$ in the Hausdorff distance,
we find that for large $j \geq J(\tau,\sigma,A,\delta)$ so that
$\calH^n(\Omega_{j}) \leq \calH^n(\Omega) + \delta$.
The third term, $\cur{\bdary \Omega \setminus ([E_T]_\sigma \cup F_T) \times (-A,A)}$
does not depend on $j$, and taking $\sigma,\tau$ small
enough in terms of $\delta$ we get
\begin{multline}
\calH^n(\bdary \Omega  \setminus ([E_T]_\sigma \cup F_T) \times (-A,A)) \\
\leq 2A(\calH^{n-1}(\bdary \Omega) - \calH^{n-1}(\spt T_0)) + 2A \delta.
\end{multline}
For the fourth, we bound 
\begin{align}
	\calH^n( \bdary W_{j,\tau,\sigma,A} \cap \bdary (E_T)_\sigma \times (-A,A)) 
	&\leq 2A \calH^{n-1}( \bdary (E_T)_\sigma ) \\
	&\leq 4\pi A \sigma \calH^{n-2}(E_T) + 2A \delta.
\end{align}
Next, the convergence assumed in the statement allows the lower bound
\begin{align}
	\calH^n(G_j \cap \Omega_{j,\tau,\sigma} &\times [-A,A]) \\
	&\geq 2A ( \calH^{n-1}(F_T) - \calH^{n-1}((E_T)_\sigma) )
	- 2 A \delta \\
	&\geq 2A (\calH^{n-1}(F_T) - 2 \pi \sigma \calH^{n-2}(E_T) - \delta)
	- 2 A \delta,
\end{align}
provided we change $\sigma$ to a suitably small value in terms of $\delta$.

We compare the graph $\cur{G_j \cap \Omega_{j,\tau,\sigma} \times (-A,A)}$
to the integral current $T_{j,A} = -\bdary \cur{W_{j,\tau,\sigma,A}}
+ \cur{G_j \cap \Omega_{j,\tau,\sigma} \times (-A,A)}$.
These have the same boundary, whence
$\calH^n(G_j \cap \Omega_{j,\tau,\sigma} \times (-A,A))	\leq \norm{T_{j,A}}(\bR^{n+1}).$
Substituting our term-by-term calculations into this inequality we find
\begin{multline}
	2A (\calH^{n-1}(F_T) - 2 \pi \sigma \calH^{n-2}(E_T) - \delta)
	- 2 A \delta \\
	\leq 2 \calH^n(\Omega) + 2 \delta 
	+ 2A(\calH^{n-1}(\bdary \Omega) - \calH^{n-1}(F_T))
	+ 2A \delta
	+ 4 \pi A \sigma \calH^{n-2}(E_T) + 2A \delta,
\end{multline}
whence after dividing by $2A$,
\begin{multline}
	\calH^{n-1}(F_T)
	- 2 \sigma \pi \calH^{n-2}(E_T)
	-2 \delta  \\
	\leq \calH^n(\Omega) / A 
	+ \calH^{n-1}(\bdary \Omega) - \calH^{n-1}(F_T)
	 + 2 \sigma \pi\calH^{n-2}(E_T) + \delta(2 + 1/A).
\end{multline}
This simplifies to 
\begin{equation}
	2 \calH^{n-1}(F_T)
	\leq \calH^n(\Omega) / A
	+ \calH^{n-1}(\bdary \Omega)
	+ 4 \sigma \pi \calH^{n-2}(E_T)
	+ \delta (4 + 1/A).
\end{equation}
The desired inequality follows after letting $A \to \infty$,
$\delta,\sigma,\tau \to 0$ and $j \geq J(\sigma,\tau,A,\delta) \to \infty$.
To conclude it only remains to justify
the two assumptions~\ref{item_domain_contained_in_sequence} and
\ref{item_divergence_away_from_spt_T_0}.

\ref{item_domain_contained_in_sequence}
After translating $\Omega$ we may assume that it contains the origin.
The convexity of $\Omega$ and the piecewise regularity of its boundary
that for  all $\tau > 0$ there is $\delta > 0$ so that
$\eta_{0,(1 + \delta)\#} \Omega \subset \Omega \setminus (\bdary \Omega)_\tau$
and $(\Omega)_\tau \subset \eta_{0,(1 + \delta)^{-1} \#} \Omega$.
Moreover as $\tau \to 0$ we may impose that $\delta \to 0$ also.
Given $\tau > 0$ take $J(\tau) \in \bN$ so that
$\Omega \setminus (\bdary \Omega)_\tau \subset \Omega_j \subset (\Omega)_\tau$.
Taking $\delta > 0$ as above we get that
$\eta_{0,1+\delta \#} \Omega \subset \Omega_j \subset \eta_{0,(1 + \delta)^{-1} \#} \Omega$.
After rescaling we find a sequence $(\eta_{0,1+\delta \#} \cur{G_j} \mid j \geq J(\tau))$
of single-valued minimal graphs respectively defined over
$\eta_{0,1+\delta \#} \Omega_j$.
Moreover $\eta_{0,1+\delta \#} \cur{G_j} \to \eta_{0,1+\delta \#}T$ as $j \to \infty$ and
$\eta_{0,1+\delta \#} T \to T$ as $\delta \to 0$. We may then diagonally
extract a subsequence of graphs $\cur{G_{j'}}$ and
find a sequence of positive scalars with $\delta_{j'} \to 0$ as $j' \to \infty$,
so that $\Omega \subset \eta_{0,1 + \delta_{j'}} \Omega_{j'}$ for all $j'$
and $\eta_{0,1+\delta_{j'} \#} \cur{G_{j'}} \to T$ as $j' \to \infty$.
Upon replacing our original sequence by this rescaled subsequence, we
may assume throughout that $\Omega \subset \Omega_j$ without restriction
of generality.

\ref{item_divergence_away_from_spt_T_0}
Let any $0 < \tau < \delta < 1 < A$ be given,
and consider the open set $\Omega' = \Omega
\setminus [\bdary \Omega \setminus \spt T_0]_\delta$.
By construction $\Omega' \setminus [\spt T_0]_\tau$ lies a distance at
least $\tau > 0$ away from $\bdary \Omega$, and thus also $\bdary \Omega_j$
for all $j$.
Were it not guaranteed that $\abs{u_j} > 2A$ in $\Omega' \setminus [\spt T_0]_\tau$
for large enough $j$, then there would exist a sequence of points
$X_{j'} \in G_{j'} \cap \Omega'  \times [-2A,2A] \setminus [\spt T]_\tau$
belonging to a subsequence of the two-valued graphs.
From this we can extract yet another subsequence so that
$X_{j''} \to X \in \clos{\Omega'} \times [-2A,2A] \setminus (\spt T)_\tau$
as $j'' \to \infty$.
By the monotonicity formula and upper semicontinuity of density $X \in \spt T$,
but that is manifestly absurd.
Replacing the original domain by $\Omega'$ and following the reasoning in the proof
yields $\calH^n(\spt( T_0 \mres \Omega')) \leq 1/2 \calH^n(\bdary \Omega')$.
Letting $\delta \to 0$ one finds the desired conclusion in terms of $\Omega$.
\end{proof}

The following special case is of particular importance in what follows.
Let $\pi = \pi_0 \times \bR e_{n+1},\pi' = \pi_0' \times \bR e_{n+1}$ be
two $n$-dimensional half-planes meeting along an axis $L = L_0 \times \bR e_{n+1}$
at which they form a positive angle $0 < \theta < \pi$, taken in the counterclockwise
direction.
Let $N,N'$ be their respective unit normals, which we both take pointing
in the counterclockwise direction.
Further let $p,p' \in L^\perp$ be the two unit vectors so that
$\pi = \{ Y + t p \mid Y \in L, t \geq 0 \}$ and 
$\pi' = \{ Y + t p' \mid Y \in L, t \geq 0 \}$.
 Any point in $\bR^n$ can be written $x = y + z = y + tp + t'p$
 with $y \in L_0, z \in L_0^\perp$.
Define $Q = \{  y + z \in \bR^n \mid \abs{y} < 1, \abs{z} < 1 \}$ and
the wedge-shaped region $V = \{ x \in Q \mid \langle x , N \rangle >0,
\langle x , N' \rangle < 0 \}$.

\begin{lem}
\label{lem_no_folding_two_half_planes}
Let $\pi,\pi'$ and $V \subset \bR^n$ be as above,
and let the current $T = \cur{(\pi \cup \pi') \cap \bdary V \times \bR}$
be oriented inward.

Then there does not exist a sequence of minimal graphs $G_j = \graph u_j$
defined over domains $\Omega_j$ with $\dist_{\calH}(\Omega_j,V) \to 0$,
$\cur{G_j} \to 0$ and so that for all $0 < \sigma < 1 < A$ there is $J = J(\sigma,A) \in \bN$
so that $\spt \bdary \cur{G_j \mres \Omega_j \times (-A,A)} \subset ((\pi \cup \pi')
\cap \bdary V)_\sigma$ for all $j \geq J$.
\end{lem}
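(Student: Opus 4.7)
I argue by contradiction, using a comparison surface argument in the spirit of the proof of Theorem~\ref{thm_no_folding} but enhanced by exploiting the convex angle $\theta < \pi$ of the wedge at the axis $L$.

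Assume such a sequence $(u_j, \Omega_j)$ exists. My first step is to identify the limit behavior of the $u_j$: since $\cur{G_j} \to 0$ and the boundaries of the slab restrictions concentrate on $(\pi \cup \pi') \cap \bdary V$ with inward orientation, the graphs must approach $\pm\infty$ pointwise in the interior of $V$. After passing to a subsequence and possibly reflecting, I arrange that $u_j \to +\infty$ locally uniformly in the interior, with the level sets $\{u_j = \pm A\}$ inside $\Omega_j$ concentrating on the walls. This places the setup into the framework of Theorem~\ref{thm_no_folding} applied with $\Omega = V$ and $T_0 = (\pi_0 \cup \pi_0') \cap \bdary V$ (the two walls, inward-oriented). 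Running the Caccioppoli sub-level set competitor construction of that proof yields the limiting inequality
\begin{equation*}
	2 \calH^{n-1}(T_0) \leq \calH^{n-1}(\bdary V).
\end{equation*}

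The critical additional step, which furnishes the contradiction, is a corner-cut improvement exploiting that $\pi$ and $\pi'$ meet at $L$ at convex angle $\theta < \pi$, both oriented inward. I would modify the competitor by deleting a thin cylindrical neighborhood $(L)_r \cap V \times (-A, A)$ of the axis, and replacing the two wall strips inside with a single flat strip orthogonal to the angle bisector of $V$. Since $2 r \sin(\theta/2) < 2r$, this modification strictly decreases the competitor's area by a term of order $A \cdot r (1 - \sin(\theta/2)) \cdot \calH^{n-2}(L \cap Q)$ --- linear in $A$ --- while the horizontal caps sealing off the modified region at heights $\pm A$ contribute only an $O(r^{n-1})$ cost independent of $A$. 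Taking $A \to \infty$ for any fixed small $r > 0$ sharpens the base inequality to one that is manifestly violated, yielding the contradiction.

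The main obstacle is the rigorous realization of the corner-cut modification at the level of integer rectifiable currents: the modified competitor must retain the correct boundary agreement with $\bdary \cur{G_j \mres \Omega_j \times (-A,A)}$, and the area savings must be tracked uniformly through the $j \to \infty$ limit and the convergence $\Omega_j \to V$. A secondary subtlety is that the hypothesis here is $\cur{G_j} \to 0$ rather than $\cur{G_j} \to T$ as in Theorem~\ref{thm_no_folding}; this is handled by reasoning at the level of slab restrictions $\cur{G_j \mres \Omega_j \times (-A,A)}$, each of which remains area-minimizing inside the convex cylinder $V \times (-A, A)$.
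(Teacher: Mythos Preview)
Your instinct to exploit the convex angle $\theta < \pi$ via a corner-cut is exactly the right geometric idea, but your implementation is more laborious than necessary and the paper achieves the same end much more cleanly. Rather than reopening the competitor construction inside the proof of Theorem~\ref{thm_no_folding} and modifying it near the axis (which, as you correctly flag, requires delicate boundary-matching at the current level and uniform control as $j \to \infty$), the paper simply passes to a small convex subdomain $\Delta_a \subset V$ --- the triangular prism $\{\,|y| < 1/2,\ t,t' > 0,\ t + t' < a\,\}$ near the axis $L$ --- and applies Theorem~\ref{thm_no_folding} to $\Delta_a$ as a black box. A direct computation shows that for $a$ small the two wall faces $\pi \cap \bdary\Delta_a$ and $\pi' \cap \bdary\Delta_a$ together occupy strictly more than half of $\calH^{n-1}(\bdary\Delta_a)$: the opposite face $\{t+t'=a\}$ has area proportional to $a\sin(\theta/2) < a$ and the lateral faces $\{|y|=1/2\}$ are $O(a^2)$. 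This immediately violates the conclusion $\norm{T_0} \leq \calH^{n-1}(\bdary\Delta_a)/2$. Your corner-cut and the paper's subdomain are dual to one another --- the chord you propose to insert is precisely the face $\{t+t'=a\}$ of $\Delta_a$ --- but working at the level of domains rather than competitors sidesteps entirely the technical obstacles you yourself identify, and also makes the preliminary step of extracting pointwise divergence $u_j \to +\infty$ unnecessary.
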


\begin{proof}
This is essentially a direct consequence of~Theorem~\ref{thm_no_folding}, 
although we need to construct a subdomain $\Delta_a \subset V$ to make the
area comparison work, where $0 < a < 1/2$ is a small parameter whose value we leave
undetermined for now.
Let
$\Delta_a = \{ y + tp + t' p' \in V \mid \abs{y} < 1/2, t,t' > 0, t + t' < a \}$,
which is convex and has piecewise smooth boundary.
When we fix $y_0 \in L_0$ with $\abs{y_0} < 1/2$ then $\Delta_a \cap \{ x = y_0 + z \}$
is an isosceles triangle with two sides of length $a$.
Moreover, apart from $\pi \cap \bdary \Delta_{a}$ and $\pi' \cap \bdary \Delta_{a}$,
the boundary of $\Delta_{a}$ contains only two subsets $\Gamma_1,\Gamma_2$
with positive area,
namely  $\Gamma_1 = \{ \abs{y} = 1/2 \} $ and $\Gamma_2 = \{ \abs{y} < 1/2, t + t' = a\}$.
On the one hand
\begin{align}
	\calH^{n-1}(\Gamma_1) &= a^2 (n-2) \omega_{n-2} \sin( \theta) 2^{-n+2}, \\
	\calH^{n-1}(\Gamma_2) &= a\omega_{n-2} 2^{-n+3} \sin (\theta/2),
\end{align}
and on the other hand
\begin{equation}
	\calH^{n-1}(\pi \cap \bdary \Delta_{a}) = a \omega_{n-2} 2^{-n+2}
	= \calH^{n-1}(\pi' \cap \bdary \Delta_{a}).
\end{equation}
Comparing the two we have
\begin{multline}
	\calH^{n-1}(\Gamma_1 \cup \Gamma_2) =
	a \omega_{n-2} 2^{-n+3} ( (n-2) a \sin (\theta) / 2 +  \sin (\theta/2)) \\
	< a \omega_{n-2} 2^{-n+3} 
	= \calH^{n-1}(\pi \cup \pi' \cap \bdary \Delta_{a})
\end{multline}
provided $a$ is small enough.
If there were a sequence of minimal graphs of $u_j \in C^2(\Omega_j)$ as
in the statement, then by restricting them to $\Omega_j \cap \Delta_a$
and letting $j \to \infty$ we would obtain a contradiction to Theorem~\ref{thm_no_folding}.
\end{proof}


\section{Multiplicity and branch points of limit cones}
\label{sec_multiplicity_limit_cones}

Let $\alpha \in (0,1)$ and $(u_j \mid j \in \bN)$ be a sequence of
two-valued minimal graphs with $u_j \in C^{1,\alpha}(D_2;\calA_2)$.
Here we examine the situation in which these graphs converge to a plane
weakly in the varifold topology, $\abs{G_j} \to m \abs{\Pi}$ as $j \to \infty$,
where $\Pi \in \Gr(n,n+1)$ and $m \in \bZ_{>0}$.

\subsection{An a priori multiplicity bound}
\label{subsec_a_priori_multiplicity_bound}

\begin{lem}
\label{lem_multiplicity_bound}
If $\abs{G_j} = \abs{\graph{u_j}} \to m \abs{\Pi}$ then $m \leq 2$.
\end{lem}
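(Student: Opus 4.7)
I would split the argument into two cases, depending on whether or not $\Pi$ is vertical (that is, whether $\langle e_{n+1}, \nu_\Pi\rangle$ vanishes). The key identity in both cases is the one produced by varifold convergence when tested against a Grassmannian weight $F \in C(G(n,n+1))$: for every $\phi \in C_c(D_2 \times \bR)$,
\begin{equation*}
\int_{G_j} \phi(X)\,F(T_X G_j)\,d\calH^n(X) \longrightarrow m \int_\Pi \phi(X)\,F(\Pi)\,d\calH^n(X).
\end{equation*}
The point is that for a graph, the ``vertical Jacobian'' $F(T) = |\langle e_{n+1},\nu_T\rangle|$ equals $1/v_j$ and exactly cancels the area element $v_j\,dx$, producing an identity that only sees the two sheets.

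\emph{Non-vertical case.} Here $\Pi = \graph L$ for some linear $L$ and $|\langle e_{n+1},\nu_\Pi\rangle| = 1/v_L$. Take $\phi(x,t) = \psi(x)\chi(t)$ with $\psi \in C_c(D_2;[0,\infty))$ and $\chi \in C_c(\bR;[0,1])$, $\chi \equiv 1$ on $[-T,T]$. The Jacobian cancellations on both sides reduce the identity to
\begin{equation*}
\sum_{i=1,2} \int_{D_2} \psi(x)\chi(u_{j,i}(x))\,dx \longrightarrow m\int_{\bR^n} \psi(x)\chi(L(x))\,dx.
\end{equation*}
The left-hand side is bounded by $2\int\psi$ since $\chi \le 1$, while the right-hand side tends to $m\int \psi$ by dominated convergence as $T \to \infty$. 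Hence $m \le 2$.

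\emph{Vertical case.} Now $\langle e_{n+1},\nu_\Pi\rangle = 0$, so the previous identity degenerates to $0=0$. I would instead use the sheet decomposition: because the branch set $\calB(G_j)$ is $\calH^{n-2}$-rectifiable by Theorem~\ref{thm_krumwic_finepropsmingraphs} and hence $\calH^n$-negligible, $G_j$ is locally, away from $\calB(G_j)$, the disjoint union of two smooth single-valued minimal graphs $\graph u_{j,1}$ and $\graph u_{j,2}$. Passing to a subsequence and using the area bounds together with standard compactness, each sheet converges (at least weakly and after diagonal extraction of suitable local parametrizations) to a stationary integer varifold $V_i$ supported on $\Pi$, with $V_1 + V_2 = m|\Pi|$ and corresponding multiplicities $m_1 + m_2 = m$. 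It then suffices to prove the auxiliary claim that a convergent sequence of single-valued smooth minimal graphs in $D_2 \times \bR$ cannot have a vertical plane limit of multiplicity greater than one; I would establish this via a coarea argument on level sets $\{u_j = t\}$, or equivalently by applying the non-vertical argument to each sheet in a frame rotated slightly off the vertical axis.

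\emph{Main obstacle.} The delicate point is the vertical case: the sheet decomposition is only local, and passing to a global statement about the limit requires care in the presence of both branch points (where the sheets meet tangentially) and classical singularities (where they cross transversely). A clean way to handle this is via the immersion $\iota\colon \Gamma \to \bR^{n+1}$ from Section~\ref{subsec_global_immersion}, which encodes the two-sheeted structure as a single connected (or doubly connected) smooth manifold, and then to transfer the multiplicity-one bound for each individual sheet into a multiplicity-two bound on the varifold limit. The auxiliary claim that single-valued minimal graphs cannot produce a vertical plane limit with multiplicity $>1$ is where I expect the calculation to be most subtle.
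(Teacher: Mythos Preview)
Your non-vertical argument is correct and cleaner than the paper's: the paper instead shows that the $G_j$ are eventually trapped in a thin slab around $\Pi$, invokes the interior gradient estimates to get uniform $C^1$ bounds, and extracts a two-valued Lipschitz limit. Your Jacobian trick via the Grassmannian weight $F(T)=|\langle e_{n+1},\nu_T\rangle|$ bypasses all of this and gives $m\le 2$ in one line.

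The vertical case, however, has a genuine gap. You write that $G_j$ is locally, away from $\calB(G_j)$, the disjoint union of two single-valued minimal graphs, and then speak of ``each sheet'' converging to some $V_i$ with $V_1+V_2=m|\Pi|$. But there are no global sheets: by Lemma~\ref{lem_properties_global_immersion}\ref{item_glued_manifold_connected} the immersed manifold $\Gamma_j$ is connected unless $G_j$ is already a union of two single-valued graphs. When $\calB(G_j)\neq\emptyset$ the local selections $u_{j,1},u_{j,2}$ do not patch together, so there is no pair of single-valued minimal graphs to which you can apply the auxiliary multiplicity-one claim. Passing to the immersion does not help either, since $\iota_{\#}|\Gamma_j|=|G_j|$ and $\Gamma_j$ is not itself a graph over $D_2$. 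The suggestion to rotate the frame slightly off the vertical axis also fails: after rotation the $G_j$ need not be graphs over the new horizontal plane, so your Jacobian identity no longer applies.

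The paper handles the vertical case by a direct area comparison in the spirit of the improved estimates of Section~\ref{subsec_improved_area_estimates}. One first observes that $\int_{D_1}\tfrac{1}{v_j}\indic_{|u_j|<1}\to 0$ because $v_j\to\infty$ on the relevant set, and then chooses a cutoff $\eta$ supported in a thin tubular neighbourhood $U$ of $\Pi_0\cap D_1$ with $\int|D\eta|$ barely larger than $\Per(U)\approx 2\,\calH^{n-1}(\Pi_0\cap D_1)$. Plugging this into the identity~\eqref{eq_area_estimate_inequality_perimeter_type} gives $\calH^n(G_j\cap D_1\times(-1,1))\le 4\,\calH^{n-1}(\Pi_0\cap D_1)+o(1)$, and comparing with $\norm{m|\Pi|}(D_1\times(-1,1))=2m\,\calH^{n-1}(\Pi_0\cap D_1)$ forces $m\le 2$. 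This avoids any sheet decomposition entirely.
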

\begin{proof}
The proof is slightly easier when the plane $\Pi \in \Gr(n,n+1)$ is vertical,
that is of the form $\Pi = \Pi_0 \times \bR e_{n+1}$.
Let $\eps > 0$ be a given, arbitrarily small constant.
Arguing as in the proof of Proposition~\ref{prop_area_estimate} we may take
$J(\eps) \in \bN$ so that $\int_{D_1} \frac{1}{v_j}
\indic_{\abs{u_j} < 1} \leq \eps$ when $j \geq J(\eps)$.
Thus for large $j$, $\calH^n(G_j \cap D_1 \times (-1,1))
\leq \eps + \int_{D_1} \frac{\abs{Du_j}^2}{v_j} \indic_{\abs{u_j} < 1}$.
Let $U \subset D_1$ be a tubular neighbourhood of $\Pi_0$ inside the disc,
narrow enough that $\Per(U) \leq 2 \calH^{n-1}(\Pi_0 \cap D_1) + \delta$.
We may find a function
$\eta \in C_c^1(D_2)$ with $\eta \equiv 1$ on $U$ and $\int_{D_2} \eta
\leq \Per(U) + \delta \leq 2 \calH^{n-1}(\Pi_0 \cap D_1) + 2 \delta$.
Updating $ j \geq J(\eps,\delta$ so that $G_j \cap D_1 \times (-1,1) \subset
U \times (-1,1)$ we get $\calH^n(G_j  \cap D_1 \times (-1,1))
\leq \eps + 4( \calH^{n-1}(\Pi_0 \cap D_1) + \delta)$.
Letting $\eps,\delta \to 0$ and $j \geq J(\tau,\eps) \to \infty$ we find
$m \calH^n(\Pi \cap D_1 \times (-1,1)) \leq 4 \calH^{n-1}(\Pi_0)$,
which gives the desired conclusion.

Now for the case where the plane $\Pi$ is not vertical.
There nothing to prove if $m = 1$, so we may assume that $m \geq 2$.
Let a small constant $0 < \tau < 1$ be given, and take $j \geq J(\tau)$
large enough that inside the cylinder $G_j \cap D_2 \times \bR \cap (\Pi)_1
\subset (\Pi)_\tau$.
In fact we have the same control over $G_j$ in the whole cylinder,
that is $G_j \cap D_2 \times \bR \subset (\Pi)_\tau$.
Indeed, if this were to fail then $G_j \cap D_2 \times \bR$ would be
disconnected, and we could write $G_j \cap D_2 \times \bR = \Gamma_{j,1} \cup \Gamma_{j,2}$
where
$\Gamma_{j,1} \cap (\Pi)_\tau \neq \emptyset$ and
$\Gamma_{j,2} \cap  (\Pi)_{1} = \emptyset$.
But then $\calH^n(G_j \cap D_2 \times \bR \cap (\Pi_1)_1) \leq \calH^n(\Gamma_{j,1})$,
and taking limits as $j \to \infty$ would yield $m = 1$.
This is absurd as we initally assumed that $m$ is at least two, and hence
we have confirmed that  $G_j \cap D_2 \times \bR \subset (\Pi)_\tau$.
Let $L = \max \{ X^{n+1} \mid X = (x,X^{n+1}) \in \Pi \cap D_2 \times \bR \}$,
then $\norm{u_j(x)} \leq 2 (\tau + L)$ for all $x \in D_2$. Using the interior
gradient estimates, we find that there is a constant $C = C(n,L)$ so that
eventually $\norm{u_j}_{1;D_1} \leq C$.
Up to extracting a subsequence we find that the $u_j$ converge to a two-valued
Lipschitz graph defined on $D_1$. As by assumption $\abs{G_j} \to m \abs{\Pi}$
we can conclude that $m = 2$.
\end{proof}

\subsection{Multiplicity in limit varifolds}

Combining the previous lemma with a diagonal extraction argument, we
obtain the following result.

\begin{cor}
Let $\alpha \in (0,1)$, and let $(u_j \mid j \in \bN)$ be a sequence of
two-valued minimal graphs in $C^{1,\alpha}(D_2;\calA_2)$.
Suppose that there are half-planes $\pi_i$ and $m_i \in \bZ_{>0}$ so that
$\abs{G_j} \to \sum_i m_i \abs{\pi_i}$. Then $m_i \leq 2$.
\end{cor}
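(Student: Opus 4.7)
Fix an index $i$. The plan is to localise around a generic point of $\pi_i$, rescale, and reduce to Lemma~\ref{lem_multiplicity_bound}. First, I would choose a point $p_i = (p_i^H, p_i^V)$ in the relative interior of $\pi_i$, lying in the interior of $D_2 \times \bR$ and at positive distance from the boundary $\bdary \pi_i$ and from every other half-plane $\pi_j$ with $j \neq i$. Pick $r > 0$ small enough that the open ball $B_{2r}(p_i)$ is contained in $D_2 \times \bR$ and avoids $\bdary \pi_i \cup \bigcup_{j \neq i} \pi_j$. On $B_{2r}(p_i)$ the limit varifold then reduces to $m_i \abs{\Pi_i}$, where $\Pi_i \in \Gr(n, n+1)$ is the full $n$-plane spanned by $\pi_i$.

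Next, I would apply the translation-and-homothety $\Psi(X) = r^{-1}(X - p_i)$ and define the rescaled two-valued functions $\tilde u_j(\tilde x) = r^{-1}(u_j(p_i^H + r \tilde x) - p_i^V)$. These are two-valued minimal graphs by the scaling invariance of the minimal surface equation, and for $r$ small enough their natural domain contains $D_2$, so after restriction $\tilde u_j \in C^{1,\alpha}(D_2; \calA_2)$. The pushforward of the varifold convergence gives $\abs{\tilde G_j} \to m_i \abs{\tilde \Pi_i}$ on the ball $B_2(0) = \Psi(B_{2r}(p_i))$, where $\tilde \Pi_i$ is the $n$-plane through the origin parallel to $\Pi_i$.

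Finally, I would apply Lemma~\ref{lem_multiplicity_bound} to the sequence $(\tilde u_j)$ to conclude $m_i \leq 2$, and note that $i$ was arbitrary.

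The main obstacle is that Lemma~\ref{lem_multiplicity_bound} is stated with varifold convergence on the cylinder $D_2 \times \bR$, whereas our localisation only delivers convergence on the ball $B_2(0)$. When $\tilde \Pi_i$ is non-vertical this can be repaired using the interior gradient estimate (Lemma~\ref{lem_interior_gradient_bounds}): the localised convergence forces uniform $C^1$-bounds on $\tilde u_j$ near the origin, so that after passing to a subsequence and a further small rescaling the graphs converge on the full cylinder over a slightly smaller disc. When $\tilde \Pi_i$ is vertical, inspection of the proof of Lemma~\ref{lem_multiplicity_bound} reveals that only convergence on a bounded subcylinder such as $D_1 \times (-1, 1) \subset B_2(0)$ is actually used, which the localisation already provides. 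In either case a diagonal extraction produces a sequence in $C^{1,\alpha}(D_2; \calA_2)$ converging to $m_i \abs{\tilde \Pi_i}$ in the required sense, and Lemma~\ref{lem_multiplicity_bound} applies.
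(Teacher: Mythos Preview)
The proposal is correct and matches the paper's approach: the paper simply says the corollary follows by ``combining the previous lemma with a diagonal extraction argument,'' and your localise--rescale--extract scheme is exactly that.

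One streamlining remark. The single-scale rescaling followed by the ball-versus-cylinder repair is a detour. If instead you take a \emph{sequence} of scales $r_k \to 0$ from the outset, the rescaled limit varifolds $(\eta_{p_i,r_k})_{\#}\bigl(\sum_l m_l \abs{\pi_l}\bigr)$ converge to $m_i \abs{\tilde\Pi_i}$ in $\IV_n(D_2 \times \bR)$, since $p_i$ lies at positive distance from $\partial\pi_i$ and from every other $\pi_l$, so these sets leave every compact subset of the cylinder under the rescaling. Diagonal extraction then yields two-valued graphs in $C^{1,\alpha}(D_2;\calA_2)$ converging to $m_i\abs{\tilde\Pi_i}$ on the full cylinder, and Lemma~\ref{lem_multiplicity_bound} applies with no further work. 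This makes the vertical/non-vertical case split and the gradient-estimate step unnecessary---which is just as well, because the latter is not quite justified as written: convergence on $B_2(0)$ alone does not obviously force $C^0$-bounds on $\tilde u_j$ near the origin when $\tilde\Pi_i$ is non-vertical, since a priori one sheet could escape vertically outside the ball. Your closing diagonal extraction salvages the argument regardless, but going to it from the start is cleaner.
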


Similarly, though in a more general context, we can combine the estimate
from Lemma~\ref{lem_multiplicity_bound} with the work of
Krummel--Wickramasekera~\cite{KrumWic_FinePropsMinGraphs}.

\begin{cor}
\label{cor_conseq_multiplicity_bound}
Let $G_j = \graph u_j$ be a sequence of two-valued minimal graphs, where
$u_j \in C^{1,\alpha}(D_2;\calA_2)$ for all $j$ for some $\alpha \in (0,1)$.
Suppose that $\abs{G_j} \to V \in \IV_n(D_2 \times \bR)$ weakly in the
topology of varifolds. Then for all $Z \in \reg V$,
$\Theta(\norm{V},Z) \leq 2$.
If $Z \in \calB(V)$ then $\Theta(\norm{V},Z) = 2$, and the branch set
is countably $n-2$-rectifiable.
\end{cor}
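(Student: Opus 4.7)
The plan is to reduce both the regular-point and branch-point multiplicity bounds to Lemma~\ref{lem_multiplicity_bound} via a diagonal extraction argument, and then to invoke Wickramasekera's branched sheeting theorem together with Theorem~\ref{thm_krumwic_finepropsmingraphs} to obtain the rectifiability of $\calB(V)$.

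First I would fix $Z \in \reg V \cup \calB(V)$. By definition of regular point, or by definition of branch point combined with the fact that at least one tangent cone to $V$ at $Z$ is a hyperplane with integer multiplicity $Q \geq 1$, we can pick a sequence of radii $\sigma_k \to 0$ so that the varifolds $\eta_{Z,\sigma_k \#} V$ converge weakly to $Q \abs{\Pi}$ for some $\Pi \in \Gr(n,n+1)$; for $Z \in \reg V$ we have $Q = \Theta(\norm{V},Z)$ and may take $\sigma_k$ arbitrary. Since $\abs{G_j} \to V$, a diagonal extraction produces a subsequence of rescaled graphs $\eta_{Z,\sigma_k \#} \abs{G_{j(k)}}$, each of which remains a two-valued minimal graph defined on an increasingly large disc in the tangent hyperplane direction. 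Applying Lemma~\ref{lem_multiplicity_bound} (after a rotation bringing $\Pi$ into graph form if necessary, and a harmless restriction to $D_2$) yields $Q \leq 2$. For regular $Z$ this gives $\Theta(\norm{V},Z) \in \{1,2\}$; for $Z \in \calB(V)$ the definition forces $Q \geq 2$, so $Q = 2$ and $\Theta(\norm{V},Z) = 2$ by upper semicontinuity of density together with the monotonicity formula.

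With the branch-point multiplicity pinned to two, the varifold $V$ is stationary and ambient stable by Corollary~\ref{cor_stability_preserved_two_valued_graphs}, and at every $Z \in \calB(V)$ it admits a multiplicity-two tangent hyperplane. Wickramasekera's Theorem~\ref{thm_wic_mult_two_allard} therefore applies: there is a radius $\rho_Z > 0$ and a two-valued $C^{1,1/2}$ function $U_Z$ defined on a disc in a suitable hyperplane so that $\spt \norm{V} \cap B_{\rho_Z}(Z)$ coincides with $\graph U_Z$. Because $V$ is stationary, $U_Z$ defines a two-valued minimal graph in the sense of Section~\ref{sec_two_valued_minimal_graphs}, and the branch points of $V$ inside $B_{\rho_Z}(Z)$ correspond to the branch set $\calB_{U_Z}$.

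By Theorem~\ref{thm_krumwic_finepropsmingraphs} applied to each $U_Z$, the set $\calB_{U_Z}$ is countably $(n-2)$-rectifiable. Covering $\calB(V)$ by countably many such balls $B_{\rho_Z}(Z)$ (using separability of $D_2 \times \bR$) expresses $\calB(V)$ as a countable union of countably $(n-2)$-rectifiable sets, hence as countably $(n-2)$-rectifiable itself. The main technical point is really the first one, namely extracting the graphical subsequence in a neighbourhood of $Z$ so that Lemma~\ref{lem_multiplicity_bound} is applicable; once the multiplicity is known to be two at every branch point, the remaining conclusions follow from the cited black-box results.
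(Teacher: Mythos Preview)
Your proposal is correct and follows essentially the same approach the paper indicates: reduce the multiplicity bound to Lemma~\ref{lem_multiplicity_bound} by diagonal extraction, then feed the multiplicity-two branch points into Wickramasekera's sheeting theorem and apply Theorem~\ref{thm_krumwic_finepropsmingraphs} locally. One small remark: the parenthetical about ``a rotation bringing $\Pi$ into graph form'' is unnecessary and slightly misleading, since Lemma~\ref{lem_multiplicity_bound} already handles arbitrary (including vertical) planes $\Pi$, and rotating would destroy the two-valued graph structure you need; simply drop that clause.
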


\subsection{Local description near vertical planes}

We return to the situation where $\abs{G_j} \to 2 \abs{\Pi}$ to some
vertical plane $\Pi = \Pi_0 \times \bR e_{n+1} \in \Gr(n,n+1)$.
The limit in the current topology is supported in the same plane,
$\cur{G_j} \to l \cur{\Pi}$ for some non-negative $l \in \bZ$
with $l \leq 2$.
By Allard's regularity theorem the multiplicity is either $l = 0$ or $2$.
The following result considers the case where the mass of the currents vanishes
in the limit.

\begin{lem}
\label{lem_no_branch_points_if_cancellation}
Let $\alpha \in (0,1)$, $u_j \in C^{1,\alpha}(D_2;\calA_2)$ be a
sequence of two-valued minimal graphs, and let $\Pi = \Pi_0 \times \bR e_{n+1}$
be a vertical plane.
Suppose $\abs{G_j} \to 2 \abs{\Pi}$ and $\cur{G_j} \to 0$ as $j \to \infty$.
Then $\calB({G_j}) \cap B_1 = \emptyset$ for large $j$.
\end{lem}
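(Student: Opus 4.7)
The plan is to analyse the function $\phi_j = \langle \nu_j, n_\Pi \rangle$ on the immersed manifold $\Gamma_j$ constructed in Section~\ref{subsec_global_immersion}, where $n_\Pi$ is a unit normal to $\Pi$ (necessarily horizontal, since $\Pi$ is vertical). Since $\abs{G_j} \to 2\abs{\Pi}$, the varifold convergence of tangent planes yields $\abs{\phi_j} \to 1$ in $L^1_{\loc}$; in particular $\abs{\phi_j} > 1/2$ throughout $G_j \cap B_{3/2}$ for $j$ large. Both sheets at a branch point share the same normal, so $\phi_j$ extends continuously across $\calB(G_j)$; the key idea is that this continuity, combined with the branched-covering topology of $\Gamma_j$, forces $\phi_j$ to have a consistent sign on a large piece of $G_j$, in turn contradicting $\cur{G_j} \to 0$.

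First I would argue by contradiction, supposing there exist $X_{j_k} \in \calB(G_{j_k}) \cap B_1$ with $X_{j_k} \to X_\infty \in \clos{B}_1 \cap \Pi$. Since $G_{j_k}$ has a branch point, by Lemma~\ref{lem_properties_global_immersion}\ref{item_glued_manifold_connected} the manifold $\Gamma_{j_k}$ is connected. For small $\delta > 0$ the preimage $\iota^{-1}(B_\delta(X_{j_k}))$ is a connected punctured $n$-disk (for $n \geq 2$) that wraps around the missing preimage of $X_{j_k}$, hence contains the preimages of both local sheets. Let $C_{j_k}$ denote the connected component of $\iota^{-1}(B_{3/2})$ containing this punctured disk. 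On $C_{j_k}$, the continuous function $\phi_{j_k} \circ \iota$ does not vanish and so has constant sign, which I assume positive after possibly replacing $n_\Pi$ by $-n_\Pi$.

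Next I would invoke the multiplicity-two branched sheeting theorem (Theorem~\ref{thm_wic_mult_two_allard}) to obtain a uniform radius $r_0 > 0$ such that, for $j_k$ large, $G_{j_k} \cap B_{r_0}(X_{j_k})$ is a two-valued $C^{1,\alpha}$ graph over the tangent plane to $G_{j_k}$ at $X_{j_k}$. Inside $B_{r_0}(X_{j_k})$, the component $C_{j_k}$ covers both sheets, so $\iota_* \abs{C_{j_k}} \mres B_{r_0}(X_{j_k}) = \abs{G_{j_k}} \mres B_{r_0}(X_{j_k})$. Combining the monotonicity lower bound with $\iota_* \abs{C_{j_k}} \leq \abs{G_{j_k}} \to 2\abs{\Pi}$, any subsequential varifold limit of $\iota_* \abs{C_{j_k}}$ must equal $2\abs{\Pi}$ on a fixed-size neighbourhood $U$ of $X_\infty$; in particular the remaining components of $\iota^{-1}(B_{3/2})$ lose all their mass on $U$ in the limit.

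Finally I would test $\cur{G_{j_k}}$ against $\omega = \eta \beta$, where $\beta$ is the constant $n$-form on $\bR^{n+1}$ dual to $n_\Pi$ (so that its pairing with the oriented tangent $n$-vector of $G_j$ equals $\phi_j$) and $\eta \in C_c^\infty(U)$ is non-negative with $\int_\Pi \eta \diff \calH^n > 0$. This yields $\cur{G_{j_k}}(\omega) = \int_{G_{j_k}} \phi_{j_k} \eta \diff \calH^n$. On $\iota(C_{j_k})$ one has $\phi_{j_k} > 0$ with $\abs{\phi_{j_k}} \to 1$ in $L^1$ and $\abs{\iota(C_{j_k})}(U) \to 2\abs{\Pi}(U)$, whereas on the complement the varifold mass on $U$ tends to zero. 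Hence $\cur{G_{j_k}}(\omega) \to 2\int_\Pi \eta \diff \calH^n > 0$, contradicting $\cur{G_{j_k}} \to 0$. The main obstacle will be extracting the uniform sheeting radius $r_0$ over the sequence, without which the component $C_{j_k}$ might fail to absorb the full double mass at any fixed scale; this is precisely what the branched sheeting theorem of~\cite{Wic_MultTwoAllard} supplies.
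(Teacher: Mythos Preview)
Your argument is essentially correct and reaches the same conclusion, but its organisation differs from the paper's. A couple of imprecisions should be flagged. First, the step ``$\lvert\phi_j\rvert \to 1$ in $L^1_{\loc}$, in particular $\lvert\phi_j\rvert > 1/2$ throughout $G_j \cap B_{3/2}$'' is not a valid implication: $L^1$ convergence gives no pointwise lower bound. The pointwise bound is exactly what the branched sheeting theorem supplies (and you invoke it later anyway), so the fix is simply to move that citation forward. Second, calling $\iota^{-1}(B_\delta(X_{j_k}))$ a ``connected punctured $n$-disk'' is inaccurate, since the branch set is $(n-2)$-rectifiable rather than a single point; what you actually need is that the double cover $\iota^{-1}\big(B_\delta(X_{j_k})\setminus \calB(G_{j_k})\big)$ is connected. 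This holds because a genuine branch point has nontrivial monodromy: if the double cover split into two sheets, each would give a single-valued $C^1$ graph over the base minus a codimension-two set, and since $u$ is two-valued $C^{1,\alpha}$ with coinciding values and derivatives on $\calK_u$, these sheets would extend to a $C^1$ selection across $X_{j_k}$, contradicting $X_{j_k}\in\calB(G_{j_k})$.

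By contrast, the paper argues directly rather than by contradiction. It splits $\reg G_j \cap B_{3/2}$ into $\calR_j^{\pm}$ according to the sign of $f_j = \langle \nu_j, N\rangle$ (using Theorem~\ref{thm_wic_mult_two_allard} for the pointwise sign dichotomy) and forms currents $T_j^{\pm}$ by integrating over $\calR_j^{\pm}$. Since the two pieces meet only along transverse immersed singularities, each $T_j^{\pm}$ is a stationary cycle in $B_{3/2}$; Federer--Fleming compactness plus $T_j^+ + T_j^- \to 0$ forces $T_j^{\pm} \to \pm\cur{\Pi}$, hence $\lvert T_j^{\pm}\rvert \to \lvert\Pi\rvert$, and Allard regularity then makes each $T_j^{\pm}\mres B_1$ a smooth multiplicity-one graph. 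A branch point would force both local sheets into the same $T_j^{\pm}$ (by continuity of $\nu_j$ across $\calB(G_j)$), producing a density-two point on a multiplicity-one graph, which is absurd. Your route makes the topology of the double cover near a branch point explicit and tests the current against a form; the paper's route avoids the immersion $\Gamma_j$ entirely and lets Allard do the work, which is shorter.
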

\begin{proof}
Let $N$ be either unit normal to $\Pi$, and define $f_j = \langle \nu_j, N \rangle$
on $\reg G_j$. Given any $\delta > 0$, $f_j$ restricted to $\reg G_j \cap B_{3/2}$
takes values in $[-1,-1+\delta) \cup (1-\delta,1]$ by~\cite{Wic_MultTwoAllard},
at least for large enough $j \geq J(\delta)$.
Write $\calR_j$ for the connected components of $\reg G_j \cap B_{3/2}$, which
we further divide into $\calR_{j}^\pm$ according to the sign of $f_j$. 
Accordingly we may decompose $\cur{G_j} = T_j^+ + T_j^-$ into the sum of the
two currents obtained by integrating over $\calR_j^\pm$ respectively, in a
way that $\cur{G_j} = T_j^+ + T_j^-$ and $\abs{G_j} = \abs{T_j^+} + \abs{T_j^-}$.
The two currents only meet along classical, immersed singularities of $G_j$, where
they moreover intersect transversely. Therefore they are both separately stationary
with $\bdary T_j^\pm = 0$ in $B_{3/2}$.
By assumption $T_j^+ + T_j^- \to 0$ and $\abs{T_j^+} + \abs{T_j^-} \to 2 \abs{\Pi} \mres
B_{3/2}$ as $j \to \infty$ in the current and varifold topologies respectively.
Moreover by Federer--Fleming compactness $T_j^\pm \to T^\pm$ separately as $j \to \infty$.
The limit currents satisfy $T^+ + T^- = 0$, and thus they are equal to the plane
$\Pi$ with multiplicity, but with opposite orientations.
By Allard regularity both $T_j^+ \mres B_1$ and $T_j^- \mres B_1$ can be
written as smooth graphs defined on $\Pi$, and thus do not support any branch points.
\end{proof}

Return to the general case,  where $\abs{G_j} \to m \abs{\Pi}$ and
$\cur{G_j} \to l \cur{\Pi}$ for some vertical plane $\Pi = \Pi_0 \times \bR e_{n+1}$.
If $l \neq 0$ then we let $N$ be the unit normal to $\Pi$ corresponding to
the orientation induced on the plane by $\cur{G_i}$, and if $l = 0$ then
we pick our orientation arbitrarily.
Thus we can divide $D_1 \setminus \Pi_0 \subset \bR^n$ into two connected components
$D_1^{\pm} = \{ x \in D_1 \mid \pm \langle x , N \rangle > 0 \}$.
For each $j$ define a function $F_j: D_1 \to \{ 0, 1,2 \}$ by
\begin{equation}
	\label{eq_defn_number_of_sheets_function}
	F_j(x)
	= \sum_{\substack{X \in P_0^{-1}(\{ x \})\\ X^{n+1} < -1}} \Theta(\norm{G_j},X).
\end{equation}
This returns the number of points in $G_j$ which lie below $x \in D_1$, counted
with multiplicity. (We could equally well have worked with a function counting
the points lying above $x \in D_1$, although formulas
such as~\eqref{eq_difference_number_of_sheets} would have the opposite sign.)
These functions are eventually locally constant away from the plane $\Pi_0$,
in the following sense.
\begin{claim}
\label{claim_function_F_j_loc_constant}
Let $\tau > 0$ be arbitrary. Then there is $j \geq J(\tau)$
so that $F_j$ is constant on the two components $D_1^\pm \setminus (\Pi_0)_\tau$.
\end{claim}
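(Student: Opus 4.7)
The plan is as follows. First, I would observe that $F_j$ is locally constant on the open set $D_1 \setminus P_0(G_j \cap D_1 \times \{-1\})$. Indeed, at any $x_0 \in D_1$ where neither value of $u_j(x_0)$ equals $-1$, continuity of the two-valued function $u_j$ ensures that locally both values stay strictly on the same side of $-1$, so the multiplicity-weighted count defining $F_j$ in~\eqref{eq_defn_number_of_sheets_function} is unchanged in a neighbourhood of $x_0$. The set
\begin{equation*}
D_1^+ \setminus (\Pi_0)_\tau = D_1 \cap \{ \langle x, N \rangle \geq \tau \}
\end{equation*}
is the intersection of two convex sets, hence convex and thus connected; the same holds for $D_1^- \setminus (\Pi_0)_\tau$. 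Therefore the claim reduces to showing that $G_j \cap K_\tau^\pm = \emptyset$ for all sufficiently large $j$, where
\begin{equation*}
K_\tau^\pm := \{ x \in \overline{D_1} : \pm \langle x, N \rangle \geq \tau \} \times \{ -1 \}.
\end{equation*}

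Second, I would establish this by contradiction using the monotonicity formula. Suppose it fails for one of the signs; then along a subsequence one finds points $X_{j'} \in G_{j'} \cap K_\tau^\pm$. Since $K_\tau^\pm$ is compact, extract a further convergent subsequence $X_{j''} \to X_0 \in K_\tau^\pm$. As $\dist(X_0, \Pi) \geq \tau > 0$, one may choose $\rho \in (0, \tau/4)$ small enough that $\overline{B_{2\rho}(X_0)} \subset D_2 \times \bR$ and $\overline{B_{2\rho}(X_0)} \cap \Pi = \emptyset$. The monotonicity formula applied to the stationary integral varifold $\abs{G_{j''}}$ at the point $X_{j''} \in \spt \norm{G_{j''}}$ yields $\norm{G_{j''}}(B_\rho(X_{j''})) \geq \omega_n \rho^n$; for $j''$ large enough that $\abs{X_{j''} - X_0} < \rho$, this forces $\norm{G_{j''}}(B_{2\rho}(X_0)) \geq \omega_n \rho^n$. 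On the other hand, the varifold convergence $\abs{G_j} \to m \abs{\Pi}$ combined with $\overline{B_{2\rho}(X_0)} \cap \Pi = \emptyset$ gives $\norm{G_{j''}}(B_{2\rho}(X_0)) \to 0$ as $j'' \to \infty$, the desired contradiction.

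The argument is a routine combination of the monotonicity formula with weak varifold convergence, so I do not foresee any significant obstacle. The two points needing explicit verification are the connectedness of each set $D_1^\pm \setminus (\Pi_0)_\tau$ (immediate from convexity) and the local constancy of $F_j$ off the crossing set $P_0(G_j \cap \{ X^{n+1} = -1 \})$ (immediate from continuity of the two-valued function $u_j$).
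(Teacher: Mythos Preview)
Your proposal is correct and follows essentially the same approach as the paper. The paper phrases the key step as local Hausdorff convergence (forcing $G_j \cap D_1 \times (-1,1) \subset (\Pi)_\tau$) followed by a case analysis on the continuous functions $u_j^\pm$ over the connected region, whereas you isolate the level set $\{X^{n+1}=-1\}$ and invoke local constancy directly; both amount to the same monotonicity-plus-connectedness argument.
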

\begin{proof}
The proof is identical for both components, so we just work with $D_1^+$.
By the convergence of the graphs $G_j$ in the Hausdorff distance, we may
take $j \geq J(\tau)$ large enough that
$G_j \cap D_1 \times (-1,1) \subset (\Pi)_\tau$.
Hence eventually $G_j \cap D_1^+ \times \bR \setminus (\Pi)_\tau
\subset \{\abs{ X^{n+1}} > 1 \}$. 

There are three possibilities:
\begin{enumerate}
	\item either $G_j \cap D_1^+ \times \bR \setminus (\Pi)_\tau \cap \{ X^{n+1} < -1 \}
	= \emptyset$,
\item or $G_j \cap D_1^+ \times \bR \setminus (\Pi)_\tau \subset \{ X^{n+1} < -1\}$,
\item or $G_j \cap D_1^+ \times \bR \setminus (\Pi)_\tau$ contains points
	with positive and negative values for $X^{n+1}$.
\end{enumerate}
Going through these cases in the same order we find that at all points $x \in D_1^+
\setminus (\Pi)_\tau$ the function $F_j(x)$ takes the values $0,2$ or $1$.
\end{proof}

\begin{lem}
\label{lem_description_near_planes}
Let $\alpha \in (0,1)$, $u_j \in C^{1,\alpha}(D_2;\calA_2)$ be
a sequence of two-valued minimal graphs, and
$\Pi = \Pi_0 \times \bR e_{n+1}$ be a vertical plane.
Suppose that $\abs{G_j} \to m \abs{\Pi}$ and $\cur{G_j} \to l \cur{\Pi}$
as $j \to \infty$.
Then the $F_j$ are eventually constant away from $\Pi_0$, taking the
values $F^\pm$ on $D_1^\pm$ respectively, and
\begin{equation}
	\label{eq_difference_number_of_sheets}
	F^+ - F^-  = l.
\end{equation}
\end{lem}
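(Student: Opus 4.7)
The plan is to first stabilise the values of $F_j$ into constants $F^{\pm}$ on each side of $\Pi_0$, and then identify the jump $F^+ - F^-$ with the current multiplicity $l$ via a pushforward-and-slicing argument.

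For the stabilisation, I apply Claim~\ref{claim_function_F_j_loc_constant} with a decreasing sequence $\tau_k \downarrow 0$: for each fixed large $j$ and two radii $\tau_k > \tau_{k'}$, the sets $D_1^\pm \setminus (\Pi_0)_{\tau_k} \subset D_1^\pm \setminus (\Pi_0)_{\tau_{k'}}$ are nested and non-empty, so the corresponding two constants must agree, and $F_j$ takes a well-defined constant value $F_j^\pm$ on the bulk of $D_1^\pm$, with $F_j^\pm \in \{0,1,2\}$. Since these lie in a finite set, along a subsequence $F_j^\pm$ stabilises to common values $F^\pm$; the a posteriori uniqueness provided by the identification below promotes this to the full sequence.

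For the identification, I consider $Y_j := \cur{G_j} \mres D_1 \times (-\infty,-1)$ and its interior boundary slice $S_j := \langle \cur{G_j}, X^{n+1}, -1 \rangle$; the level $-1$ may be perturbed slightly to a slicing-generic value without affecting the eventually-constant $F^{\pm}$ in the bulk. Although $Y_j$ may have unbounded mass, its horizontal projection $P_{0\#} Y_j = F_j \cur{D_1}$ is a well-defined $n$-current of bounded mass $\int F_j$, because $P_0$ restricted to the upward-oriented graph has unit Jacobian and positive sign, so each sheet in $G_j \cap \{X^{n+1} < -1\}$ contributes $+1$ to the multiplicity. Commuting boundary with pushforward and passing to the limit yields, as $(n-1)$-currents in the interior of $D_1$,
\[
P_{0\#} S_j \;=\; \partial(F_j \cur{D_1}) \;\longrightarrow\; (F^+ - F^-)\, \cur{\Pi_0 \cap D_1}_N,
\]
the right-hand side coming from the jump of $F_j$ across $\Pi_0$ in the direction $N$. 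Independently, applying the slicing theorem to $\cur{G_j} \to l \cur{\Pi}$ gives $S_j \to l \langle \cur{\Pi}, X^{n+1}, -1\rangle = l \cur{\Pi_0 \times \{-1\}}$, and pushing forward by $P_0$ (which restricts to an orientation-preserving diffeomorphism on $\Pi_0 \times \{-1\}$) yields $l \cur{\Pi_0 \cap D_1}$ with the $N$-orientation, precisely by the paper's choice of $N$ as the normal to $\Pi$ corresponding to the orientation induced by $\cur{G_i}$. Equating the two limits gives $F^+ - F^- = l$.

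The main obstacle will be the orientation bookkeeping in this last step: I must verify that the sign of $\partial(F_j \cur{D_1})$ at the jump and the sign of the $P_0$-pushforward of the slice of $\cur{\Pi}$ agree, producing $+l$ rather than $-l$. I plan to fix the convention by checking the one-dimensional prototype $u_M(x) = Mx$ on $(-1,1)$ as $M \to \infty$: there $F_j = \indic_{x<-1/M}$ gives $F^+ = 1$, $F^- = 0$ with $N = -e_1$, while the upward-normal convention $\vec\xi \wedge \vec n = \omega$ gives $l = 1$ and $\partial(F_j \cur{D_1}) \to +\delta_0$, thereby pinning down the sign.
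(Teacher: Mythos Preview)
Your approach is correct but takes a genuinely different route from the paper's. The paper's proof is entirely elementary: it lists the three possible cases $(m,l)\in\{(2,2),(2,0),(1,1)\}$, and for each one restricts $u_j$ to the single line $\{tN:|t|<1\}\subset\bR^n$, makes a one-dimensional $C^1$ selection $u_{j,l}^1,u_{j,l}^2\in C^1(-1,1)$ there, and reads off $F^\pm$ directly from where these two scalar functions land relative to $\pm 1$ (for instance when $(m,l)=(2,2)$ both selections satisfy $u_{j,l}^i<-1$ on $(1/2,1)$ and $>1$ on $(-1,-1/2)$, forcing $F^+=2$, $F^-=0$). No currents, pushforwards, or slicing appear. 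Your identity $\partial(F_j\cur{D_1})=P_{0\#}\langle\cur{G_j},X^{n+1},-1\rangle$ followed by passage to the limit is more systematic: it handles all cases uniformly and would extend verbatim to higher multiplicities, at the cost of invoking slice convergence for a.e.\ level under weak convergence and the orientation check you correctly flag. Two minor remarks: your concern about $Y_j$ having unbounded mass is unnecessary, since $u_j\in C^{1,\alpha}(D_2;\calA_2)$ is bounded on $\clos{D_1}$; and your a-posteriori promotion from subsequence to full sequence only pins down $F^+-F^-$, not $F^\pm$ individually --- but only the difference is used downstream (Lemma~\ref{lem_consec_number_beans}), and in the case $(m,l)=(1,1)$ the individual values are genuinely not determined by the hypotheses, so this is not a defect of your method.
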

\begin{proof}
There are three possible cases:
\begin{enumerate}
	\item \label{item_mult_two_no_mass_cancel}
	either $m = 2$ and $l = 2$,
	\item \label{item_mult_two_mass_cancel}
	or $m = 2$ and $l = 0$,
	\item \label{item_mult_one}
	or $m = 1$ and $l = 1$.
\end{enumerate}
The proof is basically the same in all three cases, so we only consider
the first. Consider the line $l \subset \bR^n \times \{ 0 \}$ directed
by $N$ and passing through the origin.
Identify $l$ with $\bR$ via a unit-speed parametrisation.
Then there exist two functions $u_{j,l}^1,u_{j,l}^2 \in C^1(\bR)$
so that $u_j(tN) = \{ u_{j,l}^1(tN),u_{j,l}^2(tN) \}$ for all $t \in \bR$.
Moreover, as $l = 2$ we get that $u_{j,l}^1(t) \wedge u_{j,l}^2(t) < - 1$
on $(1/2,1)$ and $u_{j,l}^2(t) \vee u_{j,l}^2(t) > 1$ for $t \in (-1,-1/2)$,
provided $j$ is large enough.

Now let $0 < \tau < 1$ be an arbitrary small constant.
By Claim~\ref{claim_function_F_j_loc_constant}, the function $F_j$
is constant on the two components of $D_1 \setminus (\Pi)_\tau$,
at least provided $j \geq J(\tau)$ is chosen large enough.
Combining this with our calculations above, we find that 
for points $x \in D_1 \setminus (\Pi)_{\tau}$, $F_j(x) = 2$ if $x \in D_1^+$
and $F_j(x) = 0$ if $x \in D_1^-$.
These values do not change with large values of $j$, and we may set
$F^+ = 2, F^- = 0$, which confirms that indeed $F^+ - F^- = 2 = l$.
As explained above, the other cases can be argued similarly.
\end{proof}

\section{Classical limit cones: initial analysis}
\label{sec_classical_limit_cones}

Let $\alpha \in (0,1)$, and $(u_j \mid j \in \bN)$ be a sequence of two-valued
minimal graphs, with $u_j \in C^{1,\alpha}(D_2;\calA_2)$ for all $j$.
We assume that they converge to a \emph{classical cone} in the varifold
topology, say $\abs{G_j} \to \bP$.
By this we mean that there are $n$-dimensional half-planes $\pi_1,\dots,\pi_N$
meeting along a commmon $n-1$--dimensional axis $L \in \Gr(n-1,n+1)$ and
integers $m_1,\dots,m_N \in \bZ_{>0}$ so that $\bP = \sum_i m_i \abs{\pi_i}$.
By the graphs are endowed with the orientation corresponding to their
upward-pointing unit normal we obtain a sequence of currents which we may
also assume convergent, say $\cur{G_j} \to T \in \I_n(D_2 \times \bR)$,
extracting a subsequence if necessary.
This limit too has a similar form to the above, namely $T = \sum_i l_i \cur{\pi_i}$
where $0 \leq l_i \leq m_i$. Here the half-planes are given the orientations
induced by $T$, where we pick an arbitrary orientation for those $\pi_i$ which
have $l_i = 0$.
Our main theorem in this section is the following.

\begin{thm}
\label{thm_classical_cone_support_as_desired}
 Let $\alpha \in (0,1)$, and let $(u_j \mid j \in \bN)$ be a sequence of two-valued
minimal graphs with $u_j \in C^{1,\alpha}(D_2;\calA_2)$.
Suppose that $\abs{G_j} \to \bP$ and $\cur{G_j} \to T$ as $j \to \infty$,
where $\bP$ and $T$ are classical cones.
Then there exist planes $\Pi_1,\dots,\Pi_D \in \Gr(n,n+1)$
and integers $0 \leq l_i\leq m_i \leq 2$ so that
\begin{align}
	\label{eq_limit_cone_actual_form}
	\bP = \sum_{i=1}^D m_i \abs{\Pi_i} \quad \text{ and } \quad
	T = \sum_{i=1}^D l_i \cur{\Pi_i}.
\end{align}
\end{thm}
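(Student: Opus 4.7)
The plan is to prove the two assertions of the theorem in sequence: first the multiplicity bound $m_i \leq 2$, then the structural statement that the half-planes pair into full planes with matching varifold and current multiplicities.

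For the multiplicity bound, I would fix an arbitrary half-plane $\pi_i$ and pick an interior point $X_i \in \pi_i \setminus L$. Near $X_i$ the support of $\bP$ is a smooth $n$-plane (the affine plane containing $\pi_i$), so $X_i$ is a regular point of the limit varifold. Corollary~\ref{cor_conseq_multiplicity_bound} applied to the convergent sequence $\abs{G_j} \to \bP$ therefore yields $m_i = \Theta(\norm{\bP}, X_i) \leq 2$, and the analogous statement for $l_i$ follows from $l_i \leq m_i$.

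For the pairing, the plan is to work in the two-dimensional orthogonal complement $L^\perp$ of the axis. Each half-plane $\pi_i$ projects to a ray $R_i$ in $L^\perp$, and I order these rays cyclically by polar angle. The goal reduces to showing that the multiset of rays, weighted by $m_i$, is symmetric under the antipodal map $R \mapsto -R$; the $\pi_i$ then glue into full planes $\Pi_i$ with matching multiplicities. Suppose for contradiction that some pair of consecutive rays $R_a, R_{a+1}$ spans a sector $V_0 \subset L^\perp$ of angular width $\theta \in (0, \pi)$. The open wedge $W = L \times V_0 \subset \bR^{n+1}$ is convex and its interior contains no point of $\spt \bP$, so $\norm{\abs{G_j}}(W \cap B_A) \to 0$ as $j \to \infty$ for each $A > 0$. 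Away from the axis $L$, the graph $G_j \cap W$ decomposes into single-valued sheets, each a smooth minimal graph over its horizontal projection. I would then restrict to a convex polyhedral subdomain $\Delta \subset W$ with two faces approximating portions of $\pi_a$ and $\pi_{a+1}$---as in the construction of $\Delta_a$ inside the proof of Lemma~\ref{lem_no_folding_two_half_planes}---and apply that lemma to the extracted sheets. This yields the sought contradiction, forcing $\theta = \pi$ and hence the antipodal pairing of rays. The equality of multiplicities within each antipodal pair would then be obtained by a local sheet-counting argument analogous to Lemma~\ref{lem_description_near_planes}, adapted to classical-cone limits; compatibility of the current decomposition of $T$ with the varifold decomposition of $\bP$ drops out of this bookkeeping.

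The main obstacle is the careful extraction of single-valued sheets of $G_j$ within $W$ and the verification that they satisfy the boundary hypothesis of Lemma~\ref{lem_no_folding_two_half_planes}, namely that $\spt \bdary \cur{G_j \cap \Omega_j \times (-A,A)}$ concentrates in an arbitrarily small tubular neighbourhood of the edges of the wedge as $j \to \infty$. Branch points of $G_j$ accumulate on the axis $L$, and the two-valued structure permits sheets to reconnect or fold back across the half-planes, so tracking the sheet decomposition uniformly in $j$ is the main technical subtlety. A diagonal extraction together with the improved area estimates in Section~\ref{subsec_improved_area_estimates} should be enough to push the argument through and control the accumulation of boundary at the axis.
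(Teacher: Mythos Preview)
The multiplicity bound $m_i \leq 2$ via Corollary~\ref{cor_conseq_multiplicity_bound} is fine. The pairing argument, however, has two genuine gaps.

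First, the contradiction hypothesis is misstated. You write ``suppose for contradiction that some pair of consecutive rays $R_a, R_{a+1}$ spans a sector of angular width $\theta \in (0,\pi)$,'' but this holds automatically whenever $\bP$ is supported in two or more distinct planes through $L$; it is not the negation of antipodal symmetry. The correct negation is that some ray $R_a$ has no antipodal partner among the $R_i$ (or has one with a different multiplicity), and that does not by itself single out a sector on which your folding argument could bite.

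Second, and more seriously, Lemma~\ref{lem_no_folding_two_half_planes} is a statement about \emph{vertical} half-planes $\pi = \pi_0 \times \bR e_{n+1}$ and single-valued minimal graphs over domains in $\bR^n$ whose graphs converge to them; it is invoked only in Section~\ref{sec_classical_cones_vertical}, after one has already reduced to a vertical axis $L = L_0 \times \bR e_{n+1}$. In Theorem~\ref{thm_classical_cone_support_as_desired} the axis $L$ is arbitrary and the half-planes $\pi_i$ need not be vertical, so the Jenkins--Serrin comparison is simply not available. Moreover, the wedge $W = L \times V_0$ you build sits in $\bR^{n+1}$ with $\spt \norm{\bP} \cap \inter W = \emptyset$, so the varifold mass of $G_j$ vanishes there and there is no sheet inside $W$ to feed into the lemma; the graphs do not fold into $W$, they avoid it.

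The paper's proof is quite different and follows Schoen--Simon~\cite{SchoenSimon81}: slice $G_j$ by the two-planes $\{Y + Z : Z \in L^\perp\}$ orthogonal to the axis, use stability (Lemma~\ref{lem_orthogonal_slicing_U_j_and_U_j_kappa}) to show that for almost every $Y$ the total curvature $\int \abs{A_{G_j}} \intdiff \calH^1$ along the slice is eventually below any prescribed $\kappa > 0$, and then argue case by case that if $\bP$ (or $T$) failed to be a sum of full planes there would necessarily be a Jordan arc in the slice along which the unit normal $\nu_j$ turns by a fixed amount $\varphi > 0$ independent of $j$ and $\sigma$, contradicting the curvature bound. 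The antipodal pairing of rays, the matching of multiplicities, and the compatibility of the current orientations are all handled by this single mechanism.
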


Our proof follows an approach adapted from the work of Schoen--Simon
in~\cite{SchoenSimon81}. 
In preparation for this, we introduce an orthogonal decomposition of $\bR^{n+1}$
adapted to $L$, writing $\bR^{n+1} = \{ Y + Z \mid Y \in L, Z \in L^\perp \}$.
In keeping with this, we call the two-dimensional affine space
$\{ Y + Z \mid Z \in L^\perp \}$ obtained by fixing a point $Y \in L$ a \emph{slice}
through $Y$.
In this notation the cone $\bP$ is suported in
$\{ Y + \sum_{i=1}^N t_i p_i \mid Y \in L, t_i \geq 0, i=1,\dots,N \}$,
where the $p_i \in L^\perp$ are unit vectors so that
$\pi_i = \{ Y + t p_i \mid Y \in L, t \geq 0 \}$.
Letting $R_i$ be the ray generated by $p_i$, we have
$\bP = L \times \sum_{i=1}^N m_i \abs{R_i}$.

Let a small $\sigma > 0$ be given. For sufficiently large $j \geq J(\sigma)$,
\begin{equation}
G_j \cap \{ Y + Z \mid Z \in L^\perp, \sigma/2 < \abs{Z} < \sigma \}
= \cup_{k=1}^M \gamma_{j,Y}^k
\end{equation}
for each point $Y \in L$ with $\abs{Y} \leq 1$, where $M = \sum_i m_i$.
The $\gamma_{j,Y}^k$ are $C^{1,\alpha}$ embedded Jordan arcs with endpoints in
$\{ Y  + Z  \mid Z \in L^\perp, \abs{Z} = \sigma/2 \text{ or } \sigma \}$.
Moreover as $j \to \infty$ we have the uniform limits
\begin{equation}
	\label{eq_uniform_convergence_hausdorff_distance}
	\dist_{\calH}(\cup_k \gamma_{j,Y}^k,
	\cup_{i=1}^N \{ Y + t p_i \mid \sigma/2 < t < \sigma \}) \to 0
\end{equation}
and for every $k$,
\begin{equation}
	\label{eq_uniform_convergence_of_the_normals}
	\min_{i \in \{ 1, \dots, N \}} \sup_{X \in \gamma_{j,Y}^k}
	 \langle \nu_j(X), N_i \rangle  \to 0,
\end{equation}
for all $Y \in L$ with $\abs{Y} \leq 1$, where the $N_i$ are the normal
vectors to the $\pi_i$. The latter is justified using either Allard regularity
or the branched sheeting theorem of Wickramasekera,
quoted in Theorem~\ref{thm_wic_mult_two_allard}, depending on the multiplicity of the ray.

Let $0 < \tau < 1$ be chosen small enough in terms of $\sigma$
that the tubular neighbourhoods
$\big( \{ t p_i \mid \sigma/2 < t < \sigma \} \big)_\tau$
are two-by-two disjoint in $L^\perp$.
Next taking $j \geq J(\tau,\sigma)$ large enough that
$\dist_{\calH}(\cup_{k=1}^M \gamma_{j,Y}^k, \cup_{i=1}^N \{ Y + t_i p_i
\mid \sigma/2 < t_i < \sigma \}) < \tau$ we ensure that there lie
precisely $m_i$ Jordan arcs near every line segment
$\{ t p_i \mid \sigma/2 < t < \sigma \}$.

By~\cite{KrumWic_FinePropsMinGraphs}---whose results we quote in
Theorem~\ref{thm_krumwic_finepropsmingraphs}---the branch set of $G_j$
is countably $(n-2)$-rectifiable for all $j$.
Let $P_L$ be the orthogonal projection onto $L$. Then the projection of
$B_{G_j} \cap \{
	\abs{Y} \leq 1, \abs{Z} \leq \sigma \}$
is a compact subset of $L$ with
\begin{equation}
	\label{eq_unbranched_points_full_measure}
	\calH^{n-1}(P_{L}(\calB_{G_j} \cap \{
		\abs{Y} \leq 1 , \abs{Z} \leq \sigma\})) = 0.
\end{equation}
Together with Sard's theorem, we find that there is an open subset $\calU_{j,\sigma}
\subset L \cap \{ \abs{Y} \leq 1 \}$ with full $\calH^{n-1}$-measure of points
$Y \in \calU_{j,\sigma}$ we call \emph{unbranched} for which
\begin{equation}
	G_j \cap \{ Y + Z \mid \abs{Z} < \sigma \} 
	= \cup_{k=1}^P \Upsilon_{j,Y}^k \cup \cup_{l=1}^Q \Delta_{j,Y}^l,
\end{equation}
where the $\Upsilon_{j,Y}^k$ are smooth properly embedded Jordan arcs with
endpoints in $\{ Y + Z \mid \abs{Z} =  \sigma \}$ and
the $\Delta_{j,Y}^l$ are smooth properly embedded Jordan curves.
%
%
Given a small $\kappa > 0$, we also single out those $Y \in \calU_{j,\sigma}$ for which
$\int_{G_j \cap \{ Z + Y \mid \abs{Z} < \sigma \}} \abs{A_{G_j}} \intdiff \calH^1 < \kappa$;
these form a set denoted $\calU_{j,\sigma}(\kappa)$.
The stability of the $G_j$ forbids curvature concentration near the axis $L$,
from whence one obtains the following; this is a point made in~\cite{SchoenSimon81}.
We use a similar result at a later stage, Lemma~\ref{lem_Uj_kappa_complement_goes_to_zero},
valid under slightly different hypotheses. There we also give a detailed argument,
whose steps are easily transcribed to the present context.

\begin{lem}
	\label{lem_orthogonal_slicing_U_j_and_U_j_kappa}
For all $\kappa > 0$, $\calH^{n-1}(\calU_{j,\sigma} \setminus \calU_{j,\sigma}(\kappa))
\to 0$ as $\sigma \to 0$ and $j \geq J(\sigma) \to \infty$.
\end{lem}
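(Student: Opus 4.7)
My strategy combines the ambient stability of $G_j$, which yields a uniform $L^2$-bound on the second fundamental form, with the coarea formula for the projection $P_L: \bR^{n+1} \to L$, and Chebyshev's inequality to convert the integral bound into a measure estimate on the excluded slice set.

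Concretely, I would first fix a cutoff $\varphi \in C_c^1(B_2)$ with $\varphi \equiv 1$ on $B_{3/2}$. Inserting this into the stability inequality~\eqref{eq_stab_ineq} of Lemma~\ref{lem_graph_ambient_stability} and using the uniform area bound from Proposition~\ref{prop_area_estimate} yields a constant $C > 0$, independent of $\sigma$ and $j$, such that $\int_{\reg G_j \cap B_{3/2}} \abs{A_{G_j}}^2 \dH^n \le C$. By Cauchy--Schwarz,
\begin{equation}
	\int_{G_j \cap (L)_\sigma \cap B_{3/2}} \abs{A_{G_j}} \dH^n
	\le C^{1/2}\, \calH^n\bigl( G_j \cap (L)_\sigma \cap B_{3/2} \bigr)^{1/2},
\end{equation}
and the area factor on the right tends to zero as $\sigma \to 0$ and $j \to \infty$: by the varifold convergence $\abs{G_j} \to \bP$ it approaches $\norm{\bP}((L)_\sigma \cap B_{3/2})$, which itself vanishes as $\sigma \to 0$ since $\bP$ is a classical cone with axis $L$, supported on half-planes meeting $L$.

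Next I would invoke the coarea formula for $P_L$ restricted to $G_j$: since the tangential Jacobian $\abs{J_{G_j} P_L}$ is bounded pointwise by one, this gives
\begin{equation}
	\int_{L \cap \{\abs{Y}\le 1\}} \biggl(\int_{G_j \cap (\{Y\} + L^\perp) \cap \{\abs{Z}<\sigma\}}
	\abs{A_{G_j}} \dH^1\biggr) \dH^{n-1}(Y)
	\le \int_{G_j \cap (L)_\sigma \cap B_{3/2}} \abs{A_{G_j}} \dH^n,
\end{equation}
and combined with the previous step this double integral also tends to zero. Chebyshev's inequality applied to the inner slice-integral function $Y \mapsto \int_{G_j \cap (\{Y\}+L^\perp) \cap \{\abs{Z}<\sigma\}} \abs{A_{G_j}} \dH^1$ then delivers the claimed bound on $\calH^{n-1}(\calU_{j,\sigma} \setminus \calU_{j,\sigma}(\kappa))$.

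The main subtlety I expect is checking that the various integral inequalities survive the branched and classical singularities of $G_j$. The stability estimate accommodates branch points through the capacity argument already present in Lemma~\ref{lem_graph_ambient_stability}, while classical singularities contribute nothing to the curvature integrals. For the coarea step, the relevant fact is that $\calB_{G_j}$ is $(n-2)$-rectifiable by Theorem~\ref{thm_krumwic_finepropsmingraphs} and so projects onto a $\calH^{n-1}$-null subset of $L$ under $P_L$; this is already implicit in~\eqref{eq_unbranched_points_full_measure} when setting up $\calU_{j,\sigma}$.
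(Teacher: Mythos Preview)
Your proposal is correct and follows essentially the same route as the paper: the paper defers the argument to the detailed proof of Lemma~\ref{lem_Uj_kappa_complement_goes_to_zero}, which proceeds exactly by combining the stability inequality (for a uniform $L^2$ curvature bound), Cauchy--Schwarz, the shrinking area $\calH^n(G_j \cap (L)_\sigma) \to 0$, and the Chebyshev/coarea step to pass to the slice measure. Your explicit invocation of the coarea formula with the Jacobian bound $\abs{J_{G_j} P_L} \le 1$ makes precise what the paper leaves implicit when it ``increases the domain of integration.''
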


This is the key ingredient in the proof of
Theorem~\ref{thm_classical_cone_support_as_desired}, which we give now.
\begin{proof}
We list four ways in which the cones $\bP$ and $T$ can fail to conform
to the conclusion of the theorem, each assuming the negation of the preceding:
\begin{enumerate}[label = (\arabic*)]
\item \label{item_first_spt_P_not_a_union_of_planes}
$\bP$ is not supported in a union of planes,

\item \label{item_second_P_not_sum_of_planes}
$\bP$ is not a sum of planes,

\item \label{item_third_T_is_not_supported_in_a_union_of_planes}
	$T$ is not supported in a union of planes,
\item \label{item_fourth_current_T_is_not_a_sum_of_planes}
	$T$ is not a sum planes.
\end{enumerate}

In each of the four cases we prove the existence of constants
$\varphi_1,\varphi_2,\varphi_3 > 0$ so that
$U_{j,\sigma}(\varphi_i)$ is empty,
independently of $\sigma$ and $j \geq J(\sigma)$.
Specifically we show that for all $Y \in \calU_{j,\sigma}$
there is a curve $\Upsilon \in \{ \Upsilon_{j,Y}^1,\dots,
\Upsilon_{j,Y}^P \}$ along which the unit normal $\nu_j$ varies by a positive
amount, say $2\varphi_i \leq \sup_{X,Y \in \Upsilon} \abs{ \nu_j(X) - \nu_j(Y) }$.
Next let $\varphi$ be the minimum of $\varphi_1,\varphi_2,\varphi_3$. Identifying 
$\Upsilon$ with a smooth parametrisation on $[0,1]$, there are $0 < s_j < t_j < 1$
so that $ \varphi  \leq \abs{ \nu_j(\Upsilon(s_j)) - \nu_j(\Upsilon(t_j))}$.
Then 
\begin{align}
	\varphi
	&\leq \int_{s_j}^{t_j} \abs{ (\nu_j \circ \Upsilon)'(t)} \intdiff t
	= \int_{s_j}^{t_j} \abs{ \langle \nabla_{G_j} \nu_j(\Upsilon(t)),
	\Upsilon'(t) \rangle } \intdiff t \\
	&\leq \int_{s_j}^{t_j} \abs{A_{G_j}}(\Upsilon(t)) \abs{\Upsilon'(t)} \intdiff t 
	\leq \int_{\reg G_j \cap \{ Y + Z \mid Z \in L^\perp, \abs{Z} < \sigma\}}
	\abs{A_{G_j}} \intdiff \calH^1.
\end{align}
Hence $Y \in \calU_{j,\sigma} \setminus \calU_{j,\sigma}(\varphi)$, and
as this was chosen arbitrarily we find that $\calU_{j,\sigma}(\varphi)$ is empty.
We may then let $\sigma \to 0$ and $j \geq J(\sigma) \to \infty$ to obtain
a contradiction with Lemma~\ref{lem_orthogonal_slicing_U_j_and_U_j_kappa}.

\ref{item_first_spt_P_not_a_union_of_planes}
If the cone $\bP$ is no supported in a union of planes, then we can relabel
the rays to arrange $-p_1 \in \{ p_1,\dots,p_M \}$.
Let $\Upsilon \in \{ \Upsilon_{j,Y}^1,\dots,\Upsilon_{j,Y}^P \}$ be an arc
with at least one endpoint near the ray $R_1$. If its other endpoint lies near
another ray $R_i$, then picking a point $X$ near one endpoint and $Y$ near another
we find $2 \varphi_1 \leq \sup_{X,Y \in \Upsilon} \abs{\nu_j(X) - \nu_j(Y)}$, where
$\varphi_1$ depends only on the angle between the two rays. Similarly if both endpoints
lie near $R_0$  then $1 \leq \sup_{X,Y \in \Upsilon} \abs{\nu_j(X) - \nu_j(Y) }$.

\ref{item_second_P_not_sum_of_planes}
Thus we may assume that $\spt \norm{\bP} = \cup_{i=1}^D \Pi_i$,
and we can relabel the half-planes so that $\Pi_i = \pi_i \cup \pi_{i+D}$ for all $i$,
taking the indices module $2D = N$.
The only way for $\bP$ not to be a sum of these planes (with some multiplicities)
is if $m_i \neq m_{i+D}$ for some $i$, say $m_1 \neq m_{D+1}$ after relabelling.
A pigeonhole argument demonstrates the existence of an arc
$\Upsilon \in \{ \Upsilon_{j,Y}^1,\dots,\Upsilon_{j,Y}^P \}$ satisfying one of
the following.
Either $\Upsilon$ has both endpoints near $R_1$ or $R_{D+1}$, or it has one
endpoint one of $R_1,R_{D+1}$ and the other near a third ray $R_i$.
Combining the two cases we find $\min \{ 1,2\varphi_2 \}
\leq \sup_{X,Y} \abs{\nu_j(X) - \nu_j(Y) }$, where $\varphi_2$ depends only
on the respective angles that $R_i$ forms with $R_1,R_{D+1}$.

\ref{item_third_T_is_not_supported_in_a_union_of_planes}
Hence we may take $\bP = \sum_{i=1}^D m_i \abs{\Pi_i}$, and if
$\abs{T} = \sum_{i=1}^D (l_i \abs{\pi_i} + l_{i+D} \abs{\pi_{i+D}})$
is not a sum of planes then $l_{i} \neq l_{i+D}$. Relabelling the half-planes
once again if necessary, we may assume that $l_1 = 0$ and $l_{D+1} = 2$,
inequivalent cases being excluded by Allard regularity.
Then $m_1 = 2 = m_{D+1}$, and for large enough $j$ we can argue as above
to prove the existence of two arcs $\Upsilon_-,\Upsilon_+
\in \{ \Upsilon_{j,Y}^1,\dots,\Upsilon_{j,Y}^P \}$ lying near the rays $R_{1},R_{D+1}$,
lest $\nu_j$ vary by a positive amount along the arcs containing the
portions of $G_j$ near them.
We label these so that $\pm \langle \nu_j, N_1 \rangle$ is positive on
$\Upsilon_{\pm}$ near $R_1$. This therefore changes sign along $\Upsilon_-$,
and $1 \leq \sup_{X,Y \in \Upsilon_-} \abs{\nu_j(X) - \nu_j(Y)}$.

\ref{item_fourth_current_T_is_not_a_sum_of_planes}
For the last step we may assume that $T = \sum_{i=1}^D l_i (\cur{\pi_i}
+ \cur{\pi_{i+D}})$ and it remains to show that the pairs $\pi_i,\pi_{i+D}$
are oriented in a consistent fashion.
Were this to fail for $\pi_1,\pi_{D+1}$ for example then we could once again
find an arc $\Upsilon \in \{ \Upsilon_{j,Y}^1,\dots,\Upsilon_{j,Y}^P \}$
with either one endpoint near one of $R_{1},R_{D+1}$
and the other near a third ray, or both endpoints near $R_{1},R_{D+1}$ but
along which $\langle \nu_j , N_1 \rangle$ changes sign. This concludes the proof.
\end{proof}

\section{Classical limit cones: non-vertical cones}
\label{sec_non_vertical_cone}

Let $\alpha \in (0,1)$ and $(u_j \mid j \in \bN)$ be a sequence of two-valued
minimal graphs with $u_j \in C^{1,\alpha}(D_2;\calA_2)$. Let $D \in \bZ_{> 0}$
and for $i = 1,\dots,D$ let $\Pi_i \in \Gr(n,n+1)$ be $n$-dimensional planes
which meet along a common axis $L \in \Gr(n-1,n+1)$.
Recall that we call a plane $\Pi$ vertical if it is of the form
$\Pi = \Pi_0 \times \bR e_{n+1}$.
Here we consider the situation where they are not all vertical, 
and we may relabel them to arrange for $\Pi_1$ to be non-vertical.
Assume that $\abs{G_j} \to \sum_{i=1}^D m_i \abs{\Pi_i} = \bP$ and
$\cur{G_j} \to \sum_{i=1}^D l_i \cur{\Pi_i} = T$ as $j \to \infty$,
where $0 \leq l_i \leq m_i$.

\subsection{Slicing at an acute angle}

Broadly speaking we again use an approach based on~\cite{SchoenSimon81}.
However from a technical standpoint the arguments from the previous section are
maladapted to the current situation, as we want to exploit the two-valued
graphicality of the $G_j$.
Instead of taking slices orthogonal to the axis $L$ of the cone, we proceed
as follows.
Let $v \in \bR^{n+1}$ be a unit vector with $\langle v , e_{n+1} \rangle  = 0$,
so that $v, e_{n+1}, L$ span $\bR^{n+1}$. Let $V = \linspan \{ v , e_{n+1} \}$,
and write $Z = tv + z e_{n+1} \in V$.
Then $\bR^{n+1} = L + V$, and every point $X \in \bR^{n+1}$ can uniquely
by written $X = Y + Z = y + Y^{n+1} e_{n+1} + tv + z e_{n+1}$.
We emphasise that in general $L$ and $V$ do not meet at a right angle, 
and this decomposition is not orthogonal. (The exception being the case
where $L = \bR^n \times \{ 0 \}$.)
The slices we take are adapted to this decomposition,
using sets of the form $\{ Y + Z \mid Z \in V, \abs{Z} < \sigma \}$,
where $0 < \sigma < 1$ and $Y = (y,Y^{n+1}) \in L$ is a fixed point
with $\abs{y} < 1$.
We abbreviate this $\{ Y+ Z \mid \abs{Z} < \sigma \}$, and in the
same vein write for example $\{ \abs{y} < 1 , \abs{Z} < \sigma \}
= \{ Y + Z \mid Y \in L, Z \in V, \abs{y} < 1 , \abs{Z} < \sigma \}$.

The corresponding projection map is written $Q: X = Y + Z \in \bR^{n+1}
\mapsto Y$. Analogous to~\eqref{eq_unbranched_points_full_measure} we have
$\calH^{n-1}\big(Q (\calB(G_j) \cap \{ \abs{y} \leq 1, \abs{Z} \leq 1 \})\big) = 0$.
Let $\sigma > 0$ be given. Together with Sard's theorem we find that for all $j$
there is an open subset $\calV_{j,\sigma} \subset L \cap \{ \abs{y} \leq 1\}
\setminus Q (\calB(G_j) \cap \{ \abs{y} \leq 1, \abs{Z} \leq 1 \})$ of full measure,
so that for all $Y \in \calV_{j,\sigma}$,
$G_j \cap \{ Z + Y \mid  \abs{Z} < \sigma \}= \cup_{k=1}^P \Upsilon_{j,Y}^k$
where the  $\Upsilon_{j,Y}^k$ are smooth properly embedded Jordan arcs with endpoints
in the set $\{ Y + Z \mid  \abs{Z} = \sigma \}$.
(This cannot contain any closed curves because of the graphicality of $G_j$.)
In the same vein, given $\kappa > 0$ we write
$\calV_{j,\sigma}(\kappa) \subset \calV_{j,\sigma}$
for the measurable subset of $\calV_{j,\sigma}$ formed by those points
$Y \in \calV_{j,\sigma}$ with
$\int_{\reg G_j \cap \{ Z + Y \mid \abs{Z} < \sigma \} }
\abs{A_{G_j}} \intdiff \calH^1 < \kappa.$

\begin{lem}
	\label{lem_Uj_kappa_complement_goes_to_zero}
	For all $\kappa > 0$, $\calH^{n-1}(\calV_{j,\sigma}
	\setminus \calV_{j,\sigma}(\kappa)) \to 0$ as $\sigma \to 0$ and
	$j \geq J(\sigma) \to \infty$.
\end{lem}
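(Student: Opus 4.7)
The plan is to bound the total curvature integral $\int_{G_j \cap T_\sigma} \abs{A_{G_j}}^2$ on the tube $T_\sigma = \{Y+Z : Y \in L, \abs{y}<1, Z \in V, \abs{Z}<\sigma\}$ via the ambient stability inequality with a \emph{logarithmic} cutoff in the transverse distance $\abs{Z}$, then transfer this to a bound on the slice integrals via the co-area formula applied to the projection $Q : \bR^{n+1} \to L$, and finally combine Chebyshev's inequality with Cauchy--Schwarz on individual slices to obtain the $L^1$ control of $\abs{A_{G_j}}$ required by the definition of $\calV_{j,\sigma}(\kappa)$.

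For the first step, apply Lemma~\ref{lem_graph_ambient_stability} to $\varphi(X) = \psi(\abs{y}) \phi(\abs{Z})$, where $\psi \in C_c^1([0,2))$ satisfies $\psi \equiv 1$ on $[0,1]$, and $\phi$ is the logarithmic cutoff with $\phi \equiv 1$ on $[0,\sigma]$, $\phi(r) = \log(1/(2r))/\log(1/(2\sigma))$ for $\sigma \leq r \leq 1/2$, and $\phi \equiv 0$ on $[1/2, \infty)$. Using the volume bound $\calH^n(G_j \cap \{\abs{y}<2, \abs{Z}<s\}) \leq C s$ uniformly in $s \in (\sigma, 1/2]$ and in $j \geq J(\sigma)$---which follows from Proposition~\ref{prop_area_estimate}, monotonicity of area ratios, and the cylindrical structure of the limit $\bP$ along $L$---together with $\abs{\phi'(r)} = r^{-1} (\log(1/(2\sigma)))^{-1}$, one obtains
\begin{equation}
	\int_{\reg G_j \cap \{\abs{y}<1, \abs{Z}<\sigma\}} \abs{A_{G_j}}^2 \intdiff \calH^n
	\leq \int_{\reg G_j} \abs{\nabla_{G_j} \varphi}^2 \intdiff \calH^n
	\leq \frac{C}{\sigma (\log(1/\sigma))^2}.
\end{equation}

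For the second step, the projection $Q$ along $V$ has Jacobian on each plane $\Pi_i$ bounded between $1$ and $\sqrt{2}$ (computable explicitly from the angle between $\Pi_i$ and $V$), and by the smooth convergence of $G_j$ to $\bP$ away from $L$ together with the branched sheeting theorem (Theorem~\ref{thm_wic_mult_two_allard}) near higher-multiplicity rays, the Jacobian of $Q$ restricted to $\reg G_j$ is uniformly bounded above and below for large $j$. The co-area formula then gives
\begin{equation}
	\int_{\calV_{j,\sigma}} \int_{\reg G_j \cap Q^{-1}(Y) \cap \{\abs{Z}<\sigma\}}
	\abs{A_{G_j}}^2 \intdiff \calH^1 \intdiff \calH^{n-1}(Y)
	\leq \frac{C}{\sigma (\log(1/\sigma))^2},
\end{equation}
and Chebyshev applied at the threshold $\kappa^2/(C\sigma)$ bounds the $\calH^{n-1}$-measure of the set of $Y \in \calV_{j,\sigma}$ on which the inner integral exceeds $\kappa^2/(C\sigma)$ by $C/(\kappa^2 (\log(1/\sigma))^2)$, which vanishes as $\sigma \to 0$.

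For the third step, for any $Y$ outside this exceptional set, the slice is a union of at most $M = \sum_i m_i$ smooth arcs contained in a disc of radius $\sigma$ in the $2$-plane $Y+V$, hence of total $\calH^1$-measure at most $C\sigma$; Cauchy--Schwarz then gives $\int_{\text{slice at } Y} \abs{A_{G_j}} \intdiff \calH^1 \leq (C\sigma)^{1/2} (\kappa^2/(C\sigma))^{1/2} = \kappa$, so $Y \in \calV_{j,\sigma}(\kappa)$. The main technical obstacle is the use of the \emph{logarithmic} rather than linear cutoff in the first step: a linear cutoff in $\abs{Z}$ yields only $\int_{T_\sigma} \abs{A}^2 \leq C/\sigma$, and the resulting Chebyshev bound becomes a mere $O(1)$ constant independent of $\sigma$---it is precisely the extra $(\log(1/\sigma))^{-2}$ factor produced by the log cutoff that forces the bad set to shrink. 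A secondary subtlety is the uniform lower bound on the Jacobian $J_Q$ near $L$, which requires the tangent planes of $G_j$ to remain close to those of $\bP$ even there, and is where the branched sheeting theorem enters.
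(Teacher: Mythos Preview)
Your Step~3 contains a genuine gap: a curve contained in a disc of radius $\sigma$ need not have length $\leq C\sigma$. The slices here are two-valued graphs over the $v$-axis in the vertical $2$-plane $Y+V$, and since in this section only $\Pi_1$ is assumed non-vertical, $\abs{Du_j}$ is not uniformly bounded near $L$ and the slice arcs can be arbitrarily long inside $\{\abs{Z}<\sigma\}$. Without this length bound the per-slice Cauchy--Schwarz step collapses, and there is no evident repair: attempting to control the slice lengths by a second Chebyshev argument on $\int_L \calH^1(\text{slice})\,\intdiff\calH^{n-1} \leq C\sigma$ produces a polynomial loss in $\sigma$ that your logarithmic gain cannot absorb.

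The paper's argument sidesteps this entirely by applying Cauchy--Schwarz once, on the full $n$-dimensional integral rather than slice by slice. From the Chebyshev-type bound
\[
\kappa\,\calH^{n-1}\bigl(\calV_{j,\sigma}\setminus\calV_{j,\sigma}(\kappa)\bigr)
\;\leq\; \int_{\reg G_j\cap\{\abs{Y}<1,\,\abs{Z}<\sigma\}}\abs{A_{G_j}}\,\intdiff\calH^n
\]
one estimates the right side by $\calH^n(G_j\cap T_\sigma)^{1/2}\bigl(\int_{T_\sigma}\abs{A_{G_j}}^2\bigr)^{1/2}\leq (B\sigma)^{1/2}\cdot B^{1/2}=B\sigma^{1/2}$. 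The curvature factor comes from stability with a cutoff at \emph{fixed} scale (identically $1$ on $\{\abs{Z}<1/4\}$, vanishing outside $\{\abs{Z}<1/2\}$), which already gives $\int_{T_\sigma}\abs{A_{G_j}}^2\leq B$ uniformly in $\sigma$; all the smallness comes from the tube area. Your diagnosis that a scale-$\sigma$ cutoff is forced and must be upgraded to a logarithmic one is therefore a red herring---indeed, even in your per-slice scheme, granting the length bound, a fixed-scale cutoff would yield an exceptional set of measure $O(\sigma)$, strictly better than your $O\bigl((\log(1/\sigma))^{-2}\bigr)$.

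A minor point: in your co-area step you only need $J_Q$ bounded \emph{above}, which is automatic for the fixed linear projection $Q$ and requires nothing about the tangent planes of $G_j$. The lower Jacobian bound and the appeal to the branched sheeting theorem play no role.
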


\begin{proof}
By definition of $\calV_{j,\sigma}(\kappa)$, we can integrate over points
$Z \in \calV_{j,\sigma} \setminus \calV_{j,\sigma}(\kappa)$ to obtain the inequality
\begin{multline}
	\label{eq_lower_bound_kappa}
	\calH^{n-1}(\calV_{j,\sigma} \setminus \calV_{j,\sigma}(\kappa)) \kappa \\
	\leq \int_{\calV_{j,\sigma} \setminus \calV_{j,\sigma}(\kappa)}
	\Big\{ \int_{\reg G_j \cap \{ Z + Y \mid  \abs{Z} < \sigma \}}
	\abs{A_{G_j}} \intdiff \calH^1 \Big \}  \intdiff \calH^{n-1}(Y).
\end{multline}
On the right-hand side we may increase the domain of integration to now
be over the set
$\{  \abs{Y} < 1, \abs{Z} < \sigma \}$.
By the Cauchy--Schwarz inequality, this larger integral is bounded like
\begin{multline}
	\label{eq_conseq_cauchy_schwarz}
	\int_{\reg G_j \cap \{ \abs{Y}< 1, \abs{Z} < \sigma\}}
	\abs{A_{G_j}} \intdiff \calH^n \\
	\leq \calH^n(\reg G_j \cap \{ \abs{Y} < 1, \abs{Z} < \sigma \})^{1/2}
	\Big(
	\int_{\reg G_j \cap \{ \abs{Y} < 1, \abs{Z} < \sigma \}}
	\abs{A_{G_j}}^2 \intdiff \calH^n
	\Big)^{1/2}
\end{multline}

The integral on the right-hand side can be bounded by the stability inequality.
Let $\phi \in C_c^1(\bR^{n+1})$ be a test function with $0 \leq \phi \leq 1$,
$\phi \equiv 1$ on $\{  \abs{Y} < (\sqrt{7} - 1) / 2, \abs{Z} < 1/4 \}$,
$\phi \equiv 0$ outside $\{  \abs{Y} < \sqrt{7} / 2, \abs{Z} < 1/2 \}$
and $\abs{D \phi} \leq 8$.
Taking $\sigma \in (0,1/4)$ and $j \geq J(\sigma)$ we find that
\begin{align*}
	\int_{\reg G_j \cap \{  \abs{Y} < 1, \abs{Z} < \sigma   \} }
	\abs{A_{G_j}}^2 \intdiff \calH^n 
	& \leq \int_{\reg G_j \cap \{  \abs{Y} < (\sqrt{7} - 1)/2,
	\abs{Z} < 1/4 \}} \abs{A_{G_j}}^2 \phi^2 \intdiff \calH^n \\
	& \leq \int_{\reg G_j \cap \{ \abs{Y} < \sqrt{7} / 2, \abs{Z} < 1/2 \} }
	\abs{\nabla_{G_j} \phi}^2 \intdiff \calH^n \\
	& \leq 64 \calH^n(\reg G_j \cap \{ \abs{Y} < \sqrt{7}/2,
	\abs{Z} < 1/2 \}) \\
	& \leq 128 \norm{\bP}(\{ \abs{Y} < \sqrt{7}/2, \abs{Z} < 1/2 \}),
\end{align*}
the last inequality being guaranteed to hold for large $j$.
From this we retain only that there is a constant $B > 0$ independent of
$\sigma$ so that
\begin{equation}
\int_{\reg G_j \cap \{ \abs{Y} < 1, \abs{Z} < \sigma \})}
\abs{A_{G_j}}^2 \intdiff \calH^n \leq B
\end{equation}
for large enough $j \geq J(\sigma)$.
Increasing the value of this constant if necessary, we additionally find
$\calH^n(\reg G_j \cap \{ \abs{Y} < 1, \abs{Z} < \sigma \}) \leq B \sigma$,
and combining the two with~\eqref{eq_lower_bound_kappa} and%
~\eqref{eq_conseq_cauchy_schwarz} yields
\begin{equation}
	\calH^{n-1}(\calV_{j,\sigma} \setminus \calV_{j,\sigma}(\kappa)) \kappa
	\leq	\int_{\reg G_j \cap \{ \abs{Y} < 1, \abs{Z} < \sigma \}}
	\abs{A_{G_j}} \intdiff \calH^n
	\leq B \sigma^{1/2},
\end{equation}
at least for $j \geq J(\sigma)$ large enough.
To force $\calH^{n-1}(\calV_{j,\sigma} \setminus \calV_{j,\sigma}(\kappa)) \to 0$, it suffices
to let $\sigma \to 0$ and $j \geq J(\sigma) \to \infty$.
\end{proof}

\subsection{Initial reduction}
If at least one of the planes in $\spt \norm{\bP}$ is non-vertical, then
the axis $L$ along which the planes meet cannot be vertical either. 
However, any vertical $\Pi \in \{ \Pi_1,\dots,\Pi_D \}$ must contain $L$,
and thus be of the form $\Pi = \linspan \{ L, e_{n+1} \}$.
This uniquely determines the plane, and thus at most one plane in the support of
$\bP$ is vertical.

\begin{lem}
\label{lem_initial_reduction_planes}
Suppose $G_j \to \sum_i m_i \abs{\Pi_i}$, and that $\Pi_1$ is not vertical. 
\begin{enumerate}[label = (\roman*), font=\upshape]
	\item \label{item_second_plane_vertical_number_planes_support}
		If $\Pi_2$ is vertical then $\bP =  \abs{\Pi_1} + m_2 \abs{\Pi_2}$.
	\item \label{item_second_plane_horizontal_conclusion}
		If $\Pi_2$ is not vertical then $\bP = \abs{\Pi_1} + \abs{\Pi_2}$.
\end{enumerate}
\end{lem}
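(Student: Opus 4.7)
The strategy is a sheet-counting argument. Since $u_j$ is a two-valued function, each graph $G_j$ meets every vertical line above $\bR^n$ in at most two sheets (counted with multiplicity two at branch points), so the mass of $\bP$ carried by planes with bounded projection is controlled by the sheet count. I will establish the inequality $\sum_{\Pi_i \text{ non-vertical}} m_i \leq 2$ and show that equality excludes a vertical plane in $\spt \bP$; the two cases of the lemma then follow directly.

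For the inequality, fix a non-vertical $\Pi_i = \graph p_i$ in $\spt \bP$ and choose $X^0 \in \Pi_i \setminus L$ so that some ball $B_\rho(X^0)$ meets no other plane of $\spt \bP$. Then $\bP \mres B_\rho(X^0) = m_i \abs{\Pi_i \cap B_\rho(X^0)}$, and since $m_i \leq 2$ by Corollary~\ref{cor_conseq_multiplicity_bound}, Wickramasekera's branched sheeting theorem (Theorem~\ref{thm_wic_mult_two_allard}) gives that eventually $G_j \cap B_{\rho/2}(X^0)$ is a $C^{1,\alpha}$ $m_i$-valued graph over $\Pi_i$. Projecting to $y^0 = P_0(X^0)$, this means $m_i$ of the two values of $u_j(y^0)$ converge to $p_i(y^0)$. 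Summing over non-vertical planes at a generic $y^0$ yields $\sum_{\text{non-vert}} m_i \leq 2$.

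Suppose now $\sum_{\text{non-vert}} m_i = 2$, and let $\Pi_v = \Pi_v^0 \times \bR e_{n+1}$ denote the (at most one) vertical plane in $\spt \bP$. For any compact $K \subset \bR^n \setminus \Pi_v^0$, the support $\spt \bP \cap K \times \bR$ lies inside $K \times [-R, R]$ where $R = \sup_{y \in K} \max_i \abs{p_i(y)}$. Combining the monotonicity lower density bound $\Theta(\abs{G_j}, \cdot) \geq 1$ on $\spt \abs{G_j}$ with the vanishing $\abs{G_j}(U) \to 0$ on open sets $U \subset \bR^{n+1} \setminus \spt \bP$ (applied with $U = K \times (\bR \setminus [-R-1, R+1])$) yields $\sup_K \norm{u_j} \leq R + 1$ for large $j$. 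Per-$j$ continuity of $u_j$ into $\calA_2(\bR)$ then propagates this bound across $\Pi_v^0$ onto any compact subset of $\bR^n$. Combining with the interior gradient estimate (Lemma~\ref{lem_interior_gradient_bounds}) and Arzel\`a--Ascoli (Lemma~\ref{lem_arzela_ascoli_two_valued}), a subsequence of $u_j$ converges uniformly on compact subsets of $\bR^n$ to a Lipschitz two-valued function $u^*$; pointwise convergence at generic $y$ identifies $u^* = \{p_{i_1}, p_{i_2}\}$, so $\abs{G_j} \to \sum_{\text{non-vert}} m_i \abs{\Pi_i}$, which rules out any contribution from $\Pi_v$.

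In case (ii), $m_1, m_2 \geq 1$ combined with the inequality force $\sum_{\text{non-vert}} m_i = 2$, hence $m_1 = m_2 = 1$ and no further non-vertical plane; the preceding paragraph then rules out any vertical plane. In case (i), $\Pi_2$ vertical with $m_2 \geq 1$ excludes $\sum_{\text{non-vert}} m_i = 2$, so $\sum_{\text{non-vert}} m_i \leq 1$, forcing $m_1 = 1$ with $\Pi_1$ the only non-vertical plane in $\spt \bP$. The main technical hurdle is the uniform $L^\infty$ bound away from $\Pi_v^0$: continuity of $u_j$ alone is not uniform in $j$, but the lower density on $\spt \abs{G_j}$ combined with mass convergence to $\spt \bP$ supplies one.
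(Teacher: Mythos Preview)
Your sheet-counting inequality $\sum_{\text{non-vert}} m_i \leq 2$ is correct and coincides with the paper's argument for part~\ref{item_second_plane_horizontal_conclusion}. The gap is in the step where equality is supposed to exclude a vertical plane; both cases of the lemma rely on this step in your write-up. You obtain $\sup_K \|u_j\| \leq R+1$ only for $j \geq J(K)$, and as $K$ exhausts $D_1 \setminus \Pi_v^0$ (note $\Pi_v^0 = P_0(L)$, since $L$ is non-vertical and contained in $\Pi_v$) the threshold $J(K)$ may diverge. So for a \emph{fixed} $j$ you do not have the bound on a dense subset, and continuity of $u_j$ cannot carry it across $\Pi_v^0$. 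Your monotonicity-plus-mass argument does not close this either: the set $U = K \times (\bR \setminus [-R-1,R+1])$ is open and unbounded, and weak convergence only gives $\liminf_j \norm{G_j}(U) \geq \norm{\bP}(U) = 0$, which is vacuous. Slicing $U$ at finite heights shows that $u_j$ over $K$ eventually avoids any fixed bounded interval of large heights, but not that it is bounded. Nothing prevents one sheet of $u_j$ from running to $+\infty$ over a neighbourhood of $\Pi_v^0$ that shrinks as $j \to \infty$---which is precisely what a vertical plane in $\spt \norm{\bP}$ produces.

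The paper proceeds differently: rather than first bounding $\|u_j\|$ and then excluding $\Pi_v$, it directly shows $\spt \norm{\bP}$ contains at most two planes via the transverse slicing set up in Section~\ref{sec_non_vertical_cone}. For generic $Y \in L$ with small slice curvature (Lemma~\ref{lem_Uj_kappa_complement_goes_to_zero}), the set $G_j \cap \{Y + Z : Z \in V,\ \abs{Z} < \sigma\}$ is a finite union of embedded Jordan arcs, each confined to a thin tube around a single plane. If there were three planes, the arc near the non-vertical $\Pi_1$ projects under $P_0$ onto the full segment $\{y + tv : \abs{t} < \sigma\}$, while two further arcs near $\Pi_2,\Pi_3$ are forced to intersect near $Y$; above the base point of that intersection one then counts density at least $3$, contradicting two-valuedness. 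The same density count, run with two arcs near $\Pi_1$, yields $m_1 = 1$ in case~\ref{item_second_plane_vertical_number_planes_support}. This in-slice intersection mechanism is exactly what your global $L^\infty$ approach is missing.
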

\begin{proof}
We make a preliminary observation: arguing as in the proof of
Theorem~\ref{thm_classical_cone_support_as_desired} one proves the existence
of a constant $\varphi > 0$ so that for all $0 < \tau < \sigma$ and
$j \geq J(\tau,\sigma)$ the following is true. If $Y \in \calV_{j,\sigma}(\varphi)$
then for every $\Upsilon \in \{ \Upsilon_{j,Y}^1,\dots,\Upsilon_{j,Y}^P \}$ 
there is a plane $\Pi \in \{ \Pi_1,\dots,\Pi_D \}$ so that
$\Upsilon \subset (\Pi)_\tau$.

Using this, we first  show that 
\begin{equation}
	\label{eq_initial_conclusion_cone_sum_of_two_planes}
	\bP = m_1 \abs{\Pi_1} + m_2 \abs{\Pi_2}
\end{equation}
regardless of whether or not $\Pi_2$ is vertical, arguing by contradiction.
Suppose that there are at least three distinct planes $\Pi_1,\Pi_2,\Pi_3
\subset \spt \norm{\bP}$.
Let the constant $\varphi > 0$ be as above, 
let $\tau > 0$ be small and take $j \geq J(\tau,\sigma)$ large enough that
$\calH^{n-1}(\calV_{j,\sigma}(\varphi)) > 0$ as
per Lemma~\ref{lem_Uj_kappa_complement_goes_to_zero}.
We may thus take any point $Y \in \calV_{j,\sigma}(\varphi)$ 
and decompose $G_j \cap \{ Z + Y \mid 
\abs{Z} < \sigma \}$ as above.
Next let 
$\Upsilon_1,\Upsilon_2,\Upsilon_3 \in \{ \Upsilon_{j,Y}^1,\dots,\Upsilon_{j,Y}^P \}$
be three of the curves, lying respectively near $\Pi_1,\Pi_2,\Pi_3$,
in the sense that $\Upsilon_i \subset  (\Pi_i)_\tau$.
Possibly after extending $\Upsilon_1$ slightly beyond its endpoints, 
we have that $\{ t v \mid \abs{t} < \sigma \} \subset P_0(\Upsilon_1)$.
Now if $\tau > 0$ is small enough in terms of $\sigma > 0$ then $\Upsilon_2$
and $\Upsilon_3$ intersect in at least one point, say
$ Y + Z_0 = Y + t_0 v + z_0 e_{n+1}$ with $\abs{Z_0} < \sigma$.
This is absurd because the density of $\norm{G_j}$ at such a point is
at least two, and hence writing $Y = (y,Y^{n+1})$ we get
\begin{multline}
	\sum_{X \in P_0^{-1}(\{ y + t_0 v \})} \Theta(\norm{G_j},X) \\
	\geq \sum_{X \in P_0^{-1}(\{ y +  t_0 v \}) \cap \Upsilon_1 }
	\Theta(\norm{G_j},X)
	+ \Theta(\norm{G_j},Z_0 + Y) \geq 3.
\end{multline}
This is impossible for a two-valued graph, which proves%
~\eqref{eq_initial_conclusion_cone_sum_of_two_planes}.

Next we prove~\ref{item_second_plane_vertical_number_planes_support}.
First pick two arcs
$\Upsilon_1,\Upsilon_2 \in \{ \Upsilon_{j,Y}^1,\dots,\Upsilon_{j,Y}^P \}$
so that $\Upsilon_1,\Upsilon_2 \subset (\Pi_1)_\tau$.
Arguing as above, we may extend the two arcs slightly beyond their respective
endpoints and get that $\{y+  tv \mid \abs{t} < \sigma \} \subset P_0(\Upsilon_1)
\cap P_0(\Upsilon_2)$, where $Y = (y,Y^{n+1})$.
If there were another curve $\Upsilon_3$ say in that slice, then at
any point $Z_0 + Y = tv_0 + z_0 e_{n+1} + Y
\in \Upsilon_3 \setminus (\Upsilon_1 \cup \Upsilon_2)$
we would obtain a contradiction, as
\begin{multline}
	\sum_{X \in P_0^{-1}(\{ y + tv_0 \})} \Theta(\norm{G_j},X)  \\
	\geq \sum_{X \in P_0^{-1}(\{ y + tv_0 \}) \cap (\Upsilon_1 \cup \Upsilon_2)}
	\Theta(\norm{G_j},X)
	+ \Theta(\norm{G_j},Z_0 + Y) \geq 3.
\end{multline}

It remains to prove~\ref{item_second_plane_horizontal_conclusion},
where we do not need the decomposition of $G_j$ in the slice.
Pick any point $x \in D_1 \setminus P_0(L)$. Then there exist two distinct
points $X_1,X_2$ in $\Pi_1,\Pi_2$ respectively so that $P_0(X_1) = x = P_0(X_2)$.
As both planes have multiplicity two at most, we can apply the regularity
theory of Wickramasekera if either of the two planes has multiplicity two,
and Allard regularity otherwise, to guarantee that when $j$ is large enough,
then
\begin{equation}
	\label{eq_lower_bound_sum_multiplicities}
	\sum_{X \in P_0^{-1}(\{ x \})} \Theta(\norm{G_j},X)
	\geq m_1 + m_2 \geq 2
\end{equation}
where $m_1, m_2$ are the respective multiplicities of $\Pi_1,\Pi_2$.
Anything but equality in~\eqref{eq_lower_bound_sum_multiplicities}
would be absurd, whence $m_1 = 1 = m_2$.
\end{proof}

\subsection{Horizontal multiplicity one}
\label{subsec_second_case_non_vertical_limit_cones}

We introduce some useful additional notation. As the arguments only simplify
when $L \subset \bR^n \times \{ 0 \}$, we assume throughout that this is
not the case.
For an arbitrary point $X \in \bR^{n+1}$ we write
$X = Y + sf + z e_{n+1}$ where $Y \in L$ and $s,z \in \bR$.
Sometimes it is also convenient to write $Y = (y,Y^{n+1})$ where $y = P_0(Y)$.

We define open domains $Q,Q_\tau \subset \bR^n \times \{ 0 \}$ by
$Q = \{ y + sf \mid \abs{y} < 1, s^2 < 1 \}$
and $Q_\tau = Q \cap (\Pi_2)_\tau = \{ y + sf \mid \abs{y} < 1, s^2 < \tau^2 \}$.
A point $X = Y + sf + z e_{n+1} \in Q \times \bR$ is said to lie
\emph{north} of $\Pi_1$ if $z > 1$ and \emph{south} of $\Pi_1$ if $z < -1$.
In the same vein we say that a set $E \subset Q \times \bR$ lies \emph{north}
(resp.\ \emph{south}) of $\Pi_1$ if all points in $E$ lie north
(resp. south) of $\Pi_1$.

\begin{lem}
\label{lem_graph_north_south}
Let $\Pi_1,\Pi_2 \in \Gr(n,n+1)$ be so that $\Pi_1$ is not vertical, but $\Pi_2$ is. 
Suppose $\abs{G_j} \to \abs{\Pi_1} + 2 \abs{\Pi_2}$. For all $\tau > 0$ there
is $J(\tau) \in \bN$ so that for all $j \geq J(\tau)$, 
$\sing G_j \cap Q \times \bR \subset Q_\tau \times \bR$
and we can decompose
$G_j \cap (Q \setminus \clos{Q}_\tau) \times \bR
=  \Sigma_{j,-}^1 \cup \Sigma_{j,+}^1 \cup \Sigma_{j,-}^2 \cup \Sigma_{j,+}^2$,
into four embedded connected surfaces with
 \begin{equation}
	 \label{eq_support_four_connected_components}
	 \Sigma_{j,\pm}^1 \subset (\Pi_1)_\tau \text{ and }
	 \Sigma_{j,-}^2 \cup \Sigma_{j,+}^2 \subset \{ z > 1 \} \text{ or } \{ z < - 1\}.
 \end{equation}
 Moreover if $\Sigma_{j,-}^2 \cup \Sigma_{j,+}^2 \subset \{ z > 1 \}$ then
\begin{equation}
	\label{eq_singular_points_close_to_axis}
	\sing G_j \cap (Q \times \bR) \subset \{ s^2 < \tau^2, z \geq -1 \},
\end{equation}
and likewise if $\Sigma_{j,\pm}^2 \subset \{ z < -1 \}$.
\end{lem}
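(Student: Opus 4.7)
The plan is to verify the four assertions of the lemma in order. The key input is the continuity of the two-valued function $u_j$ in the $\calG$-metric at points of the axis $L$, which is used to rule out a ``mixed'' configuration in the third step.

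First I would show that $\sing G_j \cap (Q \setminus \clos{Q}_\tau) \times \bR = \emptyset$ for large $j$. Outside $Q_\tau$ the limit varifold reduces to $\abs{\Pi_1}$, a plane of multiplicity one, since $\Pi_2$ projects onto $\Pi_2^0 \subset Q_\tau$. Allard's regularity theorem then gives a smooth single-sheet graph close to $\Pi_1$ in this region for large $j$, while the remaining ``escaping'' sheet of $G_j$ has $|z|$ diverging and is separated by height from the $\Pi_1$-sheet, precluding any branch or classical singularities. Over each of the two connected components $Q^\pm = \{\pm s > \tau\}$ of $Q \setminus \clos{Q}_\tau$, $u_j$ therefore admits a smooth selection $\{u_{j,1}^\pm, u_{j,2}^\pm\}$ with $u_{j,1}^\pm$ bounded near $u_{\Pi_1}$ and $u_{j,2}^\pm$ diverging. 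Varifold convergence combined with the connectedness of $Q^\pm$ forces $|u_{j,2}^\pm| \to +\infty$ uniformly on compact subsets, so $u_{j,2}^\pm$ has a definite sign on each $Q^\pm$, yielding the four connected components $\Sigma^1_{j,\pm}$ and $\Sigma^2_{j,\pm}$ with the containment \eqref{eq_support_four_connected_components}.

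The crux is the dichotomy that $u_{j,2}^\pm$ must have the same sign on both sides, which I would obtain by contradiction. Suppose without loss of generality that $u_{j,2}^+ \to +\infty$ on $Q^+$ and $u_{j,2}^- \to -\infty$ on $Q^-$. Fix an axis point $Y = (y, Y^{n+1}) \in L$ with $|y| < 1$ and write $u_j(Y) = \{a_j, b_j\} \in \calA_2(\bR)$. By continuity of $u_j$ in $\calG$, for every $\eps > 0$ there is $\rho_j(\eps) > 0$ with $\calG(u_j(x), u_j(Y)) < \eps$ whenever $|x - Y| < \rho_j(\eps)$. Since the ``low'' value of $u_j(x)$ stays close to $u_{\Pi_1}(Y)$, the optimal pairing in $\calG$ matches this value with $a_j$, forcing $|b_j - u_{j,2}^\pm(x)| < \eps$ from each side. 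Taking $x$ close to $Y$ on $Q^+$ yields $b_j > u_{j,2}^+(x) - \eps$, arbitrarily large positive for $j$ large, while taking $x$ on $Q^-$ gives $b_j < u_{j,2}^-(x) + \eps$, arbitrarily large negative. Since $b_j$ is a single finite number, this is absurd.

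Finally, assuming without loss of generality the ``both up'' case, the full singular-set inclusion follows: the continuity argument above shows $b_j(Y) \to +\infty$ for each $Y \in L$, and propagating this through $Q_\tau$ via continuity of $u_j^+ = \max\{u_{j,1}, u_{j,2}\}$ together with a maximum-principle argument for the maximum of two minimal graphs yields $u_j^+ \geq 1$ on $Q$ for large $j$. Consequently $G_j \cap Q \times \{z < -1\}$ consists only of the ``low'' sheet, which is a smooth single-valued minimal graph and supports no singularities, giving $\sing G_j \cap (Q \times \bR) \subset \{s^2 < \tau^2, z \geq -1\}$. The hardest part is the third step: correctly implementing the continuity argument with the pairing structure of the $\calG$-metric, and verifying that the single real ``high'' value $b_j(Y)$ cannot diverge in opposite directions from either side of the axis.
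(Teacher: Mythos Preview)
Your first two steps (singularities confined to $Q_\tau \times \bR$ via Allard, and the four-piece decomposition on $Q \setminus \clos{Q}_\tau$) are fine and match the paper. The third step --- the ``same side'' dichotomy --- has a genuine gap.

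Your continuity argument needs a point $x \in Q^+$ (so that the smooth selection is available and one value of $u_j(x)$ is known to lie near $\Pi_1$) which is \emph{also} within the modulus of continuity $\rho_j(\eps)$ of $u_j$ at the axis point $y$. But $x \in Q^+$ forces $\lvert x - y\rvert \geq \tau$, while $\rho_j(\eps) \to 0$ as $j \to \infty$ precisely because $\lvert Du_j\rvert$ blows up near $\Pi_2^0$ (the graph is converging to a vertical plane there). So the two requirements are incompatible for large $j$. If instead you try to propagate the selection along the line $l_y$ into the strip $\lvert s\rvert < \tau$ via the one-dimensional $C^1$ selection, you lose the statement that ``the low value stays close to $u_{\Pi_1}$'': nothing prevents the two sheets from swapping roles across the axis region (such a swap corresponds to a crossing in $Q_\tau$, which is allowed), so at $s=0$ neither of $a_j,b_j$ need be close to $u_{\Pi_1}(y)$, and your forced pairing in $\calG$ collapses.

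The paper circumvents this by a Schoen--Simon style slicing argument: stability gives a uniform bound on $\int \lvert A_{G_j}\rvert^2$, and combined with the small area in thin neighbourhoods of the axis this produces, for each $j$, a set $\calU_j(\kappa)$ of nearly full $\calH^{n-1}$-measure of ``good'' axis points $Y$ at which the one-dimensional curvature integral over the slice is less than $\kappa$. For $Y$ in this good set, each Jordan arc in the slice has nearly constant unit normal and therefore stays inside the $\tau$-neighbourhood of a \emph{single} plane throughout, including across the axis region. In particular the arc $\Upsilon^1$ near $\Pi_1$ lies entirely in $(\Pi_1)_\tau$, so one can choose the $C^1$ selection on $l_y$ with $\graph u_{j,y}^1 \subset (\Pi_1)_\tau$ on all of $(-1,1)$; then $\graph u_{j,y}^2$ restricted to $l_y(-2\tau,2\tau)$ is a single connected curve whose two endpoints lie in $\Sigma_{j,-}^2$ and $\Sigma_{j,+}^2$, forcing them onto the same side. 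Your fourth step is also unconvincing as written: the maximum of two solutions of the minimal surface equation does not obviously satisfy a useful one-sided inequality, and the paper instead proves~\eqref{eq_singular_points_close_to_axis} by another slice-by-slice argument using the two-valued sheeting theorem and an endpoint count.
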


Before we give a proof of the lemma, we use its conclusions to derive the following
corollary. 
\begin{cor}
Let the two planes $\Pi_1,\Pi_2 \in \Gr(n,n+1)$ be so that $\Pi_1$ is not vertical,
but $\Pi_2$ is. Then $\abs{G_j} \to \abs{\Pi_1} + 2 \abs{\Pi_2}$ is impossible.
\end{cor}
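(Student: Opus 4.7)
We plan to derive a contradiction by combining the structural description from Lemma~\ref{lem_graph_north_south} with the Jenkins--Serrin-style non-existence Lemma~\ref{lem_no_folding_two_half_planes}, after a normalisation and a rescaling.

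After a vertical reflection $(x,z)\mapsto(x,-z)$ if needed we may assume $\Sigma^2_{j,\pm}\subset\{z>1\}$, and after subtracting from $u_j$ the affine function whose graph is $\Pi_1$ (an operation preserving the class of two-valued minimal graphs) we may also assume $\Pi_1=\bR^n\times\{0\}$ is horizontal, so that $\Sigma^1_{j,\pm}\subset\{|z|<\tau\}$. Fix $\tau<1/2$; Lemma~\ref{lem_graph_north_south} then ensures that the slab $\{|s|<\tau,z<-1\}\cap Q\times\bR$ contains no singular points of $G_j$ and no part of $\Sigma^{1,2}_{j,\pm}$. Above each point of $Q\setminus\overline{Q}_\tau$ the two values of $u_j$ split canonically into a low value $u_j^-$ (attached to $\Sigma^1$) in $(\Pi_1)_\tau$ and a high value $u_j^+$ (attached to $\Sigma^2$) in $\{z>1\}$; continuity propagates this labelling across $\partial Q_\tau$ away from the singular set, identifying $G_j\cap\{|s|<\tau,z<-1\}$ as the graph of a single-valued smooth minimal function $u_j^-$ over the subdomain $\Omega_j=\{u_j^-<-1\}\subset Q_\tau$.

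The varifold convergence $\abs{G_j}\to\abs{\Pi_1}+2\abs{\Pi_2}$ together with $\Pi_1\cap\{z<-1\}=\emptyset$ forces
\begin{equation*}
	\calH^n(G_j\cap\{-A<z<-1\}\cap Q\times\bR)
	\to 2\calH^n(\Pi_2\cap\{-A<z<-1\}\cap Q\times\bR)
\end{equation*}
for each $A>1$. Hence $u_j^-$ must develop a Jenkins--Serrin-type divergence to $-\infty$ along $\Pi_2^0\cap Q$, sweeping a ``V-shape'' profile whose two sides together produce varifold multiplicity two on $\Pi_2\cap\{z<-1\}$. A direct computation of the upward unit normal on either side of $\Pi_2^0$ shows they have opposite orientations (both pointing inward towards $\Pi_2^0$), so $\cur{G_j}\mres\{z<-1\}\to 0$ weakly as currents.

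The contradiction is extracted by rescaling and applying Lemma~\ref{lem_no_folding_two_half_planes}. Choose a sequence of centres $X_j=(y_0,0,z_j)\in\Pi_2$ with $z_j\to-\infty$ and rescaling factors $\lambda_j\to\infty$ matching the width of $\Omega_j$ at depth $z_j$, so that the rescaled domains $\lambda_j(\Omega_j-P_0(X_j))$ converge in Hausdorff distance to the interior of a wedge $V$ of positive angle bounded by two vertical half-planes $\pi,\pi'$ meeting along $L_0\times\bR e_{n+1}$; these half-planes encode the two asymptotic tangent directions of the V-shape profile as $z\to-\infty$. The corresponding rescaled single-valued minimal graphs then satisfy the hypotheses of Lemma~\ref{lem_no_folding_two_half_planes}: the domains converge to $V$, the currents converge to the zero current (by the cancellation established above), and the boundary traces concentrate on $(\pi\cup\pi')\cap\partial V$. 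Lemma~\ref{lem_no_folding_two_half_planes} then rules out such a sequence, giving the desired contradiction. The main obstacle is the rescaling step: one has to choose $(\lambda_j,X_j)$ so that the rescaled V-shape profiles converge to two distinct half-planes forming a wedge of positive angle, rather than collapsing onto a single plane (which would trivialise the conclusion).
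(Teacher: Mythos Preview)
Your setup through the current-cancellation observation is correct and parallels the paper. One minor slip: the normalisation ``subtract from $u_j$ the affine function whose graph is $\Pi_1$'' is not valid as stated --- this shear of $\bR^{n+1}$ is not an isometry and does not preserve the minimal surface equation. This is inessential (the paper's coordinate $z$, measuring height relative to the axis $L$, plays the same role without any normalisation), but you should not claim it preserves the class of two-valued minimal graphs.

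The substantive divergence from the paper --- and the genuine gap --- is in how the contradiction is reached. The paper does \emph{not} invoke Lemma~\ref{lem_no_folding_two_half_planes} here at all. It simply observes that $G_j \cap (Q \times \bR) \cap \{z < -1\}$, being a \emph{single-valued} minimal graph over $\Omega_{j,S}$, defines an area-minimising current (via the standard calibration by the vertically extended unit normal). The well-known compactness for area-minimising currents then forbids mass cancellation: one cannot have $\cur{G_j} \mres \{z < -1\} \to 0$ while $\abs{G_j} \mres \{z < -1\} \to 2\abs{\Pi_2} \neq 0$. That is the whole argument.

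Your rescaling route cannot be completed, and the obstacle you flag is fatal rather than merely technical. You need the rescaled domains $\lambda_j(\Omega_{j,S} - P_0(X_j))$ to converge to a wedge of positive opening angle. But $\partial \Omega_{j,S}$ consists of two smooth hypersurfaces, one on each side of $\Pi_2^0$, each $C^1$-close to $\Pi_2^0$ for large $j$. Homotheties preserve angles, so any blow-up of $\Omega_{j,S}$ is bounded by hypersurfaces whose tangents are parallel to $\Pi_2^0$: the limit is a slab, a half-space, or all of $\bR^n$ --- never a wedge with $0 < \theta < \pi$. Equivalently on the graph side: since $\abs{G_j} \mres \{z < -1\} \to 2\abs{\Pi_2}$ with $\Pi_2$ a single plane, and translations along $\Pi_2$ together with dilations fix $\Pi_2$, every rescaled limit is still supported in $\Pi_2$ (or in two planes parallel to it), not in two half-planes meeting at a positive angle. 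Lemma~\ref{lem_no_folding_two_half_planes} requires $0 < \theta < \pi$ and is simply inapplicable to this configuration.
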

\begin{proof}
Fix a small value for $\tau > 0$, depending only on $\Pi_1,\Pi_2$ and
a corresponding $J(\tau) \in \bN$ so that without loss of generality,
$G_j \cap Q \times \bR \cap \{ z < - 1 \}$ is a non-empty subset of
$Q_\tau \times \bR \cap \{ z < - 1\}$ and 
$\sing G_j \cap Q_\tau \times \bR \cap \{ z < - 1 \} = \emptyset$.
Then $G_j \mres Q \times \bR \cap \{ z < -1 \}$ is equal to the graph of a 
single-valued, smooth function $u_{j,S}$ defined on some subset $\Omega_{j,S}
\subset Q_\tau$.
From this we only retain that the current $\cur{G_j} \mres Q \times \bR 
\cap \{ z < - 1 \}$ is area-minimising. 
As $j \to \infty$ we get that
$\abs{G_j} \mres  Q \times \bR \cap \{ z < - 1\}
\to 2 \abs{\Pi_2} \mres Q \times \bR \cap \{ z < - 1 \}$ in the
varifold topology.
At the same time, by inspection one finds that in the current topology
$\cur{G_j} \mres Q \times \bR \cap \{ z < - 1 \} \to 0$ as $j \to \infty$.
This mass cancellation is absurd in light of the well-known compactness for
area-minimising currents.
\end{proof}

Now for the proof of Lemma~\ref{lem_graph_north_south}.

\begin{proof}
Note first that every singular point $X = Y + sf + z e_{n+1}$
in $\sing G_j \cap Q \times \bR$ automatically belongs to $Q_\tau \times \bR$,
that is has $s^2 < \tau^2$.
Indeed the Allard regularity theorem can be applied near $\Pi_1$ because
it has multiplicity one in the limit, which guarantees that away from $L$
the $G_j$ converge to the plane like smooth single-valued graphs.
Counting the pre-images of points $x \in Q \setminus \clos{Q}_\tau$ we find that
$\sing u_j \cap Q \setminus \clos{Q}_\tau = \emptyset$,
or equivalently $\sing G_j \cap Q \times \bR \subset \clos{Q}_\tau \times \bR$.

Although $Q \setminus \clos{Q}_\tau$ is not simply connected, its two connected components,
which lie on either side of $\Pi_2$, both are.
We may thus make a smooth selection $u_{j,\tau}^1,u_{j,\tau}^2 \in
C^\infty(Q \setminus \clos{Q}_\tau)$ for $u_j$, arranging for the graph of
$u_{j,\tau}^1$ to lie near $\Pi_1$.
Both graphs are disconnected, and we write $\graph u_{j,\tau}^1 = \Sigma_{j,-}^1
\cup \Sigma_{j,+}^1$ and $\graph u_{j,\tau}^2 = \Sigma_{j,-}^2 \cup \Sigma_{j,+}^2$.
As the graphs $G_j$ locally converge to $\Pi_1 \cup \Pi_2$ in the Hausdorff distance,
we may take an even larger $j \geq J(\tau)$ to get
\begin{equation}
G_j \cap \clos{Q} \times \bR \cap \{ z^2 \leq 1 \}
\subset (\Pi_1 \cup \Pi_2)_{\tau}.
\end{equation}
Thus $\Sigma_{j,\pm}^1 \subset \{ z^2 < ( 1 - \langle e , e_{n+1} \rangle^2)^{-1} \tau^2 \}$
and 
\begin{equation}
	\Sigma_{j,\pm}^2 \subset \{ z^2 > 1 \} = \{ z > 1 \} \cup \{ z < -1 \}.
\end{equation}

We show that in fact either $\Sigma_{j,\pm}^2 \subset \{ z > 1 \}$ or 
$\Sigma_{j,\pm}^2 \subset \{ z < - 1 \}$.
Recall from our initial analysis that using Sard's theorem 
one finds an open subset of `unbranched' points $\calU_j \subset L \cap \{ \abs{y }< 1 \}$
with $\calH^{n-1}(L \cap \{ \abs{y} < 1 \} \setminus \calU_j) = 0$ so that for all
$Y  \in \calU_j$,
\begin{equation}
	G_j \cap \{ Y + sf + z e_{n+1} \mid s^2 < 1, z^2 < 1 \}
	\cap \calB_{G_j}= \emptyset,
\end{equation}
and in fact can be decomposed into a union of three, smooth properly embedded
Jordan arcs $\Upsilon_{j,Y}^1,\Upsilon_{j,Y}^2,\Upsilon_{j,Y}^3$
with endpoints in $\{ Y + sf + z e_{n+1} \mid s^2 = 1\text{ or } z^2 = 1 \}$.

Given $\kappa > 0$ we define the subset $\calU_j(\kappa) \subset \calU_j$ by 
\begin{equation}
	\calU_j(\kappa) = \Big\{ Y \in \calU_j \Big\vert
		\int_{\reg G_j \cap \{ Y + sf + ze_{n+1} \mid s^2,z^2 < \tau^2 \}}
		\abs{A_{G_j}} \intdiff \calH^n < \kappa \Big \}.
\end{equation}
Arguing as in Lemma~\ref{lem_Uj_kappa_complement_goes_to_zero} we can
show that here we can take $\tau > 0$ small enough
and $j \geq J(\tau)$ to make $\calH^{n-1}(\calU_j \setminus
\calU_j(\kappa))$ as small as we like.
For our purposes we may take for example $\kappa = 1/2$, $\tau > 0$ small
and $j \geq J(\tau)$ large enough that $\calH^{n-1}(\calU_j \setminus
\calU_j(\kappa)) < \omega_{n-1}$, as then automatically $\calU_j(\kappa) \neq \emptyset$.
If we then take a point $Y \in \calU_j(\kappa)$, then we may relabel the curves
so that $\Upsilon_{j,Y}^1 \subset (\Pi_1)_\tau$, and $\Upsilon_{j,Y}^2 \cup \Upsilon_{j,Y}^3
\subset (\Pi_2)_\tau$.

Consider $Y = (y,Y^{n+1}) \in \calU_j$ and let $l_y = \{ y + sf \mid s^2 < 1 \}
\subset \bR^n \times \{ 0 \}$.
On this line segment we can make a smooth selection $u_{j,y}^1,u_{j,y}^2
\in C^\infty(-1,1)$, where we identify $l_y$ with $(-1,1) \subset \bR$.
Then $G_j \cap \{ Y + sf + z e_{n+1} \mid s^2 < 1, z^2 < 1 \}
\subset \graph u_{j,y}^1 \cup \graph u_{j,y}^2$.
We may furthermore make our selection in such a way that
$\Upsilon_{j,Y}^1 \subset \graph u_{j,y}^1 \subset (\Pi_1)_{\tau}$ and 
$\Upsilon_{j,Y}^2 \cup \Upsilon_{j,Y}^3 \subset \graph u_{j,y}^2$.
In what follows we also write $l_y(a,b) = \{ y + sf \mid a < s < b \}$, where
$-1 < a < b < 1$.
The graph of $u_{j,y}^2$ restricted to the short segment $l_y(-2\tau,2\tau)$
is a single smooth curve, which by inspection has both its endpoints lying
on the same side of $\Pi_1$, that is either both lie north or both lie south.
But $\graph u_{j,y}^2 \cap l_y(-2\tau,-\tau) \times \bR \subset \Sigma_{j,-}^2$
and likewise $\graph u_{j,y}^2 \cap l_y(\tau,2\tau) \times \bR \subset \Sigma_{j,+}^2$
whence we find that $\Sigma_{j,-}^2$ and $\Sigma_{j,+}^2$ too must lie on
the same side of $\Pi_1$---that is either both lie north or both lie south of $\Pi_1$.
This concludes the proof of~\eqref{eq_support_four_connected_components}.

To  prove~\eqref{eq_singular_points_close_to_axis} we start by making
a few general observations. First,
by~\eqref{eq_support_four_connected_components} we may assume without loss
of generality that $\Sigma_{j,\pm} \subset \{ z > 1 \}$.
Let $Y = (y,Y^{n+1}) \in L$ be an arbitrary point with
$\abs{y} < 1$, not necessarily in $\calU_j$.
By the two-valued sheeting theorem of~\cite{Wic_MultTwoAllard} applied
in the region $Q \times \bR \cap \{ \tau^2 < s^2 \vee z^2 < 1 \}$,
we find that in this slice the graph can be decomposed into six differentiable curves,
\begin{equation}
	\label{eq_decomp_two_valued_graph}
	G_j \cap \{ Y + sf + z e_{n+1} \mid \tau^2 < s^2 \vee z^2 < 1 \}
	= \cup_{k = 1}^2 \gamma_{j,Y,k}^1
	\cup \cup_{l=1}^4 \gamma_{j,Y,l}^2,
\end{equation}
where $\gamma_{j,Y,k}^1 \subset (\Pi_1)_\tau$ and $\gamma_{j,Y,l}^2 \subset (\Pi_2)_\tau$.
(Were the slice $\{ Y + sf + z e_{n+1} \mid \tau^2 < s^2 \vee z^2 < 1 \}$ 
to contain a branch point of $\calB(G_j)$, then some arbitrary choices
would have to be made in this decomposition, with no impact on the argument.)
These curves taken together have two endpoints $\{ z = -1, s^2 < \tau^2\}$,
counted with multiplicity. Hence
$\# G_j \cap \{ Y + sf - e_{n+1} \mid s^2 < \tau^2 \} \leq 2$ and
\begin{equation}
	\label{eq_multiplicities_on_short_line_segment}
	\sum_{ s^2 < \tau^2  } \Theta(\norm{G_j},Y + sf - e_{n+1}) = 2.
\end{equation}

There exist two functions $u_{j,y}^1,u_{j,y}^2 \in C^1(-1,1)$ so that 
\begin{equation}
	G_j \cap \{ Y + sf + z e_{n+1}\mid
	s^2 < 1,  z \in \bR \} = \graph u_{j,y}^1 \cup \graph u_{j,y}^2.
\end{equation}
Moreover their graphs $\graph u_{j,y}^i$ are two differentiable
curves which do not meet the region south of $\Pi_1$, except
in the thin strip near $\Pi_2$ where $s^2 < \tau^2$.
In other words,
$( \graph u_{j,y}^1 \cup \graph u_{j,y}^2)  \cap \{ s^2 \geq \tau^2, z < -1 \} 
= \emptyset$.

Notice that $\sing G_j \cap \{ Y + sf + z e_{n+1} \mid s^2 < 1, z\in \bR \}
= \graph u_{j,y}^1 \cap \graph u_{j,y}^2$. Hence, if
$\{ Y + sf + z e_{n+1} \mid s^2 < 1, z < -1 \}$ contained a singular point
of $G_j$, then $\graph u_{j,y}^1,\graph u_{j,y}^2$ would both
contain portions lying in that region.
This is impossible, because if both curves intersected that region they
would both need to pass through the set $\{ Y + s^2 - e_{n+1} \mid s^2 < \tau^2 \}$
twice each, because neither meets the set $\{ s^2 \geq \tau^2 , z < -1 \}$.
This in turn would be a contradiction to the decomposition of the graph
as we obtained in~\eqref{eq_decomp_two_valued_graph} above, and%
~\eqref{eq_multiplicities_on_short_line_segment} in particular.
As the point $Y$ was chosen arbitrarily, we have 
$\sing G_j \cap \{ s^2 < 1, z < -1 \} = \emptyset$, which concludes the proof.
\end{proof}

\section{Classical limit cones: vertical cones}
\label{sec_classical_cones_vertical}

Let $\alpha \in (0,1)$ and $(u_j \mid j \in \bN)$
be a sequence of two-valued minimal graphs $u_j \in C^{1,\alpha}(D_2;\calA_2)$.
Let $D \in \bZ_{>0}$ and for $i = 1,\dots,D$, let
$\Pi_i = \Pi_i^0 \times \bR e_{n+1} \in \Gr(n,n+1)$ be vertical planes which meet along
a common $(n-1)$-dimensional axis $L = L_0 \times \bR e_{n+1}$,
and suppose that as $j \to \infty$,
$\abs{G_j} \to \sum_i m_i \abs{\Pi_i}$ and $\cur{G_j} \to \sum_i l_i \cur{\Pi_i}$.
Here $0 \leq l_i \leq m_i \leq 2$ are integers and
we pick arbitrary orientations for those planes with $l_i = 0$.
When this notation is convenient we write $\bP = \sum_i m_i \abs{\Pi_i}$ 
and $T = \sum_i l_i \cur{\Pi_i}$.
To make statements less awkward, we allow the possibility that $\bP$ and $T$ 
are supported in a single plane, although technically these would not be
called classical cones.

Label the planes $\Pi_1,\dots,\Pi_D$ so that they lie in counterclockwise
order around $L$. From now on we consider their indices modulo $D$, and
write $\Pi_i = \pi_i \cup \pi_{i+D}$, where $\pi_i,\pi_{i+D}$ are two 
half-planes which meet along $L$. The indices of the half-planes are considered
modulo $2D$.
For every $\pi_i$ let $N_i$ be its unit normal pointing in the counterclockwise
direction; note that $N_i = - N_{i+D}$. Write $n_i$ for the unit normal induced
on $\Pi_i$ as limits of the $\cur{G_j}$, and let $s_i = \langle n_i , N_i \rangle$,
equal to $\pm 1$ depending on whether or not $n_i$ agrees with the counterclockwise
orientation.
With this notation, $s_i = - s_{i+D}$.
We say that two half-planes $\pi_i,\pi_j$ are oriented in the
same direction if they are both oriented in the clockwise or counterclockwise
direction, or equivalently if $s_i = s_j$.

Let $Q = \{ x \in \bR^n \mid \dist(x,L) < 1, \dist(x,L^\perp) < 1 \}$.
Extend the functions $F^j$ from~\eqref{eq_defn_number_of_sheets_function}
to $Q$ using the same formula,
counting the number of points in $G_j$ lying below $x$ with multiplicity.
These functions are eventually constant away from $\cup_i \Pi_i$,
that is given $\tau > 0$ we can take $j \geq J(\tau)$ large enough that the
$F^j$ are constant on every connected component of $Q \setminus (\cup_i \Pi_i)_\tau$.
Write $Q \setminus \cup_i \Pi_i$ as a disjoint union of
wedge-shaped connected components $V_1,\dots,V_{2D}$. Each
$V_i$ lies between $\pi_i,\pi_{i+1}$,
\begin{equation}
V_i = \{ x \in Q \mid \langle x, N_i \rangle > 0,
\langle x , N_{i+1} \rangle < 0 \}.
\end{equation}
By the above there is $F_i = F(V_i) \in \{ 0 ,1,2\}$  so that
$F^j(x) = F_i$ at all $x \in V_i \setminus (\pi_i \cup \pi_{i+1})_\tau$
provided $j \geq J(\tau)$.
Although the notation is slightly ambiguous, no confusion should arise
between the value $F_i = F(V_i)$ and the functions $F^j$.

\subsection{Results in arbitrary dimensions}

Applying Lemma~\ref{lem_description_near_planes} in the present context,
we can relate consecutive values of $F_i$.
\begin{lem}
\label{lem_consec_number_beans}
For all $i$, $F_i - F_{i-1} = s_i l_i$.
\end{lem}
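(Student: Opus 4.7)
The plan is to apply Lemma~\ref{lem_description_near_planes} locally, after rescaling around an interior point of the half-plane $\pi_i$, and to track the orientation carefully to account for the sign $s_i$.

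First I would fix an interior point $p \in \pi_i^0 \setminus L_0$ with $p \notin \Pi_k^0$ for $k \neq i$, and choose $\rho > 0$ small enough that $D_\rho(p)$ is disjoint from $L_0 \cup \bigcup_{k \neq i} \Pi_k^0$ and that the rescaled function $\tilde u_j(y) = \rho^{-1} u_j(p + \rho y)$ is defined on $D_2$. This is a two-valued minimal graph, and because only $\Pi_i$ meets $D_\rho(p) \times \bR$, one has $\abs{\tilde G_j} \to m_i \abs{\tilde \Pi_i}$ and $\cur{\tilde G_j} \to l_i \cur{\tilde \Pi_i}$, where $\tilde \Pi_i = (\Pi_i^0 - p) \times \bR e_{n+1}$ is a vertical plane through the origin with the same (rescaled) normal $N_i$ and current orientation $n_i = s_i N_i$.

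Next I would invoke Lemma~\ref{lem_description_near_planes} on $\tilde u_j$, obtaining constants $\tilde F^\pm$ such that $\tilde F^j \equiv \tilde F^\pm$ on $\tilde D_1^\pm = \{y \in D_1 \mid \pm \langle y, n_i \rangle > 0\}$ for $j$ large, with $\tilde F^+ - \tilde F^- = l_i$. To pass back to the global $F^j$, fix $y$ in the interior of one of $\tilde D_1^\pm$, away from $\tilde \Pi_i^0$. In original coordinates, $\tilde F^j(y)$ counts the points of $G_j$ above $p + \rho y$ at height $< -\rho$, while $F^j(p + \rho y)$ counts them at height $< -1$. The vertical line $\{p + \rho y\} \times \bR$ is disjoint from $\spt \norm{\bP}$, so by varifold convergence and upper semicontinuity of support, each value $u_j^k(p + \rho y)$ must escape to $\pm \infty$ as $j \to \infty$; in particular, for $j$ large no sheet lies in the interval $[-1, -\rho]$, and hence $\tilde F^j(y) = F^j(p + \rho y) \in \{F_{i-1}, F_i\}$.

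Finally I would separate the two sign cases. If $s_i = +1$ then $n_i = N_i$, so $\tilde D_1^+$ sits on the $V_i$-side of $\pi_i$ and $\tilde D_1^-$ on the $V_{i-1}$-side; hence $\tilde F^+ = F_i$ and $\tilde F^- = F_{i-1}$, and the lemma gives $F_i - F_{i-1} = l_i = s_i l_i$. If $s_i = -1$ then $n_i = -N_i$, the identification reverses, and $F_{i-1} - F_i = l_i$, which rearranges to $F_i - F_{i-1} = -l_i = s_i l_i$. The main obstacle will be the identification $\tilde F^j(y) = F^j(p + \rho y)$: the threshold $-1$ appears in both frames but corresponds to different absolute heights under the rescaling, and matching them requires the escape of sheets of $u_j$ above wedge points to $\pm \infty$, a consequence of the absence of limiting mass along those vertical lines.
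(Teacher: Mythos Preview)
Your proposal is correct and follows the paper's own approach: the paper simply states that the lemma is obtained by ``applying Lemma~\ref{lem_description_near_planes} in the present context'', and you have carried this out carefully, including the localisation near an interior point of $\pi_i^0$, the sign analysis for $s_i = \pm 1$, and the threshold-matching for $F^j$ versus $\tilde F^j$. The only minor remark is that you do not need the sheets to escape all the way to $\pm\infty$; it suffices that for large $j$ the graph over points of $V_m \setminus (\cup_k \Pi_k^0)_\tau$ avoids the slab $\{\abs{X^{n+1}} \leq 1\}$, which already follows from local Hausdorff convergence of supports.
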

Similarly using Lemma~\ref{lem_no_folding_two_half_planes},
we obtain the following result.
\begin{lem}
\label{lem_single_sheet_lemma}
If $F_i = 1$ and $\pi_i \neq \pi_{i+1}$ then both half-planes have multiplicity
one and are oriented in the same direction.
\end{lem}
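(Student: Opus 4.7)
The strategy is to classify every multiplicity-orientation pattern $(m_i, s_i, m_{i+1}, s_{i+1})$ compatible with $F_i = 1$ and to eliminate every case but the claimed one, combining Lemmas \ref{lem_consec_number_beans}, \ref{lem_no_branch_points_if_cancellation} and \ref{lem_no_folding_two_half_planes} with the improved area estimate of Corollary \ref{cor_qualitative_estimates}.

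First I would use the constancy $F_j \equiv 1$ on $V_i \setminus (\cup_k \Pi_k)_\tau$ together with the verticality of $\bP$ to split the two-valued graph on this region into a single-valued down-sheet $u_{j,-}$ and up-sheet $u_{j,+}$, with $u_{j,-} \to -\infty$ and $u_{j,+} \to +\infty$ locally uniformly as $j \to \infty$. The relation $F_i - F_{i-1} = s_i l_i$ of Lemma \ref{lem_consec_number_beans} together with $|F_i - F_{i\pm 1}| \leq 1$ restricts $l_i, l_{i+1} \in \{0,1\}$. Because a plane of varifold multiplicity two must have $l \in \{0,2\}$, the only possibility for $m = 2$ at one of the bounding planes is with $l = 0$.

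To exclude $m_i = 2$ (and symmetrically $m_{i+1} = 2$) I would invoke Lemma \ref{lem_no_branch_points_if_cancellation}: in the case $m_i = 2, l_i = 0$, no branch points accumulate near $\pi_i$ and $G_j$ decomposes near $\pi_i$ into two smooth sheets crossing transversally, which together account for the entire two-valued graph on the $V_i$ side. Tracking these sheets toward $\pi_{i+1}$, either one of them fails to extend and terminates at a classical singularity, which would locally force $F_j \in \{0,2\}$ and hence lie in $(\Pi_{i+1})_\tau$, or both extend transversally across $\pi_{i+1}$, which by the same analysis forces $m_{i+1} = 2$ and $l_{i+1} = 0$. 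In that last configuration the cone contains two multiplicity-two planes, so $\sum_k m_k \geq 4$, and for $n+1 \leq 7$ this contradicts Corollary \ref{cor_qualitative_estimates} once the positive mass of the remaining planes is taken into account; in the borderline dimensions one supplements this with a direct application of Lemma \ref{lem_no_folding_two_half_planes} inside a sub-wedge where the forced two-sheeted behaviour produces an impermissible ``folded'' structure.

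Once $m_i = m_{i+1} = 1$ is established, the single Allard-regular sheet at each of $\pi_i, \pi_{i+1}$ must coincide with either $u_{j,-}$ or $u_{j,+}$ on the $V_i$ side. A direct computation of the limit upward unit normal along the corresponding near-vertical graph (modelled on the preamble of Section \ref{sec_classical_cones_vertical}) yields the dictionary: at $\pi_i$, $s_i = +1$ matches the down-sheet and $s_i = -1$ the up-sheet; at $\pi_{i+1}$ the convention flips because $N_{i+1}$ points out of $V_i$, so $s_{i+1} = +1$ matches the up-sheet and $s_{i+1} = -1$ the down-sheet. Hence $s_i = s_{i+1}$ is precisely the condition that one boundary sheet is an up-sheet and the other a down-sheet, exactly as required by $F_i = 1$. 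The opposite case $s_i \neq s_{i+1}$ would force both boundary sheets to be of the same type, leaving the other interior sheet of $V_i$ unmatched on $\bdary V_i$; viewing this leftover sheet as a single-valued minimal graph over a wedge-shaped subdomain $\Delta_a \subset V_i$ constructed as in the proof of Lemma \ref{lem_no_folding_two_half_planes}, with $\cur{G_j} \to 0$ and boundary concentrating on $\pi_i \cup \pi_{i+1}$, then contradicts that lemma.

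The main obstacle is the exclusion of $m_i = 2$: the combinatorial tracking of two transverse sheets across the wedge must be combined carefully with both the improved area bound and the no-folding argument to rule out the residual $m_i = m_{i+1} = 2$ configuration across all admissible dimensions.
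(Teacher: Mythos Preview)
Your treatment of the case $m_i = m_{i+1} = 1$ is essentially the paper's argument: isolate the single-valued down-sheet over the wedge $V_i$, and when $s_i \neq s_{i+1}$ this sheet limits onto both $\pi_i$ and $\pi_{i+1}$ with the inward orientation, contradicting Lemma~\ref{lem_no_folding_two_half_planes}. The ``dictionary'' you describe is a restatement of this.

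The genuine gap is in your elimination of the cases with $m_i = 2$ or $m_{i+1} = 2$. You invoke Corollary~\ref{cor_qualitative_estimates} to rule out $m_i = m_{i+1} = 2$, but this fails on two counts. First, the lemma is stated and used in arbitrary dimension (it feeds into Corollary~\ref{cor_dimless_configurations} and the paragraph preceding Lemma~\ref{lem_sum_consec} before any dimension restriction is imposed), so an area-estimate argument is out of place here. Second, even for $4 \leq n \leq 6$ the bound $\sum_k m_k \leq 5$ does not exclude two multiplicity-two planes. Your ``sheet tracking'' dichotomy for the mixed cases $(m_i,m_{i+1}) = (2,1)$ or $(1,2)$ is likewise not established: nothing you wrote prevents the two transverse sheets near $\pi_i$ from interacting in complicated ways inside $V_i$ before reaching $\pi_{i+1}$, so the claimed alternative (terminate at a classical singularity versus extend transversally) is unjustified.

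The paper treats all four cases $(m_i,l_i,m_{i+1},l_{i+1}) \in \{(1,1,1,1),(2,0,1,1),(1,1,2,0),(2,0,2,0)\}$ uniformly via Lemma~\ref{lem_no_folding_two_half_planes}, with no area estimate. The point you are missing is that when $m_i = 2$ and $l_i = 0$, Lemma~\ref{lem_no_branch_points_if_cancellation} lets you make a smooth two-sheet selection on a simply connected region that \emph{crosses} $\pi_i$ into the adjacent wedge $V_{i-1}$; one of these two sheets is the ``south'' sheet $u_{j,i,S}$ (going to $-\infty$ on the interior of $V_i$), and along $\pi_i$ its normal satisfies $\langle \nu_j, N_i \rangle > 1 - \delta$. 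This south sheet is then a single-valued minimal graph over a domain $U_{j,i}$ converging to $V_i$, whose current limit is the inward-oriented $\cur{\pi_i} + \cur{\pi_{i+1}}$, and Lemma~\ref{lem_no_folding_two_half_planes} applies directly. The same construction handles $m_{i+1} = 2$ and the doubly-two case.
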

\begin{proof}
Start with the observation that
$1 = F_i = F_{i+1} - s_{i+1} l_{i+1}
= F_{i-1} + s_i l_i$, so $l_i,l_{i+1} \in \{ 0,1 \}$.
The possibilities are as follows:
\begin{enumerate}
	\item $m_i = 1 = m_{i+1}$ and $l_i = 1 = l_{i+1}$,
	\item $m_i = 2, m_{i+1} = 1$ and $l_i = 0$, $l_{i+1} = 1$,
	\item $m_i = 1, m_{i+1} = 2$ and $l_i = 1$, $l_{i+1} = 0$,
	\item $m_i = 2 = m_{i+1}$ and $l_i = 0 = l_{i+1}$.
\end{enumerate}
The proof is similar in all four cases. Every case is argued by contradiction,
eventually reaching a conclusion that
Lemma~\ref{lem_no_folding_two_half_planes} forbids.
We give a detailed proof for $m_i = 1 = m_{i+1}$ and $l_i = 1 = l_{i+1}$ and
explain the necessary modifications for the remaining cases.

Assume without loss of generality that
$\pi_i$ and $\pi_{i+1}$ both point into $V_i$, that is $n_i = N_i$ and
$n_{i+1} = -N_{i+1}$.
Let two small $0 < \tau < \sigma < 1$ and a large $A > 1$ be given, and
define the open subset $U_{j,i} \subset Q \setminus [L]_\sigma$ by
\begin{align}
	\label{eq_defn_region_U_ij}
	U_{j,i}
	&= U_{j,i}(\tau,\sigma,A) \\
	&= (V_i)_\tau \setminus [L]_\sigma  \cap \{ u_j^- < 2A \}  \\
	& = \{ x\in Q \setminus [L]_\sigma \mid 
	\langle N_i, x \rangle > -\tau, \langle N_{i+1} , x \rangle < \tau, 
	u_j^-(x) < 2A \},
\end{align}
where recall $u_j^- = u_j^1 \wedge u_j^2$.
As $F_i = 1$ we know that for large enough $j \geq J(\tau,\sigma,A)$,
\begin{equation}
	V_i \setminus ( [\pi_i]_\tau \cup [\pi_{i+1}]_\tau \cup [L]_\sigma )
	\subset U_{j,i}
\end{equation}
and $\sing G_j \cap U_{j,i} \cap (-\infty,-2A) = \emptyset$.
Hence any singular points of $u_j$ would have to lie in
$U_{j,i} \cap [\pi_i \cup \pi_{i+1}]_\tau$.
As $m_i = 1 = m_{i+1}$ we may use Allard regularity inside 
$Q \times (-9/4 A, 9/4 A) \cap (\pi_i \cup \pi_{i+1})_{2\tau}$
and find that in fact
\begin{equation}
	\sing u_{j} \cap U_{j,i} = \emptyset,
\end{equation}
at least provided $j \geq J(\tau,\sigma,A)$
is large enough.

Let us rename $u_{j,i,S} = u_{j,i}^-$ and $u_{j,i,N} = u_{j,i}^+$.
These two functions give a smooth selection for $u_j$ on $U_{j,i}$.
By construction
\begin{equation}
	\label{eq_southern_part_contains_most_portions}
	G_j \cap (V_i)_\tau \setminus [L]_\sigma \times (-\infty,A)
	\subset \graph u_{j,i,S}.
\end{equation}

From this we obtain a contradiction with Lemma~\ref{lem_no_folding_two_half_planes}.
Indeed as we let $\sigma,\tau \to 0$,
$A \to \infty$ and $j \geq J(\tau,\sigma,A) \to \infty$, we have
$\dist_{\calH}(U_{j,i},V_i) \to 0$ 
and
\begin{equation}
	\label{eq_convergence_to_two_planes}
	\cur{\graph u_{j,i,S}} \to \cur{\pi_i} + \cur{\pi_{i+1}}
	\mres Q \times \bR.
\end{equation}

This concludes the proof in the first case. In the next case 
$m_i = 2$, $m_{i+1} = 1$ and $l_i = 0$, $l_{i+1} = 1$.
As $l_i = 0$ the half-plane $\pi_i$ has no well-defined orientation
induced by $T$. The half-plane $\pi_{i+1}$ may be assumed oriented
in the clockwise direction without loss of generality,
that is $n_{i+1} = - N_{i+1}$.
By Lemmas~\ref{lem_description_near_planes} and~\ref{lem_no_branch_points_if_cancellation}
we may take $j \geq J(\tau,\sigma)$ large enough that
\begin{align}
	\sing u_j \cap ( V_i \cup V_{i-1})
	\setminus ([\pi_{i-1} \cup &\pi_i \cup \pi_{i+1}]_\tau \cup [L]_\sigma)
	= \emptyset, \\
	\calB_{u_j} \cap (V_{i-1} \cup V_i \cup \pi_i)
	\setminus ([\pi_{i-1} &\cup \pi_i]_\tau \cup [L]_\sigma)
	= \emptyset.
\end{align}
Additionally the set $(V_{i-1} \cup V_{i} \cup \pi_i) \setminus 
([\pi_{i-1} \cup \pi_i]_\tau \cup [L]_\sigma)$ is simply connected, so we
can make a smooth selection $\{ u_{j,i,S},u_{j,i,N} \}$ for $u_j$ on it,
arranging the indices in a way that $\graph u_{j,i,S}$ lies south of $V_i$.
(Here $u_{j,i,S}$ is not equal $u_{j,i}^-$ anymore.)
Define the region
$U_{j,i} = (V_i)_\tau \setminus [L]_\sigma \cap \{ u_{j,i,S} < 2A \}
 \subset (V_i)_\tau \setminus [L]_\sigma$.
As above~\eqref{eq_southern_part_contains_most_portions} holds, and
near $\pi_i$ we have that given any $\delta > 0$,
\begin{equation}
	\label{eq_mult_two_case_orientation_goes_in_the_right_direction}
\langle \nu_j(X),N_i \rangle > 1 - \delta 
\end{equation}
for all $X \in \graph u_{j,i,S} \cap \reg G_j \cap (\pi_i)_\tau \cap
\{ \abs{X^{n+1}} < A \}$,
at least after updating $j \geq J(\tau,\sigma,A,\delta)$.
As we let $\tau,\sigma,\delta \to 0$ and $A \to \infty$ 
both $\dist_\calH(U_{j,i},V_i) \to 0$ and%
~\eqref{eq_convergence_to_two_planes} hold as $j \geq J(\tau,\sigma,A,\delta) \to \infty$.
The last inequality~\eqref{eq_mult_two_case_orientation_goes_in_the_right_direction}
guarantees that the limit current has the right orientation to apply
Lemma~\ref{lem_no_folding_two_half_planes}, which immediately yields a contradiction.
The third case, when $m_i = 1, m_{i+1} = 2$ and $l_i = 1, l_{i+1} = 0$ can be
argued in precisely the same way, with reversed roles of $\pi_i$ and $\pi_{i+1}$.

The last remaining case is $m_i = 2 = m_{i+1}$ and $l_i = 0 = l_{i+1}$. 
Arguing as above we find
\begin{align}
	\sing u_j \cap (V_{i-1} \cup V_i \cup V_{i+1})
	\setminus ([\pi_{i-1} \cup &\pi_i \cup \pi_{i+1} \cup \pi_{i+2}]_\tau \cup
	[L]_\sigma) = \emptyset, \\
	\calB_{u_j} \cap (V_{i-1} \cup V_i \cup V_{i+1} \cup \pi_i \cup \pi_{i+1})
	\setminus ([& \pi_{i-1} \cup \pi_{i+2}]_\tau \cup [L]_\sigma)
	= \emptyset.
\end{align}
We may make a smooth selection $\{ u_{j,i,S},u_{j,i,N} \}$ for $u_j$ on the
latter set $(V_{i-1} \cup V_i \cup V_{i+1} \cup \pi_i \cup \pi_{i+1})
\setminus ([\pi_{i-1} \cup \pi_{i+2}]_{\tau} \cup [L]_\sigma)$.
We arrange for $u_{j,i,S}$ to lie south of $V_i$, and moreover 
\begin{equation}
\langle \nu_j(X),N_i \rangle > 1-\delta
\text{ and }
\langle \nu_j(X),-N_{i+1} \rangle > 1 - \delta
\end{equation}
at all points  $X \in \graph u_{j,i,S} \cap \reg G_j \cap (\pi_i)_\tau
\cap \{ \abs{X^{n+1}} < A \}$
and $\graph u_{j,i,S} \cap \reg G_j \cap ( \pi_{i+1})_\tau
\cap \{ \abs{X^{n+1}} < A \}$ respectively.
From that point on we can argue in the same way as above, ultimately 
leading to a contradiction with with Lemma~\ref{lem_no_folding_two_half_planes}.
This exhausts the list of possible cases, and concludes the proof.
\end{proof}

Using this, we can immediately conclude mass cancellation in all but one
case. Indeed $\abs{T} \neq \bP$ if and only if $l_i = 0$ for at least one
plane.
But then $F_{i-1} = 1 = F_i$ by Lemma~\ref{lem_description_near_planes},
which contradicts Lemma~\ref{lem_single_sheet_lemma} unless $\pi_i = \pi_{i+1}$
and $D = 1$.
For the remainder of this section, we may assume that $l_i = m_i$ for all $i$.
As a consequence also $F_{i} -  F_{i-1} = s_i m_i$ for all $i$.

\begin{lem}
	\label{lem_sum_consec}
Let $\abs{G_j} \to \sum_{i=1}^D m_i \abs{\Pi_i}$. Then
\begin{enumerate}[label = (\roman*), font = \upshape]
	\item \label{item_sum_any_consec}
	any consecutive $\pi_{i+1},\dots,\pi_{i + J}$ have
	$ \abs{ \sum_{j = 1}^J s_{i+j} m_{i+j}} \leq 2$,
	\item \label{item_sum_consec_mult_two} if $\pi_i$ has multiplicity two then
	 $\sum_{j = 1}^{D-1} s_{i + j} m_{i + j} = 0$.
\end{enumerate}
\end{lem}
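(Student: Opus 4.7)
The plan is to deduce both parts from the telescoping identity
\begin{equation}
F_{i+J} - F_i = \sum_{j=1}^{J} s_{i+j} m_{i+j},
\end{equation}
which follows by iterating the increment formula $F_i - F_{i-1} = s_i m_i$ of Lemma~\ref{lem_consec_number_beans} (available after the reduction $l_i = m_i$ that was made just above the statement of the lemma). Combined with two elementary observations about the counting function $F^j$, this gives both conclusions.

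Part~\ref{item_sum_any_consec} will be immediate: the counting function $F^j$ defined in~\eqref{eq_defn_number_of_sheets_function} takes values in $\{0,1,2\}$ by construction (this is really the \emph{a priori} multiplicity bound from Lemma~\ref{lem_multiplicity_bound}), and hence so does each limiting value $F_i$. The telescoping identity above then forces $\abs{F_{i+J} - F_i} \leq 2$ for every $J$.

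For part~\ref{item_sum_consec_mult_two} the crucial geometric observation is that, when $m_i = 2$, the wedges $V_i$ and $V_{i+D-1}$ lie on the same side of the plane $\Pi_i = \pi_i \cup \pi_{i+D}$. Indeed $V_i$ is adjacent to $\pi_i$ in the $+N_i$ direction, while $V_{i+D-1}$ is adjacent to $\pi_{i+D}$ in the $-N_{i+D}$ direction; since $N_{i+D} = -N_i$ these two directions coincide. Now pick a point $p$ in the relative interior of $\pi_i$ (resp.\ $\pi_{i+D}$) lying well away from $L$ and from all other half-planes. Locally near $p$ the cone $\bP$ reduces to the single vertical plane $\Pi_i$ with multiplicity two, and because $l_i = m_i = 2$ the approximating currents $\cur{G_j}$ converge to $2\cur{\Pi_i}$ without mass cancellation. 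Lemma~\ref{lem_description_near_planes}, applied in this local picture, then asserts that $F^j$ stabilises for large $j$ to a constant value on each of the two sides of $\Pi_i$ near $p$, this constant being $2$ on the side corresponding to the induced orientation $n_i = s_i N_i$ and $0$ on the opposite side. Combined with Claim~\ref{claim_function_F_j_loc_constant}, this forces $F_i = F_{i+D-1}$ (both equal to $2$ if $s_i = +1$ and both equal to $0$ if $s_i = -1$), which by the telescoping identity is precisely the vanishing claimed.

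The argument is essentially combinatorial once the local structural results are in hand; the only non-formal ingredient is Lemma~\ref{lem_description_near_planes} near a vertical multiplicity-two plane, which in turn rests on the two-valued Allard theorem (Theorem~\ref{thm_wic_mult_two_allard}). The one step that demands care, and which I see as the main bookkeeping obstacle, is tracking the orientation conventions (the relation $s_{i+D} = -s_i$, the direction of $n_i$ with respect to $N_i$, and which halfspace of $\Pi_i$ the wedges $V_i$ and $V_{i+D-1}$ lie in); everything else is a direct telescoping argument.
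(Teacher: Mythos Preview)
Your proposal is correct and follows essentially the same approach as the paper. For part~\ref{item_sum_consec_mult_two} the paper's argument is slightly more economical: rather than re-invoking Lemma~\ref{lem_description_near_planes} geometrically, it simply applies Lemma~\ref{lem_consec_number_beans} at both $\pi_i$ and $\pi_{i+D}$ (where $m_i=m_{i+D}=2$ and $s_{i+D}=-s_i$) together with the constraint $F_k\in\{0,1,2\}$ to force $F_i=F_{i+D-1}$ directly --- but this is exactly the combinatorial content of your geometric picture.
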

\begin{proof}
\ref{item_sum_any_consec}
Iterating Lemma~\ref{lem_consec_number_beans} we find that
$F_{i+J} - F_i = \sum_{j=1}^J s_{i+j} m_{i+j}$. As $F_{i+J},F_i \in \{ 0 , 1,2 \}$
we get $\abs{\sum_j s_{i+j} m_{i+j}} \leq 2$ as desired.

\ref{item_sum_consec_mult_two} Here we consider the $D-1$ consecutive planes
$\pi_{i+1},\dots,\pi_{i + D - 1}$, which stop just shy of $\pi_i$ and $\pi_{i + D}$.
Because $\pi_i \cup \pi_{i+D} = \Pi$ is a plane, they must have $s_{i} = -s_{i+D}$;
without loss of generality $s_i = 1$.
By Lemma~\ref{lem_consec_number_beans}, $F_i = F_{i-1} + 2$ and $F_{i+D} = F_{i+D-1} - 2$,
so $F_i = 2 = F_{i+D-1}$.
Iterating the same lemma over the half-planes $\pi_{i+1},\dots,\pi_{i+D-1}$
we find $F_{i+D-1} = F_i + \sum_{j=1}^{D-1} s_{i+j} m_{i+j}$,
whence $\sum_{j=1}^{D-1} s_{i+j} m_{i+j} = 0$, as desired.
\end{proof}

\begin{cor}\leavevmode
	\label{cor_dimless_configurations}
Let $\abs{G_j} \to \sum_{i = 1}^D m_i \abs{\Pi_i}$. Then
\begin{enumerate}[label = (\roman*), font = \upshape]
	\item \label{item_limit_confs_all_mult_one}
		if $m_1 = \cdots = m_D = 1$ then $D \equiv 2 \pmod 4$,
	\item \label{item_limit_confs_all_mult_two}
		if $m_1 = \cdots = m_D = 2$ then $D$ is odd,
	\item \label{item_limit_confs_one_mult_two}
		if $m_1 = 2$, $m_2 = \cdots = m_D = 1$ then
		$D \equiv 1 \pmod 4$.
\end{enumerate}
\end{cor}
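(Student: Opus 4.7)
The plan is to treat each assertion as a claim about the closed walk $(F_0, F_1, \dots, F_{2D} = F_0)$ on $\{0,1,2\}$ traced out by the sheet counts on the $2D$ angular wedges. Writing $\sigma_i := F_i - F_{i-1}$, Lemma~\ref{lem_consec_number_beans} together with the equality $l_i = m_i$ already established gives $\sigma_i = s_i m_i$; Lemma~\ref{lem_sum_consec}\ref{item_sum_any_consec} confines the walk to $\{0,1,2\}$; the identities $m_{i+D}=m_i$ and $s_{i+D}=-s_i$ yield the \emph{antipodal relation} $\sigma_{i+D} = -\sigma_i$; and the single-sheet Lemma~\ref{lem_single_sheet_lemma} supplies the crucial rule that whenever both neighbouring half-planes $\pi_i,\pi_{i+1}$ have multiplicity one, the walk cannot turn around at $F_i = 1$, i.e.\ $\sigma_i = \sigma_{i+1}$.

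For \ref{item_limit_confs_all_mult_one} every step is $\pm 1$, so the walk must pass through $1$ at every second step; the no-turning rule then applies throughout and, together with the confinement to $\{0,1,2\}$, it forces the unique periodic pattern $\dots, 0, 1, 2, 1, 0, 1, 2, 1, \dots$ of period four, whose step sequence reads $(+, +, -, -)$ repeating. The antipodal relation now says that shifting this period-four step sequence by $D$ must give its pointwise negation, which a direct case check shows happens precisely when $D \equiv 2 \pmod 4$. For \ref{item_limit_confs_all_mult_two} every step is $\pm 2$, so $F$ alternates between $0$ and $2$ with step sequence $(+2, -2, +2, -2, \dots)$ of period two; shift by $D$ equals negation exactly when $D$ is odd.

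For \ref{item_limit_confs_one_mult_two} the steps at $i=1$ and $i=D+1$ are $\pm 2$ and all others are $\pm 1$. Normalising $F_0 = 0$ and $\sigma_1 = +2$ so that $F_1 = 2$, the no-turning argument rigidly produces the pattern $(F_1,F_2,F_3,F_4,\dots) = (2,1,0,1,2,1,0,\dots)$ of period four on the block $1 \le i \le D$. The requirement $\sigma_{D+1} \in \{\pm 2\}$ forces $F_D \in \{0,2\}$ and hence $D$ to be odd, while the antipodal relation $\sigma_{D+1} = -\sigma_1 = -2$ then pins $F_D = 2$; inspection of the explicit walk shows $F_D = 2$ precisely when $D \equiv 1 \pmod 4$. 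The second block $F_{D+1},\dots,F_{2D}$ is then the antipodal image of the first and automatically closes at $F_{2D} = F_0 = 0$.

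The main technical obstacle will be case \ref{item_limit_confs_one_mult_two}, where one has to reconcile the two multiplicity-two jumps at positions $1$ and $D+1$ with the rigid period-four oscillation on the intermediate multiplicity-one blocks through the antipodal identity; cases \ref{item_limit_confs_all_mult_one} and \ref{item_limit_confs_all_mult_two} use the same combinatorial machinery but are lighter to execute.
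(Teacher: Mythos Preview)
Your proof is correct and follows essentially the same route as the paper. Both arguments encode the problem via the step sequence $\sigma_i = s_i m_i$ subject to the confinement $F_i \in \{0,1,2\}$, the antipodal identity $\sigma_{i+D} = -\sigma_i$, and the ``no-turning'' constraint at $F_i = 1$ supplied by Lemma~\ref{lem_single_sheet_lemma}; the paper phrases this last constraint as a classification of admissible triples $(s_{i-1},s_i,s_{i+1})$ among consecutive multiplicity-one half-planes, which is equivalent to your formulation. Your walk-based presentation is arguably a bit cleaner, but the underlying combinatorics and the use of the cited lemmas are identical.
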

\begin{proof}
The result is obtained by listing the orientations of the half-planes
$\pi_1,\dots,\pi_{2D}$ weighted by their respective multiplicities,
and excluding certain subsequences from this.
We start by considering three consecutive half-planes $\pi_{i-1},\pi_i,\pi_{i+1}$
with multiplicities $m_{i-1},m_i,m_{i+1} = 1$, and show that then
$(s_{i-1},s_i,s_{i+1}) \in \{ \pm (1,-1,-1), \pm (1,1,-1) \}$.
First note that the two sequences $(s_{i-1},s_i,s_{i+1}) = \pm (1,1,1)$ are excluded
by Lemma~\ref{lem_consec_number_beans}.
For the remaining cases we argue by contradiction,
and assume that $(s_{i-1},s_i,s_{i+1}) = (1,-1,-1)$.
Then on the one hand by Lemma~\ref{lem_single_sheet_lemma},
applied between $\pi_{i-1},\pi_i$ and $\pi_i,\pi_{i+1}$ respectively
we find that $F_{i-1},F_i \neq 1$.
This is absurd, as on the other hand $F_i = F_{i-1} + 1$.
One reasons similarly when $(s_{i-1},s_i,s_{i+1}) = (-1,1,1)$.

\ref{item_limit_confs_all_mult_one}
When the multiplicities are all equal $m_1,\dots,m_D = 1$ 
then the only possibility is that 
$(m_1s_1,\dots,m_{2D}s_{2D}) = (s_1,\dots,s_{2D}) = (1,1,-1,-1,\dots)$
or a cyclic permutation thereof.
Hence $D$ must be even. As $s_1,s_2 = 1$ we get $s_{D+1},s_{D+2} = -1$,
whence $s_{D-1},s_D = 1$ and $D \not \equiv 0 \pmod 4$.

\ref{item_limit_confs_all_mult_two}
From Lemma~\ref{lem_sum_consec}~\ref{item_sum_any_consec}
either $(s_1,\dots,s_{2D}) =  (1,1,-1,-1,\dots)$
or a cyclic permutation thereof, so $D$ must be odd.

\ref{item_limit_confs_one_mult_two}
Without loss of generality $s_1 = 1$, and by Lemma~\ref{lem_consec_number_beans}
the sequence $(m_i s_i)$ starts $(2s_1,s_2,s_3) = (2,-1,-1)$.
The orientations of half-planes with multiplicity one
alternate in pairs, so this continues $(s_4,s_5,s_6,s_7,\dots) = (1,1,-1,-1,\dots)$.
As $2 s_{D+1} = -2$ we get $(s_{D-1},s_D,2 s_{D+1}) = (1,1,-2)$.
Combining the two observations, $D-1 \equiv 0 \pmod 4$.
\end{proof}

\subsection{Classification in dimensions up to seven}

\label{subsec_classification_in_dimensions_up_to_seven}

Here too, as in the previous section, we consider a sequence of two-valued
minimal graphs $u_j \in C^{1,\alpha}(D_2;\calA_2)$ which converge in
the varifold topology, $\abs{G_j} = \abs{\graph u_j} \to \sum_i m_i \abs{\Pi_i}$
as $j \to \infty$.
These planes are assumed to meet along a single $n-1$-dimensional vertical axis
$L = L_0 \times \bR e_{n+1} \in \Gr(n-1,n+1)$.

\begin{cor}
	\label{cor_classical_limit_cone_classification}
Let $\abs{G_j} \to \sum_{i = 1}^D m_i \abs{\Pi_i}$.
If $2 \leq n \leq 6$ then this is either
\begin{equation}
	2 \abs{\Pi_1} 
	\text{ or }
	\abs{\Pi_1} + \abs{\Pi_2}.
\end{equation}
\end{cor}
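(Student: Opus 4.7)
The plan is to combine the refined area bound of Corollary~\ref{cor_qualitative_estimates} with the combinatorial constraints assembled in the previous lemmas to exhaustively check the admissible multiplicity patterns $(m_1, \ldots, m_D)$. Reading off Table~\ref{table_values_for_the_improved_estimates}, Corollary~\ref{cor_qualitative_estimates} yields
\[
	M := \sum_{i=1}^D m_i \;\leq\; \bigl\lfloor (1-\delta) n \omega_n / \omega_{n-1} \bigr\rfloor \;\leq\; 5
\]
uniformly for $n \in \{2,\dots,6\}$, and since each $m_i \in \{1,2\}$ by Corollary~\ref{cor_conseq_multiplicity_bound}, only finitely many configurations $(D; m_1, \dots, m_D)$ remain to be checked.

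I would then organise these configurations by the number $k$ of multiplicity-two planes. When $k \in \{0, 1, D\}$, Corollary~\ref{cor_dimless_configurations} does all the work. For $k = 0$ all planes have multiplicity one, so $D = M \leq 5$, and part~\ref{item_limit_confs_all_mult_one} further requires $D \equiv 2 \pmod 4$, leaving only $D = 2$ and the cone $\abs{\Pi_1} + \abs{\Pi_2}$. For $k = D$ one has $M = 2D \leq 5$ so $D \in \{1, 2\}$, and part~\ref{item_limit_confs_all_mult_two} requires $D$ odd, forcing $D = 1$ and the cone $2\abs{\Pi_1}$. For $k = 1$ one has $M = D + 1 \leq 5$, allowing $D \in \{1,2,3,4\}$, but part~\ref{item_limit_confs_one_mult_two} requires $D \equiv 1 \pmod 4$, forcing $D = 1$ and again $2\abs{\Pi_1}$.

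The only remaining possibilities are the ``mixed'' patterns with $2 \leq k \leq D - 1$; the area bound leaves just $(D,k) \in \{(2,2), (3,2)\}$, both of which I would dispatch via Lemma~\ref{lem_sum_consec}\ref{item_sum_consec_mult_two}. For $(D,k) = (2,2)$ that lemma reduces to $s_2 m_2 = 0$, impossible. For $(D,k) = (3,2)$, after cyclically relabelling so that $m_1 = 2$, the remaining pair $(m_2, m_3)$ is either $(2,1)$ or $(1,2)$ and the lemma becomes
\[
	0 \;=\; \sum_{j=1}^{2} s_{1+j} m_{1+j} \;=\; 2 s_2 + s_3 \;\text{ or }\; s_2 + 2 s_3,
\]
each of which lies in $\{\pm 1, \pm 3\}$, a contradiction. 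The surviving configurations are exactly $D = 1$ with $m_1 = 2$ and $D = 2$ with $m_1 = m_2 = 1$, matching the two forms in the statement.

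The main obstacle is conceptual rather than technical: one must recognise that the sharpened estimate of Corollary~\ref{cor_qualitative_estimates} (with its $(1-\delta)$ improvement over the naive bound) is exactly what is needed to push $M$ below the threshold at which the mixed configuration $(m_1, m_2, m_3) = (2,2,2)$---which is compatible with both Corollary~\ref{cor_dimless_configurations}\ref{item_limit_confs_all_mult_two} and Lemma~\ref{lem_sum_consec}\ref{item_sum_consec_mult_two}---would become admissible. This is also the precise reason the argument does not extend into dimension $n + 1 \geq 8$.
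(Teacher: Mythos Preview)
Your proof is correct and follows the same strategy as the paper: bound $M = \sum_i m_i \leq 5$ via the improved area estimates, enumerate the finitely many admissible multiplicity patterns, and eliminate all but the two claimed ones using Corollary~\ref{cor_dimless_configurations} together with Lemma~\ref{lem_sum_consec}\ref{item_sum_consec_mult_two}. Your organisation by the number $k$ of multiplicity-two planes is a clean repackaging of the paper's direct enumeration in Table~\ref{table_multiplicities}, and your treatment of $(2,2,1)$ via $2s_2 + s_3 \neq 0$ is exactly the paper's.

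Two small corrections. First, the case $(D,k)=(2,2)$ is not a ``mixed'' pattern: it has $k=D$ and is already dispatched by Corollary~\ref{cor_dimless_configurations}\ref{item_limit_confs_all_mult_two}, so its reappearance in your mixed list is redundant (though harmless). Second, and more substantively, your final paragraph misidentifies which estimate is doing the work. The paper invokes only Proposition~\ref{prop_improved_estimates_sum_of_planes}, whose unsharpened bound $M \leq \lfloor n\omega_n/\omega_{n-1}\rfloor$ already gives $M \leq 5$ for every $n \in \{2,\dots,6\}$ (see Table~\ref{table_values_for_the_improved_estimates}) and hence already excludes $(2,2,2)$; the $(1-\delta)$ refinement of Corollary~\ref{cor_qualitative_estimates} is not needed here. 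That refinement is deployed only later, in Section~\ref{sec_bernstein_theorem_in_four_dimensions}, to push $M \leq 3$ in the specific case $n=3$. Your observation that the argument breaks at $n=7$ because $(2,2,2)$ then survives is correct, but the barrier is the value of $\lfloor n\omega_n/\omega_{n-1}\rfloor$ itself jumping to $6$, not any failure of the $\delta$-improvement.
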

\begin{proof}
As the graphs have dimension up to six, the area estimates of
Proposition~\ref{prop_improved_estimates_sum_of_planes} give
$\sum_j m_j \leq \lfloor n \omega_n / \omega_{n-1} \rfloor \leq 5$.
Hence $D \leq 5$, and the possibilities for $(m_1,\dots,m_D)$ 
are listed in Table~\ref{table_multiplicities} up to cyclic permutation.
Of these, the only not forbidden by Corollary~\ref{cor_dimless_configurations}
is $(m_1,m_2,m_3) = (2,2,1)$. However $2 s_2 + s_3 \neq 0$ is forbidden by
Lemma~\ref{lem_consec_number_beans}.
\end{proof}

\begin{table}
\caption{The possibilities for the multiplicities afforded by the improved
	area bounds of Corollary~\ref{cor_qualitative_estimates} when $2 \leq n \leq 6$,
	up to cyclic permutations.}
\begin{tabularx}{0.5\textwidth}{c |  Y }
\toprule
$D$ & $(m_1,\dots,m_D)$ \\ \midrule
$2$ & $(2,2),(2,1)$ \\ 
$ 3$ & $  (2,1,1),(1,1,1),(2,2,1)$ \\
$ 4$ & $  (2,1,1,1),(1,1,1,1)$ \\
$ 5$ & $  (1,1,1,1,1)$ \\
\bottomrule
\end{tabularx}
\label{table_multiplicities}
\end{table}

\subsection{Multiplicity and mass cancellation}

\begin{cor}
\label{cor_sing_of_limit_cones}
Suppose $2 \leq n \leq 6$. Let 
\begin{equation}
	\abs{G_j} \to V \neq 0 \in \IV_n(D_1 \times \bR)
\text{ as $j \to \infty$.}
\end{equation}
Then either 
$\Theta(\norm{V},X) = 2$ for $\calH^n$-a.e.\ $X \in \reg V$
and there is a smooth, stable minimal surface $\Sigma$ so that
\begin{equation}
	V = 2 \abs{\Sigma},
\end{equation}
or $\Theta(\norm{V},X) = 1$ for $\calH^n$-a.e.\ $X \in \reg V$ and
\begin{enumerate}[label = (\roman*), font = \upshape]
	\item \label{item_limit_varifold_immersed} $\spt \norm{V}$ is immersed
		near points of $\calS^{n-1}(V) \setminus \calS^{n-2}(V)$,
	\item \label{item_rectifiability_of_lower_strata} the set $\calS^{n-2}(V)
		\cup \calB(V)$ is countably $(n-2)$-rectifiable.
\end{enumerate}
\end{cor}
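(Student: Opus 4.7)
The plan is to combine the classification of classical tangent cones in Corollary~\ref{cor_classical_limit_cone_classification} with the structural theorems of Wickramasekera and the Almgren--Federer--Naber--Valtorta stratification. By Corollary~\ref{cor_stability_preserved_two_valued_graphs} the limit $V$ is stationary and ambient stable, and by Corollary~\ref{cor_conseq_multiplicity_bound} the density at regular points lies in $\{1,2\}$, equals $2$ at branch points, and $\calB(V)$ is countably $(n-2)$-rectifiable. First I would invoke the constancy theorem \cite[Thm.~41.1]{Simon84} to assert that $\Theta(\norm{V},\cdot)$ is locally constant on $\reg V$, so that the dichotomy in the statement corresponds to the density being uniformly $1$ or uniformly $2$ on each connected component of $\reg V$.

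\textbf{Case $\Theta\equiv 1$.} For~\ref{item_limit_varifold_immersed}, let $X \in \calS^{n-1}(V) \setminus \calS^{n-2}(V)$. By definition some tangent cone at $X$ has a spine of dimension exactly $n-1$; Corollary~\ref{cor_classical_limit_cone_classification} leaves only the possibilities $2\abs{\Pi}$ or $\abs{\Pi_1}+\abs{\Pi_2}$. The former has $n$-dimensional spine, incompatible with $X\in\calS^{n-1}$, so the tangent cone must be $\abs{\Pi_1}+\abs{\Pi_2}$ with $\Pi_1\neq \Pi_2$. This is a transverse union of two multiplicity-one planes, so applying Allard's regularity separately in each wedge of the complement of the $(n-1)$-axis gives that in a small ball about $X$ the support of $V$ is the union of two smooth minimal graphs meeting transversally, proving immersion. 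For~\ref{item_rectifiability_of_lower_strata}, $\calB(V)$ is countably $(n-2)$-rectifiable by Corollary~\ref{cor_conseq_multiplicity_bound} and $\calS^{n-2}(V)$ by Naber--Valtorta~\cite{NaberValtorta_rectifiability_stationary_varifolds}; a finite union of such sets is again countably $(n-2)$-rectifiable.

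\textbf{Case $\Theta\equiv 2$.} The first step is to exclude tangent cones of the form $\abs{\Pi_1}+\abs{\Pi_2}$ with $\Pi_1\neq \Pi_2$: near such a point the regular part of $V$ would split into two transverse multiplicity-one sheets, contradicting $\Theta\equiv 2$ on $\reg V$. In view of Corollary~\ref{cor_classical_limit_cone_classification}, every tangent cone with $(n-1)$-dimensional spine is then of the form $2\abs{\Pi}$. Wickramasekera's branched sheeting theorem (Theorem~\ref{thm_wic_mult_two_allard}) applied at any such point yields a local two-valued $C^{1,1/2}$ graph representation $\{u_1,u_2\}$ of $V$; density $2$ on the nearby regular part forces $u_1(x)=u_2(x)$ at every regular point, so by continuity $u_1\equiv u_2$ and $V$ is locally a multiplicity-two single-valued smooth minimal graph at every point of $\reg V \cup (\calS^{n-1}(V)\setminus \calS^{n-2}(V))$.

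To extend this globally, let $\Sigma$ denote the integer varifold with $\spt \Sigma = \spt V$ and multiplicity one, which inherits stability from $V$. I would show that $\Sigma$ admits no classical singularities: any classical tangent cone $\sum_i\abs{\pi_i}$ of $\Sigma$ would lift to $2\sum_i\abs{\pi_i}$ for $V$, which by Corollary~\ref{cor_classical_limit_cone_classification} must equal $2\abs{\Pi}$ (forcing the $\Sigma$-tangent to be the plane $\abs{\Pi}$) or $\abs{\Pi_1}+\abs{\Pi_2}$, the latter excluded by parity of multiplicities. Then Wickramasekera's stable regularity theorem~\cite{Wickramasekera14} gives $\dim_{\calH}\sing \Sigma\leq n-7$, so $\sing \Sigma=\emptyset$ for $n\leq 6$; hence $\Sigma$ is a smooth stable minimal hypersurface and $V=2\abs{\Sigma}$. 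The main technical obstacle sits in this last step: one must combine the coincidence $u_1\equiv u_2$ from branched sheeting with the multiplicity bound of Lemma~\ref{lem_multiplicity_bound} and the low-dimensional classification to certify the absence of classical singularities on $\Sigma$ before invoking~\cite{Wickramasekera14}.
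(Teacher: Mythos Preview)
Your per-case arguments are largely in the right spirit, but you have skipped the main content of the proof: establishing that the dichotomy is \emph{global}. The constancy theorem only tells you that $\Theta(\norm{V},\cdot)$ is constant on each connected component of $\reg V$; it does not rule out that some components carry multiplicity one and others multiplicity two. Your sentence ``the dichotomy in the statement corresponds to the density being uniformly $1$ or uniformly $2$ on each connected component of $\reg V$'' papers over exactly this point, and your case split then tacitly assumes the conclusion.

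The paper's proof is almost entirely devoted to excluding the mixed case. One decomposes $V = V_1 + V_2$ according to multiplicity, and observes that the classification of classical limit cones forces $\calC(V) \cap \spt\norm{V_2} = \emptyset$, so that $\spt\norm{V_1} \cap \spt\norm{V_2}$ is contained in $\calB(V) \cup \calS^{n-2}(V)$, which is $\calH^{n-1}$-null. Hence $V_1,V_2$ are separately stationary and stable. Since $V_2$ has no classical singularities and no genuine branch points, Schoen--Simon~\cite{SchoenSimon81} gives that $V_2 = 2\abs{\Sigma_2}$ for a smooth embedded $\Sigma_2$. If $V_1 \neq 0$ one then finds a point where $\spt\norm{V_1}$ touches $\Sigma_2$ from one side, contradicting the Solomon--White maximum principle. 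This barrier argument is the heart of the proof and is absent from your proposal.

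A secondary point: in your Case $\Theta\equiv 2$ you invoke~\cite{Wickramasekera14} to conclude smoothness of $\Sigma$. The paper explicitly avoids this dependence (as stated in the introduction) and uses~\cite{SchoenSimon81} instead, which is applicable precisely because the absence of classical singularities and branch points on the multiplicity-two piece has already been established via the limit-cone classification.
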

\begin{proof}
We only need to show that the multiplicity of regular points of $V$ is either one or
two; the conclusion follows by combining our limit cone classification with the
results of~\cite{SchoenSimon81,KrumWic_FinePropsMinGraphs,Wic_MultTwoAllard}.
We may assume without loss of generality that $G_j \cap D_1 \times \bR$ is connected
for all $j$. As the graphs converge locally in $D_1 \times \bR$ with respect to
Hausdorff distance, $\spt \norm{V} \cap D_1 \times \bR$ is also connected.
Let $\calR$ be the set of connected components of $\reg V$, which we group into the
two sets $\calR_1$ and $\calR_2$ according to their respective multiplicities.
(The multiplicities are constant on every component by~\cite[Thm.\ 41.1]{Simon84}.)

We use a contradiction argument to show that one of the two is empty.
Let $V_1 = \sum_{\Sigma \in \calR_1} \abs{\Sigma}$ and $V_2 = \sum_{\Gamma \in \calR_2}
2 \abs{\Gamma}$. These are both stationary in $D_1 \times \bR$ away from
$\spt \norm{V_1} \cap \spt \norm{V_2} \cap D_1 \times \bR$.
By our classification of limit cones, $\calC(V) \cap \spt \norm{V_2} = \emptyset$.
As $\calB(V) \cup \calS^{n-2}(V)$ is countably $(n-2)$-rectifiable, this means
$\calH^{n-1}(\spt \norm{V_1} \cap \spt \norm{V_2} \cap D_1 \times \bR) = 0$,
whence $V_1,V_2$ are in fact stationary in $D_1 \times \bR$ without restrictions.
One argues in the same way to justify their stability in $D_1 \times \bR$.
As the support of $V_2$ contains neither genuine branch points nor classical
singularities,~\cite{SchoenSimon81} implies that there is a smooth embedded minimal
surface $\Sigma_2 \subset D_1 \times \bR$ so that $V_2 = 2 \abs{\Sigma_2}$.
Take $X \in \spt \norm{V_1} \cap \Sigma_2 \cap D_1 \times \bR$, and $\rho > 0$
small enough enough that $B_\rho(X) \setminus \Sigma_2$ has two connected components,
say $U_{\pm}$.
Let $V_1^{\pm}$ be the two varifolds made up of the portions of $V_1$ lying in $U_{\pm}$
respectively. Arguing as above we find that $V_1^{\pm}$ are both stationary in
$B_\rho(X)$. Without loss of generality $X \in \spt \norm{V_1^+}$. As $V_1^+$ lies
above $\Sigma_2$, we have reached a contradiction with the maximum principle
of Solomon--White~\cite{SolomonWhiteMaxPrinc}.
\end{proof}

\begin{cor}[No mass cancellation]
Let $2 \leq n \leq 6$. Suppose that as $j \to \infty$,
$\abs{G_j} \to V \in \IV_n(D_1 \times \bR)$ and  
$\cur{G_j} \to T \in \I_n(D_1 \times \bR)$.
If $T \neq 0$ then $\abs{T} = V$.
\end{cor}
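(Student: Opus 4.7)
\textit{Proof plan.} The plan is to prove that if $T \neq 0$ then the \emph{cancellation varifold} $W := V - \abs{T}$ vanishes, starting from the basic inequality $\abs{T} \leq V$, which follows by lower semicontinuity of mass under the joint current and varifold convergence. The argument is organised around the dichotomy from Corollary \ref{cor_sing_of_limit_cones}: either $V = 2\abs{\Sigma}$ with $\Sigma$ a smooth stable minimal surface (Case~1), or $\reg V$ has multiplicity one throughout, with $\calB(V) \cup \calS^{n-2}(V)$ being countably $(n-2)$-rectifiable and $\spt V$ smoothly immersed away from this set (Case~2).

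In Case~1, the integer-multiplicity current $T$ has constant integer multiplicity on each connected component of $\Sigma$, by the constancy theorem for closed integral currents. The decisive point is that both sheets of every $G_j$ carry upward-pointing unit normals: at a regular point $X \in \Sigma$ whose tangent plane $T_X\Sigma$ is not vertical, the two sheets of $G_j$ converge in $C^1$ to a single graph over $T_X\Sigma$, and both induce the \emph{same} orientation on $T_X\Sigma$, forcing $\Theta(\abs{T},X) = 2$. Since the non-vertical regular points are dense in any component of $\Sigma$ which is not entirely vertical, one concludes $\abs{T} = V$ on any such component charged by $T$. Components of $\Sigma$ which are entirely vertical (hence cylinders over $(n-1)$-dimensional minimal cones) are handled separately by Lemma \ref{lem_description_near_planes} together with Lemma \ref{lem_no_branch_points_if_cancellation}, which directly constrain the orientation pattern of the sheets of $G_j$ converging to a vertical multiplicity-two limit.

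In Case~2, branch points of $V$ satisfy $\Theta(\abs{V},\cdot) = 2$ by Corollary \ref{cor_conseq_multiplicity_bound}, and form part of the $(n-2)$-rectifiable set $\calB(V) \cup \calS^{n-2}(V)$, which can be excised by the same $2$-capacity arguments used in Section \ref{sec_properties_of_branch_set} without affecting the stationarity analysis; a separate appeal to Lemma \ref{lem_no_branch_points_if_cancellation} at blowups of $V$ around branch points rules out cancellation there. At any classical singular point $X_0 \in \calC(V)$, Corollary \ref{cor_classical_limit_cone_classification} forces the tangent cone of $V$ to be $\abs{\Pi_1} + \abs{\Pi_2}$ with two distinct multiplicity-one planes; diagonally extracting from $G_j$ a sequence of two-valued minimal graphs converging to this tangent cone, Lemma \ref{lem_single_sheet_lemma} forbids cancellation on either of the half-planes, so $\abs{T} = V$ locally near $X_0$. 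Consequently, whenever two connected components of $\reg V$ meet at a classical singularity, the multiplicity of $T$ on them is simultaneously zero or simultaneously one.

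The hard part will be the final global propagation step: assembling these local non-cancellation facts into the statement that if $T$ is non-zero on \emph{some} component of $\reg V$, it is non-zero on \emph{every} component. The plan here is to exploit the connectedness of the glued immersion $\Gamma$ from Lemma \ref{lem_properties_global_immersion} \ref{item_glued_manifold_connected}. When each $G_j$ is a genuine two-valued graph, $\Gamma$ is connected, and any two points in $\spt V$ lifting to different components of $\reg V$ are joined by paths in $G_j$ whose limits accumulate in $\sing V$; the analysis above (classical singularities in Case~2, vertical cylinders and branch points in Case~1) rules out cancellation mismatches across such limit points. In the complementary degenerate case that each $G_j$ splits as a union of two single-valued minimal graphs $G_j = G_j^1 \cup G_j^2$, the currents $\cur{G_j^1}, \cur{G_j^2}$ are both oriented by upward-pointing normals; their limits are area-minimising single-sheeted currents which sum without cancellation, so $\abs{T} = V$ is automatic. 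Combining both cases yields $W = 0$ and hence $\abs{T} = V$ whenever $T \neq 0$.
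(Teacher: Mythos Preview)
Your plan is broadly correct but substantially over-engineered; the paper's proof is four lines, and nearly all of the discrepancy lies in your Case~2.

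The observation you are missing is that mass cancellation is \emph{automatically impossible at any multiplicity-one regular point}. If $X \in \reg V$ has $\Theta(\norm{V},X) = 1$, Allard regularity forces a single sheet of $G_j$ to converge smoothly to $\spt \norm{V}$ near $X$, so $\cur{G_j}$ converges there to a multiplicity-one current and $\Theta(\abs{T},X) = 1 = \Theta(\norm{V},X)$. In your Case~2 every regular point has multiplicity one by hypothesis and $\sing V$ is $\calH^n$-null, so $\abs{T} = V$ follows at once. Your classical-singularity analysis, branch-point discussion, the appeal to Lemma~\ref{lem_single_sheet_lemma} (which in any case is only formulated for vertical cones), and the global propagation via connectedness of $\Gamma$ are all redundant here: there is nothing to propagate, because $\Theta(\abs{T},\cdot) = 1$ at \emph{every} multiplicity-one regular point, not merely on one component.

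The paper therefore reduces to Case~1 in one line, applies the Constancy Theorem to write $\abs{T} = m\abs{\Sigma}$, rules out $m = 1$ by a terse appeal to Allard regularity, and concludes that $T \neq 0$ forces $m = 2$. Your orientation argument at non-vertical tangent points is a valid and more transparent substitute for the $m \neq 1$ step---in fact it gives $m = 2$ outright on any component of $\Sigma$ containing a non-vertical point (both upward normals limit to the \emph{same} normal of $\Sigma$), so your qualifier ``charged by $T$'' is already superfluous. The residual worry about multiple, possibly entirely vertical, components of $\Sigma$ is handled in the paper not by path-lifting through $\Gamma$ but simply by connectedness of $\spt \norm{V}$, inherited from the WLOG connectedness of the $G_j$ established in the proof of Corollary~\ref{cor_sing_of_limit_cones}.
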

\begin{proof}
The limit current $T$ necessarily has $\abs{T} \ll V$,
with equality if and only if it there is no mass cancellation.
This cannot occur at points of multiplicity one, so by
Corollary~\ref{cor_sing_of_limit_cones} we may assume that
$\Theta(\norm{V},X) = 2$ for $\calH^n$-a.e.\ $X \in \reg V$
and there is a smooth embedded minimal surface $\Sigma$ with
$V = 2\abs{\Sigma}.$
Thus also $\spt \norm{T} \subset \Sigma$,
and by the Constancy Theorem \cite[Thm.~41.1]{Simon84}
there is $m \in \bZ_{> 0}$ so that
$\abs{T} = m \abs{\Sigma}.$
As $m \leq 2$, and $m \neq 1$ as otherwise we could use Allard's regularity
theory, we have that either $m = 2$ and $\abs{T} = V$ or $m = 0$ and $T = 0$.
\end{proof}

\section{Blowdown cones and asymptotic analysis}

\label{sec_blowdown_cones_and_asymptotic_analysis}

\subsection{Entire graphs with bounded growth}

\begin{thm}
\label{thm_grad_bounds_vert_line}
Let $\alpha \in (0,1)$ and $n \geq 2$ be arbitrary.
Let $u \in C^{1,\alpha}(\bR^n;\calA_2)$ be an entire
two-valued minimal graph with $u(0) = 0$. If 
\begin{equation}
	\limsup_{r \to \infty} \big( \norm{u}_{0;D_r} / r \big) < +\infty
\end{equation}
then $u$ is linear.
Otherwise the support of every blowdown cone at infinity contains the half-line
$L_+ = \{ t e_{n+1} \mid t \geq 0 \}$ or its reflection $-L_+$.
\end{thm}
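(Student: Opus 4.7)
The plan is to split into the two cases of the statement and in each case to push the blowdown cones through Lemma~\ref{lem_reg_lipschitz_graph_cone} together with the monotonicity formula.

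\textbf{Linear growth.} If $\limsup_{r\to\infty}\norm{u}_{0;D_r}/r<+\infty$ I would apply Lemma~\ref{lem_int_grad_bounds_form_in_small_ball} to the rescaling $v(x)=R^{-1}u(Rx)$, whose $C^0$--norm on $D_2$ is bounded independently of $R$ by the linear growth assumption; the resulting bound on $\max_{D_1}\norm{Dv}$ gives $\norm{Du(y)}\leq C'$ for all $y\in\bR^n$, so $u$ is globally Lipschitz. The blowdowns $u_j(x)=\lambda_j^{-1}u(\lambda_jx)$, $\lambda_j\to\infty$, inherit the same Lipschitz constant, and Lemma~\ref{lem_arzela_ascoli_two_valued} produces a locally uniform subsequential limit $u_\infty\in\Lip(\bR^n;\calA_2)$, which is homogeneous of degree one by construction. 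Lemma~\ref{lem_reg_lipschitz_graph_cone} then forces $u_\infty$ to be linear, so the blowdown varifold is $V=\abs{\Pi_1}+\abs{\Pi_2}$ with $\norm{V}(B_1)=2\omega_n$. To deduce that $u$ itself is linear I would invoke the monotonicity formula: the hypothesis $u(0)=\{0,0\}$ yields $\Theta(\norm{G},0)=2$, so $\norm{G}(B_r)/r^n\to 2\omega_n$ as $r\to 0$, while the blowdown gives the same limit at infinity. The ratio being monotone nondecreasing it is identically $2\omega_n$, and equality in the monotonicity formula forces $G$ to be a cone with vertex at the origin; a cone which is also a two-valued graph over $\bR^n$ is a union of at most two hyperplanes through the origin, so $u$ is linear.

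\textbf{Superlinear growth.} Here I would argue the contrapositive: if some blowdown cone $V$ obtained along $\lambda_j\to\infty$ has $\pm L_+\not\subset\spt\norm{V}$, then $u$ must be linear, contradicting the growth hypothesis. Since $V$ is a cone whose closed support misses $\pm e_{n+1}$, there is $\delta>0$ such that $\spt\norm{V}\cap B_R\subset\{\abs{X^{n+1}}\leq(1-\delta)\abs{X}\}$ for every $R>0$. Combining varifold convergence with the uniform area bound of Proposition~\ref{prop_area_estimate} and the lower density bound of the monotonicity formula, one obtains Hausdorff convergence of supports on compact sets, and therefore for each $R$ and $j\geq J(R)$,
\begin{equation*}
G_j\cap B_R\subset\{\abs{X^{n+1}}\leq(1-\delta/2)\abs{X}\}.
\end{equation*}
The crucial step is to upgrade this cone-like inclusion to a uniform pointwise bound $\sup_{D_2}\norm{u_j}\leq C_\delta$. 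I would argue by contradiction: if a branch $u_j^i$ spiked to height $M$ at some $x_0\in D_2$, then $u_j(0)=0$ and continuity of a selection of $u_j$ along the radial path $t\mapsto tx_0$ would produce $x'=t^* x_0$ with $u_j^i(x')=M-1$, and the point $(x',M-1)\in B_M$ would have to satisfy the displayed inclusion, forcing $\abs{x'}^2\geq (M-1)^2((1-\delta/2)^{-2}-1)$ -- incompatible with $\abs{x'}\leq 2$ once $M$ is large enough in terms of $\delta$. Once $u_j$ is uniformly bounded on $D_2$, the interior gradient estimate (Lemma~\ref{lem_interior_gradient_bounds}) gives uniform Lipschitz control on $D_1$, and the argument of the linear-growth case then applies verbatim: $u_\infty$ is linear by Lemma~\ref{lem_reg_lipschitz_graph_cone}, and the monotonicity equality forces $u$ itself to be linear, hence of linear growth -- a contradiction.

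The main obstacle is the spike-exclusion step, which is delicate in the two-valued setting: selecting a continuous single branch of $u_j$ along a radial path requires handling possible crossings or branching with the other sheet, and one must also ensure the selection actually attains every intermediate height on the way to a hypothetical spike. A related technical point is the justification of Hausdorff convergence of supports for the blowdown sequence; this should follow from the uniform area estimates together with the monotonicity formula, but deserves careful bookkeeping in the presence of branch points.
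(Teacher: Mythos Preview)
Your first half matches the paper's argument. For the superlinear case you take a different, more circuitous route: the paper argues directly rather than by contrapositive. Assuming without loss of generality that $\sup_r r^{-1}\max_{D_r}u_+ = +\infty$, the rescaled $u_j$ eventually exceed $1$ somewhere in $\clos{D_\delta}$; since $u_j(0)=\{0,0\}$ forces the set $G_j\cap(D_\delta\times\bR)$ to be connected (both sheets pass through the origin), a continuous path \emph{inside the graph} from the origin to a point at height $>1$ must meet the disc $D_\delta\times\{1\}$, and a diagonal extraction as $\delta\to 0$ produces points of $G_{j_m}$ converging to $(0,1)$, whence $(0,1)\in\spt\norm{\bC}$ by upper semicontinuity of density. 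This avoids your Hausdorff-convergence step and the spike-exclusion bookkeeping entirely, and it uses connectedness of the graph in $\bR^{n+1}$ rather than a one-dimensional selection in the domain.

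Your own argument is essentially correct, but the spike-exclusion has a small logical slip: you apply the inclusion $G_j\cap B_M\subset\{\abs{X^{n+1}}\le(1-\delta/2)\abs{X}\}$ at the \emph{a priori} unknown spike height $M$, which requires $j\ge J(M)$ --- unjustified if $M=M_j$ grows with $j$. The remedy is to fix one threshold $M_0>C_\delta$, rule out $u_j$ attaining exactly $M_0$ over $D_2$ for $j\ge J(M_0)$, and then invoke the intermediate-value theorem once more (now from $0$ to a hypothetical value above $M_0$) to rule out exceeding $M_0$. The one-dimensional $C^1$ selection along a radial segment that you flag as an obstacle always exists --- the paper records exactly this fact when discussing two-valued functions on intervals --- so it is not a genuine difficulty.
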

\begin{proof}
Suppose first that $u$ has bounded growth, say 
$\sup_r r^{-1} \norm{u}_{0;D_r} \leq C$ for some $C > 0$. 
Let $(\lambda_j \mid j \in \bN)$ be a sequence of positive scalars
with $\lambda_j \to \infty$, along which we blow down $u$.
For all $j \in \bN$, define $u_j \in C^{1,\alpha}(\bR^n;\calA_2)$ 
by setting $u_j(x) = \lambda_j^{-1} u(\lambda_j x)$ for all $x \in \bR^n$.
Using the interior gradient estimates, we find that for all $r > 0$ there
is a constant $C(r)$ so that $\sup_j \norm{u_j}_{1;D_r} \leq C(r)$.
By the two-valued Lipschitz theorem we can extract a subsequence
so that that there is a two-valued Lipschitz function
$U \in \Lip(\bR^n;\calA_2)$ so that simultaneously $u_{j'} \to U$ locally uniformly
and $\abs{G_{j'}} \to \abs{\graph U}$.
By Lemma~\ref{lem_reg_lipschitz_graph_cone}, $U$ must be linear and its
graph is a union of two possibly equal planes. By the monotonicity formula the same
holds for $u$.
This concludes the proof of the first half of the lemma.

Now assume instead that $\sup_{r > 0} r^{-1} \max_{D_r} \{ u_1,u_2 \} = + \infty$,
without loss of generality. We show that $\{ t e_{n+1} \mid t \geq 0 \}$
is contained in the support of any blowdown cone of $\abs{G}$.
As above, we blow down $\abs{G}$ along the sequence $\lambda_j \to \infty$.
Let $\delta > 0$ be a small parameter, whose value we eventually let tend to zero.
Inside the disc $D_\delta$
the functions $u_j$ have
$\max_{\clos{D}_\delta} \{ u^j_1,u^j_2 \} \to +\infty$ as $j \to \infty$,
so that $\max_{\clos{D}_\delta} \{ u_1^j,u_2^j \} \geq 1$
for large enough $j \geq J(\delta)$.
Hence there exists a sequence
of points $X_j = (x_j,X_j^{n+1}) \in G_j \cap D_\delta \times \bR$
with $X_j^{n+1} > 1$ for all $j$.
As $G_j \cap D_\delta \times \bR$ is connected there
is a continuous path $\gamma_j: [0,1] \to \bR^{n+1}$
with image $\gamma_j([0,1]) \subset G_j \cap D_\delta \times \bR$
and endpoints $\gamma_j(0) = \orig$ and $\gamma_j(1) = X_j$.
This path must cross the solid disc $D_\delta \times \{ 1 \}$
at height one, so that by picking a point in this intersection we can construct
a sequence of points $(Y_{j,\delta} \mid j \geq J(\delta))$ that respectively
belong to $\gamma_j([0,1]) \cap D_\delta \times \{ 1 \}$.

Given any positive sequence $(\delta_m \mid m \in \bN)$ with $\delta_m \to 0$
repeat the argument with $\delta = \delta_m$. Via a diagonal extraction argument
we obtain a subsequence of indices $(j_m \mid m \in \bN)$ and $(Y_m \mid m \in \bN)$
with $Y_m  = 	Y_{j_m,\delta_m} \in G_{j_m} \cap D_{\delta_m} \times \{ 1 \}.$
They converge to the point $(0,1) \in \bR^{n+1}$ at height one, which
thus is in $\spt \norm{\bC}$. As $\bC$ is a cone, this confirms that
$\{ t e_{n+1} \mid t \geq 0 \} \subset \spt \norm{\bC}$.
\end{proof}

\subsection{General results in low dimensions}

Combining the results from Section~\ref{subsec_max_princ_near_branch_point}
with the work of~\cite{Wic_MultTwoAllard} we obtain the following technical
lemma, which will be useful in what follows.

\begin{cor}
	\label{cor_vertical_tangent_cones}
Let $V \in \IV_n(D_2 \times \bR)$ be the limit of a sequence of two-valued
graphs $G_j = \graph u_j$, where there is $\alpha \in (0,1)$ so that
$u_j \in C^{1,\alpha}(D_2;\calA_2)$ for all $j$.
Suppose that at the point $Z \in \spt \norm{V}$, there is a tangent cone of the form
\begin{equation}
	\abs{\Pi_1^0} \times \bR e_{n+1},
2 \abs{\Pi_1^0} \times \bR e_{n+1},
\text{ or } (\abs{\Pi_1^0} + \abs{\Pi_2^0}) \times \bR e_{n+1}
\in \vartan(V,Z),
\end{equation}
where $\Pi_1^0,\Pi_2^0$ are two distinct $n-1$-dimensional planes in $\bR^n$.
Then there is $\rho > 0$ so that
\begin{equation}
	\langle \nu(X) , e_{n+1} \rangle = 0
	\text{ for all $X \in \reg V \cap B_\rho(Z)$}.
\end{equation}
\end{cor}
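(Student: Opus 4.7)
The plan is to show that the Jacobi field $f = \langle \nu_V, e_{n+1} \rangle$ on $\reg V$ vanishes identically in a neighbourhood of $Z$, by combining a regularity reduction near $Z$ with the maximum principles developed in Section~\ref{subsec_max_princ_near_branch_point}. Recall that $f$ satisfies $\Delta_V f + \abs{A_V}^2 f = 0$ pointwise on $\reg V$ (and weakly through branch singularities), by essentially the same argument used to justify the analogous PDE in the proof of Lemma~\ref{lem_max_princ_for_nu_dot_e}.

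My first step is to describe $V$ near $Z$ as a $C^{1,\gamma}$ graph over $\Pi_1^0 \times \bR e_{n+1}$, using an appropriate regularity result in each case. When the tangent cone is $\abs{\Pi_1^0} \times \bR e_{n+1}$, Allard's theorem gives a smooth embedded single-valued graph; when it is $2\abs{\Pi_1^0} \times \bR e_{n+1}$, the branched sheeting theorem of Wickramasekera (Theorem~\ref{thm_wic_mult_two_allard}) yields a two-valued $C^{1,\gamma}$ graph; and when it is $(\abs{\Pi_1^0} + \abs{\Pi_2^0}) \times \bR e_{n+1}$, a sheeting theorem for classical singularities of stable stationary varifolds yields two smooth single-valued graphs over $\Pi_1 \times \bR e_{n+1}$ and $\Pi_2 \times \bR e_{n+1}$, meeting transversally along a codimension-two set through $Z$. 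In every case the tangent plane to $V$ (or to each of the two smooth sheets) at $Z$ is vertical, so $f$ extends continuously to $Z$ with $f(Z) = 0$.

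Next I would establish the sign condition $f \geq 0$ on each connected component of $\reg V$ near $Z$. Because each $G_j$ is a two-valued graph over $\bR^n \times \{0\}$, its upward unit normal satisfies $\langle \nu_j, e_{n+1} \rangle > 0$ on $\reg G_j$. Using the $C^1$-convergence on regular parts that follows from applying Allard/Wickramasekera to $V$ and to the $G_j$ jointly near any regular point of $V$, each connected component of $\reg V$ near $Z$ admits a continuous choice of unit normal $\nu_V$ (as a limit of nearby $\nu_j$) for which $f \geq 0$ throughout. Thus $f$ is a non-negative supersolution of the Jacobi equation on each such component, attaining the value $0$ at $Z$ in its closure.

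Finally I would apply the appropriate maximum principle. In the multiplicity-one case and the classical-singularity case, $V$ (respectively each of its two sheets) is smooth through $Z$, and the classical strong maximum principle applied to the superharmonic function $f \geq 0$ forces $f \equiv 0$ in a neighbourhood of $Z$. In the multiplicity-two case, I would change coordinates so that $\Pi_1^0 \times \bR e_{n+1}$ becomes the new horizontal plane, reinterpret $V$ near $Z$ as a two-valued minimal graph of a function $U$ with $U(0) = 0$ and $DU(0) = 0$, and apply Lemma~\ref{lem_max_princ_for_nu_dot_e} with the unit vector $e$ in the new horizontal plane corresponding to the old $e_{n+1}$. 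The hard part will be this final step, since $\nu_V$ is only defined up to sign on each connected component of $\reg V$ and the two orientations must be reconciled consistently across the sheets of $U$ in order to guarantee that $\langle DU, e \rangle$ has a definite sign on the regular part of $\graph U$; I expect this bookkeeping with orientations to be the principal technical obstacle.
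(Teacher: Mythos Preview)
Your proposal is correct and matches the paper's approach exactly; the paper itself gives no detailed proof, only remarking that the corollary follows by combining Section~\ref{subsec_max_princ_near_branch_point} with~\cite{Wic_MultTwoAllard}. For the orientation obstacle you flag in the branch-point case: note that $\nu_V$ is genuinely well-defined on $\reg V$ as the limit of the $\nu_j$ (not merely up to sign), and both $\nu_V$ and the graph normal $\nu_U$ pull back continuously to the immersion $\Gamma$ of Section~\ref{subsec_global_immersion}; since $\Gamma$ is connected whenever $Z$ is a genuine branch point (Lemma~\ref{lem_properties_global_immersion}\ref{item_glued_manifold_connected}), the sign in $\nu_U = \pm \nu_V$ is global and Lemma~\ref{lem_max_princ_for_nu_dot_e} applies directly.
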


This holds in arbitrary dimensions, but for the remainder we restrict to the range
$2 \leq n \leq 6$.

\begin{thm} 
\label{thm_degiorgi_splitting_in_low_dimensions}
Let $\alpha \in (0,1)$ and $2 \leq n \leq 6$.
Let $u \in C^{1,\alpha}(\bR^n;\calA_2)$ be
an entire two-valued minimal graph and $\bC$ be a blowdown cone of
$\abs{G}$ at infinity.
Then
\begin{enumerate}[label = (\roman*), font = \upshape]
	\item \label{item_blowdown_cylindrical}
		either $\bC$ is cylindrical of the form
		$\bC = \bC^0 \times \bR e_{n+1}$,
	\item \label{item_blowdown_cylindrical_plus_plane}
		or $\bC = \abs{\Pi} + \bC^0 \times \bR e_{n+1}$
		where $\Pi \in \Gr(n,n+1)$,
	\item \label{item_blowdown_sum_of_planes}
		or $\bC$ is the sum of two possibly equal planes $\Pi_1,\Pi_2 \in \Gr(n,n+1)$,
		$\bC = \abs{\Pi_1} + \abs{\Pi_2}$.
\end{enumerate}
\end{thm}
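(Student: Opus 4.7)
The plan is to combine the structural result of Corollary~\ref{cor_sing_of_limit_cones} with a De~Giorgi--type splitting argument based on the vertical Jacobi field $f = \langle \nu, e_{n+1} \rangle$ on the immersion of $\bC$.

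First, I would invoke Corollary~\ref{cor_sing_of_limit_cones} to split the analysis into two alternatives. If $\Theta(\norm{\bC}, \cdot) = 2$ on $\reg\bC$, then $\bC = 2 \abs{\Sigma}$ for a smooth embedded stable minimal cone $\Sigma$, and since $n \leq 6$ the classical Simons Bernstein theorem forces $\Sigma$ to be a hyperplane $\Pi$, placing us in the degenerate instance of case~\ref{item_blowdown_sum_of_planes} with $\Pi_1 = \Pi_2 = \Pi$. Otherwise $\Theta(\norm{\bC}, \cdot) = 1$ on $\reg \bC$, the set $\calS^{n-2}(\bC) \cup \calB(\bC)$ is countably $(n-2)$-rectifiable, and $\spt\bC$ is smoothly immersed outside it, with classical tangent cones of the form $\abs{\Pi_1} + \abs{\Pi_2}$ by the classification of Sections~\ref{sec_classical_limit_cones}--\ref{sec_classical_cones_vertical}.

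In this second alternative, I would adapt the gluing of Section~\ref{subsec_global_immersion} to construct a smooth oriented $n$-manifold $\Gamma$ and a minimal immersion $\iota : \Gamma \to \bR^{n+1}$ with $\iota(\Gamma) = \reg \bC \cup \calC(\bC) = \spt\bC \setminus \calB(\bC)$. Each approximating two-valued graph $G_j$ has an immersion with at most two connected components by Lemma~\ref{lem_properties_global_immersion}, and a diagonal extraction transfers this bound to the limit: $\Gamma$ will have at most two components $\Gamma_1, \Gamma_2$. I would orient $\Gamma$ compatibly with the upward unit normals of the $G_j$, so that $f = \langle \nu \circ \iota, e_{n+1} \rangle$ is non-negative on $\Gamma$. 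Minimality of $\iota$ together with the Killing property of $e_{n+1}$ for the area functional gives the pointwise Jacobi equation $\Delta_\Gamma f + \abs{A_\Gamma}^2 f = 0$ on $\Gamma$, and the Hopf strong maximum principle applied component-by-component yields the following dichotomy on each $\Gamma_k$: either $f \equiv 0$, or $f > 0$ throughout. In the first case, $e_{n+1}$ is tangent to $\iota(\Gamma_k)$ at every point, so $\iota(\Gamma_k)$ is invariant under vertical translation and takes the form $\Sigma^0_k \times \bR e_{n+1}$ for some closed cone $\Sigma^0_k \subset \bR^n$. In the second case, $\Gamma_k$ equipped with its pulled-back metric is a smooth connected stable minimal cone of dimension $n \leq 6$; Simons's intrinsic eigenvalue inequality on the link forces $\Gamma_k$ to be flat, and the assumption $\Theta = 1$ on $\reg\bC$ implies that $\iota|_{\Gamma_k}$ is injective, so $\iota(\Gamma_k) = \Pi_k$ is a hyperplane.

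Assembling the pieces from the at most two components of $\Gamma$ will produce precisely the three configurations~\ref{item_blowdown_cylindrical}--\ref{item_blowdown_sum_of_planes}: two vertical cylinders, one vertical cylinder plus one plane, or two planes. The hardest part will be to rigorously justify the immersion construction and the sign condition $f \geq 0$ at the level of the limit cone $\bC$, particularly across the classical singularities of $\bC$ where two distinct sheets of $\Gamma$ are identified by $\iota$; this will rely essentially on the branched sheeting theorem of Wickramasekera (Theorem~\ref{thm_wic_mult_two_allard}) applied to the multiplicity-one regular part of $\bC$, together with the classification of classical limit cones from Sections~\ref{sec_classical_limit_cones}--\ref{sec_classical_cones_vertical} which ensures no more exotic singular structure can arise.
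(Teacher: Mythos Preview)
Your decomposition via the Jacobi field $f = \langle \nu, e_{n+1}\rangle$ and the strong maximum principle is sound and is in fact precisely how the paper splits $\bC = \bC_v + \bC_h$ in Lemma~\ref{lem_decomposition_cone}. The treatment of the multiplicity-two alternative via Simons is also fine. However, the heart of your argument---the claim that the immersion $\Gamma$ of the limit cone has at most two connected components---is both unjustified and, as stated, false.

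First, the justification fails: connectedness of the abstract immersion domains $\Gamma_j$ is not a varifold-continuous quantity, and there is no ``diagonal extraction'' that carries the bound $\#\pi_0(\Gamma_j)\leq 2$ across a varifold limit. The immersions $\iota_j$ are defined away from the branch sets $\calB(G_j)$, which can collapse or proliferate in the limit, so there is no well-defined convergence $\Gamma_j \to \Gamma$ of the domains. Second, the claim is already violated by admissible cones: in case~\ref{item_blowdown_cylindrical} the theorem allows $\bC = \bC^0 \times \bR e_{n+1}$ with $\bC^0$ an arbitrary stationary cone in $\bR^n$, and if $\bC^0$ is (say) supported on three multiplicity-one planes in general position, your $\Gamma$ has at least three components, all vertical. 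What you actually need is not a global bound on $\#\pi_0(\Gamma)$ but a bound on the \emph{horizontal} part $\bC_h$: namely that $\bC_h$ is at most a single plane whenever $\bC_v \neq 0$, and at most two planes when $\bC_v = 0$.

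The paper obtains this by two entirely different mechanisms, neither of which is Simons-style. When $\bC_v = 0$, the support of $\bC$ contains no vertical half-line, so Theorem~\ref{thm_grad_bounds_vert_line} forces $u$ to have linear growth; the gradient estimate, the regularity lemma (Theorem~\ref{thm_regularity_lipschitz_two_valued_graphs}), and monotonicity then give $\bC = \abs{\Pi_1} + \abs{\Pi_2}$ directly. When both $\bC_v, \bC_h \neq 0$, Lemma~\ref{lem_Cv_nonzero_no_classical_singularities} runs a preimage-counting argument over $\bR^n$: it shows the function $Q(y) = \sum_{P_0(Y)=y} \Theta(\norm{\bC_h},Y)$ is locally constant on a connected set of full measure, and then rules out $Q \equiv 2$ by exploiting two-valuedness of the approximating $G_j$ together with the classical-cone classification. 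This forces $Q \equiv 1$, and Simon's erasable-singularity theorem~\cite{Simon_Erasable_Singularity_Result} then gives $\bC_h = \abs{\Pi}$. Your Simons argument on a putative non-vertical component $\Gamma_k$ would correctly show $\iota(\Gamma_k)$ is a single plane, but without an independent bound on the number of such components it does not close the argument.
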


The remainder is dedicated to proving this theorem, starting by decomposing
the blowdown cone $\bC$ into a vertical and a horizontal part.
We construct this decomposition as follows.  We consider the set $\calR$ of connected
components of $\reg \bC$. (This set has at most countably many elements by a
classical separability argument.)
By~\cite[Thm.~41.1]{Simon84} every $\Sigma \in \calR$ has
has constant multiplicity $\Theta_\Sigma \in \bZ_{>0}$.
We say that $\Sigma$ is \emph{vertical} if $\langle \nu,e_{n+1} \rangle \equiv 0$
on $\Sigma$, and \emph{horizontal} if instead $\langle \nu,e_{n+1} \rangle > 0$.
Thus $\calR = \calR_v \cup \calR_h$, both of which are allowed to be empty.

\begin{lem} 
\label{lem_decomposition_cone}
Let $\alpha \in (0,1)$ and $2 \leq n \leq 6$. Let $u \in C^{1,\alpha}(\bR^n;\calA_2)$
be an entire two-valued minimal graph and $\bC \in \IV_n(\bR^{n+1})$ be
a blowdown cone of $\abs{G}$ at infinity. Then $\bC_v = \bC_v^0 \times \bR e_{n+1}
= \sum_{\Sigma \in \calR_v} \Theta_\Sigma \abs{\Sigma}$ and
$\bC_h = \sum_{\Gamma \in \calR_h} \Theta_\Gamma \abs{\Gamma}$ are stationary
integral varifolds, and 
\begin{equation}
	\label{eq_decomp_cone_sum_vert_horiz}
	\bC = \bC_v + \bC_h
	\in \IV_n(\bR^{n+1}).
\end{equation}
\end{lem}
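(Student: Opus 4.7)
The plan is first to verify that the partition $\calR = \calR_v \sqcup \calR_h$ is exhaustive. Fix $\Sigma \in \calR$; as $\Sigma$ is a smooth connected minimal submanifold of $\bR^{n+1}$, we may pick a smooth unit normal $\nu$ along it, and the function $f = \langle \nu , e_{n+1}\rangle$ satisfies the Jacobi equation $\Delta_\Sigma f + \abs{A_\Sigma}^2 f = 0$. Because $\bC$ is the varifold limit of a sequence of two-valued graphs $\abs{G_j}$, each of which has strictly positive upward normal component on its regular part, we may orient $\Sigma$ so that $f \geq 0$; the strong maximum principle applied to the Jacobi equation then forces either $f \equiv 0$ on $\Sigma$ (placing $\Sigma \in \calR_v$) or $f > 0$ throughout $\Sigma$ (placing $\Sigma \in \calR_h$).

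Next I would show that each vertical component $\Sigma \in \calR_v$ has the cylindrical form $\Sigma = \Sigma^0 \times \bR e_{n+1}$. Vanishing of $\langle \nu, e_{n+1}\rangle$ on $\Sigma$ means precisely that $e_{n+1}$ is tangent to $\Sigma$ at every point, so $\Sigma$ is invariant under the one-parameter group of vertical translations. Combined with the cone property inherited from $\bC$ (every connected component of $\reg \bC$ is a cone, since dilations permute the components continuously in the parameter $t > 0$), this forces $\Sigma^0 := \Sigma \cap (\bR^n \times \{ 0 \})$ to be an $(n-1)$-dimensional smooth minimal cone in $\bR^n$, with $\Sigma = \Sigma^0 \times \bR e_{n+1}$. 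Setting $\bC_v^0 = \sum_{\Sigma \in \calR_v} \Theta_\Sigma \abs{\Sigma^0} \in \IV_{n-1}(\bR^n)$, we obtain $\bC_v = \bC_v^0 \times \bR e_{n+1}$, and the identity $\bC = \bC_v + \bC_h$ is immediate from summing the multiplicities over all components in $\calR$.

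To establish stationarity of $\bC_v$ and $\bC_h$ separately I would appeal to Corollary~\ref{cor_sing_of_limit_cones}, valid precisely in the range $2 \leq n \leq 6$: either $\bC = 2\abs{\Sigma}$ for a single smooth stable minimal surface (in which case everything is trivial), or $\spt \bC$ is smoothly immersed near each classical singularity $\calC(\bC)$ and its remaining singular strata $\calS^{n-2}(\bC) \cup \calB(\bC)$ form a countably $(n-2)$-rectifiable set. Away from $\sing \bC$, the supports of $\bC_v$ and $\bC_h$ are disjoint open minimal pieces, so each summand is trivially stationary. At a classical singularity $Z \in \calC(\bC)$ one may pick $\rho > 0$ so that $\spt \bC \cap B_\rho(Z)$ splits transversely as $\Sigma_1 \cup \Sigma_2$ with each $\Sigma_i$ smooth and embedded; by the dichotomy of the first step each $\Sigma_i$ belongs to exactly one of $\calR_v,\calR_h$, so the first variation of $\bC_v$ and of $\bC_h$ in $B_\rho(Z)$ can be unambiguously read off from that of the individual sheets.

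The only remaining obstacle is the residual singular set $\calS^{n-2}(\bC) \cup \calB(\bC)$, which I would excise via a capacity argument along the lines of the one used in Lemma~\ref{lem_graph_ambient_stability}: as this set is countably $(n-2)$-rectifiable, it admits cutoff functions analogous to those of Corollary~\ref{cor_cutoff_sequence_cpcty_2_zero}, and testing the first variation of $\bC_v$ (respectively $\bC_h$) against vector fields multiplied by $(1-\eta_j)$ and letting $j \to \infty$ yields the desired stationarity on all of $\bR^{n+1}$. The most delicate step in this plan is the verification at classical singularities that the two local sheets $\Sigma_1,\Sigma_2$ are each contained in exactly one of the global classes $\calR_v, \calR_h$; this is where the strict dichotomy from the maximum principle is essential, since a single smooth connected sheet cannot simultaneously be vertical on one open subset and horizontal on another.
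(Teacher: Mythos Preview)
Your proposal is correct and follows essentially the same route as the paper: both invoke Corollary~\ref{cor_sing_of_limit_cones} to reduce the obstruction to separate stationarity of $\bC_v$ and $\bC_h$ to the classical singularities (handled by observing that each transverse sheet is individually smooth minimal) together with an $(n-2)$-rectifiable residual set. The only minor differences are that the paper removes this residual set via a direct $\calH^{n-1}$-null removability argument rather than your explicit capacity cutoff, and leaves both the dichotomy $\calR = \calR_v \cup \calR_h$ and the cylindrical form of each vertical component as implicit observations where you spell them out.
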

\begin{proof}
First, the convergence of the two sums can be justified because their
weight measures are bounded by $\norm{\bC}$.
Let us write the argument out explicitly for $\bC_v$. Assume that $\calR_v$
is countably infinite, enumerated by $\calR_v = \{ \Sigma_{i} \mid i \in \bN \}$
say.
For every compact subset $K \subset \bR^{n+1}$,
$\sum_{\Sigma \in \calR_v} \Theta_\Sigma \norm{\Sigma}(K) \leq \norm{\bC}(K) \leq C_K$,
so that the partial sums $\sum_{i = k}^{\infty} \Theta_{\Sigma_i} \abs{\Sigma_{i}} \to 0$
when $k \to \infty$ as varifolds.
Thus the sum $\sum_{i =1}^\infty \Theta_{\Sigma_i} \abs{\Sigma_{i}}$ is convergent,
with limit $\bC_v \in \IV_n(\bR^{n+1})$.
As every $\abs{\Sigma_{i}} \in \IV_n(\bR^{n+1})$ is invariant under homotheties,
the same holds for their limit, which we are thus justified in denoting by $\bC_v$.
Similarly one may check that indeed $\bC_v$ is vertical, meaning it is of
the form $\bC_v = \bC_v^0 \times \bR e_{n+1}$ for some $\bC_v^0 \in \IV_{n-1}(\bR^n)$.

Proceeding similarly one can justify the construction of $\bC_h$, and confirm
that $\bC = \bC_v + \bC_h$ as in~\eqref{eq_decomp_cone_sum_vert_horiz}.
Moreover, the stationarity of $\bC$ means that $\bC_v$ is stationary in
the open set $\bR^{n+1} \setminus \spt \norm{\bC_h}$ and vice-versa for $\bC_h$.
The only way either of the two cones 
could fail to be stationary in $\bR^{n+1}$ is if
\begin{equation}
	\calH^{n-1}(\spt \norm{\bC_v} \cap \spt \norm{\bC_h} \cap B_1) > 0.
\end{equation}
By construction $\spt \norm{\bC_v} \cap \spt \norm{\bC_h} \subset \sing \bC$,
which is stratified like
\begin{equation}
	\calS^0 \subset \cdots \subset \calS^{n-2}
	\subset \calS^{n-1} \subset \calS^n,
\end{equation}
where we abbreviate $\calS^i = \calS^i(\bC)$.
By Corollary~\ref{cor_sing_of_limit_cones}, we further have
$\calH^{n-1}(\calB(\bC) \cup \calS^{n-2}) = 0$,
whence
\begin{equation}
	\calH^{n-1}(\spt \norm{\bC_v} \cap \spt \norm{\bC_h} 
	\setminus ( \calS^{n-1} \setminus \calS^{n-2})) = 0.
\end{equation}

Now assume $\calH^{n-1}(\spt \norm{\bC_v} \cap \spt \norm{\bC_h} \cap B_1) > 0$,
and take a  point
\begin{equation}
	X_0 \in (S^{n-1} \setminus \calS^{n-2})
	\cap \spt \norm{\bC_v} \cap \spt \norm{\bC_h} \cap B_1.
\end{equation}
The classification of classical tangent cones established in the previous
section (e.g.\ see Corollary~\ref{cor_sing_of_limit_cones} again),
and valid for the range of dimensions 
$2 \leq n \leq 6$
prescribed in the hypotheses, implies that $\spt \norm{\bC}$ must be immersed near $X_0$.
Therefore both $\bC_v$ and $\bC_h$ must be embedded near $X_0$, say 
 $B(X_0,\rho_0) \cap \spt \norm{\bC} \subset \reg \bC_h \cup \reg \bC_v$
for some $\rho_0 > 0$, which are transversely intersecting.
Both $\reg \bC_h$ and $\reg \bC_v$ have separately pointwise vanishing
mean curvature, and in particular they are both stationary near $X_0$.
As $X_0$ was chosen arbitrarily, this proves that both $\bC_v$ and $\bC_h$
are stationary as varifolds in $\IV_n(\bR^{n+1})$.
\end{proof}

The three cases~\ref{item_blowdown_cylindrical}, \ref{item_blowdown_cylindrical_plus_plane}
and \ref{item_blowdown_sum_of_planes} listed in
Theorem~\ref{thm_degiorgi_splitting_in_low_dimensions}
correspond to the following situations.
In the first case $\bC_h = 0$, and the conclusion is immediate, while in
the last $\bC_v = 0$ and we conclude using the uniform gradient bounds for $u$,
as demonstrated in Theorem~\ref{thm_grad_bounds_vert_line}.
Probably the most complicated case of the three remains, in which both
$\bC_v \neq 0$ and $\bC_h \neq 0$. 

\begin{lem}
\label{lem_Cv_nonzero_no_classical_singularities}
Let $\alpha \in (0,1)$ and $2 \leq n \leq 6$. Let $u \in C^{1,\alpha}(\bR^n;\calA_2)$
be a two-valued minimal graph and $\bC = \bC_v + \bC_h \in \IV_n(\bR^{n+1})$ be
a blowdown cone of $\abs{G}$ at infinity.
If $\bC_v \neq 0$ and $\bC_h \neq 0$ then
$\bC_h = \abs{\Pi}$ for some plane $\Pi \in \Gr(n,n+1)$.
\end{lem}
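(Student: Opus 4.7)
The strategy is to exploit the classification of mixed classical limit cones from Section~\ref{sec_non_vertical_cone}, specifically Lemma~\ref{lem_initial_reduction_planes} together with the impossibility of $\abs{\Pi_1}+2\abs{\Pi_2}$ established in Section~\ref{subsec_second_case_non_vertical_limit_cones}, by analysing tangent cones of $\bC$ at appropriate points of $\spt\norm{\bC_v}\cap\spt\norm{\bC_h}$.

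I would begin by recalling that, by Lemma~\ref{lem_decomposition_cone} and Corollary~\ref{cor_sing_of_limit_cones} (valid because $2\leq n\leq 6$), $\bC_h$ is a stationary integral cone whose regular part has $\langle \nu,e_{n+1}\rangle > 0$ and multiplicity at most two, while its singular set consists of classical singularities in the top stratum $\calS^{n-1}\setminus\calS^{n-2}$ together with an $(n-2)$-rectifiable lower part. Since the support of $\bC_h$ projects onto a codimension-zero subset of $\bR^n$ under $P_0$, while $\bC_v=\bC_v^0\times\bR e_{n+1}$ with $\bC_v^0$ a nontrivial $(n-1)$-cone in $\bR^n$, a straightforward dimension count shows $\spt\norm{\bC_v}\cap\spt\norm{\bC_h}$ is $(n-1)$-dimensional in $\bR^{n+1}$, and by Corollary~\ref{cor_sing_of_limit_cones} almost every such point is a classical singularity at which $\vartan(\bC,X_0)$ is itself a classical limit cone of two-valued minimal graphs, containing a non-vertical plane contributed by $\bC_h$ and a vertical plane contributed by $\bC_v$.

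The heart of the argument is to exclude both multiplicity-two components and multiple distinct components of $\reg\bC_h$. If some component $\Gamma$ of $\reg\bC_h$ carried $\Theta_\Gamma=2$, then at any classical singularity $X_0\in\clos{\Gamma}\cap\spt\norm{\bC_v}$ the tangent cone would equal $2\abs{\Pi_1}+m_v\abs{\Pi_v}$ with $\Pi_1$ non-vertical, directly violating Lemma~\ref{lem_initial_reduction_planes}~\ref{item_second_plane_vertical_number_planes_support}; hence every horizontal component has multiplicity one. If instead $\bC_h$ contained two distinct components $\Gamma_1,\Gamma_2$, a similar analysis at a classical singularity simultaneously on $\clos{\Gamma_1}$, $\clos{\Gamma_2}$ and $\spt\norm{\bC_v}$ would produce the tangent cone $\abs{\Pi_{\Gamma_1}}+\abs{\Pi_{\Gamma_2}}+m_v\abs{\Pi_v}$, a three-plane classical configuration with two non-vertical planes, violating Lemma~\ref{lem_initial_reduction_planes}~\ref{item_second_plane_horizontal_conclusion}, which forbids any vertical contribution in the two-non-vertical-plane case. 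When such a triple meeting lies only in a lower stratum than $\calS^{n-1}\setminus\calS^{n-2}$ (generically the case by dimension count), one must perform an iterated blow-up at such a point to produce a genuinely top-stratum classical cone inheriting the forbidden three-plane structure, and derive the contradiction there.

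The concluding step is to identify the unique surviving multiplicity-one component as a plane. Since it is locally the graph of a single-valued smooth function $f$ satisfying the minimal surface equation, and $f$ is homogeneous of degree one by the cone structure, a removable-singularity argument across the $(n-2)$-rectifiable part of $\sing\bC_h$ together with the classical Bernstein theorem (applicable since $n+1\leq 7$) forces $f$ to be linear, and $\bC_h=\abs{\Pi}$ follows. The main obstacle I expect is the case analysis around the two-component scenario, in particular the iterated blow-up needed to reduce lower-stratum triple meetings to the top-stratum setting where Lemma~\ref{lem_initial_reduction_planes} applies; one must also exclude the pathological possibility that the single surviving horizontal component is a nontrivial double cover of its projection, which would create classical self-intersections of $\bC_h$ that, whenever they fall inside $\spt\norm{\bC_v}$, can in turn be ruled out by the very same tangent-cone method.
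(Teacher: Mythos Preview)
Your overall strategy---exploit the classification of classical limit cones at points where $\spt\norm{\bC_v}$ meets $\spt\norm{\bC_h}$---is indeed the mechanism the paper uses in one of its two sub-cases. But the paper organises the whole proof around a different object: the counting function
\[
Q(y)=\sum_{Y\in P_0^{-1}(\{y\})}\Theta(\norm{\bC_h},Y),
\]
and the main work is to show (via an open/closed argument on the connected set $\bR^n\setminus K$ where $K=P_0(\calS^{n-2}(\bC)\cup\calB(\bC)\cup\sing\bC_v)$) that $Q\equiv q\in\{1,2\}$ on a full-measure set, and then to exclude $q=2$. Only after that does Simon's removable-singularity theorem and linearity of degree-one homogeneous graphs finish the job.

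There is a genuine gap in your exclusion of the two-sheet situation. Your ``dimension count'' that $\spt\norm{\bC_v}\cap\spt\norm{\bC_h}$ is $(n-1)$-dimensional presupposes that $P_0(\spt\norm{\bC_h})$ meets $\spt\norm{\bC_v^0}$ in a large set; the paper needs its open/closed argument precisely to establish that $P_0(\spt\norm{\bC_h})$ covers $\bR^n\setminus K$, and you cannot skip this. More seriously, to rule out two horizontal sheets you seek a point where $\clos{\Gamma_1}$, $\clos{\Gamma_2}$ and $\spt\norm{\bC_v}$ all meet at a classical singularity of $\bC$. But such a point need not exist: if above a regular point $y\in\reg\bC_v^0$ both horizontal pre-images lie in $\reg\bC_h$, the two horizontal sheets never touch each other near the fibre, and there is simply no triple meeting to blow up. This is exactly the paper's ``Case~1'' ($\reg\bC_v^0\setminus P_0(\sing\bC_h)\neq\emptyset$), and there the contradiction is obtained not by tangent-cone analysis but by a sheet-counting argument against the approximating graphs: near the two regular horizontal pre-images the two values of $u_j$ are exhausted by two smooth single-valued functions converging to the two horizontal sheets, leaving nothing to converge to $\bC_v$. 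Your ``iterated blow-up'' cannot recover this, because in this case there is no lower-stratum triple point to iterate on. Your tangent-cone method does match the paper's ``Case~2'' ($\reg\bC_v^0\subset P_0(\sing\bC_h)$), but even there one must verify---as the paper does---that the three planes share a common axis so that the tangent cone is genuinely classical; you do not address this.
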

\begin{proof}
Define a function $Q: \bR^n \to [0,\infty]$ which for all $y \in \bR^n$
counts the number of points in
$\spt \norm {\bC_h} \cap P_0^{-1}(\{ y \})$ with multiplicity:
$Q(y) = \sum_{Y \in P_0^{-1}(\{ y \})} \Theta(\norm{\bC_h},Y).$
As we are working with cones, this function $Q$ is constant along open rays
through the origin: for all $y \in \bR^n$ and $\lambda > 0$,
$Q(\lambda y) = Q(y)$.
Let $K = P_0(\calS^{n-2}(\bC) \cup \calB(\bC) \cup \sing \bC_v)$.
The convergence $\abs{G_j} \to \bC$ means that at all $y \in \bR^n \setminus K$,
$Q(y) \in \{ 0 , 1, 2 \}$.
The function $Q$ is locally constant on the set $U = \bR^n \setminus K \cap \{ Q > 0 \}$.
We show that $U$ is connected, and that in fact it is equal to $\bR^n \setminus K$.

First off, the set $U$ is open. Let $y \in U$ and $Y = (y,Y^{n+1}) \in \spt \norm{\bC_h}$.
Depending on whether $\Theta(\norm{\bC_h},Y) = 1$ or $2$, the tangent cone to $\bC_h$
at $Y$ is either a single plane $\abs{\Pi_Y}$ with $\Pi_Y \in \Gr(n,n+1)$
or $\abs{\Pi_{Y,1}} + \abs{\Pi_{Y,2}}$ with $\Pi_{Y,1},\Pi_{Y,2} \in \Gr(n,n+1)$.
Moreover by Corollary~\ref{cor_vertical_tangent_cones} these planes cannot be
vertical. Applying either Allard's regularity theorem if $\Theta(\norm{\bC_h},Y) = 1$
or the two-valued branched sheeting theorem of Wickramasekera~\cite{Wic_MultTwoAllard},
Theorem~\ref{thm_wic_mult_two_allard}
if $\Theta(\norm{\bC_h},Y) = 2$
one finds that there is a radius $\rho > 0$
and either a single smooth function $u_Y \in C^\infty(D_\rho(y))$ so that
$\spt \norm{\bC_h} \cap B_\rho(Y) \subset \graph u_Y$, or else two such functions
$u_{Y,1},u_{Y,2} \in C^\infty(D_\rho(y))$ so that
$\spt \norm{\bC_h} \cap B_\rho(Y) \subset \graph u_{Y,1} \cup \graph u_{Y,2}$.
In particular $Q > 0$ in a neighbourhood of $y$.

Next we show that $U \subset \bR^n \setminus K$ is relatively closed.
Let $(y_k \mid k \in \bN)$ be a sequence of points in $U$, with $y_k \to y$ as
$k \to \infty$ for some $y \in \bR^n \setminus K$.
For all $k \in \bN$ write $Y_k = (y_k,Y_k^{n+1}) \in \spt \norm{\bC_h}$,
and let $Z_k = Y_k / \abs{Y_k}$. After extracting a subsequence, which
we do without relabelling, we have $Z_k \to Z$ for some $Z \in \bdary B_1$.
By upper semicontinuity of density, $Q(y) = Q(z) \geq \Theta(\norm{\bC_h},Z) \geq 1$.
Finally, arguing as in the proof of Lemma~A.1 of \cite{SimonWickramasekera16}
one finds that $\bR^n \setminus K$ is connected which means that $U$ is connected
as well. In fact $U = \bR^n \setminus K$, and $Q$ is constant on it,
taking the value $q \in \{ 1 , 2 \}$.

We next show that $q = 1$, by assuming that $q = 2$ instead, and deriving a contradiction
with our tangent cone classification, which we obtain
near points where the supports of $\bC_v,\bC_h$ intersect.
There are two cases to distinguish: 
\begin{equation}
	\text{either }
	\label{eq_two_cases_final_argument}
	\reg \bC_v^0 \setminus P_0(\sing \bC_h)
	\neq \emptyset \quad \text{or } \reg \bC_v^0 \subset P_0(\sing \bC_h).
\end{equation}
We start with the first, and pick an arbitrary point $y \in \reg \bC_v^0 \setminus 
P_0(\sing \bC_h)$. This has two pre-images $Y_i = (y,Y_i^{n+1})$ under $P_0$,
which by assumption are also in $\reg \bC_v$.
By Corollary~\ref{cor_vertical_tangent_cones} the tangent cones to $\bC$ at the two points
are $\abs{\Pi_i^v} + \abs{\Pi_i^h} \in \vartan(\bC,Y_i)$
where $\Pi_i^v,\Pi_i^h \in \Gr(n,n+1)$ and $\Pi_i^v$ is vertical but $\Pi_i^h$ is not.
Because the two planes $\Pi_i^h$ are not vertical, there exists a radius $\rho > 0$
and $u_i \in C^2(D_\rho(y))$ so that $\spt \norm{\bC_h} \cap B_\rho(Y_i) \subset \graph u_i$;
moreover there is $\theta \in (0,1)$ depending only on the inclination of the two planes
so that $\graph u_{i} \cap D_{\theta \rho}(y) \times \bR \subset \spt \norm{\bC_h}
\cap B_\rho(Y_i)$.
From the convergence $G_j \to \bC$ and invoking again Theorem~\ref{thm_wic_mult_two_allard}
of Wickramasekera, we find that, after perhaps slightly adjusting the value of $\rho$,
and taking large enough $j$, there exist functions $u_{ji} \in C^2(D_\rho(y))$ so that
$\graph u_{ji} \cap D_{\theta \rho}(y) \times \bR \subset G_j \cap B_\rho(Y_i)$.
Now notice that in fact the two functions define a selection for the two-valued
$u_j$ on the disc $D_{\theta \rho}(y)$, and in fact $G_j \cap D_{\theta \rho}(y) \times \bR
= ( \graph u_{j1} \cup \graph u_{j2} ) \cap D_{\theta \rho}(y) \times \bR$.
As $\abs{\graph u_{ji}} \mres D_{\theta \rho}(y) \times \bR \to \abs{ \Pi_{i}^h}
\mres D_{\theta \rho}(y) \times \bR$ as $j \to \infty$
we find that $\bC$ cannot have a vertical component,
in contradiction to our original assumption that $\bC_v \neq 0$.

We turn to the second case, where it is assumed that
$\reg \bC_v^0 \subset P_0(\sing \bC_h)$.
As $\calH^{n-1}(P_0(\sing \bC_h \setminus \calC(\bC_h))) = 0$
but $\reg \bC_v^0$ has positive $\calH^{n-1}$-measure,
we can pick a point $y \in \reg \bC_v^0 \setminus
P_0(\sing \bC_h \setminus \calC(\bC_h))$ which further has 
\begin{equation}
	P_0^{-1}(\{ y \}) \cap \spt \norm{\bC_h} \subset \calC(\bC_h).
\end{equation}
In fact more is true, because
\begin{equation}
	\label{eq_projection_singular_sets}
	P_0(\calC(\bC_h)) \cap P_0(\sing \bC_h \setminus \calC(\bC_h)) = 0.
\end{equation}
To see this, take $z \in D_1 \cap P_0(\calC(\bC_h))$ and argue as above to show that
$P_0^{-1}(\{ z \}) \cap \spt \norm{\bC_h} \subset \sing \bC_h$.
Let $Z = (z,Z^{n+1}) \in \calC(\bC_h)$.
Write $\abs{\Pi_{Z,1}} + \abs{\Pi_{Z,2}} \in \vartan(\bC_h,Z)$, neither of which
is vertical by Corollary~\ref{cor_vertical_tangent_cones}. 
By Theorem~\ref{thm_wic_mult_two_allard},
for small $\rho > 0$ 
and large enough $j$, there are two smooth functions
$U_{ji} \in C^\infty(\Pi_{Z,i} \cap B_{2\rho}(Z);\Pi_{Z,i}^\perp)$ so that
$\abs{G_j} \mres B_\rho(Z) = \{ \abs{\graph U_{j1}} + \abs{\graph U_{j2}} \}
\mres B_\rho(Z)$.
There is $\theta > 0$ depending only on the inclination of $\Pi_{Z,1},\Pi_{Z,2}$
for which there is a smooth selection for $u_j$ on $D_{\theta \rho}(z)$:
there are $u_{j,1},u_{j,2} \in C^\infty(D_{\theta \rho}(z))$ with
$\graph u_{ji} \subset \graph U_{ji}$.
Write $Z_1 = Z$. If there were a point $Z_2 \in P_0^{-1}(\{ z \}) \cap \sing \bC_h
\setminus \calC(\bC_h)$ then no matter how small $\sigma > 0$ is chosen we could
pick $j$ large enough that $G_j \cap B_\sigma(Z_2) \neq \emptyset$.
This, however, is absurd if the radius is so small that $\sigma < \theta \rho$
and $B_\rho(Z_1) \cap B_\sigma(Z_2) = \emptyset.$
In short $P_0^{-1}(\{ z \}) \cap \sing \bC_h \subset \calC(\bC_h)$, which
demonstrates the validity of~\eqref{eq_projection_singular_sets}.

Therefore, having chosen the point $y \in \reg \bC_v^0 \setminus
P_0(\sing \bC_h \setminus \calC(\bC_h))$ we can pick a radius $\rho > 0$
small enough that
\begin{equation}
	\reg \bC_v^0 \cap D_\rho(y) \subset P_0(\calC(\bC_h)).
\end{equation}
Next let $Y \in P_0^{-1}(\{ y \}) \cap \calC(\bC_h)$ be the unique 
singular point lying above $y$, and denote its tangent cone
\begin{equation}
	\abs{\Pi_{Y,1}} + \abs{\Pi_{Y,2}} \in \vartan(\bC_h,Y),
\end{equation}
with neither plane $\Pi_{Y,1},\Pi_{Y,2}$ vertical.
By assumption the point $Y$ also belongs to $\reg \bC_v$, with respect to which it
has the tangent cone
\begin{equation}
	\abs{\Pi_Y^v} = \abs{\Pi_{y,0}^v} \times \bR e_{n+1}
	\in \vartan(\bC_v,Y).
\end{equation}
Blowing up $\bC$ at $Y$ one finds
$\abs{\Pi_{Y,1}} + \abs{\Pi_{Y,2}} + \abs{\Pi_y^v} \in \vartan(\bC,Y)$.
Let $L_Y = \Pi_{Y,1} \cap \Pi_{Y,2}$ be the $(n-1)$-dimensional axis along which
the two non-vertical planes. When blowing up the assumed inclusion
$\reg \bC_v^0 \subset P_0(\sing \bC_h)$, one finds $\Pi_{y,0}^v \subset P_0(L_Y)$.
Therefore the three planes meet along this very same axis $L_Y$, and the cone
$\abs{\Pi_{Y,1}} + \abs{\Pi_{Y,2}} + \abs{\Pi_y^v} \in \vartan(\bC,Y)$ is
classical. This is absurd, because it contradicts our classification of classical
limit cones.

We have thus shown, in both cases described in~\eqref{eq_two_cases_final_argument},
that $Q \equiv 1$ on $D_1 \setminus K$. Using Allard regularity for example there is
a smooth, single-valued function $U_h \in C^\infty(D_1 \setminus K)$ so that
$\bC_h \mres  (D_1 \setminus K) \times \bR = \abs{\graph U_h} $. 
As $\calH^{n-1}(K) = 0$ and $K$ is a locally compact subset of $D_1$, we are
precisely in a setting investigated by Simon~\cite{Simon_Erasable_Singularity_Result},
whose results prove that in fact $U_h$ can be extended smoothly across $K$.
It is then a standard fact that $U_h$ is linear, that is $\abs{\graph U_h}
= \abs{\Pi_h} \mres D_1 \times \bR$ for some horizontal plane $\Pi_h \in \Gr(n,n+1)$.
Then also $\abs{\bC_h} = \abs{\Pi_h}$, which is precisely what was to prove.
\end{proof}

\section{The Bernstein theorem in four dimensions}

\label{sec_bernstein_theorem_in_four_dimensions}

In this section we complete the proof of the main result:
the Bernstein theorem in dimension four.

\begin{thm}
	\label{thm_bernstein_dim_four_chapter}
Let $n + 1 = 4$, $\alpha \in (0,1)$, and $u \in C^{1,\alpha}(\bR^n;\calA_2)$
be a two-valued function whose graph $G$ is minimal.
Then $u$ is linear, and there are two three-dimensional planes $\Pi_1,\Pi_2
\in \Gr(3,4)$ so that $\abs{G} = \abs{\Pi_1} + \abs{\Pi_2}$.
\end{thm}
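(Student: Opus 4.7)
The strategy is to blow $G$ down at infinity to a stationary cone $\bC \in \IV_3(\bR^4)$, prove that $\bC$ is a sum of two three-planes, and then invoke monotonicity of the area density to conclude that $u$ is linear. By Theorem \ref{thm_structure_blowdown_cones_intro} applied with $n=3$, the cone $\bC$ is of one of three types: purely cylindrical, of the form $\abs{\Pi} + \bC_v^0 \times \bR e_4$, or already a sum of two planes. The last case gives the desired conclusion directly, so it suffices to analyse the first two, in both of which the vertical component $\bC_v = \bC_v^0 \times \bR e_4$ is non-trivial; by Lemma \ref{lem_Cv_nonzero_no_classical_singularities} the horizontal part, if present, is a single three-plane.

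To handle the vertical part I would study the two-dimensional cone $\bC_v^0 \in \IV_2(\bR^3)$, which is stationary, stable, and itself a weak limit of rescaled two-valued minimal graphs. Its multiplicities are at most two by Corollary \ref{cor_conseq_multiplicity_bound}, and the improved area bound of Corollary \ref{cor_qualitative_estimates} specialised to $n=3$ bounds the total multiplicity by three. Combining this with the classification of classical limit cones worked out in Sections \ref{sec_classical_limit_cones}--\ref{sec_classical_cones_vertical} (especially Lemmas \ref{lem_consec_number_beans}, \ref{lem_single_sheet_lemma}, \ref{lem_sum_consec} and Corollary \ref{cor_dimless_configurations}) one forces $\bC_v^0$ to be a sum of two-dimensional planes in $\bR^3$ through a common axis, each of multiplicity one. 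After this reduction $\bC$ is a sum of at most three three-planes in $\bR^4$, each with multiplicity one.

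The decisive combinatorial step is to rule out a genuine three-plane configuration. Suppose for contradiction $\bC = \abs{\Pi_1} + \abs{\Pi_2} + \abs{\Pi_3}$ with pairwise distinct $\Pi_i \in \Gr(3,4)$. The three great two-spheres $\Pi_i \cap S^3$ meet pairwise along great circles, inducing on $S^3$ a cell decomposition whose dual encodes the adjacencies of the half-three-planes that make up $\bC$. Stationarity imposes along each of these two-dimensional axes $\Pi_i \cap \Pi_j$ the balancing condition that the outward unit directions of the three meeting half-three-planes, taken in the orthogonal plane, sum to zero (equivalently, they meet at $120^\circ$). Combining this angular balancing with the multiplicity and orientation constraints across adjacent wedges enforced by Lemmas \ref{lem_consec_number_beans} and \ref{lem_single_sheet_lemma}, and performing an Euler-characteristic count on the induced cell complex, I expect to derive a contradiction. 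This is the step I anticipate as the main obstacle, as it requires simultaneously enforcing the continuous balancing law and the discrete compatibility constraints at every two-dimensional axis.

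With $\bC = \abs{\Pi_1} + \abs{\Pi_2}$ established, $\norm{\bC}(B_1) = 2\omega_3$. If $\calZ_u \neq \emptyset$, pick $x_0 \in \calZ_u$: then $X_0 = (x_0, u_1(x_0)) \in G$ has $\Theta(\norm{G}, X_0) = 2$, and the monotonicity of $r^{-3}\calH^3(G \cap B_r(X_0))$ between its value $2\omega_3$ at $r = 0$ and its limit $2\omega_3$ at infinity forces this ratio to be constant in $r$; the equality case of the monotonicity formula then gives that $G - X_0$ is a cone, and being a $C^{1,\alpha}$ two-valued graph it must be linear, i.e.\ a union of two three-planes. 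If instead $\calZ_u = \emptyset$, the branches $u_1, u_2$ are globally defined single-valued minimal graphs on $\bR^3$ and linear by the classical Bernstein theorem in dimension four. Either way $u$ is linear and $G$ is a union of two three-planes.
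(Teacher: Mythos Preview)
Your proposal has the right overall architecture but contains two genuine gaps.

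First, your reduction of $\bC_v^0$ to ``a sum of two-dimensional planes in $\bR^3$ through a common axis'' is not justified by the references you cite. Corollary~\ref{cor_dimless_configurations} and the surrounding lemmas classify \emph{classical} tangent cones---cones already known to be supported in half-planes meeting along one axis---not the global cone $\bC_v^0$ itself. Two-dimensional linear planes in $\bR^3$ generically meet only at the origin, so there is no reason for $\bC_v^0$ to be classical. The paper instead exploits that $\bC_v^0 \in \IV_2(\bR^3)$ is ambient stable: the logarithmic cutoff trick in the stability inequality forces $A_{\bC_v^0}\equiv 0$ on the regular part, and then Corollary~\ref{cor_sing_of_limit_cones} (immersed away from $0$) together with Lemma~\ref{lem_GMT_immersed_flat_union_of_planes} gives $\bC_v^0=\sum_j \abs{\Pi_j^0}$ with multiplicity one but \emph{no common axis} in general. (Were the axis common, Corollary~\ref{cor_classical_limit_cone_classification} would already give $D\leq 2$ and your combinatorial step would be vacuous.)

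Second, your combinatorial step rests on a false premise. Along a pairwise intersection $\Pi_i\cap\Pi_j$ only \emph{two} full planes meet, contributing four half-planes that balance automatically; there is no $120^\circ$ condition and nothing for an Euler-characteristic count to grip. The paper's argument is of a different nature and uses the graph structure rather than stationarity: in the purely vertical case the three $2$-planes $\Pi_1^0,\Pi_2^0,\Pi_3^0$ (not sharing an axis) cut $\bR^3$ into eight regions, and one builds the directed adjacency graph with edges oriented by the limit current. By Lemma~\ref{lem_consec_number_beans} the sheet-count $F\in\{0,1,2\}$ increases by one along each directed edge; but this graph is a cube with parallel edges consistently oriented, hence contains a directed path of length three, which is impossible. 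The mixed case $\bC=\abs{\Pi_1}+\bC_v^0\times\bR e_4$ is handled separately and differently: one peels off a single smooth sheet near $\Pi_1$, leaving a sequence of \emph{single}-valued (hence area-minimising) graphs converging to $\bC_v^0\times\bR e_4$, which forces $\bC_v^0$ to be a single plane.
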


\subsection{Stability and the logarithmic cutoff trick}
\label{sec_GMT_lemma}

By Theorem~\ref{thm_degiorgi_splitting_in_low_dimensions} the cone $\bC$ must take one
of the following three forms:
\begin{equation}
	\label{eq_three_forms_of_cone}
	 \bC^0 \times \bR e_{4},  \abs{\Pi_1} + \bC^0 \times \bR e_{4}
	\text{ or } \abs{\Pi_1} + \abs{\Pi_2},
\end{equation}
where $\Pi_1,\Pi_2$ are two non-vertical, possibly equal planes,
and $\bC^0 \in \IV_2(\bR^3)$ is a stationary integral cone.
The remainder of this section is dedicated to excluding the first two cases,
that is necessarily $\bC = \abs{\Pi_1} + \abs{\Pi_2}$. 
The proof starts with the observation that $\bC^0$ inherits the
ambient stability from $\bC$. Using the so-called logarithmic cutoff trick
one finds that $\bC^0$ has $A_{\bC^0} \equiv 0$ on $\reg \bC^0$.

\begin{lem}
\label{lem_GMT_immersed_flat_union_of_planes}
Suppose $n \geq 2$.
Let $\bC \in \IV_n(\bR^{n+1})$ be a stationary integral cone,
with support immersed outside of the origin and $\abs{A_{\bC}} \equiv 0$
on $ \reg \bC$.
%
Then $\bC$ is supported in a finite union of $n$-dimensional planes.
\end{lem}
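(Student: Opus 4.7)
The plan is to unpack the hypothesis that $\spt \bC \setminus \{0\}$ is immersed: exactly as in Section~\ref{subsec_global_immersion}, this furnishes a smooth $n$-dimensional manifold $\Gamma$ together with a proper immersion $\iota: \Gamma \to \bR^{n+1} \setminus \{0\}$ whose image is $\spt \bC \setminus \{0\}$, and along which the second fundamental form vanishes identically by hypothesis. Let $\Sigma$ be any connected component of $\Gamma$. Because $A \equiv 0$ on $\Sigma$, the tangent $n$-plane of $\iota$ is parallel, so by connectedness there is a fixed $V_\Sigma \in \Gr(n,n+1)$ that equals the tangent space at every point of $\iota(\Sigma)$; integrating gives $\iota(\Sigma) \subset H_\Sigma$ for some affine translate $H_\Sigma = v_\Sigma + V_\Sigma$. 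The cone invariance of $\bC$ lifts through $\iota$ to a continuous action of $(0,+\infty)$ by dilations on $\Gamma$, which at $\lambda = 1$ is the identity; by continuity it must preserve the connected component $\Sigma$ for $\lambda$ near $1$, and then by connectedness of $(0,+\infty)$ for all $\lambda > 0$. Hence $\lambda H_\Sigma = H_\Sigma$ for all $\lambda > 0$, which forces $0 \in H_\Sigma$, i.e.\ $H_\Sigma \in \Gr(n,n+1)$.

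Next I would show $\iota(\Sigma) = H_\Sigma \setminus \{0\}$ by a standard open-and-closed argument: since $\iota$ is an immersion of equal dimension, $\iota(\Sigma)$ is open in $H_\Sigma \setminus \{0\}$; since $\iota$ is proper into $\bR^{n+1} \setminus \{0\}$, the image is closed there, hence closed in $H_\Sigma \setminus \{0\}$; and $H_\Sigma \setminus \{0\}$ is connected because $n \geq 2$. Combining the components yields $\spt \bC = \{0\} \cup \bigcup_{\Sigma} H_\Sigma$, a union of hyperplanes through the origin.

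For finiteness, since $\bC \in \IV_n(\bR^{n+1})$ is a stationary integral cone we have $\norm{\bC}(B_1) < \infty$ and in particular $\calH^n(\spt \bC \cap B_1) < \infty$. Each distinct plane $H_\Sigma$ contributes mass $\omega_n$ to this intersection while two distinct $n$-planes through the origin intersect only in an $(n-1)$-dim linear subspace, which has $\calH^n$-measure zero; so the number of distinct $H_\Sigma$ is at most $\calH^n(\spt \bC \cap B_1)/\omega_n < \infty$. The main obstacle I anticipate is the first step: extracting a usable global immersion $\iota$ (and in particular verifying its properness into $\bR^{n+1} \setminus \{0\}$) from the bare hypothesis that the support is immersed away from the origin; once this structural input is in hand, every remaining step is essentially topological and the hypothesis $A_\bC \equiv 0$ feeds in mechanically.
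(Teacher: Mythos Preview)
Your argument is correct, but it takes a more elaborate route than the paper's. The paper never builds a global immersion $\Gamma$; instead it works directly with the support. It fixes any linear hyperplane $\Pi$ meeting $\reg \bC$ and runs the open--closed argument on $\spt\norm{\bC}\cap\Pi\setminus\{0\}$ inside the connected set $\Pi\setminus\{0\}$: closedness is immediate, and for openness one only needs the \emph{local} immersion hypothesis together with $A_\bC\equiv 0$, which forces $\spt\norm{\bC}$ near any $X\in\Pi\setminus\{0\}$ to be a finite union of pieces of affine hyperplanes, one of which must be $\Pi$. This sidesteps precisely the obstacle you flagged---constructing $\iota$ and checking its properness---and also avoids your lift of the dilation action to $\Gamma$ (which is correct but requires verifying that $\eta_\lambda$ respects the sheet equivalence and that $(\lambda,x)\mapsto\tilde\eta_\lambda(x)$ is jointly continuous). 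Your approach has the merit of packaging the structure cleanly once $\Gamma$ is in hand, and the component-by-component analysis makes finiteness transparent; the paper's approach trades that tidiness for a shorter path that uses only pointwise information. Both hinge on the same topological fact, the connectedness of $\Pi\setminus\{0\}$ for $n\geq 2$, and both deduce finiteness from the mass bound $\norm{\bC}(B_1)<\infty$.
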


\begin{proof}
Let $\Pi \in \Gr(n,n+1)$ be any $n$-dimensional linear plane with $\reg \bC
\cap \Pi \neq \emptyset$.
The set $\Pi \setminus \{ 0 \}$ is connected because $n \geq 2$. As
$\spt \norm{\bC} \cap \Pi \setminus \{ 0 \}$ is a relatively closed subset of
$\Pi \setminus \{ 0 \}$, we only need to show that it is open to obtain
$\Pi \subset \spt \norm{\bC}$.
A point $X \in \spt \norm{\bC} \cap \Pi \setminus \{ 0 \}$ is either regular
or an immersed classical singularity. In both cases the fact that $\abs{A_\bC} \equiv 0$
on $\reg \bC$ means that near $X$ the support of $\bC$ is either a plane
or a union of planes.
By assumption $X \in \Pi$, so one of these planes must be $\Pi$ itself. This shows
that $\spt \norm{\bC} \cap \Pi \setminus \{ 0 \}$ is open inside $\Pi \setminus \{ 0 \}$, 
and thus $\Pi \subset \spt \norm{\bC}$.
Repeating this, we find a finite collection of planes $\Pi_1,\dots,\Pi_D$
so that $\calH^n(\spt \norm{\bC} \setminus \cup_i \Pi_i) = 0$, and hence
$\spt \norm{\bC} \subset \cup_i \Pi_i$ using the monotonicity formula.
\end{proof}

To apply Lemma~\ref{lem_GMT_immersed_flat_union_of_planes} to $\bC^0$
we first need to show that it is immersed outside the origin.
This is essentially a consequence of Corollary~\ref{cor_sing_of_limit_cones},
which offers the following two possibilities.
\begin{enumerate}[label = (\arabic*)]
\item Either $\Theta(\norm{\bC},X) = 2$ for $\calH^3$-a.e.\ $X \in \reg \bC$,
	and then $\bC$ is smooth embedded. This makes it impossible that
	$\bC = \bC^0 \times \bR e_4 + \Pi_1$, and in the case where
	$\bC = \bC^0 \times \bR e_4$ we find that $\bC^0 = 2 \abs{\Pi^0}$
	for some two-dimensional plane $\Pi^0 \subset \bR^3$. 
	Thus $\bC = 2 \abs{\Pi^0 \times \bR e_4}$, and we can conclude by
	the monotonicity formula.

\item The second possibility is that the density of $\bC$ is $\calH^3$-a.e. equal
	one, and thus automatically also $\Theta(\norm{\bC^0},X) = 1$
	for $\calH^2$-a.e.\ $X \in \reg \bC^0$.
	Recall that the singular set of $\bC^0$ is stratified like
	$\calS^0(\bC^0) \subset \calS^1(\bC^0) \subset \calS^2(\bC^0)$.
	Invoking Corollary~\ref{cor_sing_of_limit_cones} again we find that
	that $\spt \norm{\bC^0}$ is immersed near points of $\calS^1(\bC^0)
	\setminus \calS^0(\bC^0)$, and the remaining singularities necessarily
	have $\calS^0(\bC^0) \cup \calB(\bC^0) \subset \{ 0 \}$.
\end{enumerate}
In both cases $\bC^0$ is immersed outside the origin, and by
Lemma~\ref{lem_GMT_immersed_flat_union_of_planes} we find that $\bC^0$ is supported
in a union of planes.
In the remainder we need only consider the second possibility,
where the density of $\bC^0$ at all regular points is one, and
the cone is equal to a sum of planes, all of which are vertical and have multiplicity one.
(Note Lemma~\ref{lem_GMT_immersed_flat_union_of_planes} only gives that $\bC^0$
is supported in a union of planes.)
In the remainder we write $\bC^0 = \bP^0 = \sum_{j=2}^D \abs{\Pi_j^0}$ to reflect this.
At this stage of the proof we have reduced the possible forms of the blowdown
cone given in~\eqref{eq_three_forms_of_cone} to
\begin{equation}
	\bP^0 \times \bR e_4 \text{ or } \abs{\Pi_1} + \bP^0 \times \bR e_4.
\end{equation}
The two require different approaches, and we treat the latter first.

\subsection{Non-vertical blowdown cones}

Write $\bL^0 \subset \bR^3$ for the union of one-dimensional lines along which
the planes in the support of $\bP^0$ meet.
The singularities of the blowdown cone are 
$\sing \bC = (\Pi_1 \cap \spt \norm{\bP^0 \times \bR e_4}) \cup \bL^0 \times \bR e_{4}$,
and those lying in $\Pi_1 \cap \spt \norm{\bC} \setminus (\bL^0 \times \bR e_4)$
are all immersed.
Let $\tau > 0$ be given. Using Allard's regularity theorem near the points
of $\Pi_1 \cap \reg \bC$ and Wickramasekera's stable sheeting theorem near
those in $\Pi_1 \cap \sing \bC \setminus (\bL^0 \times \bR e_4)$ we find the
existence of a smooth function $u_{j,1} \in C^\infty(D_1 \setminus [\bL^0]_\tau)$
with $G_{j,1} = \graph u_{j,1} \subset G_j \cap (\Pi_1)_\tau$, at least provided
$j \geq J(\tau)$ is large enough.
We obtain a smooth selection $u_{j,1},u_{j,2} \in C^\infty(D_1 \setminus [\bL^0]_\tau)$
by picking the remaining value of $u_j$ for $u_{j,2}$ above every point.
(It is not enough to observe that eventually $\calB_{u_j}
\cap D_1 \subset [\bL^0]_\tau$, as the set $D_1 \setminus [\bL^0]_\tau$
is not simply connected regardless of how small $\tau > 0$ is.)
As $\tau \to 0$ and $j \geq J(\tau) \to \infty$ we find that by construction
$\abs{G_{j}^1} \to \abs{\Pi_1}$ and $\abs{G_j^2} \to \bP^0 \times \bR e_4$.
As the $G_j^2$ are all single-valued graphs we find that $\bP^0$ is necessarily
supported in a single plane, say $\bP^0 = \abs{\Pi_2^0}$. (There are various
ways of confirming this in more detail, all boiling down to the fact that
$\bP^0$ cannot be the limit of a sequence of area-minimising currents if
it is supported in more than one plane. To give but one example, revisiting the arguments
used to prove the improved area estimates we obtain that there is $\delta > 0$
so that $\norm{\bP^0 }(D_1) \leq (2 - \delta) \omega_2$.)
Therefore $\abs{G_j} \to \abs{\Pi_1} + \abs{\Pi_2}$, as desired.

\subsection{Vertical blowdown cones: the adjacency graph}

Here the blowdown sequences converges to a vertical cone,
$\abs{G_j} \to \bP = \bP^0 \times \bR e_4 = \sum_{j=1}^D \abs{\Pi_j^0} \times \bR e_4$.
In the current topology $\cur{G_j} \to \sum_{j=1}^D \cur{\Pi_j^0} \times \bR e_4$
where the planes are respectively oriented by unit normals $n_1,\dots,n_D$.
The improved area estimates give $D \leq 3$,
see Corollary~\ref{cor_qualitative_estimates}.

The only problematic value is $D = 3$. We exclude this by a combinatorial argument,
constructing what we call the \emph{adjacency graph} by a kind of dual cellular
decomposition.
The planes $\Pi_1,\Pi_2,\Pi_3$ divide $\bR^3$ into a finite number of connected
components $\Omega_1,\dots,\Omega_N \subset \bR^3$. These are all polyhedral,
with respective boundaries $\bdary \Omega_1,\dots,\bdary \Omega_N
\subset \Pi_1 \cup \Pi_2 \cup \Pi_3$.
We say that two regions $\Omega \neq \Omega'$ are adjacent if they meet along a face.
To every component $\Omega$ we associate a vertex $v$, forming a set $V$.
Connect two distinct vertices $v,v' \in V$ by an edge $e$ if the corresponding regions
$\Omega,\Omega'$ are adjacent.
If $\Omega,\Omega'$ are adjacent then they meet along a single plane $\Pi_i$.
We orient $e$ so that it agrees with the orientation of this plane,
meaning if $n_i$ points away from $\Omega$ and into $\Omega'$
then $e$ is directed from $v$ to $v'$ and vice-versa.
Thus we obtain a set of directed edges denoted $E$. We call the finite, directed
graph $H = (V,E)$ the \emph{adjacency graph} of $\bP$.
Label the vertices of the graph by a function $F: V \to \{ 0 , 1 , 2 \}$
which returns the number $F(v)$ of sheets of $G_j$ eventually lying over the
corresponding region $\Omega \cap D_1$. This is well-defined by
Lemma~\ref{lem_description_near_planes} for example.
Let $v,v' \in V$ be two adjacent vertices, and suppose that
$e$ points from $v$ to $v'$. By Lemma~\ref{lem_consec_number_beans},
$F(v') = F(v) + 1$.
As an immediate consequence we find that $H$ cannot contain directed paths
of length more than two.
Indeed if $H$ contained three edges $e_1,e_2,e_3$ so that $e_i$ points
from $v_i$ to $v_{i+1}$ then $F(v_4) = F(v_1) + 3$, which is absurd.

There are essentially only two ways in which the planes $\Pi_1,\Pi_2,\Pi_3$
can be arranged. 
Let $\Pi_3^1 = \{ x \in \bR^3 \mid \langle x , n_3 \rangle \equiv 1 \} \subset \bR^3$
be the affine plane parallel to $\Pi_3^0$ at height one.
The two planes $\Pi_1^0$ and $\Pi_2^0$ intersect this 
transversely in a pair of affine lines $l_1,l_2$.
If these lines were parallel, then the planes $\Pi_1,\Pi_2,\Pi_3$ would meet
along a common axis, making $\bP = \bP^0 \times \bR e_4$ a classical cone.
As we have already dealt with these, we may assume this is not the case.

Hence we may assume the two lines $l_1,l_2$ intersecting,
and compute the adjacency graph. The set $\Pi_3^1 \setminus (l_1 \cup l_2)$
has four connected components.
Each of these leads to a pair of adjacent vertices in $V$, which correspond
to regions meeting along a face in $\Pi_3$. Thus $H$ contains eight vertices,
arranged as four pairs of vertices lying on either side of $\Pi_3$. 
Additionally the four vertices corresponding to the regions
contained inside $\{ x \in \bR^3 \mid \langle x , n_3 \rangle > 0 \}$
are arranged in a square in $H$, with parallel edges oriented in the same direction.
The same holds for the regions lying in the half-space
$\{ x \in \bR^3 \mid \langle x, n_3 \rangle < 0 \}$.
In short, $H$ is a cube with eight vertices and twelve edges, with
parallel edges pointing in the same direction. As this graph contains a
directed path of length three, we have reached a contradiction.

\appendix



\section{Singularities and regularity of minimal surfaces}

\label{sec_appendix_regularity_and_singularities}

Here we record some results from geometric measure theory,
and define some notation that we rely on in our arguments.
We mainly work with integral varifolds, but we sometimes also require
some results for integer-density rectifiable currents. 
For their basic theory one may consult the book of Simon~\cite{Simon84},
or indeed any of the other standard references.
Here we concentrate on the regularity of stationary integral varifolds,
and quote some results concerning their singularities.
We state all results for codimension one surfaces, although some of them%
---notably those regarding the stratification of their singular sets---%
remain valid in higher codimensions.

\subsection{Stratification of the singular set}
\label{subsec_stratification}

Let $U \subset \bR^{n+1}$ be an open set. We write $\IV_n(U)$ for the space
of $n$-dimensional integer-density rectifiable varifolds in $U$,
and $\I_n(U)$ for the space of integral currents in $U$, namely those integer-density
currents in $U$ whose boundary also defines an integer-density rectifiable current.
(This includes the $n$-dimensional cycles, which have zero boundary by definition.)

Let $V \in \IV_n(U)$ be a stationary varifold.
A point $X \in U \cap \spt \norm{V}$ is called \emph{regular} if there
is a radius $\rho > 0$ so that $B_\rho(X) \cap \spt \norm{V}$ is an embedded
surface. 
A point which is not regular is called \emph{singular}.
We denote the \emph{regular set} $\reg V$ and the \emph{singular set} $\sing V$.

Consider a point $X \in U \cap \sing V$ and a tangent cone $\bC \in \vartan(V,X)$.
Let $V \in \bR^{n+1}$ be a vector, and $\tau_V: X \in \bR^{n+1} \mapsto X - V \in \bR^{n+1}$
be the corresponding translation by $-V$.
The set $\calS(\bC) = \{ V \in \bR^{n+1} \mid \tau_{V\#} \bC = \bC \}$
of vectors that leave $\bC$ invariant forms a vector space called the \emph{spine} of $\bC$.
For $0 \leq m \leq n$ let
$\calS^m(V) = \{ X \in U \cap \sing V \mid \dim \calS(\bC) \leq m \text{ for all }
\bC \in \vartan(V,X) \}$.
If $\dim \calS(\bC) = m$ then there is a stationary cone
$\bC' \in \IV_{n-m}(\bR^{n-m+1})$ so that $\bC = \bC' \times \calS(\bC)$.
 Let $\bC \in \IV_n(\bR^{n+1})$ be a stationary
integral cone. Then for $X \in \spt \norm{\bC} \setminus \{ 0 \}$ every tangent
cone $\bC_X \in \vartan(\bC,X)$ has $\calS(\bC_X) \neq \{ 0 \}$ because
$\bR \cdot X \subset \calS(\bC_X)$. This means that there exists $\bC_X' \in
\IV_{n-1}(\bR^{n})$ so that $\bC_X = \bC_X' \times \bR \cdot X$.
Then we have the so-called \emph{Almgren--Federer stratification theorem}.
\begin{thm}[\cite{Almgren1981}]
Let $U \subset \bR^{n+1}$ be open, and $V \in \IV_n(U)$ be a stationary
integral varifold. Then $\dim_\calH \calS^m(V) \leq m$ for all $0 \leq m \leq n$.
\end{thm}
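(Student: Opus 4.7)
The plan is to prove this by Federer's dimension reduction, which is essentially how Almgren originally established it. Throughout, recall that stationarity together with the monotonicity formula ensures: (i) tangent cones exist at every point of $\spt \norm V$ and are themselves stationary integral varifolds, (ii) the density function $\Theta(\norm V, \cdot)$ is upper semicontinuous, and (iii) every tangent cone at $X$ is invariant under dilations about $0$ (so in particular $\bR\cdot X \subseteq \calS(\bC_X)$ for any $X \neq 0$ and any tangent cone $\bC_X$ at $X$ to a cone with vertex $0$).

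First I would argue by contradiction: suppose $\calH^s(\calS^m(V)) > 0$ for some $s > m$. By standard density lemmas for Hausdorff measure, one can extract a point $X_0 \in \calS^m(V)$ at which $\calS^m(V)$ has positive upper $s$-density, meaning $\limsup_{r\to 0} r^{-s}\calH^s_\infty(\calS^m(V) \cap B_r(X_0)) > 0$. Blow up at $X_0$: select $r_j \to 0$ realising this limsup, consider the rescaled varifolds $\eta_{X_0, r_j \#} V$, and extract a subsequential varifold limit $\bC \in \vartan(V, X_0)$. By construction, the pushforwards $\eta_{X_0, r_j}(\calS^m(V) \cap B_{r_j}(X_0))$ subconverge in the Hausdorff sense to a closed subset $E \subseteq \clos{B}_1$ with $\calH^s_\infty(E) > 0$.

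Next I would check that this inherited set sits inside the stratum of the blowup: $E \subseteq \calS^m(\bC)$. This is the point where upper semicontinuity of density and the structure of tangent cones to tangent cones are used, together with the elementary observation that spines can only increase when passing to a tangent cone at a suitable rescaled sequence. Having verified this, I iterate. Since $\bC$ is a cone with vertex $0$, pick any $Y_0 \in E \setminus \{0\}$ which is a density point for $\calH^s$ on $E$; such $Y_0$ exists because $s > 0$. Blow up $\bC$ at $Y_0$ to obtain $\bC' \in \vartan(\bC, Y_0)$. Because $\bC$ is invariant under dilations about $0$, $\bC'$ is translation-invariant along $\bR Y_0$; combined with the translation-invariance it already has from $\calS(\bC)$, this gives $\calS(\bC') \supseteq \calS(\bC) + \bR Y_0$. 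If $Y_0 \notin \calS(\bC)$, the spine dimension strictly increases. By splitting off the existing spine $\calS(\bC) = \bR^k$ and writing $\bC = \bC_0 \times \bR^k$ with $\bC_0$ having spine $\{0\}$, we can always arrange $Y_0 \notin \calS(\bC)$: otherwise $E \subseteq \calS(\bC)$, forcing $\calH^s(E) = 0$ since $\dim \calS(\bC) \leq m < s$, a contradiction.

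Iterating this step at most $n - m$ times produces a tangent cone whose spine has dimension strictly greater than $m$, arising as an element of $\vartan(V, Y)$ for some $Y \in \calS^m(V)$ (after unwinding the chain of rescalings). This contradicts the defining property of $\calS^m(V)$ and completes the proof. The principal technical obstacle is the step labelled \emph{survival of the stratum}: making precise that the limit set $E$ of rescaled copies of $\calS^m(V)$ both has positive $s$-dimensional Hausdorff content (requiring a Vitali-type covering combined with $\calH^s_\infty$-lower semicontinuity under Hausdorff convergence of closed sets in a ball) and lies in $\calS^m(\bC)$ (requiring that tangent cones of $V$ at the selected points converge in a controlled way to tangent cones of $\bC$). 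Once this survival is established, the iteration on spine dimension is formal.
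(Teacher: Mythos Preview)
The paper does not give its own proof of this theorem; it is merely quoted as a background result with a citation to Almgren. So there is no ``paper's own proof'' to compare against. Your sketch is the standard Federer dimension-reduction argument, which is indeed how this result is established in the literature (see for instance the appendix to Simon's \emph{Lectures on Geometric Measure Theory}).

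Your outline is essentially correct, but one point in the final paragraph is off. You write that the iterated tangent cone with spine dimension exceeding $m$ arises ``as an element of $\vartan(V,Y)$ for some $Y \in \calS^m(V)$ (after unwinding the chain of rescalings)''. Iterated tangent cones are \emph{not} in general tangent cones of the original varifold, and no such unwinding is needed. The contradiction is local to the final iteration: at the last step you blow up a cone $\bC_k$ at a point $Y_k \in \calS^m(\bC_k)$, so by definition every element of $\vartan(\bC_k,Y_k)$ has spine of dimension at most $m$; but your construction forces the resulting cone to have spine of dimension at least $m+1$. That is the contradiction. Also, the bound ``at most $n-m$ times'' for the number of iterations should be ``at most $m+1$ times'': you start with a cone of spine dimension at least $0$ and increase by at least one each step until you exceed $m$.
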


Under the same hypotheses,
Naber--Valtorta~\cite{NaberValtorta_rectifiability_stationary_varifolds}
improve this to the following.

\begin{thm}[\cite{NaberValtorta_rectifiability_stationary_varifolds}]
Let $U \in \bR^{n+1}$ be open, and $V \in \IV_n(U)$ be a stationary
integral varifold. Then $\calS^m(V)$ is countably $k$-rectifiable
for all $0 \leq m \leq n$.
\end{thm}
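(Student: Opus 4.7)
The plan is to follow the quantitative stratification scheme pioneered by Cheeger--Naber and refined by Naber--Valtorta, whose essential steps I now sketch.

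First I would introduce the quantitative strata. Say a ball $B_r(X) \subset U$ is \emph{$(m+1,\eta)$-symmetric} with respect to $V$ if there exists an $(m+1)$-symmetric cone $\bC'$ (i.e., a cone with spine of dimension at least $m+1$) such that $r^{-n-1} \int_{B_r(X)} \calG(V, \tau_{X\#} \bC')^2 \, \intdiff \norm{V} < \eta$, where $\calG$ is a suitable metric on varifolds. Define
\begin{equation}
\calS^m_{\eta,r}(V) = \{ X \in \spt \norm{V} \cap U : \text{no } B_s(X) \text{ with } r \leq s \leq 1 \text{ is } (m+1,\eta)\text{-symmetric}\}
\end{equation}
and $\calS^m_{\eta}(V) = \bigcap_{r > 0} \calS^m_{\eta,r}(V)$. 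A density-gap/compactness argument using upper semicontinuity of density and the Almgren monotonicity formula shows $\calS^m(V) = \bigcup_{\eta > 0} \calS^m_\eta(V)$, so it suffices to prove each $\calS^m_\eta(V)$ is countably $m$-rectifiable.

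Next comes the heart of the argument: a quantitative $L^2$ best-approximation estimate. Fix $\eta > 0$ and a ball $B_r(X)$ centered on $\calS^m_\eta(V)$. The monotonicity formula gives that the mass ratio $\rho \mapsto \rho^{-n} \norm{V}(B_\rho(Y))$ is monotone; the deviation $\theta_V(X,2\rho) - \theta_V(X,\rho)$ controls the $L^2$ distance between $V$ and a cone centered at $X$ via the identity $\int_{B_{2\rho}(X) \setminus B_\rho(X)} \abs{(Y - X)^\perp}^2 / \abs{Y - X}^{n+2} \, \intdiff \norm{V}$. By a Dyadic summation and an affine-invariant Pythagorean identity for best $m$-planes $L^*(X,r)$ minimizing $\int_{B_r(X)} \dist(\cdot, L)^2 \, \intdiff \mu$, one obtains the key inequality
\begin{equation}
\int_{B_r(X)} \beta_\mu^{m,2}(Y, s)^2 \, \intdiff \mu(Y) \, \frac{\intdiff s}{s} \leq C \cdot (\text{monotonicity deficit}),
\end{equation}
where $\mu = \norm{V} \mres \calS^m_{\eta,r}(V)$ and $\beta_\mu^{m,2}$ is Jones' $\beta$-number. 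On $\calS^m_\eta$ the deficit is bounded in terms of mass, giving a universal Dini-type $L^2$ bound on the $\beta$-numbers.

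Finally I would invoke the rectifiable Reifenberg theorem of Naber--Valtorta: any Radon measure on $\bR^{n+1}$ whose support satisfies the summed $\beta^{m,2}$-estimate above, together with a uniform upper mass-density bound, must be carried by a countably $m$-rectifiable set and moreover admits uniform Minkowski content bounds. Applying this to $\mu = \norm{V} \mres \calS^m_\eta(V)$ yields that $\calS^m_\eta(V)$ is countably $m$-rectifiable; taking the countable union over $\eta \downarrow 0$ concludes.

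The main obstacle is the rectifiable Reifenberg theorem itself: converting an integrated $L^2$-flatness hypothesis into actual rectifiability requires a delicate covering and stopping-time argument, constructing bi-Lipschitz graphs over best-approximating $m$-planes at many scales simultaneously and patching them while controlling overlaps. Establishing the $L^2$ best approximation inequality linking Jones' $\beta$-numbers to the monotonicity deficit is the second serious technical hurdle, since it requires using the stationarity through the first variation in a quantitative way rather than merely through the monotonicity formula.
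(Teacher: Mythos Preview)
The paper does not prove this theorem at all: it is stated in the appendix as a result quoted directly from Naber--Valtorta~\cite{NaberValtorta_rectifiability_stationary_varifolds}, with no proof given. The appendix is explicitly a collection of background results from geometric measure theory that the paper relies on but does not establish. So there is no ``paper's own proof'' to compare your proposal against.

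That said, your sketch is a reasonable outline of the actual Naber--Valtorta argument: quantitative strata, the $L^2$ best-approximation estimate tying Jones $\beta$-numbers to the monotonicity deficit, and the rectifiable Reifenberg theorem. You have correctly identified the two main technical hurdles. If you were asked to reproduce the Naber--Valtorta proof this would be an accurate roadmap, but for the purposes of this paper the theorem is simply imported as a black box.
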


In the literature both $\calS^k(V)$ and $(\calS^k \setminus \calS^{k-1})(V)$
are sometimes called \emph{strata} of the singular set of $V$.
The singular set of a stationary $V \in \IV_n(U)$ can be divided into $\sing V
= \calS^{n-2}(V) \cup (\calS^{n} \setminus \calS^{n-2})(V)$.
By Naber--Valtorta's result, the lower strata gathered into $\calS^{n-2}(V)$
can be excised by suitable sequences of test functions,
using a capacity argument.
The top stratum $(\calS^n \setminus \calS^{n-1})(V)$ is also called the
\emph{branch set} of $V$, and denoted $\calB(V)$. A singular point $X$ belongs to $\calB(V)$,
and is called a \emph{branch point} if at least one cone $\bC \in \vartan(V,X)$
is of the form $\bC = Q \abs{\Pi}$ for some $n$-dimensional plane $\Pi \in \Gr(n,n+1)$
with multiplicity $Q \in \bZ_{>0}$.
(By Allard regularity the multiplicity of such a branch point must be at least two.)
Note that a point $X \in \reg V$ near which $V$ coincides with an embedded minimal surface
with multiplicity two would not be considered a branch point according to
this convention. (These points are sometimes called \emph{false branch points}.)

The next stratum $(\calS^{n-1} \setminus \calS^{n-2})(V)$ is formed by
those points $X \in U \cap \sing V$ near which at least one tangent cone
$\bC \in \vartan(V,X)$ is of the form $\bC = \sum_{i=1}^D m_i \abs{\pi_i}$ where
$m_i \in \bZ_{\geq 1}$ and the $\pi_i$ are $n$-dimensional half-planes meeting
along a common $(n-1)$--dimensional axis $L$.
We call these \emph{classical cones}.
This is motivated by the term \emph{classical singularities}, which refers to the points
in $\sing V$ near which the support of $V$ is a union of minimal hypersurfaces
meeting along a common boundary.
We write $\calC(V)$ for the set of classical singularities in $V$.
This is the case for example for singularities that are smoothly immersed.
A classical singularity in $V$ necessary belongs to $(\calS^{n-1} \setminus \calS^{n-2})(V)$,
but in principle not all points in $(\calS^{n-1} \setminus \calS^{n-2})(V)$ are
classical singularities.

An example of particular relevance to us is where $D = 4$, $m_1,\dots,m_4 = 1$
and the half-planes $\pi_1,\dots,\pi_4$ form a union of
two $n$-dimensional planes $\Pi_1,\Pi_2 \in \Gr(n,n+1)$, say $\Pi_1 = \pi_1 \cup \pi_3$
and $\Pi_2 = \pi_2 \cup \pi_4$.
In other words $\bC = \abs{\Pi_1} + \abs{\Pi_2}$. Such tangent cones
arise for example at \emph{immersed singularities}, where there are
two sheets $\Sigma_1,\Sigma_2$ which are separately embedded and
meet transversely along the $(n-1)$--dimensional singular axis $\Sigma_1 \cap \Sigma_2$.
For stable minimal hypersurfaces the converse holds as well, see
Section~\ref{subsec_wic_mult_two_allard} below.

Let $\Omega \subset \bR^n$ be an open domain, and $u \in \Lip(\Omega;\calA_2)$
be a two-valued, locally Lipschitz function. Its graph defines an integral
varifold $\abs{G} \in \IV_n(\Omega \times \bR)$, taken with multiplicity one.
(When endowed with an orientation it yields an integral current
$\cur{G} \in \I_n(\Omega \times \bR)$.)
When in fact $u \in C^1(\Omega;\calA_2)$ then $\abs{G}$ can only have
two kinds of singularities, namely branch points and classical, immersed
singularities, respectively forming the sets $\calB(G)$ and $\calC(G)$.

\subsection{The second variation formula and stability}

Here to start off we assume that $\reg V$ is orientable, and let $\xi \in
C_c^1(U \setminus \sing V;\bR^{n+1})$ be an arbitrary vector field.
Its restriction to the support of $V$ can be written $\varphi N$, where
$N$ is a choice of unit normal to $\reg V$ and $\varphi \in C_c^1(\reg V)$.
Let $K \subset U$ be any compact set containing $\spt \varphi$. Writing $(\Phi_t)$
for the flow generated by $\xi$, the \emph{second variation} of the area of $V$
can be expressed as
\begin{equation}
	\label{eq_second_variation_formula}
	\frac{\diff^2}{\diff t^2} \restr{ \norm{\Phi_{t\#} V}(K)}{t = 0}
	= \int_{K \cap \reg V}
	\abs{\nabla_V \varphi}^2 - \abs{A_V}^2 \varphi^2 \intdiff \norm{V},
\end{equation}
where 
$\nabla_V$ and $A_V$ are the gradient operator and second fundamental form
on $\reg V$ respectively.

A varifold $V \in \IV_n(U)$ is said to have \emph{stable regular part} if
this is non-negative for all perturbations $\varphi \in C_c^1(\reg V)$, that is
$\int \abs{A_V}^2 \varphi^2 \intdiff \norm{V}
\leq \int \abs{\nabla_V \varphi}^2 \intdiff \norm{V}$; this is the 
\emph{stability inequality}.
This automatically gives integral curvature bounds away from
the singular set. Indeed, let $X \in U \cap \reg V$ be a point with
$\dist(X,\sing V) > 2R$. If we let $\varphi \in C_c^1(B_{2R}(X))$ be
a standard cutoff function, with $\varphi = 1$ on $B_R(X)$ and
$\abs{D \varphi} \leq 2/R$ then~\eqref{eq_second_variation_formula} yields
$\int_{\reg V \cap B_R(X)} \abs{A_V}^2 \intdiff \norm{V}
\leq 4R^{-2} \norm{V}(B_{2R}(X))$.
On $\reg V$ we can define the linear, elliptic \emph{Jacobi operator}
$L_V = \Delta_V + \abs{A_V}^2$. Let $W \subset \subset U \setminus \sing V$
be another open set, and write $(\lambda_p(W) \mid p \in \bN)$ for the
spectrum of $L_V$ with zero Dirichlet eigenvalues on $\bdary W \cap \reg V$.
Although this is not necessary, assume for simplicity that the regular part
is connected, so that by the Constancy Theorem~\cite[Thm~41.1]{Simon84}
the density of $V$ is constant on $\reg V$.
If $\reg V$ is stable in $U$, then the stability inequality becomes
$\int_{U \cap \reg V} \abs{A_V}^2 \varphi^2 \intdiff \calH^n
\leq \int_{U \cap \reg V} \abs{\nabla_V \varphi}^2 \intdiff \calH^n$.
After integrating by parts one gets that $\lambda_p(W) \geq 0$ for all $W
\subset \subset U \cap \reg V$ and all $p \in \bN$.
Additionally we point out that the symmetries of $V$ lead to \emph{Jacobi fields},
namely functions $f \in C^2(\reg V)$ which solve $\Delta_{V} f + \abs{A_V}^2 f = 0$
in the classical, pointwise sense.
In particular by translating the varifold in the direction of a vector $v \in \bR^{n+1}$
we find $\Delta_V \langle N , v \rangle + \abs{A_V}^2 \langle N , v \rangle = 0$.

Although we imposed orientability of $\reg V$ in our derivation,
stability is \emph{a posteriori} well-defined regardless of this. 
This remains true for the second notion of stability we introduce,
which we call \emph{ambient stability}.
Suppose that for all compact $K \subset U$,
$\int_{K \cap \reg V} \abs{A_V}^2 \intdiff \norm{V} < \infty$.
Then we say that $V$ is \emph{ambient stable} if the inequality
\begin{equation}
	\label{eq_stab_ineq_varifold_ambient}
	\tag{$S_V$}
	\int_{U \cap \reg V} \abs{A_V}^2 \varphi^2 \intdiff \norm{V}
	\leq \int_{U \cap \reg V} \abs{\nabla_V \varphi}^2 \intdiff \norm{V}
\end{equation}
holds for all $\varphi \in C_c^1(U)$.
We emphasise here that both the local bounds for the curvature and the stability
inequality hold for any compact subsets of $U$, not just those that avoid
the singularities of $V$.
This in turn allows the application of the theory developed by Hutchinson
in~\cite{Hutchinson86}, where it was proved that ambient stability is
preserved under weak convergence of varifolds.
\begin{prop}[\cite{Hutchinson86}]
\label{prop_hutchinson_stability}
Let $U \subset \bR^{n+1}$ be open, and let 
$(V_j \mid j \in \bN)$ be a sequence of stationary varifolds in $U$
satisfying~\eqref{eq_stab_ineq_varifold_ambient}.
Suppose that $V_j \to V \in \IV_n(U)$ weakly in the varifold topology.
Then $V$ is stationary and ambient stable.
\end{prop}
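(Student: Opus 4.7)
The plan is to pass to the limit separately in the stationarity condition and in the stability inequality $(S_{V_j})$. Stationarity of $V$ is straightforward: for any $\xi \in C_c^1(U;\bR^{n+1})$ the first variation functional $\delta W(\xi) = \int \div_S \xi(X) \, dW(X,S)$ is obtained by pairing $W$ with a continuous, compactly supported function on $U \times G_n(\bR^{n+1})$, so weak varifold convergence yields $\delta V(\xi) = \lim_j \delta V_j(\xi) = 0$.

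For stability I would first extract uniform local curvature bounds. Given $W \subset\subset U$, pick $\eta \in C_c^1(U)$ with $\eta \equiv 1$ on $W$; applying $(S_{V_j})$ to $\eta$ gives $\int_{W \cap \reg V_j} \abs{A_{V_j}}^2 d\norm{V_j} \leq \int \abs{\nabla_{V_j}\eta}^2 d\norm{V_j}$, and the right-hand side is bounded uniformly in $j$ since $\norm{V_j}(\spt \eta)$ is bounded by weak convergence. After extracting a subsequence, the Radon measures $\mu_j = \abs{A_{V_j}}^2 \norm{V_j} \mres \reg V_j$ thus converge weakly to some $\mu \in \calM(U)$.

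The main step, and I expect the principal obstacle, is to establish that $\abs{A_V}^2 \norm{V}\mres \reg V \leq \mu$ as Radon measures, equivalently the lower semicontinuity
\begin{equation}
	\int_{U \cap \reg V} \abs{A_V}^2 \varphi^2 \, d\norm{V}
	\leq \liminf_{j \to \infty} \int_{U \cap \reg V_j} \abs{A_{V_j}}^2 \varphi^2 \, d\norm{V_j}
\end{equation}
for every non-negative $\varphi \in C_c(U)$. This is the content of Hutchinson's theory of curvature varifolds: the uniform $L^2$ curvature bounds place each $V_j$ in the class of curvature varifolds, and this class is closed under weak varifold convergence, with the limit $V$ admitting a generalized second fundamental form $A$ that coincides with the classical one on $\reg V$ and satisfies the displayed semicontinuity. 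Alternatively, one can argue more directly using the integration-by-parts characterization of $A$ against compactly supported test tensors and Fatou's lemma, exploiting the fact that $\div_S \psi$ pairs continuously with varifolds.

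With these pieces in place the proof concludes in one line. Fix $\varphi \in C_c^1(U)$. The continuous Grassmannian function $(X,S) \mapsto \abs{P_S D\varphi(X)}^2$ has compact $X$-support, so weak varifold convergence gives $\int \abs{\nabla_{V_j} \varphi}^2 d\norm{V_j} \to \int \abs{\nabla_V \varphi}^2 d\norm{V}$. Combining this with $(S_{V_j})$ and the lower semicontinuity above yields
\begin{equation}
\int_{U \cap \reg V} \abs{A_V}^2 \varphi^2 \, d\norm{V}
	\leq \liminf_j \int \abs{A_{V_j}}^2 \varphi^2 d\norm{V_j}
	\leq \lim_j \int \abs{\nabla_{V_j} \varphi}^2 d\norm{V_j} = \int \abs{\nabla_V \varphi}^2 d\norm{V},
\end{equation}
which is $(S_V)$.
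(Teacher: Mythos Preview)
The paper does not supply its own proof of this proposition: it is stated in the appendix with attribution to Hutchinson~\cite{Hutchinson86} and no argument is given. Your outline is correct and is precisely the strategy of the cited reference---stationarity passes to the limit by weak convergence, uniform local $L^2$ curvature bounds follow from applying $(S_{V_j})$ to a fixed cutoff, and the crucial lower semicontinuity of $\int \abs{A}^2 \varphi^2$ is exactly what Hutchinson's theory of curvature varifolds provides.
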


\subsection{Wickramasekera's branched sheeting theorem}

\label{subsec_wic_mult_two_allard}

\begin{thm}[\cite{Wic_MultTwoAllard}]
\label{thm_wic_mult_two_allard}
There is $\eps = \eps(n) > 0$ so that if $V \in \IV_n(B_2)$
is stationary with stable regular part, $\norm{V}(B_2)/(\omega_n 2^n)
< 2 + \eps$, $\Theta(\norm{V},Y) \neq 3/2$ for all $Y \in B_2$ and
\begin{enumerate}[label = (\roman*), font=\upshape]
\item either $\int_{B_2} \dist(X,\Pi)^2 \intdiff \norm{V}(X) < \eps$
	for some $\Pi \in \Gr(n,n+1)$,
\item or $\int_{B_2}  \dist(X,\Pi_1)^2 \wedge \dist(X,\Pi_2)^2 
	\intdiff \norm{V}(X) 
	+ \int_{B_2} \dist(X,\spt \norm{V})^2 \allowbreak
	\intdiff (\norm{\Pi_1} + \norm{\Pi_2})(X)
	< \eps$ for some $\Pi_1 \neq \Pi_2 \in \Gr(n,n+1)$.
\end{enumerate}
Then 
\begin{enumerate}[label = (\roman*), font=\upshape]
\item either $B_1 \cap \spt \norm{V} \subset \reg V \cup \calC(V) \cup \calB(V)$
	and there is $\gamma = \gamma(n,\eps) \in (0,1)$ and 
	a two-valued function
$u \in C^{1,\gamma}(B_1 \cap \Pi;\calA_2(\Pi^\perp))$ so that
$B_1 \cap \spt \norm{V} \subset \graph u \subset  \reg V \cup \calC(V) \cup \calB(V)$,
and there is $C = C(n,\eps) > 0$ so that
$\abs{u}_{1,\gamma;B_1 \cap \Pi}
\leq C \big( \int_{B_2} \dist(X,\Pi)^2 \intdiff \norm{V}(X) \big)^{1/2}.$
\item or $B_1 \cap \spt \norm{V} \subset \reg V \cup \calC(V)$ and there is
$\gamma = \gamma(n,\eps) \in (0,1)$ and
two single-valued functions $u_i \in C^{1,\gamma}(B_1 \cap \Pi_i,
\Pi_i^\perp)$ so that $B_1 \cap \spt \norm{V} \subset \cup_i \graph u_i
\subset \reg V \cup \calC(V)$,
and there is $C = C(n,\eps) > 0$ so that
$\abs{u_i}_{1,\gamma;B_1 \cap \Pi}
\leq C \big( \int_{B_2}  \dist(X,\Pi_1)^2 \wedge \dist(X,\Pi_2)^2  \intdiff \norm{V}(X)
+ \int_{B_2} \dist(X,\spt \norm{V})^{1/2} \intdiff (\norm{\Pi_1} + \norm{\Pi_2})(X)
\big)^{1/2}.$
\end{enumerate}
\end{thm}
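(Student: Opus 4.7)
The plan is to follow Wickramasekera's inductive scheme on the density ratio $\Theta(\norm{V},\cdot) < 5/2$, which reduces the branched sheeting conclusion to establishing a suitable excess decay lemma at every scale. The base of the induction is the multiplicity one case, which is Allard's regularity theorem, so the real task is to upgrade from multiplicity one sheeting to the multiplicity two case stated here. First I would introduce the tilt-excess $E^2(V,\Pi) = \int_{B_2} \dist(X,\Pi)^2 \intdiff \norm{V}(X)$ (and the analogous two-plane excess in case (ii)), and set up the contradiction/compactness framework: assume no $(C,\gamma)$ estimate holds on a sequence $V_j$ with $E_j^2 \to 0$; after rescaling, extract a weak limit which should be described by a two-valued harmonic function $u_\infty$ on $\Pi$ in case (i) (or a pair of single-valued harmonic functions in case (ii)).

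The key technical ingredient is to show that the blow-up limit $u_\infty$ inherits enough regularity from the stability of $V_j$ and the absence of density $3/2$ points to be genuinely two-valued harmonic in the sense of Almgren, and in particular to satisfy a \emph{classification of homogeneous degree one limits}: the only such tangent functions are linear (in case (i)) or a union of two affine graphs (in case (ii)). This is where the stability hypothesis and the density constraint $\Theta \neq 3/2$ are essential---they rule out the appearance of classical tangent cones of the form $3 \abs{\Pi}$ or half-plane cones $\sum_i m_i \abs{\pi_i}$ with $\sum m_i$ odd, which would otherwise obstruct planar blow-up. Granted this rigidity for blow-ups, a standard $\eps$-regularity/iteration argument (à la De Giorgi) upgrades $E^2(V_j,\Pi_j) \to 0$ to excess decay $E^2(V_j,\Pi_j') \leq \theta^{2(1+\gamma)} E^2(V_j,\Pi_j)$ at a smaller scale with a rotated plane $\Pi_j'$, yielding $C^{1,\gamma}$ estimates for the two-valued graphing function.

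To close the induction it will then be necessary to control the singular set: in case (i) one shows that $\spt \norm{V} \cap B_1 \subset \reg V \cup \calC(V) \cup \calB(V)$ (no other singularities appear) by ruling out tangent cones supported in three or more half-planes meeting along a common axis; here Simon--Wickramasekera's two-valued minimal graph regularity in~\cite{SimonWickramasekera16} combined with the stability inequality is the right tool. In case (ii) the singular set is further restricted to $\calC(V)$ because the two-plane closeness hypothesis prevents branching: the two graphs $\graph u_i$ have to be separately $C^{1,\gamma}$, with only their mutual transverse intersection as singular set.

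The hard part will be the rigidity for blow-up limits, specifically excluding \emph{a priori} degenerate blow-ups that are not two-valued harmonic functions---this is the heart of Wickramasekera's paper and requires a delicate frequency monotonicity argument for stable varifold limits combined with Almgren's Dirichlet energy estimates for $\calA_2$-valued functions. A secondary difficulty, specific to case (ii), is ensuring that the two planes $\Pi_1,\Pi_2$ do not degenerate to a single plane in the limit; this is handled by the second term $\int \dist(X,\spt \norm{V})^2 \intdiff (\norm{\Pi_1} + \norm{\Pi_2})(X)$ in the hypothesis, which quantifies the $L^2$ proximity of the planes to $\spt \norm{V}$ and thus prevents mass loss near each $\Pi_i$ in the blow-up limit.
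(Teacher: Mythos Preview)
This theorem is not proved in the present paper: it is quoted verbatim from Wickramasekera~\cite{Wic_MultTwoAllard} in the appendix and used as a black box throughout. There is therefore no ``paper's own proof'' to compare your proposal against.

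That said, your outline is a reasonable high-level sketch of the strategy in Wickramasekera's original work, though some details are off. The induction is not on density ratio but rather a more delicate scheme, and the phrase ``two-valued harmonic in the sense of Almgren'' understates the difficulty: the blow-ups are not \emph{a priori} Dirichlet-minimising $\calA_2$-valued functions, and a large part of~\cite{Wic_MultTwoAllard} is devoted to showing that the coarse blow-up nonetheless has the required structure (via a Hardt--Simon type inequality and the stability hypothesis). Your identification of the frequency/rigidity step as the heart of the matter is correct, but be aware that Simon--Wickramasekera~\cite{SimonWickramasekera16} is logically downstream of~\cite{Wic_MultTwoAllard}, not an input to it, so invoking it to control the singular set would be circular. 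If you genuinely want to reconstruct the proof, you should work directly from~\cite{Wic_MultTwoAllard} rather than from this paper.
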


Elliptic regularity theory is not available for two-valued minimal graphs.
However, Simon--Wickramasekera~\cite{SimonWickramasekera16} have shown
that necessarily $u \in C^{1,1/2}(B_1 \cap \Pi;\calA_2(\Pi^\perp))$.
Of course, in the second case described above,
where $V$ is close to $\abs{\Pi_1} + \abs{\Pi_2}$ and $V \cap \spt \norm{V}
\subset \reg V \cup \calC(V)$ the two functions $u_i$ are regular.
The usual, single-valued elliptic regularity then
gives that for all $l \in \bZ_{>0}$ there is $C(l) = C(n,l)$ so that
$\abs{u_i}_{l,\gamma;B_1 \cap \Pi_i} \leq 
C(l) \big(
 \int_{B_2} \dist(X,\Pi_1)^2 \wedge \dist(X,\Pi_2)^2  \intdiff \norm{V}(X)
+ \int_{B_2} \dist(X,\spt \norm{V})^{1/2} \intdiff (\norm{\Pi_1} + \norm{\Pi_2})(X).
\big)^{1/2}$

\begin{thm}[\cite{Wic_MultTwoAllard}]
	Let $U \subset \bR^{n+1}$ be open and $V \in \IV_n(U)$ be stationary
	with stable regular part, and $\Theta(\norm{V},Y) \neq 3/2$ for all $Y \in U$.
	Then there is $\eps = \eps(n) \in (0,1)$
	so that if $Y \in U \cap \spt \norm{V}$ has
	$\Theta(\norm{V},Y) < 2 + \eps$ then
	$Y \in \reg V \cup \calB(V) \cup \calC(V)$.
\end{thm}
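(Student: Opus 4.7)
The strategy is to deduce the theorem from the branched sheeting theorem (Theorem~\ref{thm_wic_mult_two_allard}, stated just above) by a tangent cone analysis at the point $Y$. The key reduction is to show that every tangent cone $\bC \in \vartan(V,Y)$ must be of one of the following three types: (a) a multiplicity-one hyperplane $\abs{\Pi}$, (b) a multiplicity-two hyperplane $2\abs{\Pi}$, or (c) a pair $\abs{\Pi_1} + \abs{\Pi_2}$ of distinct multiplicity-one hyperplanes meeting along an $(n-1)$-dimensional axis. Granted this, one concludes as follows. In case (a), $\Theta(\norm{V},Y) = 1$, so Allard's regularity theorem places $Y \in \reg V$. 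In case (b), $V$ is $L^2$-close to $2\abs{\Pi}$ on $B_{2\rho}(Y)$ for all sufficiently small $\rho > 0$, so case (i) of Theorem~\ref{thm_wic_mult_two_allard}, applied to the rescaled varifold $\eta_{Y,\rho \,\#} V$, realises $V \cap B_\rho(Y) \cap \spt\norm{V}$ as a two-valued $C^{1,\gamma}$ graph, placing $Y \in \reg V \cup \calB(V) \cup \calC(V)$. In case (c), case (ii) of Theorem~\ref{thm_wic_mult_two_allard} applies similarly and yields $Y \in \reg V \cup \calC(V)$.

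The first step of the plan is straightforward: tangent cones inherit the structural hypotheses. Any $\bC \in \vartan(V,Y)$ is itself a stationary integral $n$-varifold in $\bR^{n+1}$ with stable regular part, satisfies $\Theta(\norm{\bC},\cdot) \neq 3/2$ everywhere by upper semicontinuity of density applied to rescaled copies of $V$, and has $\Theta(\norm{\bC},0) = \Theta(\norm{V},Y) < 2 + \eps$. By monotonicity, $\Theta(\norm{\bC},Z) < 2 + \eps$ for every $Z \in \spt\norm{\bC}$.

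The hard part is the classification of such cones $\bC$, which is where the bulk of the work lies. I would carry this out by induction on $n$, using dimension reduction in the Almgren--Federer sense. For the base case $n=2$, the classification of stationary stable integral cones in $\bR^3$ with density below $2 + \eps$ and no density-$3/2$ point forces $\bC$ to be one of (a), (b), (c): arbitrary triple-junction or $Y$-shaped configurations contribute density $3/2$ somewhere and are excluded by hypothesis, and higher multiplicity configurations overshoot $2 + \eps$. For the inductive step, one analyses tangent cones of $\bC$ at points $Z \in \sing \bC \setminus \{0\}$: each such tangent cone splits off a line (spanned by $Z$) and reduces to an $(n-1)$-dimensional stationary stable cone satisfying the same hypotheses, hence by induction is of type (a), (b), or (c). Combined with the stratification theorem, this forces $\sing \bC \subset \{0\}$ unless $\bC$ itself is translation-invariant along a line; iterating, $\bC$ must be translation-invariant along an $(n-1)$-plane, which reduces the classification to the two-dimensional base case.

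The main obstacle is precisely this tangent cone classification: one needs to propagate both the density bound and the ``no $3/2$'' condition through successive blow-ups, and verify that no exotic low-density stable cones exist outside the three listed types. Once this classification is in place, the application of Theorem~\ref{thm_wic_mult_two_allard} at small scales around $Y$ is essentially automatic, because the $L^2$ closeness hypotheses of that theorem follow from the varifold convergence $\eta_{Y,\rho_j\,\#} V \to \bC$ along a suitable sequence $\rho_j \to 0$, combined with Allard-type area bounds to verify $\norm{V}(B_{2\rho}(Y))/(\omega_n(2\rho)^n) < 2 + \eps$ for small $\rho$.
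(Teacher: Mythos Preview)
The paper does not prove this theorem at all; it is quoted from \cite{Wic_MultTwoAllard} as a black-box result in the appendix, with no argument given. So there is no ``paper's proof'' to compare against, and your proposal is an attempt to supply one independently.

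Your outline has a genuine gap in the cone classification, specifically in the inductive step. From the inductive hypothesis you learn that at every $Z \in \sing \bC \setminus \{0\}$ each tangent cone is a line cross one of (a), (b), (c); combined with Theorem~\ref{thm_wic_mult_two_allard} this gives $\sing \bC \setminus \{0\} \subset \calB(\bC) \cup \calC(\bC)$. But that is far weaker than the claim that ``$\bC$ must be translation-invariant along an $(n-1)$-plane'': a cone can have an $(n-1)$-dimensional set of immersed or branch singularities away from the origin without itself splitting off any line. To get from ``nice singularities away from $0$'' to ``$\bC$ is a sum of at most two hyperplanes'' requires a separate rigidity argument for stable cones of density below $2+\eps$ whose singular set consists only of branch and classical points. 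That rigidity is not a formality; in Wickramasekera's work the sheeting theorem and this cone classification (the Minimum Distance Theorem) are established together by a simultaneous induction on dimension, and neither follows from the other in isolation in the way you suggest. Your base case $n=2$ is essentially correct (the link is a geodesic network in $S^2$, and the density and no-$3/2$ hypotheses force it to be one or two great circles), but the passage to higher dimensions needs substantially more than dimension reduction alone.
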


Let us quickly comment on these three possibilities. Let $V \in \IV_n(U)$
be as above, and let $Y \in U \cap \spt \norm{V}$ have
$\Theta(\norm{V},Y) = 2$. Then either $Y \in \calB(V)$ or else
there is $0 < \rho < \dist(Y,\bdary U)$ so that
\begin{enumerate}
\item either $B_\rho(X) \cap \spt \norm{V} \subset \reg V$ and there
	is a smooth embedded $\Sigma$ so that
	$V \mres B_\rho(X) = 2 \abs{\Sigma}$,
\item or $B_\rho(X) \cap \spt \norm{V} \subset \reg V \cup \calC(V)$
	is immersed and there are two smooth embedded surfaces
	$\Sigma_1 , \Sigma_2$ which meet transversely along an axis
	of immersed, classical singularities, so that
	$V \mres B_\rho(Y) = \abs{\Sigma_1} + \abs{\Sigma_2}$
	and $\sing V \cap B_\rho(Y) = \Sigma_1 \cap \Sigma_2$.
\end{enumerate}

\bibliographystyle{alpha}

\bibliography{min_surfaces_refs.bib}

\begin{thebibliography}{BDGG69}

\bibitem[Alm66]{Almgren_Some_Interior_Regularity_and_Bernstein}
Frederick~J. Almgren.
\newblock Some interior regularity theorems for minimal surfaces and an
  extension of {B}ernstein's theorem.
\newblock {\em Ann. of Math. (2)}, 84:277--292, 1966.

\bibitem[Alm00]{Almgren1981}
Frederick~J. Almgren.
\newblock {\em Almgren's Big Regularity Paper}.
\newblock World {S}cientific, 2000.
\newblock Edited by {V}ladimir {S}cheffer and {J}ean {T}aylor.

\bibitem[BDGG69]{BombierideGiorgiGiusti69}
Enrico Bombieri, Ennio De~Giorgi, and Enrico Giusti.
\newblock Minimal cones and the {B}ernstein problem.
\newblock {\em Invent. Math.}, 7:243--268, 1969.

\bibitem[Ber27]{Bernstein_1916}
Serge Bernstein.
\newblock \"{U}ber ein geometrisches {T}heorem und seine {A}nwendung auf die
  partiellen {D}ifferentialgleichungen vom elliptischen {T}ypus.
\newblock {\em Math. Z.}, 26(1):551--558, 1927.

\bibitem[BK17]{Spencer_Two_valued_graphs_arbitrary_codimension}
Spencer Becker-Kahn.
\newblock Transverse singularities of minimal two-valued graphs in arbitrary
  codimension.
\newblock {\em J. Differential Geom.}, 107(2):241--325, 2017.

\bibitem[BKW20]{BeckerKahn_Wic_Perso}
Spencer Becker-Kahn and Neshan Wickramasekera.
\newblock Personal communication, 2020.

\bibitem[DG61]{deGiorgi_Frontiere_orientate}
Ennio De~Giorgi.
\newblock {\em Frontiere orientate di misura minima}.
\newblock Seminario di Matematica della Scuola Normale Superiore di Pisa,
  1960-61. Editrice Tecnico Scientifica, Pisa, 1961.

\bibitem[DLS11]{DLS_Q_Valued_Functions}
Camillo De~Lellis and Emanuele~Nunzio Spadaro.
\newblock {$Q$}-valued functions revisited.
\newblock {\em Mem. Amer. Math. Soc.}, 211(991):vi+79, 2011.

\bibitem[DLS14]{DLS_RegularityI}
Camillo De~Lellis and Emanuele Spadaro.
\newblock Regularity of area minimizing currents {I}: gradient {$L^p$}
  estimates.
\newblock {\em Geom. Funct. Anal.}, 24(6):1831--1884, 2014.

\bibitem[DLS16a]{DLS_RegularityII}
Camillo De~Lellis and Emanuele Spadaro.
\newblock Regularity of area minimizing currents {II}: center manifold.
\newblock {\em Ann. of Math. (2)}, 183(2):499--575, 2016.

\bibitem[DLS16b]{DLS_RegularityIII}
Camillo De~Lellis and Emanuele Spadaro.
\newblock Regularity of area minimizing currents {III}: blow-up.
\newblock {\em Ann. of Math. (2)}, 183(2):577--617, 2016.

\bibitem[EG15]{Evans15}
Lawrence Evans and Ronald Gariepy.
\newblock {\em Measure Theory and Fine Properties of Functions, Revised
  Edition}.
\newblock Textbooks in Mathematics. CRC Press, 2015.

\bibitem[Fle62]{Fleming_Oriented_Plateau}
Wendell~H. Fleming.
\newblock On the oriented {P}lateau problem.
\newblock {\em Rend. Circ. Mat. Palermo (2)}, 11:69--90, 1962.

\bibitem[GT98]{GilbargTrudinger98}
David Gilbarg and Neil Trudinger.
\newblock {\em Elliptic partial differential equations of second order}.
\newblock Grundlehren der mathematischen Wissenschaften. Springer, 1998.

\bibitem[Hut86]{Hutchinson86}
John Hutchinson.
\newblock Second fundamental form for varifolds and the existence of surfaces
  minimising curvature.
\newblock {\em Indiana Univ. Math. J.}, 35(1):45--71, 1986.

\bibitem[JS66a]{JenkinsSerrin_Dirichlet_problem_infinite_data}
Howard Jenkins and James Serrin.
\newblock The {D}irichlet problem for the minimal surface equation, with
  infinite data.
\newblock {\em Bull. Amer. Math. Soc.}, 72:102--106, 1966.

\bibitem[JS66b]{JenkinsSerrin_Variational_Problems_II}
Howard Jenkins and James Serrin.
\newblock Variational problems of minimal surface type. {II}. {B}oundary value
  problems for the minimal surface equation.
\newblock {\em Arch. Rational Mech. Anal.}, 21:321--342, 1966.

\bibitem[Kru19]{Krummel_Multivalued_Dirichlet}
Brian Krummel.
\newblock Existence and regularity of multivalued solutions to elliptic
  equations and systems.
\newblock {\em Comm. Anal. Geom.}, 27(4):877--935, 2019.

\bibitem[KW20]{KrumWic_FinePropsMinGraphs}
Brian Krummel and Neshan Wickramasekera.
\newblock Fine properties of two-valued minimal graphs, 2020.
\newblock Manuscript in preparation.

\bibitem[MS73]{Michael_Simon_Sobolev}
James Michael and Leon Simon.
\newblock Sobolev and mean-value inequalities on generalized submanifolds of
  {$R^{n}$}.
\newblock {\em Comm. Pure Appl. Math.}, 26:361--379, 1973.

\bibitem[Mun00]{Munkres_Topology}
James~R. Munkres.
\newblock {\em Topology}.
\newblock Prentice Hall, Inc., 2 edition, 2000.

\bibitem[NV15]{NaberValtorta_rectifiability_stationary_varifolds}
Aaron Naber and Daniele Valtorta.
\newblock The singular structure and regularity of stationary varifolds.
\newblock \url{https://arxiv.org/abs/1505.03428}, 2015.
\newblock To be published in {J. Eur. Math. Soc.} 2019.

\bibitem[Ros16]{Rosales_Two_valued_minimal_graphs}
Leobardo Rosales.
\newblock A {H}\"{o}lder estimate for entire solutions to the two-valued
  minimal surface equation.
\newblock {\em Proc. Amer. Math. Soc.}, 144(3):1209--1221, 2016.

\bibitem[Sim68]{Simons_Minimal_varieties}
James Simons.
\newblock Minimal varieties in riemannian manifolds.
\newblock {\em Ann. of Math. (2)}, 88:62--105, 1968.

\bibitem[Sim77]{Simon_Erasable_Singularity_Result}
Leon Simon.
\newblock On a theorem of de {G}iorgi and {S}tampacchia.
\newblock {\em Math. Z.}, 155:199--204, 1977.

\bibitem[Sim84]{Simon84}
Leon Simon.
\newblock {\em Lectures on geometric measure theory}.
\newblock Proceedings of the Center for Mathematical Analysis. Australian
  National University, 1984.

\bibitem[SS81]{SchoenSimon81}
Richard Schoen and Leon Simon.
\newblock Regularity of stable minimal hypersurfaces.
\newblock {\em Comm. Pure Appl. Math.}, 34(6):741--797, 1981.

\bibitem[SW89]{SolomonWhiteMaxPrinc}
Bruce Solomon and Brian White.
\newblock A strong maximum principle for varifolds that are stationary with
  respect to even parametric elliptic functionals.
\newblock {\em Indiana Univ. Math. J.}, 38(3):683--691, 1989.

\bibitem[SW07]{SimonWickramasekera07}
Leon Simon and Neshan Wickramasekera.
\newblock Stable branched minimal immersions with prescribed boundary.
\newblock {\em J. Differential Geom.}, 75:143--173, 2007.

\bibitem[SW16]{SimonWickramasekera16}
Leon Simon and Neshan Wickramasekera.
\newblock A frequency function and singular set bounds for branched minimal
  immersions.
\newblock {\em Comm. Pure Appl. Math.}, 69:1213--1258, 2016.

\bibitem[Wic14]{Wickramasekera14}
Neshan Wickramasekera.
\newblock A general regularity theory for stable codimension 1 integral
  varifolds.
\newblock {\em Ann. of Math. (2)}, 179(3):843--1007, 2014.

\bibitem[Wic20]{Wic_MultTwoAllard}
Neshan Wickramasekera.
\newblock A sheeting theorem for stable codimension one varifolds near a
  multiplicity two plane, 2020.
\newblock Manuscript in preparation.

\end{thebibliography}

\end{document}